\tikzset{snake it/.style={decorate, decoration=snake}}
\tikzset{zigzag/.style={decorate, decoration=zigzag}}
\definecolor{ao(english)}{rgb}{0.0, 0.5, 0.0}
\def\dover#1{\underline{\underline{#1}}}
\newcommand{\uvee}{{\underline{v} }}
\newcommand{\vx}{{\color{violet} \underline{x} }}
	\definecolor{eng}{rgb}{0.0, 0.5, 0.0}
\definecolor{apple}{rgb}{0.55, 0.71, 0.0}
\definecolor{cadmium}{rgb}{0.0, 0.42, 0.24}
\definecolor{darkspringgreen}{rgb}{0.09, 0.45, 0.27}
\definecolor{amethyst}{rgb}{0.6, 0.4, 0.8}
\definecolor{ao}{rgb}{0.0, 0.0, 1.0}
\definecolor{atomictangerine}{rgb}{1.0, 0.6, 0.4}
\definecolor{carmine}{rgb}{0.59, 0.0, 0.09}
 \newcommand{\blor}{
 \begin{tikzpicture}
\draw[orange,->](0,0) to(0.2,0);
\draw[blue,->](0,0) to(-0.2,0);
  \end{tikzpicture}}
 \newcommand{\orbl}{
 \begin{tikzpicture}
\draw[blue,->](0,0) to(0.2,0);
\draw[orange,->](0,0) to(-0.2,0);
  \end{tikzpicture}}
\newcommand{\blbl}{
 \begin{tikzpicture}
\draw[blue,->](0,0) to(0.2,0);
\draw[blue,->](0,0) to(-0.2,0);
  \end{tikzpicture}}
\newcommand{\oror}{
 \begin{tikzpicture}
\draw[orange,->](0,0) to(0.2,0);
\draw[orange,->](0,0) to(-0.2,0);
  \end{tikzpicture}}
\definecolor{toggle}{rgb}{1.0, 0.94, 0.96}
 \newcommand{\plus}{ }
 \newcommand{\pDelta}{\Delta}
 \newcommand{\dom}{{\sf dom}}
  \newcommand{\cod}{{\sf cod}}
  \newcommand{\on}{{\sf 1}}
 \newcommand{\gsigma}{{{{\color{gray}\bm\sigma}}}}
 \newcommand{\brown}{{{{\color{brown}\bm\pi}}}}
 \newcommand{\violet}{{{{\color{violet}\bm\rho}}}}
  \newcommand{\purple}{{{{\color{violet}\bm\rho}}}}
 \newcommand{\green}{{{{\color{green!80!black}\bm\beta}}}}
 \newcommand{\orange}{{{{\color{orange}\bm\gamma}}}}
 \newcommand{\pink}{{{{\color{magenta}\bm\alpha}}}}
 \newcommand{\blue}{{{{\color{cyan}\bm\tau}}}}
 \newcommand{\colMAP}{\imath}
 \newcommand{\MAP}{\jmath }
 \newcommand{\TRNC}{\varphi}
 \newcommand{\defect}{{\sf def}}
 \newcommand{\grey}{{{{\color{gray}\bm\alpha}}}}
 \newcommand{\two}{{{{\color{orange}2}}}}
 \newcommand{\zero}{{{{\color{magenta}0}}}}
 \newcommand{\one}{{{{\color{cyan}1}}}}
 \newcommand{\trid}{{\sf trid}}
 \newcommand{\cpi}{{{{\color{violet}\bm\pi}}}}
 \newcommand{\csigma}{{{{\color{magenta}\bm\sigma}}}}
 \newcommand{\ctau}{{{{\color{cyan}\bm\tau}}}}
\newcommand{\csigmaw}{{{{\color{magenta}\bm\sigma}}}}
\newcommand{\ctauw}{{{{\color{cyan}\bm\tau}}}}
 \newcommand{\al}{{{{\color{magenta}\bm \sigma}}}}
 \newcommand{\crho}{{{{ \color{green!80!black}\bm\rho}}}}
\newcommand{\exx}{{b_\al }}
\newcommand{\deltax}{{{\color{magenta}\delta_\alpha}}}
\newcommand{\eps}{ \varepsilon}
\newcommand{\SSTX}{{\sf X}}
\newcommand{\SSTY}{{\sf Y}}
\newcommand{\gap}{{\sf gap}}
\newcommand{\fork}{{\sf fork}}
\newcommand{\braid}{{\sf braid}}
\newcommand{\spot}{{\sf spot}}
\newcommand{\isit}{{i}}
\newcommand{\isitone}{\al(i+1)}
\newcommand{\Shl}{\widehat{\mathfrak{S}}_{h\ell}}
\newcommand{\w}{{\underline{w}}}
\newcommand{\vvv}{{\underline{v} }}
\newcommand{\uuu}{{\underline{u} }}
\newcommand{\y}{{\underline{y}}}
\newcommand{\x}{{\underline{x}}}
\newcommand{\dil}{{\sf dil}_\ctau }
\newcommand{\unvvv}{{{z}}}
\newcommand{\grade}{q}
\newcommand{\dgrm}{\mathcal{D}}
\newcommand{\dgrmdeg}{\mathcal{D}^{\rm deg}}
\newcommand{\dgrmf}{\mathcal{D}_{\rm f}}
\newcommand{\dgrmBS}{\mathcal{D}_{\rm BS}}
\newcommand{\dgrmBSdeg}{\mathcal{D}_{\rm BS}^{\rm deg}}
\newcommand{\dgrmBSdegsum}{\mathcal{D}_{\rm BS}^{{\rm deg},\oplus}}
\newcommand{\dgrmBSF}{\mathcal{D}_{\rm BS}^F}
\newcommand{\dgrmF}{\mathcal{D}^F}
\newcommand{\dgrmBSsumshift}{\mathcal{D}_{\rm BS}^{\oplus,(-)}}
\newcommand{\dgrmstd}{\mathcal{D}_{\rm std}}
\newcommand{\dgrmBSstd}{\mathcal{D}_{{\rm BS},{\rm std}}}
\newcommand{\dgrmBSpast}{\mathcal{D}_{{\rm BS},p|\ast}}
\newcommand{\dgrmBSpastsumshift}{\mathcal{D}_{{\rm BS},p|\ast}^{\oplus,\langle - \rangle}}
\newcommand{\dgrmpast}{\mathcal{D}_{p|\ast}}
\newcommand{\eee}{{\sf e}}
\newcommand{\Wf}{W_{\rm f}}
\newcommand{\W}{W}
\newcommand{\Wp}{\W_p}
\newcommand{\Ssf}{S_{{\rm f}}}
\newcommand{\Ss}{S}
\newcommand{\Ssp}{\Ss_p}
\newcommand{\Sspexpr}{\expr{\Ss}_p}
\newcommand{\Sspone}{\Ss_{p \cup 1}}
\newcommand{\sh}{s_{\rm h}}
\newcommand{\linkexpr}{\expr{\Ss}_{p|1}}
\newcommand{\Wpcosets}{\prescript{p}{}{\W}}
\newcommand{\Alc}{\text{\bf Alc}}
 \newcommand{\Pdiptwo}{{\sf M}_{i,i+2}}
 \newcommand{\Pdipj}{{{\sf M}_{i,j}}} 
 \newcommand{\vstwo}{{{{\color{violet}\bm \sigma_2}}}}
 \newcommand{\vsi}{{{{\color{violet}\bm \sigma_i}}}}
  \newcommand{\vsione}{{{{\color{violet}\bm \sigma_{i+1}}}}}
  \newcommand{\vsone}{{{{\color{violet}\bm \sigma_1}}}}
    \newcommand{\vsk}{{{{\color{violet!70!white}\bm \sigma_k}}}}
      \newcommand{\vsj}{{{{\color{violet}\bm \sigma_j}}}}
 \newcommand{\nodelabel}{0}
\def\down{\vee}
\def\up{\wedge}
\tikzset{
  variable line width/.style={
    every variable line width/.append style={#1},
    to path={%
      \pgfextra{%
        \draw[every variable line width/.try,line width=\pgfkeysvalueof{/tikz/thickness}] (\tikztostart) -- (\tikztotarget);
      }%
      (\tikztotarget)
    },
  },
  thickness/.initial=0.6pt,
  every variable line width/.style={line cap=round, line join=round},
}
\newcommand{\warning}[1]{\todo[color=red!75]{#1}}
\newlength{\superthick}
\newlength{\cornerradius}
\tikzstyle{corner}=[rounded corners=\cornerradius]
\tikzstyle{dot}=[circle, inner sep=0pt, minimum size=4.8pt]
\tikzstyle{string}=[line width=\superthick]
\tikzstyle{std}=[string,dash pattern=on 0.9pt off 0.9pt]
\definecolor{realcyan}{rgb}{0,1,1}
\mathchardef\mhyphen="2D
\definecolor{mediumblue}{rgb}{0.0, 0.0, 0.8}
\newcommand{\Res}{{\rm Res}}
\newcommand{\Rem}{{\rm Rem}}
\newcommand{\Add}{{\rm Add}}
\newcommand{\Ind}{{\rm Ind}}
\newcommand{\hstar}{\mathfrak{h}^*}
\newcommand{\mptn}{{\mathscr{P}_{(W,P)}}}
\newcommand{\mtau}{{\mathscr{P}^\ctau_{(W,P)}}}
\newcommand{\mptnmax}{{\mathscr{P}_{(W,P)}^\circ}}
\newcommand{\mptnl}{{\mathscr{P}^\ell_{(W,P)}}}
\newcommand{\mptntau}{{\mathscr{P}_{(W,P)^\ctau}}}
\renewcommand{\geq}{\geqslant}
\renewcommand{\leq}{\leqslant}
 \newcommand{\Q}{{\mathbb Q}}
\tikzset{wei/.style= 
{red,double=red,double
distance=0.5pt}}
\newcommand{\fA}{\mathfrak{A}}
\newcommand{\fB}{\mathfrak{B}}
\newcommand{\fC}{\mathfrak{C}}
\newcommand{\fD}{\mathfrak{D}}
\newcommand{\fE}{\mathfrak{E}}
\newcommand{\fF}{\mathfrak{F}}
\newcommand{\fG}{\mathfrak{G}}
\newcommand{\fH}{\mathfrak{H}}
\newcommand{\fI}{\mathfrak{I}}
\newcommand{\fJ}{\mathfrak{J}}
\newcommand{\fK}{\mathfrak{K}}
\newcommand{\fL}{\mathfrak{L}}
\newcommand{\fM}{\mathfrak{M}}
\newcommand{\fN}{\mathfrak{N}}
\newcommand{\fO}{\mathfrak{O}}
\newcommand{\fP}{\mathfrak{P}}
\newcommand{\fQ}{\mathfrak{Q}}
\newcommand{\fR}{\mathfrak{R}}
\newcommand{\fS}{\mathfrak{S}}
\newcommand{\fT}{\mathfrak{T}}
\newcommand{\fU}{\mathfrak{U}}
\newcommand{\fV}{\mathfrak{V}}
\newcommand{\fW}{\mathfrak{W}}
\newcommand{\fX}{\mathfrak{X}}
\newcommand{\fY}{\mathfrak{Y}}
\newcommand{\fZ}{\mathfrak{Z}}
\newcommand{\fa}{\mathfrak{a}}
\newcommand{\fb}{\mathfrak{b}}
\newcommand{\fc}{\mathfrak{c}}
\newcommand{\fd}{\mathfrak{d}}
\newcommand{\fe}{\mathfrak{e}}
\newcommand{\ff}{\mathfrak{f}}
\newcommand{\ffg}{\mathfrak{g}}
\newcommand{\fh}{\mathfrak{h}}
\newcommand{\ffi}{\mathfrak{i}}
\newcommand{\fj}{\mathfrak{j}}
\newcommand{\fk}{\mathfrak{k}}
\newcommand{\fl}{\mathfrak{l}}
\newcommand{\fm}{\mathfrak{m}}
\newcommand{\fn}{\mathfrak{n}}
\newcommand{\fo}{\mathfrak{o}}
\newcommand{\fp}{\mathfrak{p}}
\newcommand{\fq}{\mathfrak{q}}
\newcommand{\fr}{\mathfrak{r}}
\newcommand{\fs}{\mathfrak{s}}
\newcommand{\ft}{\mathfrak{t}}
\newcommand{\fu}{\mathfrak{u}}
\newcommand{\fv}{\mathfrak{v}}
\newcommand{\fw}{\mathfrak{w}}
\newcommand{\fx}{\mathfrak{x}}
\newcommand{\fy}{\mathfrak{y}}
\newcommand{\fz}{\mathfrak{z}}
\newcommand{\sA}{\mathscr{A}}
\newcommand{\sB}{\mathscr{B}}
\newcommand{\sC}{\mathscr{C}}
\newcommand{\sD}{\mathscr{D}}
\newcommand{\sE}{\mathscr{E}}
\newcommand{\sF}{\mathscr{F}}
\newcommand{\sG}{\mathscr{G}}
\newcommand{\sH}{\mathscr{H}}
\newcommand{\sI}{\mathscr{I}}
\newcommand{\sJ}{\mathscr{J}}
\newcommand{\sK}{\mathscr{K}}
\newcommand{\sL}{\mathscr{L}}
\newcommand{\sM}{\mathscr{M}}
\newcommand{\sN}{\mathscr{N}}
\newcommand{\sO}{\mathscr{O}}
\newcommand{\sP}{\mathscr{P}}
\newcommand{\sQ}{\mathscr{Q}}
\newcommand{\sR}{\mathscr{R}}
\newcommand{\sS}{\mathscr{S}}
\newcommand{\sT}{\mathscr{T}}
\newcommand{\sU}{\mathscr{U}}
\newcommand{\sV}{\mathscr{V}}
\newcommand{\sW}{\mathscr{W}}
\newcommand{\sX}{\mathscr{X}}
\newcommand{\sY}{\mathscr{Y}}
\newcommand{\sZ}{\mathscr{Z}}
\newcommand{\hd}{\operatorname{hd}}
\newcommand{\cA}{ \mathscr{A}}
\newcommand{\cB}{\mathcal{B}}
\newcommand{\cC}{\mathcal{C}}
\newcommand{\cD}{\mathcal{D}}
\newcommand{\cE}{{\rm M}}
\newcommand{\cF}{\mathcal{F}}
\newcommand{\cG}{\mathcal{G}}
\newcommand{\cH}{\mathcal{H}}
\newcommand{\cI}{\mathcal{I}}
\newcommand{\cJ}{\mathcal{J}}
\newcommand{\cK}{\mathcal{K}}
\newcommand{\cL}{\mathcal{L}}
\newcommand{\cLR}{\mathcal{LR}}
\newcommand{\cM}{\mathcal{M}}
\newcommand{\cN}{\mathcal{N}}
\newcommand{\cO}{\mathcal{O}}
\newcommand{\cP}{\mathcal{P}}
\newcommand{\cQ}{\mathcal{Q}}
\newcommand{\cR}{\mathcal{R}}
\newcommand{\cS}{\mathcal{S}}
\newcommand{\cT}{\mathcal{T}}
\newcommand{\cU}{\mathcal{U}}
\newcommand{\cV}{\mathcal{V}}
\newcommand{\cW}{\mathcal{W}}
\newcommand{\cX}{\mathcal{X}}
\newcommand{\cY}{\mathcal{Y}}
\newcommand{\cZ}{\mathcal{Z}}
\newcommand{\Z}{\mathbb{Z}}
\newcommand{\R}{\mathbb{R}}
\newcommand{\N}{\mathbb{N}}
\newcommand{\C}{\mathbb{C}}
\DeclareMathOperator{\reg}{reg}
\DeclareMathOperator{\image}{im}
 \newcommand{\alphar}{{{\color{magenta}\boldsymbol \alpha}}}
 \newcommand{\bet}{{{\color{cyan}\boldsymbol \tau}}}
  \newcommand{\gam}{{{{\color{orange!95!black}\boldsymbol\gamma}}}}
 \newcommand{\betar}{{{\color{green!70!black}\boldsymbol \beta}}}
\tikzset{wei2/.style={red,double=red,double
distance=0.5pt}}
\numberwithin{equation}{section}
\newtheorem{thm}{Theorem}[section]
\newtheorem{cor}[thm]{Corollary}
\newtheorem{lem}[thm]{Lemma}
\newtheorem{prop}[thm]{Proposition}
\newtheorem*{prop*}{Proposition}
\newtheorem*{thmA}{Theorem A}
\newtheorem*{thmB}{Theorem B}
\newtheorem*{thmC}{Theorem C}\newtheorem*{thm*}{Theorem D}
\newtheorem*{cor*}{Corollary}
\newtheorem*{conj*}{Conjecture A}
\newtheorem*{conj1*}{Conjecture B}
\newtheorem*{Acknowledgements*}{Acknowledgements}
\theoremstyle{rmk}
\theoremstyle{defn}
\newtheorem{rmk}[thm]{Remark}
\newtheorem{defn}[thm]{Definition}
\newtheorem{eg}[thm]{Example}
\newcommand{\great}{>}
\newcommand{\less}{<}
\newcommand{\greatoreq}{\geq}
\newcommand{\lessoreq}{\leq}
\newcommand{\codeg}{\mathrm{codeg}}
\newcommand{\triv}{\mathrm{triv}}
\newcommand{\id}{\mathrm{id}}
\newcommand{\TL}{\mathrm{TL}}
\newcommand{\rad}{\mathrm{rad}}
\newcommand{\res}{\mathrm{res}}
\newcommand{\ik}{{k}}
\newcommand{\Std}{{\rm Std}}
\newcommand{\SStd}{{\rm Path}}
\renewcommand{\det}{{\rm det}}
\newcommand{\epsilonLIRONdontchange}{\epsilon}
\newcommand{\TSStd}{\operatorname{\mathcal{T}}}
\newcommand{\Shape}{\operatorname{Shape}} 
\newcommand{\Path}{{\rm Path}}
\newcommand{\CPath}{{\rm Path}_{\underline{w}}}
\newcommand{\la}{\lambda}
\newcommand{\I}{i}
\newcommand{\J}{j}
\newcommand{\K}{k}
\newcommand{\M}{m}
\renewcommand{\L}{l}
\def\Ca{\mathcal C}
\newcommand{\Lead}{\operatorname{Lead}}
\newcommand{\mcompose}{ {\; \color{magenta}\circledast \; } }
\newcommand{\ocompose}{ {\; \color{orange}\circledast \; } }
\newcommand{\gcompose}{ {\; \color{green!80!black}\circledast \; } }
\newcommand{\greycompose}{ {\; \color{gray}\circledast \; } }
\newcommand{\compose}{ {\; \color{cyan}\circledast \; } }
\newcommand{\pcompose}{ {\; \color{violet}\circledast \; } }
\newcommand{\SSTS}{\mathsf{S}}
\newcommand{\tSSTT}{\overline{\mathsf{T}}} 
\newcommand{\tla}{\overline{\x}}
\newcommand{\tmu}{\overline{\y }}
\newcommand{\SSTT}{\mathsf{T}}  
\newcommand{\SSTP}{\mathsf{P}}  
\newcommand{\SSTU}{\mathsf{U}}  
\newcommand{\SSTV}{\mathsf{V}}  
\newcommand{\SSTQ}{\mathsf{Q}}  
\newcommand{\SSTR}{\mathsf{R}}  
\newcommand{\sts}{\mathsf{s}}  
\newcommand{\stt}{\mathsf{t}}  
\newcommand{\stu}{\mathsf{u}}  
\newcommand{\stv}{\mathsf{v}}  
\newcommand{\ZZ}{{\mathbb Z}}
\newcommand{\NN}{{\mathbb N}}
\newcommand{\g}{\ell}
\newcommand{\CC}{{\mathbb{C}}}
\newcommand{\RR}{{\mathbb R}}
\newcommand{\Hyp}{{\mathbb E}_{\alpha,me}}
\newcommand{\length}{{t}}
\DeclareMathOperator\noedge{\:\rlap{\hspace*{0.25em}/}\text{---}\:}
\DeclareMathOperator{\Hom}{Hom}
\def\Mod{\textbf{-Mod}}
\newcommand\Dec[1][A]{\mathbf{D}_{#1}(t)}
\newcommand\Cart[1][A]{\mathbf{C}_{#1}(t)}
\newcommand{\north}{top }
\newcommand{\northT}{{\sf T}}
\newcommand{\south}{bottom } 
\newcommand{\southT}{{\sf B}}
\newcommand\mydots{\makebox[1em][c]{\color{cyan}.\hfil\color{magenta}.\hfil\color{cyan}.\hfil\color{magenta}.}}
\tikzset{
ultra thin/.style= {line width=0.05pt},
very thin/.style=  {line width=0.2pt},
thin/.style=       {line width=0.1pt},
semithick/.style=  {line width=0.6pt},
thick/.style=      {line width=0.8pt},
very thick/.style= {line width=1.2pt},
ultra thick/.style={line width=1.6pt}
}
\crefname{ques}{Question}{Questions}
\crefname{defn}{Definition}{Definitions}
\crefname{thm}{Theorem}{Theorems}
\crefname{prop}{Proposition}{Propositions}
\crefname{lem}{Lemma}{Lemmas}
\crefname{cor}{Corollary}{Corollaries}
\crefname{conj}{Conjecture}{Conjectures}
\crefname{section}{Section}{Sections}
\crefname{subsection}{Subsection}{Subsections}
\crefname{eg}{Example}{Examples}
\crefname{figure}{Figure}{Figures}
\crefname{rem}{Remark}{Remarks}
\crefname{rmk}{Remark}{Remarks}
\crefname{equation}{equation}{equation}
\Crefname{ques}{Question}{Questions}
\Crefname{defn}{Definition}{Definitions}
\Crefname{thm}{Theorem}{Theorems}
\Crefname{prop}{Proposition}{Propositions}
\Crefname{lem}{Lemma}{Lemmas}
\Crefname{cor}{Corollary}{Corollaries}
\Crefname{conj}{Conjecture}{Conjectures}
\Crefname{section}{Section}{Sections}
\Crefname{subsection}{Subsection}{Subsections}
\Crefname{eg}{Example}{Examples}
\Crefname{figure}{Figure}{Figures}
\Crefname{rem}{Remark}{Remarks}
\Crefname{rmk}{Remark}{Remarks}
  \newcommand{\Mull}{{\rm M}}
\newcommand\Dim[2][t]{\text{\rm Dim}_{#1}#2}
\newcommand\REMOVETHESE[2]{{{{\mathsf{M}}_{#1}^{#2}	}}}
\newcommand\ADDTHIS[2]{{{{\mathsf{P}}_{#1}^{#2}}}}
 \newcommand{\Spotzero}{\SSTS_{0,\al}}
  \newcommand{\Spotone}{\SSTS_{1,\al}}
   \newcommand{\Spottwo}{\SSTS_{2,\al}}
    \newcommand{\Spotthree}{\SSTS_{3,\al}}
 \newcommand{\Spotq}{\SSTS_{q,\al}}
  \newcommand{\Spotb}{\SSTS_{\exx,\al}}
  \newcommand{\Spotqplus}{\SSTS_{q+1,\al}}
 \newcommand{\Forkq}{{\sf F}_{q, \al}}
  \newcommand{\Forkqplus}{{\sf F}_{q+1, \al}}
 \newcommand{\Forkone}{{\sf F}_{1, \al}}
 \newcommand{\Forkzero}{{\sf F}_{0, \al}} 
  \newcommand{\Forktwo}{{\sf F}_{2, \al}}
    \newcommand{\Forkthree}{{\sf F}_{3, \al}}
  \newcommand{\Forkb}{{\sf F}_{\exx, \al}}
 \newcommand{\TForkq}{{\sf F}_{q,\al }}
  \newcommand{\TForkqplus}{{\sf F}_{q+1,\al}}
 \newcommand{\TForkone}{{\sf F}_{1,\al }}
 \newcommand{\TForkzero}{{\sf F}_{0,\al }} 
  \newcommand{\TForktwo}{{\sf F}_{2,\al }}
\renewcommand{\labelitemi}{$\circ $}
\def\Item{\item\abovedisplayskip=0pt\abovedisplayshortskip=5pt~\vspace*{-\baselineskip}} 
\begin{document}
  

 \title [The anti-spherical  Hecke categories    for Hermitian symmetric pairs   ]
 { The anti-spherical  Hecke categories
 \\   for Hermitian symmetric pairs   
  }
 \author{Chris Bowman}
       \address{Department of Mathematics, 
University of York, Heslington, York,  UK}
\email{Chris.Bowman-Scargill@york.ac.uk}
  
 \author{Maud De Visscher}
	\address{Department of Mathematics, City, University of London,   London, UK}
\email{Maud.DeVisscher@city.ac.uk}

		\author{Amit  Hazi}
\address{School of Mathematics, University of Leeds, Leeds, UK}
\email{A.Hazi@leeds.ac.uk}

\author{Emily Norton}
\address{
School of Mathematics, Statistics and Actuarial Science, University of Kent, Canterbury, UK  }
 \email{E.Norton@kent.ac.uk}

 \maketitle 
 
   \begin{abstract}
 We calculate the  $p$-Kazhdan--Lusztig polynomials  for  
   Hermitian symmetric pairs and prove that the corresponding  
  anti-spherical  Hecke categories 
    categories are standard  Koszul.     
    We   prove that the combinatorial invariance conjecture 
    can be lifted to the level of   graded Morita equivalences between subquotients of these Hecke categories.  
 \end{abstract}

 \section*{Introduction}
Anti-spherical Hecke categories  first rose to mathematical celebrity  
as the centrepiece of the  proof of the Kazhdan--Lusztig  positivity conjecture 
and its anti-spherical counterpart \cite{MR3245013,MR4437613}.  
Understanding the  $p$-Kazhdan--Lusztig polynomials of these   categories   subsumes the problem of determining prime divisors of Fibonacci numbers \cite{w13};  this is  a notoriously difficult problem in number theory, for which a combinatorial solution is highly unlikely.  
As $p\to \infty$ the situation simplifies and we encounter the  classical Kazhdan--Lusztig polynomials; 
these are important combinatorial objects which can be calculated via a recursive algorithm.    
We seek to understand this gulf between  the 
combinatorial and non-combinatorial realms within  $p$-Kazhdan--Lusztig theory.  

Over fields of infinite characteristic,     the  families  
of  anti-spherical  Kazhdan--Lusztig polynomials which are best understood combinatorially are those for  Hermitian symmetric pairs, $P\leq W$.  
These polynomials  admit inexplicably simple     
        combinatorial formulae in terms of  Dyck paths or  Temperley--Lieb diagrams          \cite{MR957071,MR2363142,MR2465813,MR2918294,MR3563723}.  Their importance derives from their universality: these polynomials 
  control  the structure  of parabolic Verma modules for Lie algebras \cite{MR888703}; 
 algebraic supergroups \cite{MR1937204};  Khovanov arc algebras 
  \cite{MR2881300,MR3644792,BarWang,MR4818649,
 stropandus,BDDHMS,BDDHMS2};    
   Brauer and walled Brauer algebras \cite{MR2955190,MR3544626,MR3286499,MR2813567};  
     categories $\mathcal{ O}$ for Grassmannians \cite{MR646823,MR2781018};    
   topological and algebraic Springer fibres,   Slodowy slices,        and   $W$-algebras \cite{MR3563723}.  

\smallskip
\noindent {\bf Koszulity and $p$-Kazhdan--Lusztig theory.  } The first main result  of 
this paper extends  our understanding of the Kazhdan--Lusztig theory for Hermitian symmetric pairs to all fields. 
 
\begin{thmA}
   Let  $\Bbbk$ be a field of characteristic $p\geq 0$ and $(W,P)$   a  
  Hermitian symmetric pair.   The Hecke category,  
 $\mathcal{H}_{(W,P)}$, is standard  Koszul \color{black!99}  (in the sense of \cite[Introduction]{MR1960515}) and the 
 $p$-Kazhdan--Lusztig polynomials  are $p$-independent    (and hence 
 admit closed combinatorial  interpretations).  
\end{thmA}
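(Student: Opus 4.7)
The plan is to exploit the highly constrained combinatorics of Hermitian symmetric pairs to produce a characteristic-free diagrammatic model of the anti-spherical Hecke category $\mathcal{H}_{(W,P)}$, from which standard Koszulity and the $p$-independence of the $p$-Kazhdan--Lusztig polynomials will follow together. The starting point is to present $\mathcal{H}_{(W,P)}$ via anti-spherical Soergel calculus and establish an integral double light leaves basis, indexed by pairs of paths in the parabolic Bruhat graph on ${}^{P}W$. For Hermitian pairs the minimal coset representatives and their covering relations are controlled by the well-known Dyck-path (equivalently, Temperley--Lieb half-diagram) combinatorics exploited classically by Boe, Brenti, Lascoux--Sch\"utzenberger and Brundan--Stroppel; consequently the light-leaves basis becomes fully transparent and manifestly $\mathbb{Z}$-integral.

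Next, for each $w\in{}^{P}W$ I would construct an explicit linear projective resolution of the standard module $\Delta(w)$. These should take the form of BGG-style complexes whose $k$-th term is a direct sum of projective covers $P(y)$ indexed by $y\leq w$ with $\ell(w)-\ell(y)=k$, with differentials assembled from the degree-$1$ light leaves attached to covering relations $y\lessdot y'$. Exactness should reduce, via the Dyck-path description of hom spaces, to a cancellation identity over the \emph{diamonds} of the parabolic Bruhat graph; this is precisely what the authors' classification of diamonds (cf.\ the Diamonds theorem prepared in the preamble) is set up to deliver. A dual construction yielding linear injective resolutions of costandard modules then gives standard Koszulity.

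Once standard Koszulity is in place, the graded decomposition numbers of $\mathcal{H}_{(W,P)}$---which by definition are the $p$-Kazhdan--Lusztig polynomials---are computed by graded $\mathrm{Ext}$ groups between standard and simple modules, and can be read off directly from the linear resolutions. Because the resolutions are built from an integral light-leaves basis with structure constants independent of the characteristic, the $p$-KL polynomials are $p$-independent and hence equal the classical parabolic KL polynomials for Hermitian pairs, for which the closed Dyck-path formulas apply verbatim.

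The principal obstacle is the characteristic-free exactness of the BGG complex. One has to check both that the candidate differentials square to zero (which is a diamond-cancellation identity) and, more delicately, that no spurious homology appears in small characteristic. The latter requires a \emph{rigidity} statement to the effect that every degree-$1$ morphism between two standards in $\mathcal{H}_{(W,P)}$ factors through a single cover relation in ${}^{P}W$, ruling out any higher light leaves that would be invisible over $\mathbb{Q}$ but nontrivial over $\mathbb{F}_{p}$. Proving this rigidity by a uniform argument that leverages the cominuscule nature of $(W,P)$ and the Dyck-path combinatorics is the technical heart of the proof; everything else---standard Koszulity, $p$-independence, and the closed combinatorial form of the $p$-KL polynomials---then follows in parallel from standard homological algebra applied to the resulting linear resolutions.
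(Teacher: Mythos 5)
Your proposal takes a genuinely different route from the paper, and there is a real gap in it. You propose writing down a BGG-type complex for $\Delta(w)$ directly, with $k$-th term $\bigoplus_{\ell(w)-\ell(y)=k} P(y)$, checking $d^2=0$ via diamond cancellation, and then proving exactness via an unestablished ``rigidity'' statement. You correctly identify that the exactness of this complex over $\mathbb{F}_p$ is the crux, but you never actually prove it---you flag it as ``the technical heart'' and stop. Over $\mathbb{C}$ one can invoke Enright--Shelton, but over $\mathbb{F}_p$ there is no a priori reason the complex has no higher homology, and your rigidity claim (degree-$1$ maps between standards factor through single covers) is not obviously true and is not shown. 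You also have the logical order backwards: in the paper $p$-independence of decomposition numbers (\cref{yabasic}) is established \emph{first}, from a purely combinatorial positivity statement about degrees of light leaves proved in the companion paper (\cref{niamh}), together with generalities about graded cellular algebras; it is not deduced from Koszulity.

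The paper's actual proof avoids your obstacle entirely by arguing inductively on rank. The two structural inputs are Theorem C (combinatorial invariance, used to reduce the non-simply-laced pairs $(B_n,B_{n-1})$ and $(C_n,A_{n-1})$ to simply-laced ones via graded Morita equivalences, \cref{C--A}), and Theorem B (the Coxeter dilation/truncation isomorphism $\varphi_\ctau : h_{(W,P)^\ctau} \to h_{(W,P)}^\ctau$, \cref{veccer}, realising the singular Hecke category inside $\mathcal{H}_{(W,P)}$ as the Hecke category of a smaller-rank Hermitian pair). From this the paper builds an \emph{exact} induction functor $G^\ctau := (-)\otimes_{h_{(W,P)^\ctau}}e_\ctau h_{(W,P)}\langle -1\rangle$ (\cref{exactnessisimportnat}) which sends projectives to projectives (\cref{projers}) and produces a short exact sequence $0\to\Delta(\mu^-)\to G^\ctau\Delta(\mu)\to\Delta(\mu^+)\langle -1\rangle\to 0$ (\cref{ind-delta}); the linear resolutions are then assembled recursively in rank and Bruhat order (\cref{resolutionsss}), with multiplicities given by a recursively defined polynomial $p_{\la,\mu}(q)$ rather than the single copy of each $P(y)$ you posit---it is not clear your candidate resolution is even the correct one in general. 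Your approach, if completed, would be more direct, but the missing rigidity lemma is precisely the kind of characteristic-sensitive statement that the paper's inductive machinery is designed to circumvent.
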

 
It is very unusual that our infinite families of  $p$-Kazhdan--Lusztig polynomials are independent of $p\geq0$.  
\color{black!99} Indeed, it was pointed out to us by both Pramod Achar and the anonymous referee 
  that our result is surprising from  a geometric perspective: 
 in the non-parabolic setting, a typical geometric explanation  for characteristic-free behaviour is the existence of   small 
  resolutions for the relevant Schubert varieties. 
\color{black}  In contrast, for our parabolic setting of Hermitian symmetric pairs 
Theorem~A provides  infinite families of characteristic-free $p$-Kazhdan--Lusztig polynomials for which it is known that 
many {\em (but not   all!)}
   Schubert varieties 
    admit small  
     resolutions  \cite{MR705051,MR2360316}. 
\color{black}

{\color{black}Over a field of characteristic zero, Elias--Williamson and Libedinsky--Williamson have shown that these $0$-Kazhdan--Lusztig polynomials are the classical 
 anti-spherical Kazhdan--Lusztig polynomials \cite{MR3245013,MR4437613}}.
Thus our Theorem A implies that the  $p$-Kazhdan--Lusztig polynomials  are equal to the (classical) anti-spherical Kazhdan--Lusztig polynomials of \cite{DD1,MR1444322}.   
Thus  the combinatorial interpretations of these polynomials 
alluded to in  Theorem A  can be found  in terms of the tiling language of this paper     in   \cite{MR2465813}
(building on earlier work of \cite{MR646823,MR957071,MR888703}).
 Combinatorial interpretations in terms of oriented Temperley--Lieb algebras can be found in \cite{FARR} (building on earlier work \cite{MR2955190,MR2813567}).
   
\color{black}


\medskip
\noindent {\bf Tetris presentations and combinatorial invariance.  }
The first step towards proving Theorem A is to reduce to simply laced types.  
The graph automorphisms of  Coxeter graphs of type  $A$ and $D$  
give  rise to fixed point subgroups of types $B$ and $C$, respectively. 
 A crucial step in the   proof of Theorem~A  is 
to lift this  to the level of the corresponding  Hecke categories of Hermitian symmetric pairs of types $(B_n, B_{n-1})$ and $(A_{2n-1}, A_{2n-2})$ and of type $(C_n, A_{n-1})$ and $(D_{n+1}, A_n)$ ---  thus  categorifying an observation of Boe, namely that the Kazhdan--Lusztig polynomials for these pairs coincide.  
 This is an example of Lusztig--Dyer--Marietti's   combinatorial invariance conjecture: which states that 
anti-spherical  Kazhdan--Lusztig polynomials depend only on local isomorphisms of the strong Bruhat graphs (and proven   by Brenti \cite{MR2465813}  for Hermitian symmetric pairs).  
\color{black!99}

\begin{thmB}
  Let  $\Pi=[\la,\mu]$ and $\Pi=[\la',\mu']$ be  subquotients of the Bruhat graphs of   Hermitian symmetric pairs $\la,\mu\in (W,P)$ and $\la',\mu'\in (W',P')$.  If  $\Pi$ and $\Pi'$   are isomorphic as partially ordered sets, then the corresponding subquotients   
 $ \mathcal{H}^{\Pi}_{(W,P)}$ 
and 
 $ \mathcal{H}^{\Pi'}_{(W',P')}$ 
  are       Morita equivalent 
 (in the sense of \cite[Section 2.2]{MR1644252})
 and this equivalence preserves the 
 grading, cellular, and highest-weight structures of these algebras.

\end{thmB}

 In other words, Theorem B says that all important representation theoretic information is preserved.  
 \color{black}
 In order to prove Theorems A and B we must provide  new presentations of the
  $\mathcal{H}_{(W,P)}$ for ${(W,P)}$ a Hermitian symmetric pair, see  \cref{easydoesit}.  
 While the original presentations have many advantages, they 
 are ill-equipped for tackling the combinatorial invariance conjecture. 
 This is because these  are ``too local" and therefore cannot possibly hope to
   reflect the wider structure of the Bruhat graph.
 Defining these new presentations requires the full power of Soergel  diagrammatics  
 and the development of new ``Tetris style" closed combinatorial formulas for manipulation of diagrams in  $\mathcal{H}_{(W,P)}$.  
 This   provides  an extremely  thorough understanding of these Hecke categories, and we expect
that  it will serve as a springboard for further combinatorial analysis of more general Hecke categories.

%

\medskip
\noindent {\bf Singular Soergel diagrammatics and proof of Koszulity.  }
 \color{black!99}The standard Koszulity  property is a particularly beautiful property which is characteristic of {\em complex} Lie theory (\cite{bgs96,MR1960515});   
a much-loved consequence of this property is that 
we can explicitly calculate the radical filtrations  of  projective and cell modules    by way of   the grading structure.
Many well-loved objects in Lie theory are Koszul over the complex field, for example the
 quantum Schur algebras \cite{MR2915315},  extended  Khovanov arc algebras \cite{MR2600694}, and the (diagrammatic)
 Cherednik algebras \cite{RSVV,losev,Webster}.   
  \color{black}

  Koszulity  of Lie theoretic objects   is usually difficult to prove and it  
  is an incredibly rare attribute   over fields of  
   characteristic $p>0$.     
  Our proof of Koszulity   explicitly constructs  linear projective resolutions of standard modules using the following theorem, which recasts the results of 
 Enright--Shelton's monograph  \cite{MR888703}
 in  the setting of    Hecke categories  and   generalises  
their  results to  fields of positive characteristic. 
 We hence  make headway on the difficult problem of 
 constructing   singular Soergel diagrammatics.

\begin{thmC}\color{black!99}
Let $(W,P)$ be a simply laced Hermitian symmetric pair, and suppose $\ctau  $ is a simple reflection in $W$.
We explicitly construct the $\ctau$-singular Hecke category\footnote{This is a diagrammatic, anti-spherical analogue of the category $J\mathbb{BS}\mathrm{Bim}$ introduced in \cite{MR3502025} for $J=\{\ctau\}$. Repeated applications of our construction will give an analogue of $J\mathbb{BS}\mathrm{Bim}$ for $J$ arbitrary.}
 $\mathcal{H}^\ctau_{(W,P)}$ as a subcategory of $\mathcal{H} _{(W,P)}$. 
 We prove that 
  $\mathcal{H}^\ctau_{(W,P)}$  is isomorphic to 
  the Hecke category of  
  a Hermitian symmetric pair $(W,P)^\ctau$ of smaller rank.  
\end{thmC}

 \begin{figure}[ht!]
 $$ \; \begin{minipage}{2.6cm}
\end{minipage}
$$
\caption{On the left
 we depict the embedding of the Bruhat graph   of  $ (  A_3, A_1 \times A_1)$ into 
 $  ( A_5, A_2 \times A_2)$.
On the right we depict  the corresponding  dilation map  on  the  fork  
 generator.    
 Here $i$ is a primitive 4th root of 1.  The  tri-colouring  
 $\color{magenta}2\color{orange}4\color{cyan}3$  
  of single edges in the Bruhat graph  and Soergel   diagram    comes from a single tricoloured node in the truncated  Coxeter  diagram. 
}
\label{coxeterlabelA22NOW}
\end{figure}
 
 The combinatorial shadow of Theorem C is  a graded bijection between    paths in the  smaller Bruhat graph of $(W,P)^\ctau$ and paths  in a  truncation of the larger 
   Bruhat graph of $(W,P)$.    
This can be categorified  to the level of    ``dilation"  
 homomorphisms between the  
anti-spherical    Hecke categories.  
   \Cref{hom1,hom3} are dedicated to constructing these dilation
  maps and proving that they are indeed homomorphisms.  
 We depict an example  of the embedding   of Bruhat graphs and  the effect of the homomorphism on the fork generator  in  \cref{coxeterlabelA22NOW}.  

\bigskip
\noindent {\bf Structure of the paper.} The paper is organised as follows. Section 1 contains the basic definitions needed in this paper, namely, the tile combinatorics of Hermitian symmetric pairs and the original definition of the Hecke category for an arbitrary parabolic Coxeter system. 
  In Section 2, we prove that in the case of Hermitian symmetric pairs, the presentation of the Hecke category can be simplified dramatically, lifting to the Hecke category a result of Stembridge which states  that these parabolic quotients   are fully commutative. 
 In Section 3, we recall the construction of the light leaves basis for these Hecke categories. 
 Section 4 constructs Tetris style presentations and proves Theorem B.  In particular, we show that the Hecke categories corresponding to non-simply laced Hermitian symmetric pairs are graded Morita equivalent to Hecke categories of simply laced types. Section 5 constructs $\ctau$-singular Hecke categories for simply-laced types by truncating the original categories and identifies them with Hecke categories for Hermitian symmetric pairs of smaller ranks. (The proof is given in Section 6.) This construction of $\ctau$-singular Hecke categories allows us to prove results by induction on the rank.  In Section 7, we use this, together with the reduction to simply-laced types, to give a description of the graded decomposition numbers and prove Koszulity of the Hecke categories for Hermitian symmetric pairs.
 
  \section{The   Hecke categories for Hermitian symmetric pairs }
 
 Let $(W, S_W)$ be a Coxeter system:  $W$ is the group generated by the finite set $S_W$ subject to the relations $(\csigma\ctau)^{m_{\csigma\ctau}} = 1$ for  
 $\csigma,\ctau\in S_W$, $ {m_{\csigma\ctau}}\in \NN\cup\{\infty\}$ 
 satisfying ${m_{\csigma\ctau}}= {m_{\ctau\csigma}}$, and ${m_{\csigma\ctau}}=1$ if and only if  $\csigma= \ctau$.  
   Let $\ell : W \to \mathbb{N}$  be the corresponding length function. 
  Consider $S_P \subseteq S_W$ a  subset and $(P, S_P)$
 its corresponding Coxeter system. We say that $P$ is the parabolic subgroup corresponding to  $S_P\subseteq S_W$.
Let  $^ PW \subseteq W$ denote a set of minimal coset representatives in $P\backslash W$. 
 For $\w=\sigma_1\sigma_2\cdots  \sigma_\ell$ an expression, we define a subword  to be a sequence
$\underline{t}=(t_1,t_2,\dots ,t_\ell)\in\{0,1\}^\ell$ and we set 
 $\w^{\underline{t}}  :=\sigma_1^{t_1}\sigma_2^{t_2}\cdots \sigma_\ell^{t_\ell}$. 
  We let $\leq $ denote the strong  Bruhat order on $^PW$: namely $y\leq w$ if for some  
reduced expression $\w$ there exists   a subword ${\underline{t}}$ and a reduced expression  $\y$ such that $\w^{\underline{t}}=\y$.   The Hasse diagram of this partial ordering is called the {Bruhat graph} of $(W,P)$.  
 \color{black}
       For the remainder of this paper we will assume that $W$ is a Weyl group and indeed that $(W,P)$ is a Hermitian symmetric pair, which are classified as follows:

    \begin{figure}[ht!]
$$

 \end{minipage} 
 $$
 
 \caption{ 
 Enumeration of nodes in the parabolic Dynkin diagram of types
 of type $(A_{n },   A_{k-1} \times A_{n-k } )$, 
 $(C_n, A_{n-1})  $ and $(B_n, B_{n-1}) $, 
  $(D_n , A_{n-1})  $ and $(D_n,  D_{n-1}) $ and $(E_6 , D_5 )  $ and $(E_7 , E_6) $ respectively.  The single node not belonging to the parabolic is highlighted in pink in each case.  }
\label{coxeterlabelD2}
\end{figure}

 \color{black!99} Let  $W$ be a  finite Coxeter group and $P$ a parabolic subgroup. 
   The $(W,P)$ corresponding to Hermitian symmetric spaces were first studied by Cartan \cite{Cartan} and have been classified (see for example \cite{MR957071}). There are five infinite families  
  $(A_n  ,  A_{k-1} \times A_{n-k} )$  with $1 \leq k\leq n$, 
    $(D_n , A_{n-1}  )$,  $(D_n , D_{n-1}  )$,  $(B_n , B_{n-1}  )$, 
     $(C_n , A_{n-1}  )$
     and two exceptional ones  
      for  $n\geq 2$, $(E_6, D_5)$, or $(E_7,E_6)$.  
      Our main interest in these pairs $(W,P)$ stems from the  rich combinatorial 
     and  representation theoretic structures associated to them (see for example
      \cite{MR921987,MR888703,MR957071,MR2363142,MR2465813,MR3363009});  
     and the fact that they are  tractable and diverse enough to  serve as milestones  
     of our understanding of Lie theoretic objects.  
This paper 
extends the work above on Hermitian symmetric pairs in order 
 to provide a new milestone in our understanding of 
 anti-spherical Hecke categories.
 
 \color{black}
 
In \cref{coxeterlabelD2},  we   recall the Dynkin diagrams of  Hermitian symmetric pairs   explicitly.  For type $D$, we use a slightly unusual labelling of nodes, which  will allow us to pass between types $C$ and $D$ more easily.  
   The remainder of this section is dedicated to the combinatorics  of Hermitian symmetric pairs.
  This has been lifted from   \cite[Appendix: diagrams of Hermitian type]{MR3363009}, but has been translated into a more diagrammatic language.

\subsection{Tile partitions   }\label{tileeeeee}
  \label{Dyck}
The Bruhat graphs of Hermitian symmetric pairs can be encapsulated 
 in terms of   tilings of  ``admissible regions" of the plane, which we now define.   
 In type $(A_n, A_{k-1} \times A_{n-k})$, the admissible region is simply a  $(k\times (n-k+1))$-rectangle, and the tilings governing the combinatorics are Young diagrams which fit in this rectangle. 
 The general picture is as follows:

\begin{defn}  Let $(W,P)$ be a Hermitian symmetric pair of classical type.  
We call a point $[r,c] \in \NN^2$ a {\sf tile}.
The {\sf admissible region} $\mathscr{A}_{(W,P)}$ is a certain finite subset of tiles defined as follows:
\begin{itemize}[leftmargin=*]
\item for type $(W,P)=(A_n, A_{k-1} \times A_{n-k})$, the admissible region is the subset of tiles $$\{ [r,c] \mid   r\leq n-k+1,   c\leq k\}.$$
\item for types  $(W,P)=  (C_n, A_{n-1} )$ and $   (D_n, A_{n-1}) $, the admissible region is the subset of tiles 
$$\{ [r,c] \mid   r,c \leq  n \text{ and }r-c\geq 0 \}.$$

\item for type $(W,P)= (B_n,  B_{n-1})$,   the admissible region is the subset of tiles 
$$
\{ [r,c] \mid r=1 \text{ and }c< n\}
\sqcup
\{ [r,c] \mid c=n \text{ and }r\leq n\}.
$$

\item for type $(W,P)=  ( D_n,D_{n-1} )$,   the admissible region is the subset of tiles 
$$
\{ [r,c] \mid r=1 \text{ and }c< n\}
\sqcup
\{ [r,c] \mid c=n \text{ and }r\leq n\}
\sqcup
\{ [2,n-1] \}.
$$

\end{itemize}
We draw tiles and admissible regions in the ``Russian'' style, with rows (i.e.~fixed values of $r$) pointing northwest and columns (i.e.~fixed values of $c$) pointing northeast.   
   	\end{defn}	
  
  \begin{eg} We illustrate the admissible region for type
  $(A_8 , A_4\times A_3)$ in \cref{typeAtiling}, 
  for types    $(D_6,A_5)$ and $(C_6,A_5)$ in \cref{pm1}, and
  for types $(B_6, B_5)$ and $(D_7,D_6)$ in \cref{pm2}. For the two exceptional 
  types $(E_6,D_5)$ and $(E_7,E_6)$, the admissible region consists of the subset of tiles pictured  in \cref{hhhh}.  
  \end{eg}   
  
\begin{figure}[ht!]
$$\scalefont{0.9}

$$
\caption{On the left we picture the admissible region for $(A_8 , A_4\times A_3)$. We then picture two tilings; the  first of which is a tile partition, but the latter is not (the  tile $\color{orange}[2,4]$ is not supported). }
\label{typeAtiling}
\end{figure}

  Each tile $[r,c] \in \mathscr{A}_{(W,P)}$ carries a coloured label, inherited from the Dynkin diagram of $W$.  
 This is explained in detail in  \cite[Appendix]{MR3363009}, but can be deduced easily from \cref{typeAtiling,pm1,pm2,hhhh}. 
 Given $[r,c] \in \mathscr{A}_{(W,P)}$, we let $s_{[r,c]}$ denote the corresponding simple reflection in $S$.  
 For example, in types $(A_n,A_{k-1}\times A_{n-k})$ and $(C_n,A_{n-1})$ the reflection $s_{[r,c]}$ is determined simply by the $x$-coordinate of the tile $[r,c] \in \mathscr{A}_{(W,P)}$ (i.e. it is determined by $c-r$).  
Given $\ctau \in W$ a label of the Dynkin diagram, we refer to a   tile $[r,c] \in \mathscr{A}_{(W,P)}$ 
as a {\sf $\ctau$-tile} if   
  $s_{[r,c]}=\ctau$.  
We emphasise this connection by colouring the tile,   $\color{cyan}[r,c]\color{black} \in \mathscr{A}_{(W,P)}$, when appropriate.

\begin{figure}[ht!]
$$\scalefont{0.9}

 $$
\caption{We picture two admissible regions $\mathscr{A}_{(W,P)}$ for types    $(D_6,A_5)$ and $(C_6 ,A_5 )$ and a tile-partition $\la=(1,2,3,4  ) \in   \mptn $ for types $(C_6 ,A_5 )$.  
The $\pm$ signs are explained in \cref{pm3}. }
\label{pm1}
\end{figure}

 We say that a pair of tiles  
  are {\sf neighbouring} if they meet  at an edge (which necessarily has  an angle of  $45^\circ$ 
or $135^\circ$ {\color{blue} to the horizontal axis}, by construction).  
 \color{black!99} Given a pair of neighbouring tiles $X$ and $Y$, we write $Y<X$
  if 
 $X$ appears above $Y$ (i.e. the $y$-coordinate of $X$ is strictly larger than that of $Y$). 
 We extend $<$ to a partial order on the tiles in $  \mathscr{A}_{(W,P)}$ by transitivity and we say that $Y$ {\sf supports} $X$ if $Y<X$ in this ordering. 
  We say that a collection of tiles, $\la\subseteq \mathscr{A}_{(W,P)}$ is a
  {\sf tile-partition}    if  for every tile  $X \in \la$ and 
  every $Y \in  \mathscr{A}_{(W,P)}$ such that $Y<X$, we have that $Y \in \la$.
 \color{black}  
     We let     $\mathscr{P}_{(W,P)}  $ denote the set of all tile-partitions. 
  We  depict a tile-partition  $\la$ by  colouring the tiles of $\la$. See  \cref{typeAtiling} 
    for examples and non-examples of tile-partitions.

  We define the {\sf length} of a tile-partition $\la$ to be the total number of tiles $[r,c]\in \la$.  
\color{black!99}  We let $\mathscr{P} _{(W,P)}^\ell$ denote the subset of all tile partitions of length $\ell$. 
\color{black}
  There is a natural bijection between ${^P}W$ and 
$\mathscr{P}_{(W,P)}$ (see   \cite[Appendix]{MR3363009}) under which the length functions coincide.  
For $\lambda  ,\mu  \in \mathscr{P} _{(W,P)}$, we define the {\sf Bruhat order} on tile partitions by
 $\lambda  \leq \mu $ if 
$$\{[r,c] \mid [r,c] \in \lambda \}
\subseteq \{[r,c] \mid [r,c] \in \mu \}.$$
Given $\la \leq \nu$, we define the {\sf skew tile-partition} $\nu\setminus \la$  to be the  
set difference of $\la$ and $\nu$.

\begin{figure}[ht!]
$$\scalefont{0.9}
 
$$ 
\caption{The admissible regions $\mathscr{A}_{(W,P)}$ for  the   pairs
 $(E_6,D_5)$ and  $(E_7,E_6)$ respectively. We think of the violet node as being free to swing from left to right; in this way we continue to associate tiles with the $x$-coordinates of nodes in the Coxeter graph.}
 \label{hhhh}
 \end{figure}

\begin{defn}\label{flashback}
Let $[r,c] \in  \mathscr{A}_{(W,P)}$ denote any tile.  
We define $\la_{[r,c]}$ to be the tile partition  
$$\la_{[r,c]}:=\{[x,y] \in  \mathscr{A}_{(W,P)} \mid x \leq r, y\leq c \}   $$
\end{defn}

\begin{rmk}
In type $(A_{n },   A_{k-1} \times A_{n-k } )  $ a tile-partition $\la$ is the  Young diagram (in Russian notation)  of a classical partition with at most $k$ columns and $(n-k+1)$ rows.   
In this case  $\la_{[r,c]} $ is the $(r \times c)$-rectangle.  In other types, it is this rectangle intersected with the region $ \mathscr{A}_{(W,P)}$.  
 \end{rmk}


\subsection{Tile tableaux } \label{tiletableaux}
The combinatorics of reduced and non-reduced words 
for Hermitian symmetric pairs can be encapsulated 
in terms of {\em tile-paths} or {\em tile-tableaux}, which we now define.   
Given $\mu\in  \mathscr{P}_{(W,P)}	$,
 we define     the set of all {\sf addable} and {\sf removable}
 tiles to be
  ${\rm Add}(\mu)=\{ [r,c] \mid \mu \cup [r,c] \in \mathscr{P}_{(W,P)}		\}$
 and 
  ${\rm Rem}(\mu)=\{ [r,c] \mid \mu \setminus [r,c]  \in \mathscr{P}_{(W,P)}		\}$ 
respectively. Abusing notation, we will write $\mu+[r,c]$ for $ \mu \cup [r,c]$ and $\mu-[r,c]$ for  $\mu \setminus [r,c]$.
Any tile-partition $\mu$  has at most one addable or removable tile of any given colour $\ctau\in S_W$.  
Thus  given    $\color{cyan}[r,c]\color{black} \in \mathscr{A}_{(W,P)}$ with 
  $\ctau=\color{cyan}s_{[r,c]}\color{black}  $,  
 we often write $\ctau \in   {\rm Add}(\mu)$ or 
 $\ctau \in   {\rm Rem}(\mu)$
  and we write $\mu+\ctau$ or simply $\mu\ctau$ for 
   $\mu\ctau:= \mu+ 
 \color{cyan}[r,c]$; we write 
   or $\mu-\ctau:= \mu-
 \color{cyan}[r,c]$.

\begin{defn}\label{oieurioweurg} For $\la  \in \mathscr{P}_{(W,P)} $  we define a   
 {\sf tile-tableau}    of length $\ell$  and shape $\la$  to be a path 
 $$\SSTT: \varnothing ={\la}_0 \xrightarrow{ \ [r_1,c_1] \ } {\la}_1  \xrightarrow{ \ [r_2,c_2] \ } {\la}_2 
 \xrightarrow{ \ [r_3,c_3] \ } 
  \cdots 
 \xrightarrow{ \ [r_\ell,c_\ell] \ } {\la}_\ell  = \la$$such that for each $k=1,\ldots, \ell$, ${\la}_k   \in \mathscr{P}_{(W,P)} $   
  and ${\la}_k $ satisfies one of the following
  \begin{itemize}
  \item[$(i)$] ${\la}_k={\la}_{k-1} + [r_k,c_k]$ with $[r_k,c_k] \in {\rm Add}({\la}_{k-1})$; or 
  \item[$(ii)$] ${\la}_k={\la}_{k-1} - [r_k,c_k]$ with  $[r_k,c_k] \in {\rm Rem}({\la}_{k-1})$; or  
  \item[$(iii)$] ${\la}_k={\la}_{k-1}$		with $[r_k,c_k] \in {\rm Rem}({\la}_{k-1})$ or $ {\rm Add}({\la}_{k-1})$; 
  \end{itemize}We let $\Path_\ell(\la)$ denote the set of all tile-tableaux of  shape $\la$ and length $\ell$.  
%
 We say that a tile tableau, $\SSTT_\la\in \Path _\ell(\la)$, is {\sf reduced} if $\la  \in \mathscr{P}^\ell_{(W,P)} $ and we denote the set of all such tableaux by
$ \Std (\la)$.  We use uppercase (respectively lowercase)  san-serif letters for general tile-tableaux  
(respectively reduced tile-tableaux).  
 \end{defn}
 
  \begin{rmk}
 In type $(A_{n },   A_{k-1} \times A_{n-k } )  $, the notions of addable and removable tiles correspond to the familiar  notions of addable and removable boxes for Young diagrams.  
The set of reduced tile tableaux coincides with the usual notion of standard Young tableaux.  
 \end{rmk}

 For $\la \in\mathscr{P}^\ell_{(W,P)}$ we   identify a reduced tableau
 $\stt \in  \Std  (\la)$ with a bijective map 
 $\stt: \la \to \{1,\dots,\ell\}$ and we record this by placing the entry $\stt^{-1}(k)$ in the $[r,c]$th tile, in the usual manner.  
 In this fashion, we can identify $\Std (\la)$ with the set of all possible
  fillings of $\la$ with the numbers $ \{1,\dots,\ell\}$ in such a way that these numbers increase alongs the rows and columns of $\la$.  
For $\nu \setminus \la$ a skew tile partition,   we can 
define $\Std (\nu\setminus \la)$  
 in the obvious fashion.  
Given $1\leq k \leq \ell$,  we let $\stt{\downarrow}_{\{1,\dots,k\}}$ denote the restriction of the map to the pre-image of ${\{1,\dots,k\}}$.   
   Examples are  depicted in \cref{super}.

\begin{defn}\label{asjkdjsfkhgd} 
Let $\la,\mu \in \mptn$ and fix    $\stt\in \Std(\mu)$
such that 
 $\stt ( [x_k,y_k])=k$ for  $1\leq k \leq \ell$.  We say that a tile-tableau
 $$\SSTT: \varnothing ={\la}_0 \xrightarrow{ \ [r_1,c_1] \ } {\la}_1  \xrightarrow{ \ [r_2,c_2] \ } {\la}_2 
 \xrightarrow{ \ [r_3,c_3] \ } 
  \cdots 
 \xrightarrow{ \ [r_\ell,c_\ell] \ } {\la}_\ell  = \la$$is obtained by {\sf folding-up}   $\stt\in \Std(\mu)$ if 
   $s_{[r_k,c_k] }=s_{[x_k,y_k] }$    for $1\leq k \leq \ell$.  
 We let $\Path(\la,\stt)$ denote the set of all paths obtained  in this manner.  
 \end{defn}

     \begin{figure}[ht!] \color{black!99}
 $$\begin{minipage}{5cm} 

\end{minipage}
$$
 \caption{\color{black!99}Tableaux $\sts,\stt, \stu$ of shape $ (4,3,2^2)$, $ (4,3,2,1)$  and 
 $(5^2,4,3)\setminus (4,3,2^2)$ respectively, in type $(A_8,A_4\times A_3) $. 
 We have that $\sts=\stt \otimes \ctau$ for $\ctau=s_7$ and 
$ \stt =\sts{\downarrow}_{\{1,\dots, 10\}}$.
 }
 \label{super}
 \end{figure}

\begin{defn}
Given  $\la \in \mptnl$, $\stt \in \Std(\la)$ and 
 $\color{cyan} [r,c] \color{black} \in {\rm Add}(\la)$, 
  we let $\stt \otimes \ctau  \in \Std(\la\ctau) $ denote the reduced tableau 
 uniquely determined by $(\stt\otimes \ctau)[x,y] \color{black} = \stt   [x,y]$
  for $[x,y]\neq \color{cyan} [r,c]$.  

\end{defn}

\begin{eg}\color{black!99}
We now provide an example of \cref{asjkdjsfkhgd}. 
An element of $\Path((3,2), \sts)$ for $\sts \in \Std(4,3,2^2)$ as in \cref{super} is given as follows: \color{black}
$$
\varnothing 
\;{\color{magenta}\to}\; (1) 
\; {\color{green}\to} 			\;(2) 
 \;{\color{orange}\to} \;		 (3) 
\; {\color{yellow}\to} \;				(3) 
 \;{\color{brown}\to} \;				(3,1) 
\; {\color{magenta}\to}			\; (3,2) 
\; {\color{green}\to} 			\;		 (3,2) 
\; {\color{cyan}\to} 			\;			(3,2,1) 
\; {\color{brown}\to}\;				 (3,2,1) 
\; {\color{pink}\to} 			\; 				(3,2,1) 
\; {\color{cyan}\to}	\;		(3,2) 
$$\color{black!99}where the tile-labels of the arrows can be deduced from their colours (which match with the colours of the tiles  for $\sts$ pictured in  \cref{super}). 
The 1st, 2nd, 3rd, 5th, 6th, 8th steps are all of the form $(i)$ \cref{oieurioweurg};
 the 11th step is of the form  $(ii)$ \cref{oieurioweurg};
 the 4th, 7th,   9th, and 10th steps are of the form $(iii)$ \cref{oieurioweurg}.
\end{eg}
%

\begin{rmk}
\color{black!99} For $\la \in {^PW}$, the set of reduced tableaux $\Std(\la)$ is in bijection with the set of reduced expressions for $\la$ in a natural fashion.  
For example, the tableau $\stt \in \Std(4,3,2,1)$ in \cref{super} corresponds to the reduced word 
$ 
  {\color{magenta}s_5}
  {\color{green}s_4}
  {\color{orange}s_3}
  {\color{yellow}s_2}
  {\color{brown}s_6}
  {\color{magenta}s_5}
  {\color{green}s_4}
  {\color{cyan}s_7}
   {\color{brown}s_6}
  {\color{pink}s_8}
$.
\end{rmk}

 \subsection{Parity conditions for non-simply laced tiles} 
\label{pm3}
\color{black!99}
Finally, we are now ready to explain the existence of $\pm$ signs in \cref{pm1,pm2}.  
For  non-simply-laced Weyl groups (that is, $ (W,P)=(C_n,A_{n-1})  $ or $(B_n,B_{n-1}$) we  allow a simple reflection 
  $s_i \in S_W$ to  carry a parity  label,  $s_{\pm i} $).  
  Given $ \la =\sigma_1\sigma_2\cdots  \sigma_\ell \in {^PW}$ we 
 use this parity label to  record the  odd/even  number of prior appearances of this reflection (read from left-to-right).  
 For example, in \cref{pm1} we picture the element
 $ \la = {\color{magenta}s_{+1}}
 {\color{brown}s_{2}}
 {\color{magenta}s_{-1}}
 {\color{orange}s_{3}}
  {\color{brown}s_{2}}
   {\color{green!80!black}s_{4}}
 {\color{magenta}s_{+1}}
 {\color{orange}s_{3}}
  {\color{violet}s_{5}}  {\color{cyan}s_{6}}\in {^PW}$ for $(W,P)=(C_6 ,A_5 )$.  
  We note that this $\pm$ label is  well-defined for elements of ${^PW}$ 
  (as these cosets are all ``fully commuting'' in the sense of \cite{MR1406459}).  We record this parity label   in terms of the $y$-coordinate of tiles in $\mathscr{A}_{(W,P)}$ as in \cref{pm1,pm2}.  
Given $\la,\mu \in \mathscr{P}_{(W,P)}$ and $\sts \in \Std(\mu)$, we let 
 $\Path^{\pm}(\la,\sts)\subseteq \Path (\la,\sts)$ denote the subset of paths which preserve this parity condition.  More precisely, using the notation from \cref{asjkdjsfkhgd}, we insist that if $s_{[x_k,y_k]} = s_{\pm i}$ then we also have $s_{[r_k, c_k]} = s_{\pm i}$ (with the same parity).
 \begin{eg}  Let $(W,P)=(C_6 ,A_5 )$ as in \cref{pm1}, we have that 
$$
\varnothing
\hspace{0.05cm}{\color{magenta}	\to}\hspace{0.05cm}
(1)
\hspace{0.05cm}{\color{brown}	\to}\hspace{0.05cm}
(1^2)
\hspace{0.05cm}{\color{orange}	\to}\hspace{0.05cm}
(1^3)
\hspace{0.05cm}{\color{green!80!black}	\to}\hspace{0.05cm}
(1^4)
  \hspace{0.05cm}{\color{magenta}	\to}\hspace{0.05cm}
(1,2,1^2)
{\color{brown}	\to}\hspace{0.05cm}
(1,2^2,1 )
{\color{magenta}	\to}\hspace{0.05cm}
(1,2^2,1 )
{\color{orange}	\to}\hspace{0.05cm}
(1,2^2,1 )
{\color{brown}		\to}\hspace{0.05cm}
(1,2,1^2)
{\color{magenta}	\to}\hspace{0.05cm}
(   1^4)
 $$and 
$$
\varnothing
\hspace{0.05cm}{\color{magenta}	\to}\hspace{0.05cm}
(1)
\hspace{0.05cm}{\color{brown}	\to}\hspace{0.05cm}
(1^2)
\hspace{0.05cm}{\color{orange}	\to}\hspace{0.05cm}
(1^3)
\hspace{0.05cm}{\color{green!80!black}	\to}\hspace{0.05cm}
(1^4)
  \hspace{0.05cm}{\color{magenta}	\to}\hspace{0.05cm}
(1,2,1^2)
{\color{brown}	\to}\hspace{0.05cm}
(1,2^2,1 )
{\color{magenta}	\to}\hspace{0.05cm}
(1,2,3,1 )
{\color{orange}	\to}\hspace{0.05cm}
(1,2,3,2 )
{\color{brown}		\to}\hspace{0.05cm}
(1,2,3,2 )
{\color{magenta}	\to}\hspace{0.05cm}
( 1,2^3 )
 $$are both elements of $ \Path (\la,\sts)$ for some $\la=(1^4), (1,2^3) \in {^PW}$ and  for the same $\sts \in \Path(1,2,3,4)$. The former of these paths belongs to  $\Path^{\pm}(1,2,3,4)$ whereas the latter does not as $s_{[x_{10}, y_{10}]} = s_{[4,4]} = s_{-1}$ but $s_{[r_{10}, c_{10}]} = s_{[3,3]} = s_{+1}$.
 
 \end{eg}

 \color{black}

  \subsection{The diagrammatic Hecke categories}    
\label{coulda}
 Almost everything from this section is  lifted from   Elias--Williamson's original paper \cite{MR3555156}  or is an extension of their results to the parabolic setting \cite{MR4437613}.  
Let $(W,S)$ denote a   Coxeter system for $W$ a Weyl group.   
Given $\csigma\in S_W$ we define the {\sf monochrome Soergel generators} to be the framed graphs 
\begin{align}\label{genrdiag}\tag{G1}
{\sf 1}_{\emptyset } =
\begin{minipage}{1.5cm} 
 \end{minipage}
\end{align}for $m$ equal to $2, 3$ or $4$ respectively. 
(We will also sometimes write ${\sf braid}_{\csigma\ctau}^{\ctau \csigma}$, ${\sf braid}_{\csigma \ctau \csigma}^{\ctau \csigma \ctau}$, and ${\sf braid}_{\csigma \ctau \csigma \ctau}^{\ctau \csigma \ctau \csigma}$ for ${\sf braid}_{\csigma \ctau}^{\ctau \csigma}(m)$ with $m=2$, $m=3$, and $m=4$ respectively.) 
 Pictorially, we define the duals of these generators to be the graphs obtained by reflection through their horizontal axes.  Non-pictorially, we simply swap the sub- and superscripts.  We sometimes denote duality by $\ast$.  For example, the dual of the fork generator is pictured as follows
$$
  {\sf fork}^{\csigma\csigma}_{\csigma}=  
  \begin{minipage}{2cm}  
 \begin{tikzpicture}[scale=-1.2]
 \draw[densely dotted,rounded corners](-0.75cm,-0.5cm)
  rectangle (0.75cm,0.5cm);
  \clip (-0.75cm,-0.5cm)
  rectangle (0.75cm,0.5cm);
 \draw[line width=0.08cm, magenta](0,0)to [out=-30, in =90] (10pt,-15pt);
 \draw[line width=0.08cm, magenta](0,0)to [out=-150, in =90] (-10pt,-15pt);
 \draw[line width=0.08cm, magenta](0,0)--++(90:1);
 \end{tikzpicture}
\end{minipage}.$$We define the northern/southern reading word of a Soergel generator (or its dual) to be word in the alphabet $S$ obtained by reading the colours of the northern/southern edge of the frame respectively.  
Given   two (dual) Soergel generators $D$ and $D'$ we define $D\otimes D'$ to be the diagram obtained by horizontal concatenation (and we extend this linearly).  The northern/southern colour sequence of $D\otimes D'$ is  the concatenation of those of $D$ and $D'$ ordered from left to right.    
Given any two (dual) Soergel generators, we define their product $D\circ D'$ (or simply $DD'$) to be the vertical concatenation of $D$ on top of $D'$ if the southern reading word of $D$ is equal to 
 the northern reading word of $D'$ and  zero otherwise.  
We define a {\sf Soergel diagram} to be any graph obtained by repeated  horizontal and vertical concatenation of  (dual) Soergel generators.


For $\w=\sigma_1\dots\sigma_\ell$ an expression, we define 
$ {\sf 1}_\w={\sf1}_{\sigma_1}\otimes {\sf1}_{\sigma_2}\otimes \dots\otimes {\sf1}_{\sigma_\ell} 
$ and given $k>1$ and $\csigma,\ctau\in S_W$ we set $
  {\sf 1}_{ \csigma\ctau} ^{k }   = 
{\sf 1}_{ \csigma}\otimes {\sf 1}_\ctau 
\otimes{\sf 1}_{ \csigma}\otimes {\sf 1}_\ctau \dots 
$ 
to be the alternately coloured idempotent on $k$ strands (so that the final strand is  $\csigma$- or $\ctau$-coloured if $k$ is odd or even respectively).  
   Given $\csigma,\ctau \in S_W$ with $m_{\csigma\ctau}=2$,
    let  
 $\w=\rho_1\cdots \rho_k (\csigma\ctau ) \rho_{k+3}\cdots \rho_\ell$ and $
\underline{\w}=\rho_1\cdots \rho_k (\ctau\csigma ) \rho_{k+3}\cdots \rho_\ell$ 
be two reduced expressions for $w\in W$.   We say that $\w$ and $\underline{\w}$ are {\sf adjacent} and we set 
$${\sf braid}^\w_{\underline{\w}}={\sf 1}_{\rho_1}\otimes \cdots \otimes {\sf 1}_{ \rho_k}
\otimes 
{\sf braid}^{\csigma\ctau}_{\ctau\csigma}(2) 
\otimes {\sf 1}_{\rho_{k+3}}\otimes  \cdots \otimes {\sf 1}_{ \rho_\ell}.
 $$Given a {\em  fixed}  sequence of adjacent reduced expressions, 
 $\w=\w^{(1)}, \w^{(2)},\dots, \w^{(q)}=\underline{\w}$ 
and the value $q$ is minimal such that 
this sequence exists, then we set 
$${\sf braid}^\w_{\underline{\w}}
=
\prod _{1\leq p < q}{\sf braid}^{\w^{(p)}}_{{\w^{(p+1)}} }
 .$$
%
%

 Given $\csigma$, we define  the corresponding ``barbell" and ``gap" diagrams  to be the elements  
 $${\sf bar}(\csigma)=  
  {\sf spot}_ \csigma^\emptyset
   {\sf spot}^ \csigma_\emptyset
   \qquad
 {\sf gap}(\csigma)= {\sf spot}^ \csigma_\emptyset
  {\sf spot}_ \csigma^\emptyset ,$$respectively.  
 {\color{black!99} Let  $\la, \mu \in \mathscr{P}_{(W,P)} $ with $\ell=\ell(\mu)-\ell(\la)$ and   $\stt \in \Std (\mu\setminus \la)$   
such that 
 $\stt ( [x_k,y_k])=k$ for  $1\leq k \leq \ell$.  
We  let 
$${\sf 1}_\stt = {\sf 1}_{s_{[x_1,y_1]}} \otimes  {\sf 1}_{s_{[x_2,y_2]}} \otimes \dots \otimes  {\sf 1}_{s_{[x_\ell,y_\ell]}} $$and 
for $\csigma= \color{magenta}s_{[x_k,y_k]}$  we set }
   $$
  {\sf gap}(\stt-	[x_k,y_k]	) = {\sf 1}_{\stt{\downarrow}_{\{1,\dots,k-1\}}} \otimes {\sf gap}(\csigma)\otimes {\sf 1}_{\stt{\downarrow}_{\{k+1,\dots,\ell\}}}.  
  $$We also define the corresponding ``double fork'' diagram to be the element
  $$
  {\sf dork}^{\csigma\csigma} _{\csigma\csigma} = {\sf fork}^{\csigma\csigma}_{\csigma} {\sf fork}^{\csigma}_{\csigma\csigma} \text{.}
  $$
   It is standard (in Soergel diagrammatics) to draw the  element 
${\sf cap}^\emptyset _{\ctau\ctau}:={\sf spot}_\ctau^\emptyset {\sf fork }_{\ctau\ctau}^\ctau$ simply as a strand which starts and ends on the southern edge of the frame.  (We define ${\sf cup}^{\ctau\ctau}_\emptyset:=({\sf cap}^\emptyset _{\ctau\ctau})^\ast$.)  
For $\x=\sigma_1\sigma_2\cdots \sigma_\ell$ a word, we define $\x_{\mathsf{rev}}=\sigma_\ell\cdots\sigma_2\sigma_1$. Then we inductively define
$$
 {\sf cap }_{\x \;\x_{\mathsf{rev}} }^\emptyset :=  {\sf cap }_{{\y }\;{\y_{\mathsf{rev}} } }^{\emptyset }
 ({\sf 1}_{\y}  \otimes 
 {\sf cap}_{{\sigma_\ell }{\sigma_\ell } }^{\emptyset}
 \otimes {\sf 1}_{\y_{\mathsf{rev}}} )
 $$
 where $\y=\sigma_1\sigma_2\cdots \sigma_{\ell-1}.$ This diagram can be visualised as a rainbow of concentric arcs (with ${\sf cap}_{\sigma_\ell \sigma_\ell}^\emptyset$ the innermost arc).   
Since we have $\x\x_{\mathsf{rev}}=1_W$ when evaluated in the group $W$, we will simply write $ {\sf cap }_{\x \x^{-1} }^\emptyset $ for $ {\sf cap }_{\x \;\x_{\mathsf{rev}} }^\emptyset$.

In order to make our notation  less dense, 
we will often  suppress mention of  idempotents by including them in the sub- and super-scripts  of other generators.  
This is made possible by recording where the 
edits to the underlying words are  with emptysets.  
For example 
$${\sf spot}_{ \alphar\betar\blue}^{  \emptyset\betar \emptyset }
:=  {\sf spot}^\emptyset_\alphar
\otimes {\sf 1}_\betar
\otimes {\sf spot}^\emptyset_\blue
\quad
{\sf fork}_{\gam\gam\betar\alphar\alphar}^{\gam  \betar \alphar }
:={\sf fork}_{\gam\gam}^\gam \otimes {\sf 1}_\betar \otimes
{\sf fork}_{\alphar\alphar}^\alphar
\quad 
{\sf bar}(\pink\orange)= {\sf bar}(\pink)\otimes {\sf bar}(\orange).
$$
We make use of all of the above notational shorthands (even within the same equation).  
Finally, for distinct $\csigma,\ctau \in S_W$ we recall the entries of the Cartan matrix corresponding to the Dynkin diagram of $(W,P)$:
$$
\langle \alpha_\csigma^\vee, \alpha_\ctau \rangle = 
\begin{cases}
0 & \text{if $\csigma 
                  \mathrel{\mkern2mu}
                  \arrownot 
                  \mathrel{\mkern-2mu}
                  \mathrel{-}
                  \joinrel\joinrel 
                  \mathrel{-}
  \ctau$,} \\
-1 & \text{if $\csigma 
                  \mathrel{-}
                  \joinrel\joinrel 
                  \mathrel{-}
  \ctau$,} \\
-1 & \text{if $\csigma \Longrightarrow \ctau$,} \\
-2 & \text{if $\csigma \Longleftarrow \ctau$.}
\end{cases}
$$

%
%


   \begin{defn}\label{the algebra} 
     \renewcommand{\vvv}{{\underline{w} }} 
\renewcommand{\w}{{\underline{x}}}
\renewcommand{\x}{{\underline{y}}}
\renewcommand{\y}{{\underline{z}}}

Let $ W $ be a Weyl group, $P$ be a parabolic subgroup,  
\color{purple}
and let $\Bbbk$ be a field.
\color{black}
 We define  $\mathcal{H}_{(W,P)}$ to be the locally-unital  associative $\Bbbk$-algebra spanned by all Soergel-graphs with multiplication given by $\circ$-concatenation modulo the following local relations and their vertical and horizontal flips.
\begin{align}\label{R1}\tag{R1}
{\sf 1}_{\csigma} {\sf 1}_{\ctau}& =\delta_{\csigma,\ctau}{\sf 1}_{\csigma},
 & {\sf 1}_{\emptyset} {\sf 1}_{\csigma} & =0,
  & {\sf 1}_{\emptyset}^2& ={\sf 1}_{\emptyset},
  \\
 \label{R2}\tag{R2}
{\sf 1}_{\emptyset} {\sf spot}_{\csigma}^\emptyset {\sf 1}_{\csigma}& ={\sf spot}_{\csigma}^{\emptyset},
 &
  {\sf 1}_{\csigma} {\sf fork}_{\csigma\csigma}^{\csigma} {\sf 1}_{\csigma\csigma}& ={\sf fork}_{\csigma\csigma}^{\csigma} ,
  &
{\sf 1}_{\ctau\csigma}^m {\sf braid}_{\csigma\ctau}^{\ctau\csigma}(m) {\sf 1}_{\csigma\ctau}^m & ={\sf braid}_{\csigma\ctau}^{\ctau\csigma}(m),
\end{align}For each  $\csigma \in S $  we have the fork-spot, double-fork, circle-annihilation relations 
\begin{align}\label{R3}\tag{R3}
({\sf spot}_\csigma^\emptyset \otimes {\sf 1}_\csigma){\sf fork}^{\csigma\csigma}_{\csigma}
=
{\sf 1}_{\csigma},
 \quad 
  ({\sf 1}_\csigma\otimes {\sf fork}_{\csigma\csigma}^{ \csigma} )
({\sf fork}^{\csigma\csigma}_{\csigma}\otimes {\sf 1}_{\csigma})
=
{\sf fork}^{\csigma\csigma}_{\csigma}
{\sf fork}^{\csigma}_{ \csigma\csigma},
\quad
{\sf fork}_{\csigma\csigma}^{\csigma}
{\sf fork}^{\csigma\csigma}_{\csigma}=0,
\end{align}
pictured in \cref{onecolour1} {together with the cinching relation} 
\begin{align}\label{R4}  \tag{R4}
 {\sf 1}_{\csigma}\otimes {\sf 1}_{\csigma}=	{\sf spot}_\csigma^\emptyset \otimes {\sf fork}^{\csigma\csigma}_\csigma+
{\sf spot}^\csigma_\emptyset \otimes {\sf fork}_{\csigma\csigma}^\csigma		
- {\sf bar}(\csigma) \otimes {\sf dork}^{\csigma\csigma} _{\csigma\csigma}	
\end{align}pictured in \cref{shorter}.
 For every ordered  pair  $(\csigma,\ctau) \in S_W^2$ with $\csigma \neq \ctau$,   the   bi-chrome relations:  The $\csigma\ctau$-barbell, 
\begin{align}\label{R5}
\tag{R5}
{\sf bar}(\ctau)\otimes {\sf 1}_{\csigma}
-
 {\sf 1}_{\csigma} \otimes {\sf bar}(\ctau)
 = \langle \alpha_\csigma^\vee, \alpha_\ctau \rangle ({\sf gap}(\csigma)-
 {\sf 1}_{\csigma} \otimes {\sf bar}(\csigma))
.\end{align}

\begin{figure}[h!]
\begin{equation*} 
   \begin{minipage}{1.5 cm}

\end{minipage}\; \right)
 \end{align*}
 \caption{\color{black!99} The two-colour barbell relation of \eqref{R5}.}
 
 \end{figure}

\noindent For $m=m_{\csigma\ctau}\in \{2,3,4\}$ we also have  the fork-braid relations 
 \begin{align}\tag{R6}\begin{split}  \label{fork-braid-text1}  
 {\sf braid}_{\csigma \ctau\cdots \ctau \csigma}^{\ctau\csigma\cdots \csigma\ctau} 
({\sf fork}^{\csigma}_{\csigma\csigma}\otimes{\sf 1}_{ \ctau\csigma} ^{m-1} 	)
 ( {\sf 1}_\csigma \otimes{\sf braid}^{  \csigma \ctau\cdots \ctau \csigma}_
 {  \ctau\csigma\cdots \csigma\ctau} )
&=
   ({\sf 1}_{ \ctau\csigma}^{m-1}   \otimes {\sf fork}_{\ctau\ctau}^{ \ctau})
( 
 {\sf braid}_{ \csigma \ctau\cdots \ctau \csigma}^{  \ctau\csigma\cdots \csigma\ctau} 
\otimes {\sf 1}_\ctau) 
\\
 {\sf braid}_{\csigma \ctau\cdots  \csigma \ctau}^{\ctau\csigma\cdots \ctau \csigma} 
({\sf fork}^{\csigma}_{\csigma\csigma}\otimes{\sf 1}_{\ctau\csigma} ^{m-1} 	)
 ( {\sf 1}_\csigma \otimes{\sf braid}^{  \csigma \ctau\cdots  \csigma\ctau }_
 {  \ctau\csigma\cdots  \ctau\csigma} )
&=
  ({\sf 1}_{ \ctau\csigma}^{m-1}   \otimes {\sf fork}_{\csigma\csigma}^{\csigma})
( 
  {\sf braid}_{  \csigma \ctau\cdots  \csigma \ctau}^{  \ctau\csigma\cdots \ctau \csigma} 
\otimes {\sf 1}_\csigma) 
 \end{split}
\end{align}
for $m$ odd and even, respectively --- these are pictured in \cref{forkbraid}.  
We   require the cyclicity  relation,  
\begin{align}\tag{R7} \begin{split}\label{R7}
({\sf 1}_{ \ctau\csigma}^{m }\otimes (   {\sf cap}^\emptyset_{\csigma\csigma}))({\sf 1}_\ctau \otimes
 {\sf braid}^{\csigma\ctau}_{ \ctau\csigma}(m)  
\otimes {\sf 1}_\csigma) 
(   {\sf cup}_{\emptyset }^{\ctau\ctau}  \otimes 
{\sf 1}_{ \csigma \ctau}^{m })
&=
  {\sf braid}^{\ctau \csigma\dots \csigma \ctau}_{\csigma \ctau\cdots \ctau\csigma} 
\\
({\sf 1}_{ \ctau\csigma}^{m }\otimes(  {\sf cap}^\emptyset_{\ctau\ctau}))({\sf 1}_\ctau \otimes
 {\sf braid}^{\csigma\ctau}_{ \ctau\csigma}(m)  
\otimes {\sf 1}_\ctau) 
(    {\sf cup} ^{\ctau\ctau}_\emptyset \otimes 
{\sf 1}_{ \csigma \ctau}^{m })
&=
  {\sf braid}^{\ctau \csigma\dots \ctau \csigma}_{\csigma \ctau\cdots \csigma \ctau}    
 \end{split}
\end{align}
for $m$ odd or even, respectively.

\begin{figure}[h]
$$
 \begin{minipage}{1.4cm}
 \end{minipage}$$ 
   
   \caption{\color{black!99}The fork braid relations of \eqref{fork-braid-text1}   for  $m( \csigma , \ctau)=2$ and $3$  and $4$ respectively.  }
   \label{forkbraid}
   \end{figure}

\noindent For $m=2,3,$ or $4 $ we have the double-braid relations\footnote{The double-braid relations can   replace the usual Jones--Wenzl relations, by 
 \cite[Exercise 9.39(1)]{MR4220642}.}
\begin{align} \tag{R8}
 \label{braidrelation1}
 {\sf 1}_{  \ctau\csigma } = 
 {\sf braid}_{\csigma\ctau}^{\ctau\csigma}
 {\sf braid}^{\csigma\ctau}_{\ctau\csigma}
\qquad 
 {\sf 1}  _{\csigma\ctau\csigma}=
 {\sf braid}^{\csigma\ctau\csigma}_{\ctau\csigma
\ctau}
{\sf braid}_{\csigma\ctau\csigma}^{\ctau\csigma
\ctau}
-
{\sf spot}^{\csigma\ctau \csigma}_{\csigma\emptyset\csigma}
{\sf fork}^{\csigma\csigma}_\csigma
{\sf fork}_{\csigma\csigma}^\csigma
{\sf spot}_{\csigma\ctau \csigma}^{\csigma\emptyset\csigma}
\end{align}
\begin{align} 
 \nonumber
   {\sf 1}_{\ctau\csigma\ctau\csigma}
   =&\; {\sf braid}
^{\ctau\csigma\ctau\csigma}
_{\csigma\ctau\csigma\ctau}
{\sf braid}
_{\ctau\csigma\ctau\csigma}
^{\csigma\ctau\csigma\ctau}
+ 
 \langle \alpha_\csigma^\vee , \alpha_\ctau
  \rangle 
{\sf spot}^{\ctau\csigma\ctau\csigma}
_{\ctau \emptyset\ctau \csigma}
{\sf fork}^{\ctau\ctau \csigma}_{\ctau\csigma}
{\sf fork}_{\ctau\ctau \csigma}^{\ctau\csigma}
{\sf spot}^{\ctau \emptyset \csigma \ctau }
_{\ctau\csigma\ctau\csigma}
\\ \label{braidrelation3}\tag{R9}
&
 + 
 \langle \alpha_\ctau^\vee , \alpha_\csigma
  \rangle 
{\sf spot}^{\ctau\csigma\ctau\csigma}
_{\ctau\csigma\emptyset\csigma}
{\sf fork}^{\ctau\csigma\csigma}_{\ctau\csigma}
{\sf fork}_{\ctau\csigma\csigma}^{\ctau\csigma}
{\sf spot}^{\ctau\csigma\emptyset}
_{\ctau\csigma\ctau\csigma}
  \\  & \nonumber   
  -
{\sf spot}_{\ctau \emptyset\ctau \csigma}^{\ctau\csigma\ctau\csigma}
{\sf fork}^{\ctau \ctau\csigma}_{\ctau\csigma}
{\sf fork}_{\ctau \csigma\csigma}^{\ctau\csigma}
{\sf spot}_{\ctau\csigma\ctau\csigma}^{\ctau\csigma \emptyset\csigma}
-
{\sf spot}^{\ctau\csigma\ctau\csigma}_{\ctau\csigma \emptyset\csigma}
{\sf fork}^{\ctau \csigma\csigma}_{\ctau\csigma}
{\sf fork}_{\ctau \ctau\csigma}^{\ctau\csigma}
{\sf spot}^{\ctau \emptyset\ctau \csigma}_{\ctau\csigma\ctau\csigma}
 \end{align}
 respectively, pictured in \cref{2braid1,2braid2}.  For   $(\csigma, \ctau ,\crho)\in S_W^3$ 
with   $m_{\csigma \crho}=
m_{  \crho\ctau}=2$ and 
$m_{\csigma \ctau}=m$,  we have the commutating-braids relation
\begin{align}\tag{R10}\label{R10-com-braid}
\begin{split}
({\sf braid}_{\csigma\ctau\dots \ctau\csigma}^{\ctau\csigma\dots \csigma \ctau}  \otimes {\sf 1}_
\crho)
{\sf braid}^{\csigma \ctau \cdots \ctau\csigma  \crho}_{\crho\csigma \ctau \cdots \ctau \csigma}
&=
{\sf braid}^{\ctau \csigma\cdots \csigma\ctau \crho}_{\crho\ctau \csigma\cdots \csigma\ctau}
({\sf 1}_
\crho\otimes {\sf braid}_{\csigma\ctau\dots  \ctau\csigma}^{\ctau\csigma\dots \csigma\ctau} )
\\
({\sf braid}_{\csigma\ctau\dots  \csigma \ctau}^{\ctau\csigma\dots   \ctau\csigma}  \otimes {\sf 1}_
\crho)
{\sf braid}^{\csigma \ctau \cdots \csigma\ctau  \crho}_{\crho\csigma \ctau \cdots \csigma\ctau}
&=
{\sf braid}^{\ctau \csigma\cdots \csigma\ctau \crho}_{\crho\ctau \csigma\cdots \csigma\ctau}
({\sf 1}_
\crho\otimes {\sf braid}_{\csigma\ctau\dots  \ctau\csigma}^{\ctau\csigma\dots \csigma\ctau} )
.\end{split}
\end{align}
for $m$ odd or even respectively, this is pictured in \cref{combraid}.

\begin{figure}[h!]

 \begin{align*}
 \begin{minipage}{1.1cm}
 \end{minipage}  
   $$ 
   
   \caption{\color{black!99}The commuting braids relation of \eqref{R10-com-braid} for
      $m( \csigma , \crho)=2=m( \ctau , \crho)$  and 
      $m( \csigma , \ctau)=2$ and $3$  and $4$ respectively.  }
   \label{combraid}
   \end{figure}

\noindent  Finally, we have the  Zamolodchikov relations:    for a   triple $\al,\bet, \rho\in S_W $ with $m_{\al\bet}=3=m_{\al\gam}$ and $m_{\al\gam}=2$ we have  that 
  \begin{align}\tag{R11}\begin{split} 
  &{\sf braid}^{ \gam\al\gam \bet\al\gam}_{ \al\gam\al  \bet\al\gam    }    
  {\sf braid}^{ \al\gam  \al\bet\al \gam    }_{  \al\gam \bet\al\bet    \gam   }    
   {\sf braid}^{ \al\gam\bet \al \bet\gam  }_{ \al\bet\gam \al  \gam\bet   }  
   {\sf braid}^{   \al\bet	\gam\al\gam \bet   }_{ \al\bet \al\gam\al   \bet }   
 {\sf braid}^{  \al\bet	 \al  \gam\al \bet  } _{  \bet \al\bet  \gam\al \bet    }   
{\sf braid}^{\bet\al \bet\gam\al\bet     }_{ \bet\al \gam\bet \al\bet     } 
\\[5pt] = \;\;\;&  	{\sf braid}_{\gam\al\bet\gam \al\gam  }^{ \gam\al \gam\bet\al\gam  }  
  {\sf braid}^{\gam\al\bet \gam\al \gam }_{\gam\al\bet\al\gam\al} 
   {\sf braid}^{\gam\al\bet\al\gam\al}_{\gam\bet\al\bet\gam\al}
 {\sf braid}^{\gam\bet\al\bet\gam\al}_{\bet\gam\al\gam\bet\al}   
 {\sf braid} ^{\bet\gam\al\gam\bet\al} 
 _{\bet \al\gam\al\bet\al} 
  {\sf braid}^{\bet\al\gam\al\bet\al}_{\bet\al\gam\bet\al\bet} .
  \end{split}
\intertext{ For a     triple $\al,\bet,\gam\in S$ such that 
$m_{\al\gam}=4$, $m_{\bet\gam}=2$, $m_{\al\bet}=3$, we have that 
}
\tag{R12}
\begin{split} 
  &  {\sf braid}_{\gam\al\gam {\al\bet\al}{\gam\al\bet}}^{\gam\al\gam{\bet\al\bet} {\gam\al\bet}} 
    {\sf braid}^{\gam\al\gam{\al\bet\al}{\gam\al\bet}}
    _ { \al\gam{\al\gam\bet\al}{\gam\al\bet}}
     {\sf braid} 
    ^ { \al\gam{\al\gam\bet\al}{\gam\al\bet}}
    _ { \al\gam \al\bet \gam \al \gam\al\bet} 
    {\sf braid}^{ \al\gam \al\bet \gam \al \gam\al\bet} _
    { \al\gam \al\bet  \al \gam\al \gam\bet} 
    {\sf braid}^ 
    { \al\gam \al\bet  \al \gam\al \gam\bet} 
_    { \al\gam \bet  \al \bet  \gam\al  \gam \bet} 
 \times
\\[4pt]  
&   {\sf braid}^ 
     { \al\gam \bet  \al \bet  \gam\al   \gam\bet} _
  { \al  \bet \gam \al    \gam \bet  \al  \bet\gam}  {\sf braid}^ 
  { \al  \bet \gam \al    \gam \bet  \al  \bet\gam} _
    { \al  \bet \gam \al    \gam   \al  \bet  \al \gam} 
 {\sf braid}^ 
     { \al  \bet \gam \al    \gam   \al  \bet  \al \gam} 
     _
         { \al  \bet   \al    \gam   \al \gam \bet  \al \gam} 
 {\sf braid}^ 
         { \al  \bet   \al    \gam   \al \gam \bet  \al \gam} 
         _
  {   \bet   \al  \bet  \gam   \al\gam\bet     \al \gam}   
 {\sf braid}^ 
      {   \bet   \al  \bet  \gam   \al\gam\bet     \al \gam}   
      _
  {   \bet   \al     \gam\bet   \al \bet  \gam   \al \gam}         
\\[7pt]  
   =\;\;\;& {\sf braid}  
    ^
      {      \gam   \al \gam  \bet	 \al   	 \bet 	 \gam    \al  \bet   }   
_
  {      \gam   \al \gam  \bet	 \al 	 \gam   	 \bet   \al  \bet   }   
    {\sf braid}  
    ^
    {      \gam   \al \gam  \bet	 \al 	 \gam   	 \bet   \al  \bet   }   
    _
        {      \gam   \al    \bet	\gam \al 	 \gam   	  \al  \bet    \al }   
  {\sf braid}  
    ^
         {      \gam   \al    \bet	\gam \al 	 \gam   	  \al  \bet    \al }   
         _
                 {      \gam   \al    \bet		  \al 	 \gam   	  \al 	\gam			 \bet    \al }   
  {\sf braid}  
    ^
                          {      \gam   \al    \bet		  \al 	 \gam   	  \al 	\gam			 \bet    \al }   
                          _
                 {      \gam      \bet		  \al    \bet	 \gam   	  \al 	\gam			 \bet    \al }                             
  {\sf braid}  
    ^
                {      \gam      \bet		  \al    \bet	 \gam   	  \al 	\gam			 \bet    \al }                             
_
                 {          \bet	 \gam  	  \al    \gam   \bet	  	  \al 	 		 \bet \gam	   \al }                                                 
     \times
\\[4pt]     %
 & {\sf braid}  
    ^
    {          \bet	 \gam  	  \al    \gam   \bet	  	  \al 	 		 \bet \gam	   \al } 
_
    {          \bet	 \gam  	  \al    \gam     \al   \bet     \al 	 \gam	   \al }   {\sf braid}  
    ^
      {          \bet	 \gam  	  \al    \gam     		\al   \bet     \al 	 \gam	   \al }     
      _
          {    \bet	   \al    \gam     \al  \gam  	 	 \bet     \al 	 \gam	   \al }     
 {\sf braid}  
    ^
          {    \bet	   \al    \gam     \al  \gam  	 	 \bet     \al 	 \gam	   \al }     
          _
    {    \bet	   \al    \gam     \al   	 \bet   \gam  	   \al 	 \gam	   \al }               
 {\sf braid}  
    ^
         {    \bet	   \al    \gam     \al   	 \bet   \gam  	   \al 	 \gam	   \al }               
_
    {    \bet	   \al    \gam     \al   	 \bet     	   \al 	 \gam	   \al \gam 		}                        
 {\sf braid}  
    ^
        {    \bet	   \al    \gam     \al   	 \bet     	   \al 	 \gam	   \al \gam 		}      
_      {    \bet	   \al    \gam      	 \bet     	   \al 	 	 \bet      \gam	   \al \gam 		}      
                         .   \end{split}   \end{align}
 Further,  we require the 
  interchange law
\begin{align}\label{R8}\tag{R13}
 \big( {\sf D  }_1 \otimes   {\sf D}_2   \big)\circ  
\big(  {\sf D}_3  \otimes {\sf D }_4 \big)
=
 ({\sf D}_1 \circ   {\sf D_3}) \otimes ({\sf D}_2 \circ  {\sf D}_4)
\end{align} 
and the monoidal unit relation
\begin{align}\label{R9}\tag{R14}
{\sf 1}_{\emptyset} \otimes {\sf D}_1={\sf D}_1={\sf D}_1 \otimes {\sf 1}_{\emptyset}
\end{align}
for all diagrams ${\sf D}_1,{\sf D}_2,{\sf D}_3,{\sf D}_4$. 
%
%
%
Finally, we require the following {\em non-local } cyclotomic relations 
\begin{align}\label{R10}\tag{R15}
{\sf bar}(\csigma) \otimes D =0 \qquad &\text{for all $\csigma \in S_W$ and $D$ any diagram,  }
 \\
\label{R11}\tag{R16}
{\sf 1}_\ctau \otimes D=0
 \qquad &\text{for  all
  $\ctau \in S_P\subset  S_W$ and $D$ any diagram.   }
\end{align}
  
 \end{defn}

\renewcommand{\w}{{\underline{w}}}
\renewcommand{\vvv}{{\underline{z} }}
\renewcommand{\y}{{\underline{y}}}
\renewcommand{\x}{{\underline{x}}}

The algebra    $\mathcal{H}_{  (W,P)}$ can be equipped with 
 a $\mathbb Z$-grading which preserves the duality~$\ast$.  The degrees  of  the generators under this grading are defined  as follows:
 $$
 {\sf deg}({\sf 1}_\emptyset)=0
 \quad
  {\sf deg}({\sf 1}_\al)=0
  \quad
  {\sf deg} ({\sf spot}^\emptyset_\al)=1
    \quad
  {\sf deg} ({\sf fork}^\al_{\al\al})=-1
    \quad
  {\sf deg} ({\sf braid}^{\al\bet}_{\bet\al}(m))=0
 $$for $\al,\bet \in S_W$   arbitrary and $m\geq 2$.  
\color{black!99}We will define the $p$-Kazhdan--Lusztig polynomials of these categories in \eqref{hjhjhjhas}.

\begin{rmk}
\color{black!99}Of the defining  relations of \cref{the algebra}, we have only diagrammatically depicted those  
which   will explicitly appear  in the arguments of this paper; the   remaining relations can be found for example in  \cite{MR3555156,mybook}).
\end{rmk}

\begin{rmk}
We can pre- and post-multiply the relation depicted in  \cref{shorter} with spot generators. We hence obtain the usual ``one colour Demazure relation''  as follows:
 \color{black} 
 \begin{equation} \label{onecolourdemazure}
  \begin{minipage}{1.5cm}
  \end{minipage}  \end{equation}
  \end{rmk}

 \color{purple}

 \begin{rmk} \label{demazure} 
The Hecke category is usually defined in the literature using the one-colour Demazure relation \eqref{onecolourdemazure} instead of the cinching relation \eqref{R4}, plus an additional technical assumption called {\em Demazure surjectivity} \cite[Assumption 3.7]{MR3555156}. 
In this setting Demazure surjectivity is necessary to prove the cinching relation, which is essential for the Hecke category to be well behaved (more precisely, for the light leaves construction in \cref{lightleaves} to yield a basis). 
However a careful analysis of the proof in \cite[\S 7]{MR3555156} shows that Demazure surjectivity is \emph{not} necessary for the Hecke category to be well behaved if one assumes the cinching relation to begin with!
In other words, our definition of $\mathcal{H}_{ (W,P) }$ is \emph{always} well behaved, and is equivalent to the usual definition of the Hecke category in the literature when the latter is well behaved.
We believe this trivial observation has been overlooked until now due to historical motivation of the diagrammatic Hecke category from Soergel bimodules, which rely on Demazure surjectivity much more heavily.
 \end{rmk}

 \color{black}

 \section{Lifting full-commutativity to the \\ Hecke categories  of Hermitian symmetric pairs  }


Stembridge proved that the parabolic quotients   for Hermitian symmetric pairs are fully commutative \cite[Theorem 6.1]{MR1406459}. In other words, 
  in $^PW$ the non-commuting  braid {\em relations} are redundant.  
  (This is discussed in more detail in terms of Temperley--Lieb  diagrammatics in the companion paper \cite{FARR}.) 
We now lift this idea to the 2-categorical level;  
  in $\mathcal{H}_{(W,P)}$ the non-commuting braid {\em generators} are redundant.

\begin{rmk}
 \label{locality}\color{black!99}
By a    ``local relation''    of the algebra $\mathcal{H}_{(W,P)}$ we mean any relation that can be applied in {\em an arbitrary} local neighbourhood of a diagram 
(as opposed to the relations in \eqref{R10} and \eqref{R11} which can only be applied to the leftmost edge of a diagram).  
In this section we state and  prove new {\bf local relations} of $\mathcal{H}_{(W,P)}$ --- our proofs will make  use of  the {\bf non-local} relations of  \eqref{R10} and \eqref{R11}.  
To see how this works, we first observe that  
any element  of  $  \mathcal{H}_{(W,P)}$ can be obtained 
by vertical concatenation  of diagrams of the form ${\sf 1}_{\x} \otimes D \otimes {\sf 1}_{\y}$ for $\x$ and $\y$ two expressions for $x,y \in W$ and $D$ a diagram 
as in \eqref{genrdiag} or \eqref{genrdiag2}. 
 Thus proving a local relation $ D_1 \circ D_2 = D_3$ is equivalent to proving the non-local relation 
 $${\sf 1}_{\x} \otimes D_1 \circ D_2 \otimes {\sf 1}_{\y}=   {\sf 1}_{\x} \otimes D_3 \otimes {\sf 1}_{\y}$$for all possible $\x$ and $\y$ for $x,y \in W$. 
 In fact,  by \cite[Theorem 5.3]{MR4437613}, it is enough to consider only   {\em reduced} expressions $\x,\y$ for $x,y \in W$.
\end{rmk}

\begin{thm}\label{simpligy}\label{makeahole}
In $\mathcal{H}_{(W,P)}$ we have the  local relation 
 $
  {\sf braid}^{\csigma\ctau}_{\ctau\csigma}(m)=0
 $ for any $m=m(\csigma,\ctau)>2$.   
\end{thm}

%

%
%
%
 
\begin{prop}\label{iftheshoecommutes}
 Let $w\in {^P}W$ and let $\w, \w'$ be  a pair of reduced expressions for $w$.  
We have that 
$${\sf1}_\w= {\sf braid}^{\w}_{\w'}
{\sf braid}_{\w}^{\w'}
\qquad
\quad
{\sf1}_{\w'}=  
{\sf braid}_{\w}^{\w'}  {\sf braid}^{\w}_{\w'} $$
 \end{prop}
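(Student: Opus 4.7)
The strategy combines full commutativity of ${}^{P}W$ with the $m = 2$ case of the double-braid relation \cref{braidrelation1}, which is the only case without correction terms. By Stembridge's theorem cited at the top of this section, any two reduced expressions $\w, \w'$ for $w \in {}^{P}W$ are linked by a sequence of pure commutations, so there is a chain of adjacent reduced expressions $\w = \w^{(1)}, \w^{(2)}, \ldots, \w^{(q)} = \w'$ and, setting ${\sf B}_p := {\sf braid}^{\w^{(p)}}_{\w^{(p+1)}}$, we have ${\sf braid}^{\w}_{\w'} = {\sf B}_1 {\sf B}_2 \cdots {\sf B}_{q-1}$, a composition of elementary $m = 2$ braids padded by identity idempotents on unchanged letters. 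Since $\ast$-duality reflects through the horizontal axis and so reverses the order of vertical composition, we have ${\sf braid}_{\w}^{\w'} = {\sf B}_{q-1}^{\ast} \cdots {\sf B}_{2}^{\ast} {\sf B}_{1}^{\ast}$.

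I would argue by induction on $q$. The product
\[
{\sf braid}^{\w}_{\w'} \, {\sf braid}_{\w}^{\w'} \;=\; {\sf B}_1 \cdots {\sf B}_{q-2} \, \bigl( {\sf B}_{q-1} {\sf B}_{q-1}^{\ast} \bigr) \, {\sf B}_{q-2}^{\ast} \cdots {\sf B}_{1}^{\ast}
\]
has as its innermost factor a single $m = 2$ braid composed with its own dual, sandwiched by the identity idempotents common to both sides of the swapped pair. The $m = 2$ clause of \cref{braidrelation1} collapses this inner factor to ${\sf 1}_{\w^{(q-1)}}$; this idempotent is absorbed into the neighbouring ${\sf B}_{q-2}$ and ${\sf B}_{q-2}^{\ast}$, and we are reduced to the corresponding product for the shorter chain $\w^{(1)}, \ldots, \w^{(q-1)}$. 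Iterating telescopes everything down to ${\sf 1}_{\w^{(1)}} = {\sf 1}_{\w}$, proving the first identity. The second identity ${\sf 1}_{\w'} = {\sf braid}_{\w}^{\w'} \, {\sf braid}^{\w}_{\w'}$ follows from the symmetric telescoping, which now terminates at ${\sf 1}_{\w^{(q)}} = {\sf 1}_{\w'}$.

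The only genuine subtlety is careful bookkeeping of the factor-order reversal under $\ast$-duality, so that ${\sf braid}_{\w}^{\w'}$ is \emph{not} the factorwise swap of sub- and super-scripts in ${\sf braid}^{\w}_{\w'}$, and one must verify that cancellations proceed strictly from the innermost outwards so that each step matches an $m = 2$ braid with its own dual rather than with some other factor. In particular, we never need to invoke the correction-term-laden $m = 3, 4$ double-braid relations, nor indeed \cref{simpligy}, because Stembridge's theorem already rules out any $m \geq 3$ moves between reduced expressions for $w \in {}^{P}W$ at the purely combinatorial level.
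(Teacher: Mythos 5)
Your proposal is correct and follows essentially the same route as the paper: invoke Stembridge's full commutativity theorem to reduce to a chain of pure $m=2$ commutation moves, then telescope using the $m=2$ double-braid relation. The paper states this in a single sentence; you have simply written out the bookkeeping of the telescope and of the order-reversal under $\ast$, both of which are carried out correctly.
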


\begin{proof}
For $(W,P)$ a Hermitian symmetric pair,   \cite[Theorem 6.1]{MR1406459} implies that any two words $\w$ and $\w'$ in ${^PW}$ differ only by application of the commuting braid relations of the Coxeter group.   These lift to commuting braid generators in $\mathcal{H}_{(W,P)}$ and the result follows.
\end{proof}

     We are now ready to prove the main result of this section.  

 \begin{proof}[Proof of \cref{makeahole}]
As in \cref{locality}, it will suffice to  prove that 
$$
{\sf 1}_{\sts } \otimes {\sf braid}^{\csigma\ctau}_{\ctau\csigma}(m)=0,
$$for all   $\sts \in \Std(\la)$ and all    $\la \in \mptn$. 	
We will proceed   by induction on the   $\ell=\ell(\la)$.
If $\la =\varnothing$, then ${\sf braid}^{\csigma\ctau}_{\ctau\csigma}(m)=0$ for all $m\geq 2$ by relation \ref{R11}.  
  In what follows, we only explicitly consider the cases for which neither of  
$ {\sf 1}_{\sts  } \otimes  {\sf 1}_{\csigma } $
 or $ {\sf 1}_{\sts  } \otimes  {\sf 1}_{ \ctau }$ is equal to zero 
 by application of the commutativity and cyclotomic relations (as these cases are trivial).  
Now assume that $\ell(\la)\geq 1$.   
%
%
We have two cases to consider. 

\smallskip
\noindent
{\bf Case 1. }
 Either $ \csigma $ or $\ctau \in {\sf Rem}(\la)$. 
We consider the former case, as the latter is identical and we set $\mu = \la-\csigma$ and 
we can assume that $\stt \in \Std(\mu)$ so that $\stt \otimes \csigma = \sts$ (by \cref{iftheshoecommutes}).  
For $m>2$, we have that
\begin{align*}
&{\sf 1}_{\sts} \otimes {\sf braid}^{\csigma\ctau}_{\ctau\csigma}(m)
\\
=&{\sf 1}_{\stt} \otimes {\sf 1}_\csigma\otimes 
{\sf braid}^{\csigma\ctau}_{\ctau\csigma}(m)
\\
=&{\sf 1}_{\stt} \otimes 
({\sf 1}_\csigma\otimes {\sf 1}_\csigma\otimes {\sf 1}_{\ctau\csigma }^{m-1})(
{\sf 1}_\csigma\otimes 
{\sf braid}^{\csigma\ctau}_{\ctau\csigma}(m))
\\
=&{\sf 1}_{\stt} \otimes 
((	{\sf spot}_\csigma^\emptyset \otimes {\sf fork}^{\csigma\csigma}_\csigma+
{\sf spot}^\csigma_\emptyset \otimes {\sf fork}_{\csigma\csigma}^\csigma		
- {\sf bar}(\csigma) \otimes {\sf dork}^{\csigma\csigma} _{\csigma\csigma}	)
\otimes {\sf 1}_{\ctau\csigma}^{m-1})
(
{\sf 1}_\csigma\otimes 
{\sf braid}^{\csigma\ctau}_{\ctau\csigma}(m)).
\end{align*}{\color{black!99}where the first two equalities are trivial and the 
final equality is an application of  the cinching relation  \eqref{R4} visualised in \cref{shorter}.  
This is depicted in diagrammatically in \cref{apicforme}}. 
The first term is zero by induction, since it factors through ${\sf 1}_\stt\otimes {\sf braid}^{\csigma\ctau}_{\ctau\csigma}(m)
$ with $\ell(\stt)=\ell(\sts)-1$.  The other two terms factor through a diagram of the form
\begin{align*}
{\sf 1}_{\stt} \otimes
 ({\sf fork}_{\csigma\csigma}^\csigma	\otimes {\sf 1}_{\ctau\csigma}^{m-1})
(  {\sf 1}_{\csigma}	
\otimes  {\sf braid}^{\csigma\ctau}_{\ctau\csigma}(m) ),
\end{align*}
and  we can  apply the  {\color{black!99} fork-braid relation
 \eqref{fork-braid-text1}   visualised in 
   \cref{forkbraid}}
and hence 
  obtain 
 \begin{align*}
{\sf 1}_{\stt} \otimes 
{\sf braid}^{\csigma\ctau\csigma}
_{ \ctau\csigma\ctau}
({\sf 1}_{\ctau\csigma}\otimes {\sf fork}^{\ctau}_{\ctau\ctau}) 
({\sf braid}_{\csigma\ctau\csigma}
^{ \ctau\csigma\ctau}\otimes {\sf 1}_\ctau)
\\
{\sf 1}_{\stt} \otimes 
{\sf braid}^{\csigma\ctau\csigma\ctau}
_{ \ctau\csigma\ctau\csigma}
({\sf 1}_{\ctau\csigma\ctau}\otimes {\sf fork}^{\csigma}_{\csigma\csigma}) 
({\sf braid}^{\ctau\csigma\ctau\csigma}
_{\csigma \ctau\csigma\ctau}\otimes {\sf 1}_\csigma)
\end{align*}
for $m=3 $ or $4$ respectively. In both cases, this is zero by induction.

\begin{figure}[ht!]

$$  
{\sf 1}_{\sts}\otimes \begin{minipage}{1.25cm}
\end{minipage} 
 $$

  \caption{\color{black!99}Case 1  of the proof of \cref{makeahole} with  $\sts=\stt\otimes \csigma$ and $m(\csigma,\ctau)=3$. The first term after the second equality is zero by induction, the latter two terms require  an application of the fork-braid relation pictured in  \cref{forkbraid}  before they can be deduced to be zero. } 
  \label{apicforme}
 \end{figure}

\smallskip
\noindent
{\bf Case 2. }
It remains to consider the case that {\color{black!99}  $ \csigma,\ctau \not \in {\sf Rem}(\la)$}. 
We first note that if there exists $\crho \in {\sf Rem}(\la)$ with 
$m(\csigma,\crho)=2=m(\ctau,\crho)$ then we can assume (by \cref{iftheshoecommutes}) that 
$\sts =\stt \otimes \crho$ for  $\stt \in \Std(\la-\crho)$   and  
$${\sf 1}_\sts \otimes {\sf braid}^{\csigma\ctau}_{\ctau\csigma}(m)=
{\sf 1}_\stt \otimes	({\sf braid}^{\crho\ctau\csigma\dots }_{ \ctau\csigma\dots \crho}
(	 {\sf braid}^{\csigma\ctau}_{\ctau\csigma}(m)\otimes {\sf 1}_\crho 	)
	 {\sf braid}_{\crho\ctau\csigma\dots }^{ \ctau\csigma\dots \crho})		=0$$where the second equality follows by the
	  commuting braid relation \eqref{R10-com-braid} pictured in \cref{combraid}. 
and the third follows by induction  (as $\ell(\stt)=\ell(\sts)-1$).
{\color{black!99}Thus  for the remainder of Case 2  we can assume without loss of generality that $\sts=\stt \otimes \crho$ 
such that   $\crho\neq\ctau,\csigma$ and that   $m(\csigma,\crho)>2$ (and that  $ \csigma , \ctau \not \in {\sf Rem}(\la)$). }

\smallskip
\noindent 
{\bf Subcase when $\csigma \not \in {\sf Add}(\la)$. }  
We first consider the  ``generic" case in which
$\csigma \not \in {\sf Add}(\la)$.  Our assumptions (and inspection of the  $\mathscr{A}_{(W,P)}$) imply that 
 $\csigma \in {\sf Rem}(\la-\crho)$; 
   furthermore if $m( \crho,\csigma)=4$, then  $\crho \in {\sf Rem}(\la-\crho-\csigma)$. 
By \cref{iftheshoecommutes} we can assume that 
\begin{align}\label{sdfhhkdrsdhsdgjhdfjhaf}
\sts =
\begin{cases}
\stt \otimes  \csigma \otimes \crho 		&\text{for }m(\crho,\csigma)=3 \text { and some }  \sts \in \Std(\la -\crho-\csigma)			\\
\stt \otimes   \crho\otimes  \csigma \otimes \crho &\text{for }m(\crho,\csigma)=4 \text { and    some }\sts \in \Std(\la -\crho-\csigma-\crho)
\end{cases}
\end{align}
this covers all instances of types $(A,A\times A)$, $(C,A)$, and $(D,A)$ cases.  
  We consider the first generic case in which  $m(\crho,\csigma)=3	$ (and $m=m(\ctau,\csigma)>2$).  We have  
\begin{align}
\begin{split}\label{dfkjdjfkdfk}
&{\sf 1}_\stt\otimes {\sf 1}_{\csigma\crho}
 \otimes {\sf braid}^{\csigma\ctau }
_{ \ctau\csigma }(m)
\\
 =&
 {\sf 1}_\stt\otimes
 ((
{\sf braid}^{\csigma\crho\csigma}_{\crho\csigma\crho}
{\sf braid}_{\csigma\crho\csigma}^{\crho\csigma\crho}\otimes 
{\sf1}_{\ctau\csigma}^{m-1} 
-{\sf spot}^{\csigma\crho\csigma}_{\csigma\emptyset\csigma}
{\sf dork}^{\csigma\csigma}_{\csigma \csigma }
{\sf spot}_{\csigma\crho\csigma}^{\csigma\emptyset\csigma}
))
 ( {\sf 1}_{\csigma\crho}
 \otimes {\sf braid}^{\csigma\ctau }
_{ \ctau\csigma }(m))
\end{split}
\end{align}
using the double-braid relation.  We now observe that 
\begin{align*}
{\sf 1}_\stt\otimes
{\sf braid}^{\csigma\crho\csigma}_{\crho\csigma\crho}\otimes 
{\sf1}_{\ctau\csigma}^{m-1}  =0\qquad \qquad 
 {\sf 1}_\stt\otimes   {\sf 1}_{\csigma}\otimes {\sf spot}_\crho^\emptyset 
 \otimes {\sf braid}^{\csigma\ctau }
_{ \ctau\csigma }(m) =0
\end{align*}
by induction (since $\ell(\stt)$,  $\ell(\stt\otimes \csigma)<\ell(\sts)$) and so both terms in \eqref{dfkjdjfkdfk} are zero, as  required.   
We now consider the second generic case in which $m(\crho,\csigma)=4	$ (which implies that $m=m(\ctau,\csigma)=3$).  We have   that 
\begin{align*}
 &{\sf 1}_\stt\otimes {\sf 1 }_{\crho\csigma\crho }
 \otimes \braid^{\csigma\ctau \csigma }
_{ \ctau\csigma \ctau } 
\\
 =
 &{\sf 1}_\stt\otimes
 (  \braid^{\crho\csigma\crho\csigma }_{\csigma\crho\csigma\crho }
   \braid_{\crho\csigma\crho\csigma}^{\csigma\crho\csigma\crho }\otimes {\sf 1}_{\ctau\csigma}
 + 
 \langle \alpha_\csigma^\vee , \alpha_\crho 
  \rangle 
\spot^{\crho\csigma\crho }
_{\crho\emptyset\crho  }
{\sf dork}^{\crho\crho  }
_{\crho\crho   }
\spot^{ \crho\emptyset\crho   }
_{\crho\csigma\crho  }\otimes {\sf 1}_{\csigma\ctau\csigma}
\\& + 
 \langle \alpha_\crho^\vee , \alpha_\csigma
  \rangle 
\spot^{\crho\csigma\crho\csigma } 
_{\crho\csigma \emptyset\csigma} 
\fork^{\crho \csigma\csigma}
_{\crho\csigma} 
\fork_{\crho\csigma\csigma }
^{\crho\csigma}
\spot^{ \crho  \csigma \emptyset \csigma}
_{\crho\csigma\crho\csigma}
\otimes {\sf 1}_{\ctau\csigma}
\\
&-
\spot^{\crho\csigma\crho\csigma\ctau\csigma}_{\crho\emptyset\crho\csigma\ctau\csigma}
\fork^{\crho\crho\csigma\ctau\csigma}_
{ \crho\csigma\csigma\ctau\csigma}
\spot^{ \crho\csigma\emptyset\csigma\ctau\csigma}_{ \crho\csigma\crho\csigma\ctau\csigma})
-\spot^{\crho\csigma\crho\csigma\ctau\csigma}_{\crho\csigma\emptyset\csigma\ctau\csigma}
\fork^{\crho\csigma \csigma\ctau\csigma}_{\crho\crho  \csigma\ctau\csigma}
\spot^{\crho\emptyset\crho  \csigma\ctau\csigma}
_{\crho\csigma\crho  \csigma\ctau\csigma})
({\sf 1}_{\crho\csigma\crho}\otimes \braid^{\csigma\ctau\csigma}_{ \ctau\csigma\ctau})
\end{align*}
and all of these terms are zero by induction on length (similarly to the $m(\crho,\csigma)=3$ case, above).
The righthand-side of this equation is depicted  in \cref{RHSfuck} in type $(C,A)$.

\begin{figure}[h!]

$$ 
{\sf 1}_{\stt}\otimes \left(\; \begin{minipage}{2.4cm} 
  $$ 
\caption{\color{black!99}
 Some examples of the ``exceptional" examples
 for which $\csigma \in {\sf Add}(\la)$ in Case 2 of the proof of \cref{jfgdkhgkjfdhglkjsdfhglsjkdfghlskdjfg}.
  The region $\mu\subseteq \la$  is pictured in yellow (with $\mu$ pictured in grey). 
   In the first case $( \csigma,\ctau )=({\color{magenta}s_2},{\color{cyan}s_1})$ and $\crho={\color{green!80!black}s_3}$.
  In the second case, 
  $( \csigma,\ctau )=({\color{magenta}s_4},{\color{cyan}s_5})$ and $\crho={\color{green!80!black}s_3}$.
  In the third case
   $( \csigma,\ctau )=({\color{magenta}s_3},{\color{cyan}s_6})$ and $\crho  \in\{s_2,s_4\}$. 
    The colouring of nodes is chosen to 
    emphasise the roles of  $\csigma,\ctau,\crho$  in each case (and so is inconsistent with the colouring of \cref{hhhh}).
    }
\label{jfgdkhgkjfdhglkjsdfhglsjkdfghlskdjfg}
\end{figure}

\smallskip
\noindent 
{\bf Exceptional subcases where $\csigma  \in {\sf Add}(\la)$. }  
If $\csigma  \in {\sf Add}(\la)$, this implies that $\ctau \not  \in {\sf Add}(\la)$, 
\color{purple}
because the addable tiles for a tile partition must commute with each other. 
(This is a general fact about fully commutative elements of Coxeter groups; for the analogous statement regarding removable tiles see e.g.~the proof of \cite[Theorem 4.2]{MR1406459}.)
\color{black}
 It remains to consider this case.  
We remark that  our assumptions on $\la$ and the fact that
$\csigma  \in {\sf Add}(\la)$ implies that we must be  
  in one of types $(D_n,D_{n-1})$, $(B_{n},B_{n-1})$ and exceptional type and so we refer to this as the ``exceptional" case.   
In this case, we can write $\mu\subseteq \la$ where $|\la|-|\mu|=L$ is  maximal  such that 
$\stt_{\mu\setminus\la}=s_{i_1}\dots s_{i_L}$ and $m(s_{i_k},\ctau)=2$ for all $1\leq k \leq L$ and 
such that  ${\sf 1}_{\sts \otimes \ctau}=0$ for any  $\sts  \in \Std(\mu)$ by (possibly repeated application of) the double-braid 
(pictured in \cref{2braid1}) 
and  cyclotomic relations \eqref{R11}.

 Rather than go into the word combinatorics for each exceptional case in detail (as they are all very similar) 
   we simply check one of these exceptional cases here. 
Further 
    illustrative examples are in \cref{jfgdkhgkjfdhglkjsdfhglsjkdfghlskdjfg} (and we leave these as an exercise for the reader).  
\color{black!99}    Let  $\la=(1^2,2^2) \in \mptn$ for $(W,P)=(E_6,D_5)$
    with 
    $\csigma = {\color{magenta}s_2} \in {\sf Add}(\la)$ and 
        $\crho = {\color{green!80!black}s_3} \in {\sf Rem}(\la)$ with 
        $m(\csigma,\crho)=3$ as pictured in \cref{jfgdkhgkjfdhglkjsdfhglsjkdfghlskdjfg}.  We have that 
        $ (1^2)=\mu \subseteq \la=(1^2,2^2)$ 
        and setting $\ctau=\color{cyan}s_1$ we note that  $m(\ctau, s_i)=2$ for $s_i \in \mu\setminus\la=     {\color{green!80!black}s_3}s_4 s_6 
         {\color{green!80!black}s_3}         	 
$. Using the colouring of the leftmost diagram in \cref{jfgdkhgkjfdhglkjsdfhglsjkdfghlskdjfg}, we have that 
$$\color{black}\scalefont{0.8}   \begin{minipage}{3.525cm}
\end{minipage}  
=0 $$The first equality follows by the 
double braid  relation    for $\color{black}m( s_i , \ctau)=2$  of \eqref{braidrelation1}  (pictured on the left in \cref{2braid1}) for $i=3,4,6$. 
The second equality follows by 
 the double-braid relation  \eqref{braidrelation1} (pictured in \cref{2braid1})
 and the    cyclotomic relation \eqref{R10}; the third equality follows by the commutation relation \eqref{braidrelation1}; the fourth by the    cyclotomic relation \eqref{R10}.
\end{proof}

  We now state the obvious corollary, for ease of reference.

\begin{defn}
 We   define a  {\sf simple Soergel diagram} to be any Soergel diagram which  does not  contain any barbells or  ${\sf braid}^{\csigma\ctau}_{\ctau\csigma}(m)$ for $m=m(\csigma,\ctau)>2$.  \end{defn}

 \begin{cor}\label{inlightof}
 Let $( W ,P)$ be a Hermitian symmetric pair.  We can define  $\mathcal{H}_{(W,P)}$ to be the locally-unital  associative $\Bbbk$-algebra spanned by all  
simple  Soergel   diagrams   with multiplication given by vertical concatenation of diagrams modulo   relations  
 \ref{R1}, \ref{R2}, \ref{R3}, \ref{R4}, \ref{R5},  
  \ref{R8}, \ref{R9}, \ref{R10}, \ref{R11}, 
    for   $(\orange, \ctau ,\crho)\in S^3$ 
with   $m(\orange, \crho)=
m(  \crho,\ctau) =m(\orange ,\ctau)=2$,  we have the commutation relations 
\begin{equation} \label{commutingrelations}
\begin{minipage}{1.3cm}
\end{minipage}\!\!\!=0
  \end{align*}
 and their horizontal flips.
        \end{cor}

\section{Light leaves   for the Hecke categories  of Hermitian symmetric pairs}\label{lightleaves}
In this section, we recall Libedinsky--Williamson's construction of the light leaves basis  in the case  of Hermitian symmetric pairs.   
This could have been done in \cref{coulda}, however we delayed until now so that we could  simplify the presentation of this material by virtue of \cref{simpligy}.   
We regard $\mathcal{H}_{(W,P)}$  as a locally unital associative algebra in the sense of 
\cite[Section 2.2]{MR4684337} via the following idempotent decomposition 
$$\mathcal{H}_{(W,P)}=\bigoplus _{
\begin{subarray} c 
\x \in   {\sf exp}(x) 
\\
\y \in   {\sf exp} (y)
\\
 x,y \in { ^PW}
\end{subarray}
}{\sf 1}_\x \mathcal{H}_{(W,P)}{\sf 1}_\y$$

\begin{rmk}\label{jkbhxlkhbkjhcxbvhgbvjxkhvkxcv}
Given $\sts,\stt \in \Std(\la)$, by \cref{iftheshoecommutes} we have that 
$${\sf braid}^\sts_\stt \circ {\sf 1}_\stt\circ {\sf braid}^\stt_\sts = {\sf 1}_\sts \qquad {\sf braid}_\sts^\stt 
\circ {\sf 1}_\sts\circ {\sf braid}_\stt^\sts = {\sf 1}_\stt$$
Thus from now on, we may fix any preferred choice of  $\stt_\la \in \Std(\la)$, for each $\la \in \mptn$. 

 \end{rmk}

By \cref{jkbhxlkhbkjhcxbvhgbvjxkhvkxcv}, we can truncate the set of weights to be ``as small as possible".

\begin{defn} \label{asinthisthing}
We set
 $$\textstyle
{\sf 1}_{(W,P)}
=
 \sum_{\mu \in \mptn } {\sf 1}_{\stt_\mu}  
 \qquad 
\text{and} 
\qquad   h_{(W,P)}=
  {\sf 1}_{(W,P)}
   \mathcal{H}_{(W,P)}
  {\sf 1}_{(W,P)}.
$$
\end{defn}

{ Let $\la,\mu \in \mptn$ and}  $\SSTT \in \Path_\ell(\la,\stt_\mu)$ be a path of the form
$$
\SSTT: \varnothing ={\la}_0 \to {\la}_1 \to {\la}_2  \to \dots 
 \to {\la}_\ell  = \la 
$$and we let  $\ctau \in {\rm Add}(\mu )$.   
 If   $ \ctau   \in {\rm Add}(\la)   $, we set $\la^+=\la\ctau=\la + \ctau$ and $\la^-= \la$.  
  If   $ \ctau   \in {\rm Rem}(\la)   $, we set $\la^+=\la  $ and $\la^-= \la-\ctau$.  
We set $\SSTT^+$ and $\SSTT^-$ to be the paths 
$$\SSTT^+: \varnothing ={\la}_0 \to {\la}_1 \to {\la}_2  \to \dots 
 \to {\la}_\ell  \to \la^+ 
\qquad
\SSTT^-: \varnothing ={\la}_0 \to {\la}_1 \to {\la}_2  \to \dots 
 \to {\la}_\ell  \to  \la^-.$$For the  empty path $\SSTT ^\varnothing$  we set $c_{\SSTT ^\varnothing}={\sf 1}_\varnothing$ to be empty diagram. 
 We now inductively define the basis via certain ``add" and ``remove" operators 
 (denoted $A^\pm_\ctau$ and $R^\pm_\ctau$ respectively).  
\color{black!99} If   $ \ctau   \in {\rm Add}(\la)   $, then 
  we define  
$$
A^+_\ctau(c_\SSTT):= 
 {\sf braid}^{\stt_{\la + \ctau}}_{\stt_\la \otimes \ctau}
( c_{\SSTT } \otimes {\sf 1}_\ctau )
 \qquad 
A^-_\ctau(c_\SSTT) := 
   c_{\SSTT} \otimes {\sf spot}_\ctau^\emptyset  .
$$and we set  $c_{{\SSTT^+}}  =A^+_\ctau(c_\SSTT)$ and 
$ c_{{\SSTT^-}}  =A^-_\ctau(c_\SSTT)$. 
 If   $ \ctau   \in {\rm Rem}(\la)   $, then $\la=\la'\ctau$ and  
  we define 
\begin{align*}
 R^+_\ctau(c_\SSTT) &:= 
 {\sf braid}^{\stt_{\la }}_{\stt_{\la'} \otimes \ctau}
  ({\sf 1}_{\stt_{\la'}} \otimes  {\sf fork}_{\ctau\ctau}^\ctau)
 ( {\sf braid}_{\stt_{\la }}^{\stt_{\la'} \otimes \ctau} c_{\SSTT } \otimes {\sf1}_\ctau )
 \\
  R^-_\ctau(c_\SSTT) &:= 
   ({\sf 1}_{\stt_{\la'}} \otimes  {\sf cap}_{\ctau\ctau}^\emptyset)
 ( {\sf braid}_{\stt_{\la}}^{\stt_{\la'} \otimes \ctau} c_{\SSTT } \otimes {\sf1}_\ctau )
\end{align*}and we set 
$c_{{\SSTT^+}}=R^+_\ctau(c_\SSTT)$ and $ c_{{\SSTT^-}}= 	  R^-_\ctau(c_\SSTT)$. 
 An example is given in the rightmost diagram in  \cref{basis-construct}. 
 \color{black}

\begin{figure}[ht!]
  $$
   \begin{minipage}{2.2cm}
\end{minipage}$$
\caption{Construction of a light leaves basis element (for $(A_5, A_2\times A_2)$, whose Bruhat graph is the leftmost of \cref{coxeterlabelA22NOW}) 
using the diagrammatic composition defined in  \cref{compose}.  
The rightmost diagram   is equal to  
$
 A_{{\color{green!80!black} s_1 }}^+
 R_{{\color{orange} s_4 }}^+
 R_{{\color{magenta} s_2 }}^+  
A_{{\color{cyan} s_3 }}^-  
 A_{{\color{orange} s_4 }}^+
 A_{{\color{magenta} s_2 }}^+ 
A_{{\color{cyan} s_3 }}^+ 
({\sf 1}_\emptyset)
$ 
}
\label{basis-construct}
\end{figure}

 \color{black!99}
We are hence able to equip the algebras $h_{(W,P)}$ with powerful ``light leaves'' graded cellular bases  which  encode a great deal of representation theoretic information. 
  
%
%
%
%
%
%
%
%
%
%
%
%

\begin{thm}[{\cite[Section 6.4]{MR3555156} and {\cite[Theorem 5.3]{MR4437613}}}]  \label{LEW1}
Let  $\la_1\prec  \la_2 \prec  \dots \prec  \la_t$  be any total refinement of the Bruhat order $\leq $ on $\mptn$. 
  The  algebra   
$ {h}_{(W,P)}  $  
has a   chain of two-sided ideals 
 $$0 \subset {h}_{(W,P)} {\sf1}_{\la_1}		{h}_{(W,P)} \subset
 {h}_{(W,P)} ({\sf1}_{\la_1}+{\sf1}_{\la_2}){h}_{(W,P)} \subset
  \dots \subset
 {h}_{(W,P)} ({\sf1}_{\la_1}+{\sf1}_{\la_2}+\dots+ {\sf1}_{\la_t}){h}_{(W,P)} ={h}_{(W,P)}$$such that 
\begin{equation}\label{basis}
\{c^{\la_k}_{\SSTS\SSTT}:=c^*_{\SSTS}c_{\SSTT}  \mid \SSTS \in \Path _{(W,P)}( \la_k,\stt_\mu) ,  \SSTT \in   \Path _{(W,P)}( \la_k,\stt_\nu),   \mu,\nu \in  \mathscr{P}_{(W,P)} \}
\end{equation}
is a $\Bbbk$-basis of ${h}_{(W,P)} {\sf1}_{\la_k}{h}_{(W,P)} /
 {h}_{(W,P)} {\sf1}_{\la_{k-1}}{h}_{(W,P)} $. Thus ${h}_{(W,P)} $ is 
 is a graded cellular algebra  in the sense of \cite{hm10} 
 and a  quasi-hereditary algebra in the sense of  \cite{CPS1}.
\end{thm}

\color{purple}

 \begin{rmk} \label{parabolicprop}
 The careful reader will notice that we have \emph{not} assumed the {\em parabolic property} from \cite[\S 2.3]{MR4437613}, as this can fail in positive characteristic. 
 It turns out that this condition is not always necessary to obtain a light leaves basis as in \cref{LEW1} above. 
  In our setting, the trick in \cite[Example 1.11(1)]{MR4510171} is always enough to show that the light leaves construction yields a basis in any characteristic (including $p=2$, by way of \cref{demazure}). 
\color{magenta}
In more detail: let $(\mathbb{K},\mathcal{O},\Bbbk)$ be a $p$-modular system. 
In other words, suppose $\mathcal{O}$ is a complete discrete valuation ring whose residue field is $\Bbbk$ and whose field of fractions $\mathbb{K}$ is of characteristic $0$. 
Since $W$ is a Weyl group there is a standard $\mathcal{O}$-form $\mathcal{H}^{\mathcal{O}}_{(W,P)}$ of the algebra $\mathcal{H}_{(W,P)}$, which arises from the $\mathcal{O}$-form of the geometric realisation of $W$. 
This realisation is faithful (and thus satisfies the parabolic property) because the fraction field $\mathbb{K}$ is of characteristic $0$. 
By \cite[Theorem 5.3]{MR4437613} the light leaves construction yields a basis for $\mathcal{H}^{\mathcal{O}}_{(W,P)}$, which descends to a light leaves basis for $\mathcal{H}_{(W,P)}$. 
\color{purple} 

 \end{rmk}
 
 \color{black!99}

\begin{rmk}
Whilst the quasi-heredity property is not mentioned explicitly in {\cite[Section 6.4]{MR3555156} and {\cite[Theorem 5.3]{MR4437613}}},  one of the first   theorems in the literature on  cellular algebras
was that they are quasi-hereditary if and only if 
 each layer of the cell-filtration has an idempotent, as above   (see \cite[Proposition 4.1]{MR1648638}).  We refer to a  quasi-hereditary algebra as having  a {\sf highest weight structure} as in \cite{CPS1}.
\end{rmk}

\color{black}
%
%
%
When it cannot result in confusion, we write $c_{\SSTS\SSTT}$ for $c_{\SSTS\SSTT}^\la$. For $\mu \in \mptn$, we define  one-sided ideals 
\begin{align*}
h_{(W,P)} ^{ \leq  \mu  }  = {\sf1}_{	\leq \mu	} h_{(W,P)}  
 &&
h_{(W,P)} ^{< \mu}  =
h_{(W,P)} ^{ \leq  \mu }   \cap \Bbbk \{ c^\la_{\SSTS\SSTT}  \mid 
 \SSTS,\SSTT  \in \Path 
 ( \la ), \la <   \mu \}   
 \end{align*}
and we hence define the   {\sf standard} or {\sf cell}  modules of $h_{(W,P)}$ as follows 
  \begin{equation}  \label{identification}
        \Delta ( \mu) = \{ c^\mu_\SSTS:=c^\mu_{\stt^\mu\SSTS} +  h_{(W,P)} ^{< \mu} 
       \mid \SSTS \in \Path ( \mu, \stt_\nu  ), \nu \in \mptn \} . \end{equation}
   We recall that the   cellular structure allows us to define, for each
 $ \mu \in \mptn$,  a bilinear form 
  $\langle\ ,\ \rangle^{ \mu} $ 
   on $\Delta(\mu) $    which
is determined by
\begin{equation}\label{geoide}
   c^\mu_{\SSTS \SSTT}  c^\mu_{\SSTU \SSTV}\equiv
  \langle c _\SSTT,c _\SSTU \rangle^ \mu  c^\mu_{\SSTS\SSTV}
  \pmod{ h_{(W,P)}^{< \mu}} 
  \end{equation}
for any $\SSTS,\SSTT,\SSTU,\SSTV\in \Path(\mu, -)$.  
 We obtain a complete set of non-isomorphic simple modules 
 for $h_{(W,P)} $   as follows 
 $$ 
L ( \mu) =
\Delta  ( \mu) /
  \rad(\langle \;\; ,\;\;  \rangle^{ \mu} )
   $$
for $ \mu\in\mptn$.  
The projective indecomposable $h_{(W,P)} $-modules  are the direct summands 
$$
{\sf 1}_{\stt_\mu} h_{(W,P)} = 
\bigoplus_{ \la \leq  \mu	}
\dim_q( L (\la){\sf 1}_{\stt_{\mu}})  P(\la).
$$For $\Bbbk$ a field of characteristic $p\geq 0$, the  {\sf anti-spherical $p$-Kazhdan--Lusztig polynomials}  are defined  as follows, 
\begin{equation}\label{hjhjhjhas}
{^p}{n}_{\la,\mu}(\grade)
:=
 \dim_\grade (\Hom_{ h_{(W,P)} }(P (\la),\Delta (\mu))
 =
\sum_{k\in \ZZ}   [\Delta  ( {\mu})	: L  (\la)		\langle k \rangle ] \grade^k   
\end{equation} for any  $\la,\mu\in \mptn$.  
These polynomials were first defined via the {diagrammatic character} of 
  \cite[Definition 6.23]{MR3555156} and \cite[Section 8]{MR4437613} and  rephrased as above in \cite[Theorem 4.8]{MR3591153}. 
\color{black!99}
  Elias--Williamson and Libedinsky--Williamson \cite{MR3245013,MR4437613} proved that 
 over a field of characteristic   $p=0$, the (anti-spherical) 
$p$-Kazhdan--Lusztig polynomials are, in fact, equal to the 
 classical (anti-spherical)  Kazhdan--Lusztig polynomials of \cite{DD1,MR1444322} thus justifying the nomenclature.   
Interesting families of   $p$-Kazhdan--Lusztig polynomials 
calculated in the literature include   Williamson's torsion explosion 
examples \cite{w13} and examples of 
Fiebig, 
Lanini--McNamara and  
Libedinsky--Williamson, 
\cite{MR2999126,MR2667425,MR4287720,MR3693504};  algorithms for calculating the  $p$-Kazhdan--Lusztig polynomials  are considered in 
\cite{MR4633007,MR3611719}.

\color{black}  
 
 \begin{rmk}
 We note that $\mathcal{H}_{(W,P)}$ and $h_{(W,P)}$  are graded Morita equivalent (as the latter is obtained from the former by a truncation which 
 does not kill any simple module). \end{rmk}

\color{black!99}

\begin{rmk}\label{spherical}
The trivial and sign representations of the Hecke algebra give rise to the spherical and anti-spherical Hecke categories, respectively. 
The latter of which are the focus of this paper and are more well-studied  than their spherical counterparts (see for example \cite{MR3805034,MR3766576,MR4633007,MR4381198,MR4510171,MR4611117}).  
The   spherical  Hecke categories  are much more mysterious: constructing presentations of these categories (generalising \cref{the algebra}) is an important open problem. 
The problem of constructing bases of these categories for type $(A_n,A_{k-1}\times A_{n-k}) $   was solved in \cite{MR4323501} using tiling combinatorics akin to this paper; generalising this to all parabolic Coxeter systems has been the subject of much recent work \cite{MR4563864,singlightleaves,elias2024reduced}. 
   The $p$-(in)dependence of  spherical  
$p$-Kazhdan--Lusztig polynomials  is the focus of a recent preprint by Baine 
  \cite{baine2024cominusculeheckecategories}. 


\end{rmk}

\color{black}

\subsection{A ``singular"  horizontal concatenation}
Singular Soergel bimodules were first considered in Williamson's thesis, where it was proven that they categorify the Hecke algebroid \cite{MR2844932}.  
At present, 
we do not have a 
 diagrammatic construction of the category of singular Soergel bimodules (although some progress has been made, see \cite[Chapter 24]{MR4220642}).  
In this paper, we will give a complete realisation of   singular Soergel bimodules  within the diagrammatic Hecke category for $(W,P)$ a Hermitian symmetric pair. 
 In order to accomplish this goal, we first need to provide a Soergel-diagrammatic analogue of the tensor product (denoted $\otimes_{R^{\ctau}}$) for singular Soergel bimodules ``on a $\ctau$-hyperplane" for $\ctau \in S_W$ (which we will denote by $\color{cyan}\circledast$).  
 
 \begin{defn}
 We suppose that a
 diagram $D\in {\sf 1}_{\x\ctau} \mathcal{H}_{(W,P)} {\sf 1}_{\y\ctau}$
 is such that 
 $(i)$ the
 rightmost $\ctau $ in the northern 
boundary  is connected to the  rightmost  $\ctau$ in the  southern boundary by a strand 
and  $(ii)$ there are no barbells to the right of this strand.  
We say that such a diagram has a 
{\sf a final exposed propagating 
 $\ctau$-strand}.
Similarly, we define a 
 {\sf  first exposed propagating 
 $\ctau$-strand} by reflecting this definition through the vertical axis.  
 \end{defn}

 \begin{defn} \label{compose} 
  We let 
 $D_1  \in {\sf 1}_{\x\ctau} \mathcal{H}_{(W,P)}  {\sf 1}_{\y\ctau}
 $  and 
 $D_2  \in {\sf 1}_{\ctau\underline{u}} \mathcal{H}_{(W,P)}  {\sf 1}_{\ctau\underline{v} }
$.  
We suppose that $D_1$ (respectively $D_2$) has a final (respectively first) exposed propagating $\ctau$-strand.  
 We define 
\begin{align}\label{adfsjkhahakfhshjkafsd}
D_1   
  \compose  D_2 =
 (D_1 \otimes {\sf 1}_{\underline{u}})
  ({\sf 1}_\y \otimes D_2)
    = 
  ({\sf 1}_\x \otimes D_2)
( D_1 \otimes {\sf 1}_{\underline{v}})
\end{align}
    Now suppose that   $  	D_1' = \braid_{\x\ctau}^{\underline{w}}	D_1 
  \braid^{\y\ctau}_{\underline{z}}  
   \neq0 $.
       We extend the above definition as follows
\begin{align}\label{adfsjkhahakfhshjkafsd2}
D_1 '  
  \compose  D_2   = 
( \braid_{\x\ctau}^{\underline{w}}     \otimes {\sf 1}_{\underline{u}})
 ( D_1 
\compose D_2)
 (\braid^{\y\ctau}_ {\underline{z}}\otimes {\sf 1}_{\underline{v}}).
\end{align}

\begin{figure}[ht!]

$$
   \begin{minipage}{3.5cm}
\end{minipage}$$
\caption{An example of $\compose$ merging the rightmost and leftmost $\ctau$-strands.}
\label{hhghghghghghg}
\end{figure}

    \end{defn}

\begin{rmk}
 Diagrammatically, we can think of $\compose$ as identifying the rightmost 
 $\ctau$-strand of $D_1^{(')}$ 
  with the leftmost $\ctau$-strand of $D_2$.  
For examples, see \cref{basis-construct,hhghghghghghg,illustrative}.   
 \end{rmk}
  
  \begin{prop} \label{interchange}
   The operation $\compose $ satisfies the interchange law: $$(D_1\compose D_2)\circ (D_3 \compose D_4)= (D_1\circ D_3)\compose  (D_2 \circ  D_4).$$      
\end{prop}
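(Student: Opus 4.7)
The plan is to prove the interchange law by reducing it to the standard interchange law for the monoidal tensor $\otimes$, using both equivalent factorisations of $\compose$ supplied by~\eqref{adfsjkhahakfhshjkafsd} to reshuffle the middle of the vertical composite. Write $D_1\in{\sf 1}_{x_1\ctau}\mathcal{H}_{(W,P)}{\sf 1}_{y_1\ctau}$ and $D_2\in{\sf 1}_{\ctau x_2}\mathcal{H}_{(W,P)}{\sf 1}_{\ctau y_2}$, and similarly $D_3\in{\sf 1}_{y_1\ctau}\mathcal{H}_{(W,P)}{\sf 1}_{z_1\ctau}$, $D_4\in{\sf 1}_{\ctau y_2}\mathcal{H}_{(W,P)}{\sf 1}_{\ctau z_2}$, so that the vertical composites $D_1\circ D_3$ and $D_2\circ D_4$ are defined.

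First I would expand the left hand side by using the first form of~\eqref{adfsjkhahakfhshjkafsd} for $D_1\compose D_2=(D_1\otimes{\sf 1}_{x_2})({\sf 1}_{y_1}\otimes D_2)$ and the second form for $D_3\compose D_4=({\sf 1}_{y_1}\otimes D_4)(D_3\otimes{\sf 1}_{z_2})$. Stacking these vertically, the two inner factors combine under ordinary monoidal interchange as
\[
({\sf 1}_{y_1}\otimes D_2)\circ({\sf 1}_{y_1}\otimes D_4)\;=\;{\sf 1}_{y_1}\otimes(D_2\circ D_4).
\]
Setting $E:=D_2\circ D_4$, I would then apply~\eqref{adfsjkhahakfhshjkafsd} again, this time to $D_1\compose E$, to swap the order of $D_1$ past $E$ and obtain $(D_1\otimes{\sf 1}_{x_2})({\sf 1}_{y_1}\otimes E)=({\sf 1}_{x_1}\otimes E)(D_1\otimes{\sf 1}_{z_2})$. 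A final use of ordinary $\otimes$-interchange collapses $(D_1\otimes{\sf 1}_{z_2})(D_3\otimes{\sf 1}_{z_2})$ into $(D_1\circ D_3)\otimes{\sf 1}_{z_2}$, and then the second form of~\eqref{adfsjkhahakfhshjkafsd} recognises $({\sf 1}_{x_1}\otimes E)\bigl((D_1\circ D_3)\otimes{\sf 1}_{z_2}\bigr)$ as $(D_1\circ D_3)\compose E$, which is the desired right hand side.

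For the extended definition of $\compose$ via the braid-conjugated form~\eqref{adfsjkhahakfhshjkafsd2}, the braid prefixes are of the shape ${\sf braid}\otimes{\sf 1}$ supported on disjoint regions from the rest of the diagram, so they slide to the top and bottom of the computation by ordinary $\otimes$-interchange; the four-step manipulation above then applies to the inner untwisted diagrams and the braids reassemble symmetrically on the outsides of both sides of the equation. I do not expect any real obstacle here: the only non-formal point to verify is that $D_1\circ D_3$ and $D_2\circ D_4$ still carry a final, respectively first, exposed propagating $\ctau$-strand so that both sides are well-defined, but this is immediate because each propagating $\ctau$-strand extends through the shared horizontal boundary under vertical concatenation and the barbell-free region to its right (respectively left) is preserved. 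Conceptually, the proposition is the algebraic incarnation of the pictorial fact that a $2\times 2$ array of diagrams sharing a central $\ctau$-column can be read either as two rows glued horizontally and then stacked, or as two columns stacked vertically and then glued.
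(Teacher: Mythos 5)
Your argument is correct and takes essentially the same route as the paper: the paper's proof is one line saying the result "follows immediately by diagram chasing" from the two equivalent factorisations in the definition of $\compose$ plus the monoidal interchange law, and your four-step expansion is exactly that diagram chase spelled out, with the braid-conjugated case handled the same way (sliding the braid prefixes to the outside). The one small addition you make — checking that $D_1\circ D_3$ and $D_2\circ D_4$ retain a final/first exposed propagating $\ctau$-strand so that both sides are well-defined — is a worthwhile observation the paper leaves implicit.
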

 
 \begin{proof}
For \cref{adfsjkhahakfhshjkafsd}, the result follows immediately by diagram chasing using the fact that 
\begin{align}\label{askjhghdjkghdsghdsg}
(D_1 \otimes {\sf 1}_{\underline{u}})
  ({\sf 1}_\y \otimes D_2)
    = 
  ({\sf 1}_\x \otimes D_2)
( D_1 \otimes {\sf 1}_{\underline{v}})
\end{align}
The case of \cref{adfsjkhahakfhshjkafsd2} follows by applying commuting braid generators.  
 \end{proof}

We will  abuse notation   and use $
\compose$ as a shorthand as follows.

 \begin{defn} \label{compose2}
   We let 
 $D_1  \in {\sf 1}_{\x\ctau} \mathcal{H}_{(W,P)}  {\sf 1}_{\y\ctau}
 $  and 
 $D_2  \in {\sf 1}_{\ctau\underline{u}} \mathcal{H}_{(W,P)}  {\sf 1}_{\ctau\underline{v} }
$.  
We suppose that $D_1$ (respectively $D_2$) has a final (respectively first) exposed propagating $\ctau$-strand.  
 We define 
\begin{align}\label{adfsjkhahakfhshjkafsd3}
D_1   
  \compose  ({\sf spot}_\ctau^\emptyset \otimes {\sf 1}_{\underline{u}}) D_2)   = 
({\sf 1}_{\x} \otimes {\sf spot}_\ctau^\emptyset \otimes {\sf 1}_{\underline{u}}) 
( D_1   
  \compose D_2).
\end{align}
We extend this via commuting braid generators in an analogous fashion to 
\cref{adfsjkhahakfhshjkafsd2}. 
  \end{defn}

\begin{rmk}
We note that the operation in \cref{adfsjkhahakfhshjkafsd3} considers non-propagating $\ctau$-strands and therefore does not satisfy the interchange law (as \cref{askjhghdjkghdsghdsg} no longer holds).  
\end{rmk}

 \section{Categorical combinatorial invariance }\label{cat-cob}
 
In this section we provide new  presentations of the anti-spherical Hecke categories $\mathcal{H}_{(W,P)}$ for ${(W,P)}$ a simply laced Hermitian symmetric pair.  Our presentations encode the combinatorics of the Bruhat graph more effectively than those of \cref{the algebra,inlightof}.
%

\color{black}



\begin{defn}\label{subquot}\color{black!99}
We let $\la,\mu \in {^PW}$ and we set  
$\Pi= [\la,\mu]:= \{  \nu \mid \la\leq \nu\leq \mu\}  $. 
 We let
\begin{align}\label{idempotennnnnssss}
e = \sum_{   \la \not \leq \nu } {\sf 1}_\nu \quad \text{and} \quad
f = \sum_{\nu \leq \mu} {\sf 1}_\nu 
\end{align}
in $\mathcal{H}_{(W,P)}$.  
We let $\mathcal{H}_{(W,P)}^{\Pi} $ denote the subquotient of $\mathcal{H}_{(W,P)}$ given by
\[
\mathcal{H}_{(W,P)}^\Pi= f(\mathcal{H}_{(W,P)}/(\mathcal{H}_{(W,P)} e \mathcal{H}_{(W,P)}))f.
\]
We set $h_\Pi = {\sf 1}_{(W,P)} \mathcal{H}_{(W,P)}^\Pi {\sf 1}_{(W,P)}$  as in \cref{asinthisthing}.
\end{defn}

 \begin{thm}\label{HSP-comb}\color{black!99}
  Let  $\Pi=[\la,\mu]$ and $\Pi=[\la',\mu']$ be  subquotients of the Bruhat graphs of   Hermitian symmetric pairs $\la,\mu\in (W,P)$ and $\la',\mu'\in (W',P')$.  If  $\Pi$ and $\Pi'$   are isomorphic as partially ordered sets, then the corresponding subquotients   
 $ \mathcal{H}^{\Pi}_{(W,P)}$ 
and 
 $ \mathcal{H}^{\Pi'}_{(W',P')}$ 
  are       Morita equivalent 
 (in the sense of \cite[Section 2.2]{MR1644252})
 and this equivalence preserves the 
 grading, cellular, and highest-weight structures of these algebras.   
 \end{thm}
 
This section is dedicated to the proof of  \cref{HSP-comb}.  In order to do this, we must first provide ``Tetris-style" presentations of these categories.

%

\subsection{Tetris combinatorics for ``gaps" in reduced words}  \label{zeroooooooo}
 In what follows we let $\csigma,\ctau \in S_W$ with $m(\csigma,\ctau)=3$ or $4$.  
 If  $m(\csigma,\ctau)= 4$ and    $(W,P)=(C_n,A_{n-1})$, then suppose that 
 $(\csigma,\ctau)= ( s_1 , s_2)$ or if that $(W,P)=(B_n,B_{n-1})$, then 
 $(\csigma,\ctau)= (s_2,s_1)$.   
Then    $\langle\alpha_\csigma^\vee,\alpha_\ctau\rangle=-1$, and the two-colour $\csigma\ctau$-barbell relation is
\begin{equation}\label{useful2} 
  \begin{minipage}{1.4cm}

\end{minipage} 
  \end{equation}
We now provide inductive versions of the \cref{useful1,useful2}.  
 First, we define a  {\sf type $A$ string} $T\subseteq S_W$ to be an ordered  set of reflections 
 $s_{i_1}, \dots, s_{i_t}$ such that $m(s_{i_j}, s_{i_{k}})=2+\delta_{j,k+1}+\delta_{j,k-1}$  for $1\leq j,k\leq t$.  By induction on \cref{useful1} we have the following:
 
%

 \begin{lem}\label{lem1}
 Let $T\subseteq S_W $ be a    type $A$ string.  
 We have  {\color{black!99} the following local relation}
 $${\sf 1}_{s_{i_1} s_{i_2} \dots  s_{i_{t-1}}}\otimes {\sf bar}(s_{i_t})
 =\sum_{k=1} ^{t} {\sf bar}(s_{i_k})	\otimes 
{\sf 1}_{s_{i_1} s_{i_2} \dots  s_{i_{t-1}}}
  -
\sum_{k=1} ^{t-1}{\sf 1}_{s_{i_1} s_{i_2} \dots  s_{i_{k-1}}} \otimes 
{\sf gap}(s_{i_k}) \otimes 
   {\sf 1}_{s_{i_{k+1}}  \dots   s_{i_{t-1}}}
 $$
 \end{lem}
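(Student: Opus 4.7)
The plan is to prove this by induction on the length $t$ of the type $A$ string $T=(s_{i_1},\dots,s_{i_t})$.

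For the base case $t=2$, the claim reads
$${\sf 1}_{s_{i_1}}\otimes {\sf bar}(s_{i_2}) = {\sf bar}(s_{i_1})\otimes {\sf 1}_{s_{i_1}} + {\sf bar}(s_{i_2})\otimes {\sf 1}_{s_{i_1}} - {\sf gap}(s_{i_1}),$$
which is precisely the content of \cref{useful1}. (Equivalently, this is obtained by combining the two-colour $\csigma\ctau$-barbell relation~\ref{R5} with $\langle\alpha_{s_{i_1}}^\vee,\alpha_{s_{i_2}}\rangle=-1$ and the one-colour barbell relation~\ref{R4} to eliminate the ${\sf 1}_{s_{i_1}}\otimes {\sf bar}(s_{i_1})$ term.)

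For the inductive step, suppose the formula holds for all type $A$ strings of length $t-1$. Since $T$ is a type $A$ string, we have $m(s_{i_t},s_{i_j})=2$ for all $j\leq t-2$, so the commutation relations let us isolate the rightmost two strands. Applying the base case to the subword $s_{i_{t-1}}s_{i_t}$ gives
$${\sf 1}_{s_{i_1}\cdots s_{i_{t-1}}}\otimes {\sf bar}(s_{i_t}) = {\sf 1}_{s_{i_1}\cdots s_{i_{t-2}}}\otimes\bigl({\sf bar}(s_{i_t})\otimes {\sf 1}_{s_{i_{t-1}}} + {\sf bar}(s_{i_{t-1}})\otimes {\sf 1}_{s_{i_{t-1}}} - {\sf gap}(s_{i_{t-1}})\bigr).$$
The first term simplifies to ${\sf bar}(s_{i_t})\otimes {\sf 1}_{s_{i_1}\cdots s_{i_{t-1}}}$ by commuting the ${\sf bar}(s_{i_t})$ past ${\sf 1}_{s_{i_1}\cdots s_{i_{t-2}}}$, using that $s_{i_t}$ commutes with each $s_{i_j}$ for $j\leq t-2$. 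For the second term I apply the inductive hypothesis to the length-$(t-1)$ string $(s_{i_1},\dots,s_{i_{t-1}})$, obtaining
$${\sf 1}_{s_{i_1}\cdots s_{i_{t-2}}}\otimes {\sf bar}(s_{i_{t-1}})\otimes {\sf 1}_{s_{i_{t-1}}} = \sum_{k=1}^{t-1}{\sf bar}(s_{i_k})\otimes {\sf 1}_{s_{i_1}\cdots s_{i_{t-1}}} - \sum_{k=1}^{t-2}{\sf 1}_{s_{i_1}\cdots s_{i_{k-1}}}\otimes {\sf gap}(s_{i_k})\otimes {\sf 1}_{s_{i_{k+1}}\cdots s_{i_{t-1}}}.$$
Combining with the ${\sf bar}(s_{i_t})$ contribution and the outer $-{\sf 1}_{s_{i_1}\cdots s_{i_{t-2}}}\otimes{\sf gap}(s_{i_{t-1}})$ term (which is precisely the missing $k=t-1$ summand on the gap side, noting that ${\sf 1}_{s_{i_{k+1}}\cdots s_{i_{t-1}}}$ is empty when $k=t-1$) yields the desired identity.

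There is no serious obstacle here; the proof is a clean induction and the only thing to watch is the bookkeeping of indices at the endpoints of the two sums. The essential content is that the two-colour barbell relation produces precisely one "extra" gap term each time a barbell is commuted past an adjacent strand, while passing a barbell across a commuting strand is free.
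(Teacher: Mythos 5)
Your proof is correct and is precisely the argument the paper intends: the lemma is introduced with "By induction on \cref{useful1}," and your base case is exactly \cref{useful1} (itself a combination of the two-colour barbell relation with $\langle\alpha_\csigma^\vee,\alpha_\ctau\rangle=-1$ and the one-colour barbell relation), while your inductive step correctly uses commutativity to move ${\sf bar}(s_{i_t})$ past the far strands and then feeds the remaining ${\sf bar}(s_{i_{t-1}})$ term back into the inductive hypothesis, with the outer gap term filling in the missing $k=t-1$ summand.
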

 
  By induction on \cref{useful2} we have the following:
 
 \begin{lem}\label{lem2}
 Let $T\subseteq S_W$ be a    type $A$ string.  
 We have   {\color{black!99} the following local relation}
 $$
  \sum_{k=1} ^t 
  {\sf 1}_{s_{i_2} s_{i_3} \dots  s_{i_{t}}}\otimes   
 {\sf bar}(s_{i_k})	
 =
{\sf bar}(s_{i_1})	\otimes {\sf 1}_{s_{i_2} s_{i_3} \dots  s_{i_{t}}}
 +
\sum_{k=2} ^t
{\sf 1}_{ s_{i_2}s_{i_3} \dots  s_{i_{k-1}}} \otimes 
 {\sf gap}(s_{i_k}) \otimes 
  {\sf 1}_{s_{i_{k+1}}  \dots   s_{i_{t}}}
 $$
 \end{lem}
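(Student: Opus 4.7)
The plan is to deduce \cref{lem2} from \cref{lem1} by exploiting the horizontal-reflection symmetry of $\mathcal{H}_{(W,P)}$. Because the relations in \cref{the algebra} are explicitly closed under horizontal flip, there is a $\Bbbk$-algebra involution $\phi$ of $\mathcal{H}_{(W,P)}$ that sends each Soergel graph to its mirror image about the vertical axis; in particular $\phi({\sf 1}_{s_{a_1}\cdots s_{a_m}}) = {\sf 1}_{s_{a_m}\cdots s_{a_1}}$, whereas ${\sf bar}(s)$ and ${\sf gap}(s)$, being themselves horizontally symmetric, are fixed by $\phi$. Since $\phi$ reverses the $\otimes$-order, it converts identities about ``pushing a single bar leftward through a string of strands'' into identities about ``pushing a cluster of bars rightward through a string of strands''.

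My proof will simply apply \cref{lem1} to the reversed string $T^{\mathrm{rev}} := (s_{i_t}, s_{i_{t-1}}, \ldots, s_{i_1})$---which is still a type~$A$ string because the adjacency condition on $m(s_{i_j}, s_{i_k})$ is symmetric in $j$ and $k$---and then apply $\phi$ to both sides. After relabelling $s_{j_k} := s_{i_{t+1-k}}$ and reindexing each summation via $k \mapsto t+1-k$, the three blocks in the image of \cref{lem1} become, in order: the expression ${\sf bar}(s_{i_1}) \otimes {\sf 1}_{s_{i_2}\cdots s_{i_t}}$ from the former left-hand side; the sum $\sum_{k=1}^{t} {\sf 1}_{s_{i_2}\cdots s_{i_t}} \otimes {\sf bar}(s_{i_k})$ from the former first right-hand sum; and the sum $\sum_{k=2}^{t} {\sf 1}_{s_{i_2}\cdots s_{i_{k-1}}} \otimes {\sf gap}(s_{i_k}) \otimes {\sf 1}_{s_{i_{k+1}}\cdots s_{i_t}}$ from the former gap sum. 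Transposing the first right-hand sum to the left then yields \cref{lem2} verbatim.

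The only work is bookkeeping: verifying that the double substitution (by $\phi$ and by $k \mapsto t+1-k$) matches up each tensor factor and each summation range, and in particular that the range $k=1,\ldots,t-1$ in \cref{lem1} becomes $k=2,\ldots,t$ in \cref{lem2}. There is no genuine algebraic obstacle. For readers preferring a hands-on proof by induction on $t$ in the spirit of \cref{lem1}, the case $t=2$ is precisely \cref{useful2}, and the inductive step can be carried out by peeling off the rightmost strand $s_{i_t}$ of the idempotent and dispatching each summand ${\sf 1}_{s_{i_2}\cdots s_{i_t}} \otimes {\sf bar}(s_{i_k})$ via \cref{useful2} when $k \in \{t-1,t\}$ and via the commutativity relations of \cref{inlightof} otherwise, reducing to the $(t-1)$-case.
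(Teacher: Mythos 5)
Your main idea---reduce \cref{lem2} to \cref{lem1} by mirroring, since \cref{useful1} and \cref{useful2} are precisely horizontal flips of one another---is correct, and the reindexing bookkeeping works out as you describe. The paper instead proves \cref{lem2} by its own parallel induction on \cref{useful2} (the route you sketch as an alternative in your last sentence), so your mirror argument is a genuinely different route that avoids redoing the induction.

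There is, however, one imprecision that needs to be fixed before the argument is airtight: horizontal flip is \emph{not} a $\Bbbk$-algebra involution of $\mathcal{H}_{(W,P)}$ itself. While the \emph{local} relations of \cref{the algebra} are closed under horizontal flips, the non-local cyclotomic relations \ref{R10} and \ref{R11} are explicitly one-sided (${\sf bar}(\csigma)\otimes{\sf 1}_{\underline{w}}=0$ and ${\sf 1}_\ctau\otimes{\sf 1}_{\underline{w}}=0$, not their mirrors), so the two-sided ideal they generate is not stable under $\phi$ and $\phi$ does not descend to the quotient. The repair is cheap: \cref{useful1} and \cref{useful2} are local identities, so \cref{lem1} holds already in the Elias--Williamson diagrammatic Hecke category for $W$ (before imposing any cyclotomic relations), where $\phi$ \emph{is} a well-defined monoidal anti-automorphism. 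Run your reflection--reindexing argument there to get \cref{lem2} in the pre-cyclotomic algebra, and then project to $\mathcal{H}_{(W,P)}$. With that one-sentence adjustment about the domain of $\phi$, the proof is complete. The direct induction you sketch at the end (peel off the rightmost strand, dispatch each summand via \cref{useful2} when it is adjacent and via commutativity otherwise) is also correct and is effectively the paper's own proof.
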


 \begin{lem} \label{Astuffiszero}
 Let $\betar, \gam \in S_W $  be such that $m( \betar, \gam)=3$.   
We have the  following local relation 
\begin{align}\label{typeAtoprove}
 \on_{	  \gam	}\otimes 
 \left( {\sf bar}(\betar) +  
  {\sf bar}(\gam )   \right)\otimes {\sf 1}_\betar=0.
   \end{align}
    \end{lem}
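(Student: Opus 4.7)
The strategy is to reduce the expression via the two-colour barbell relation to a ``gap''-diagram at the leftmost position, and then to kill this residual term using the non-local cyclotomic relations. The first move applies \cref{useful2} (which is a direct consequence of \ref{R5} with $(\csigma,\ctau)=(\gam,\betar)$ together with the one-colour barbell relation \ref{R4}); since $\langle \alpha_\gam^\vee, \alpha_\betar\rangle = -1$ when $m(\gam,\betar)=3$, this yields
$$
{\sf 1}_\gam \otimes \bigl({\sf bar}(\betar) + {\sf bar}(\gam)\bigr) \;=\; {\sf bar}(\betar) \otimes {\sf 1}_\gam \;+\; {\sf gap}(\gam).
$$
Tensoring on the right with ${\sf 1}_\betar$ and using the interchange law \ref{R8} gives
$$
{\sf 1}_\gam \otimes \bigl({\sf bar}(\betar) + {\sf bar}(\gam)\bigr) \otimes {\sf 1}_\betar \;=\; {\sf bar}(\betar) \otimes {\sf 1}_{\gam\betar} \;+\; {\sf gap}(\gam)\otimes {\sf 1}_\betar,
$$
and the first summand vanishes immediately by the leftmost-barbell cyclotomic relation \ref{R10}.

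It therefore remains to check that ${\sf gap}(\gam)\otimes {\sf 1}_\betar = 0$. Using the interchange law once more, I factor
$$
{\sf gap}(\gam)\otimes {\sf 1}_\betar \;=\; \bigl({\sf spot}^\gam_\emptyset \otimes {\sf 1}_\betar\bigr) \circ \bigl({\sf spot}_\gam^\emptyset \otimes {\sf 1}_\betar\bigr),
$$
exhibiting this element as a composition which passes through the horizontal slice ${\sf 1}_\emptyset \otimes {\sf 1}_\betar = {\sf 1}_\betar$ (via the monoidal unit relation \ref{R9}). Since $(W,P)$ is a Hermitian symmetric pair, the set $S_W\setminus S_P$ is a singleton, so at most one of $\gam,\betar$ lies outside $S_P$. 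If $\gam \in S_P$ then already ${\sf 1}_\gam=0$ in $\mathcal{H}_{(W,P)}$ (by applying \ref{R11} with $\w=\emptyset$), so the whole identity is trivial; otherwise $\betar \in S_P$, in which case ${\sf 1}_\betar=0$ by the same instance of \ref{R11}, and the internal factorization vanishes.

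The only subtlety lies in the final step: the cyclotomic relation \ref{R11} must be applied not at the outer boundary of the diagram but at an internal horizontal slice, and this is what forces us to engineer the factorization through ${\sf 1}_\betar$ rather than to try to finish with \ref{R10} alone. The rest of the argument is a direct combination of the barbell relations with the interchange law, so I expect no further obstacles.
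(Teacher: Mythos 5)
Your calculation establishes that the element ${\sf 1}_\gam\otimes({\sf bar}(\betar)+{\sf bar}(\gam))\otimes{\sf 1}_\betar$, taken exactly as printed, vanishes in $\mathcal{H}_{(W,P)}$, but it does not establish the \emph{local} relation the lemma claims, and the lemma is applied as such later in the paper. In the calculations surrounding \cref{ppg1} through \cref{ppg4} it is invoked with a nontrivial idempotent ${\sf 1}_{\ctau\underline{x}\alphar\gam\cdots}$ tensored on the left, so a valid proof must show that ${\sf 1}_{\underline{u}}\otimes{\sf 1}_\gam\otimes({\sf bar}(\betar)+{\sf bar}(\gam))\otimes{\sf 1}_\betar=0$ for an arbitrary expression $\underline{u}$. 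Both of your concluding moves invoke the cyclotomic relations \ref{R10} and \ref{R11}, which are by design strictly one-sided (their reflections through the vertical axis are deliberately not imposed), so they act only at the extreme left boundary of a diagram. Once we prefix by ${\sf 1}_{\underline{u}}$, the residual barbell term becomes ${\sf 1}_{\underline{u}}\otimes{\sf bar}(\betar)\otimes{\sf 1}_{\gam\betar}$, which \ref{R10} no longer touches, and the internal slice through which you route the gap term becomes ${\sf 1}_{\underline{u}\betar}$ rather than ${\sf 1}_\betar$, which need not vanish even though $\betar\in S_P$.

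The repair is close, and your intermediate decomposition is exactly the mirror image of the one the paper uses. The paper moves the barbells to the \emph{right} of the $\betar$-strand (your first step moves them to the \emph{left} of the $\gam$-strand), obtaining ${\sf 1}_\gam\otimes{\sf 1}_\betar\otimes{\sf bar}(\gam)+{\sf 1}_\gam\otimes{\sf gap}(\betar)$, and recognises this as ${\sf spot}_{\gam\betar\gam}^{\gam\betar\emptyset}\bigl({\sf 1}_{\gam\betar\gam}+{\sf spot}^{\gam\betar\gam}_{\gam\emptyset\gam}{\sf dork}^{\gam\gam}_{\gam\gam}{\sf spot}_{\gam\betar\gam}^{\gam\emptyset\gam}\bigr){\sf spot}^{\gam\betar\gam}_{\gam\betar\emptyset}$, which vanishes by the $\gam\betar$-null-braid relation. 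Your sum ${\sf bar}(\betar)\otimes{\sf 1}_{\gam\betar}+{\sf gap}(\gam)\otimes{\sf 1}_\betar$ admits the analogous factorisation through a $\betar\gam\betar$ slice,
$${\sf spot}_{\betar\gam\betar}^{\emptyset\gam\betar}\left({\sf 1}_{\betar\gam\betar}+{\sf spot}^{\betar\gam\betar}_{\betar\emptyset\betar}{\sf dork}^{\betar\betar}_{\betar\betar}{\sf spot}_{\betar\gam\betar}^{\betar\emptyset\betar}\right){\sf spot}^{\betar\gam\betar}_{\emptyset\gam\betar},$$
whose bracketed factor vanishes by the $\betar\gam$-null-braid relation. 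The null-braid relation, unlike the cyclotomic relations, is imposed as a genuinely local rewriting rule in the presentation of \cref{inlightof} (this is precisely what \cref{makeahole} earns), so that argument is stable under tensoring by ${\sf 1}_{\underline{u}}$ on the left and does deliver the local statement. The defect in your write-up is therefore not the intermediate decomposition but the decision to dispose of its two summands \emph{separately} via the cyclotomic relations rather than \emph{together} via a null-braid.
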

 \begin{proof} 
 We have that 
\begin{align*}
 \on_{	  \gam	}\otimes 
 \left( {\sf bar}(\betar) +  
  {\sf bar}(\gam )   \right)\otimes {\sf 1}_\betar& = 
  \on_{	  \gam	}\otimes {\sf 1}_\betar \otimes {\sf bar}(\gam)
 +\on_{	  \gam	}\otimes {\sf gap}(\betar) \\
 & =
 {\sf spot}_{\gam\betar\gam}^{\gam\betar\emptyset}
 \left({\sf 1}_{\gam \betar \gam} +  
 {\sf spot}^{\gam\betar \gam}_{\gam\emptyset\gam}
 {\sf dork}^{\gam\gam}_{\gam\gam}
 {\sf spot}_{\gam\betar \gam}^{\gam\emptyset\gam} \right)
 {\sf spot}^{\gam\betar\gam}_{\gam\betar\emptyset}
\end{align*}
and so the result follows 
from the $\betar\gam$-null-braid relation.
 \end{proof} 
 
 \begin{lem} \label{underphi}
 Suppose that 
  $m(\csigma,\ctau)=3=m(\orange,\ctau)$ and $m(\csigma,\orange)=2$. 
    We have that 
%
%
%
%
\begin{equation}\label{movethehole}   \begin{minipage}{1.85cm}
\end{minipage}
\end{equation}
\end{lem}

  \begin{proof}
  We apply the $\blue\orange$- and $\csigma\ctau$-null-braid relation  to the left and righthand-sides of \eqref{movethehole} respectively.
 The result follows. 
  \end{proof}

 \begin{lem} \label{underphi2}\color{black!99}
  Suppose that 
  $m(\csigma,\ctau) =m(\orange,\ctau)=m(\crho,\ctau)=3$ and $m(\csigma,\orange)=m(\crho,\orange)=m(\csigma,\crho)=2$. 
    We have that \color{black}
 \begin{equation}\label{movethehole2}
   \begin{minipage}{1.6cm}

  \end{minipage} 
 \end{equation} 
  \end{lem}

  \begin{proof}
  We apply the $\blue\crho$- (respectively $\blue\orange$-) and $\csigma\ctau$-null-braid relation  to the lefthand-side (respectively righthand-side) 
  of  
   \cref{movethehole2} respectively. 
   The result follows. 
  \end{proof}

 \begin{prop}\label{theresultwethoughtweneed}
 \color{black!99}
 Suppose that we are in one of the following cases: 
 $(i)$ 
 $(W,P)=(D_n,A_{n-1})$ and $\{\csigma,\ctau\}=\{s_0,s_1\} $ 
 $(ii)$ 
  $(W,P)=(A_n,A_{n-1})$ and $m(\csigma,\ctau)=2 $ 
  $(iii)$
   $(W,P)=(D_n,D_{n-1})$ and $m(\csigma,\ctau)=2 $  with  $\{\csigma,\ctau\}\neq \{s_0,s_1\} $. 
%
%
 Then we have the following local relation: ${\sf 1}_{\csigma\ctau} =0$ in $\mathcal{H}_{(W,P)}$.
 \end{prop}

 \begin{proof}
 \color{black!99}As in \cref{locality}, it will suffice to  prove that 
 $
{\sf 1}_{\sts } \otimes {\sf 1}_{\csigma\ctau}=0,
 $ for all   $\sts \in \Std(\mu)$ and all    $\mu \in \mptn$. 	
We will proceed by induction on $\ell(\mu)=\ell\geq 0$ with the base case ${\sf 1}_{\csigma\ctau}=0$ being trivial for any    $m(\csigma,\ctau)=2 $ (by the commuting relations \eqref{commutingrelations} and the cyclotomic relation \eqref{R11}).
  Now assume that $\ell(\mu)\geq 1$. We let  $\crho \in {\sf Rem}(\mu)$ and set $\mu'=\mu-\crho$. 
  If $\crho=\csigma $ (or  similarly $\ctau$), then
 $${\sf 1}_{\mu}= {\sf 1}_{\mu'} \otimes {\sf 1}_{\crho} \otimes {\sf 1}_{\csigma\ctau}
= {\sf 1}_{\mu'} \otimes {\sf 1}_{\csigma} \otimes {\sf 1}_{\csigma\ctau}
= {\sf 1}_{\mu'} \otimes {\sf 1}_{\csigma} \otimes {\sf braid}^{\csigma\ctau}_{\ctau\csigma}{\sf 1}_{ \ctau\csigma}{\sf braid}_{\csigma\ctau}^{\ctau\csigma}=0
 $$and similarly, if $m(\crho, \csigma)=2=m(\crho,\ctau)$ then 
  $${\sf 1}_{\mu}= {\sf 1}_{\mu'} \otimes {\sf 1}_{\crho} \otimes {\sf 1}_{\csigma\ctau}
= {\sf 1}_{\mu'} \otimes  {\sf braid}^{\crho\csigma\ctau}_{\csigma\ctau\crho}{\sf 1}_{ \csigma\ctau\crho }{\sf braid}_{\csigma\ctau\crho }^{\crho \csigma\ctau}=0
 $$by induction (as $\ell(\mu') < \ell(\mu)$) by the leftmost and rightmost equations in \eqref{commutingrelations} respectively. 
 Thus we can assume, without loss of generality, that $m(\csigma,\crho)=3$. We consider each type in turn.

 For $(W,P)=(D_n,A_{n-1})$ it remains to consider the case that $\rho={\color{green!80!black}s_2} \in {\sf Rem}(\mu)$ is the unique removable node. We will assume that $\stt_\mu =\stt_{\mu''}\otimes {\color{magenta}s_0}\otimes {\color{green!80!black}s_2}$ (the case  $\stt_\mu =\stt_{\mu''}\otimes {\color{cyan}s_1}\otimes {\color{green!80!black}s_2}$ is identical).  
 We have that 
 $${\sf1}_{  \stt_{\mu}}\otimes{\sf 1}_{	\color{magenta}s_0\color{cyan}s_1}=
{\sf1}_{  \stt_{\mu''}}\otimes{\sf 1}_{ {\color{magenta}s_0} \color{green!80!black}s_2	\color{magenta}s_0\color{cyan}s_1}
= -
{\sf1}_{  \stt_{\mu''}}\otimes   \begin{minipage}{2.06cm}
  \begin{tikzpicture}[scale=1]
   
   \draw[densely dotted, rounded corners] (0.5*7+0.75,0) rectangle (6.25,1.4);

          \draw[magenta ,line width=0.08cm](0.5*9,0)   --++(90:1.4);

                 \draw[magenta ,line width=0.08cm](0.5*11,0)   to [out=90,in=0] (0.5*9,0.6);

                 \draw[magenta ,line width=0.08cm](0.5*11,1.4)   to [out=-90,in=0] (0.5*9,1.4-0.6);

                 \draw[green!80!black ,line width=0.08cm](0.5*10,0)   --++(90:0.3) coordinate (hi);
   \fill[green!80!black] (hi) circle (3pt);
 
           \draw[green!80!black ,line width=0.08cm](0.5*10,1.4)   --++(-90:0.3) coordinate (hi);
   \fill[green!80!black] (hi) circle (3pt);

           \draw[cyan ,line width=0.08cm](0.5*12,1.4)   --++(-90:1.4) coordinate (hi);
   \end{tikzpicture}\end{minipage}=0
$$by induction on $\ell(\mu'')<\ell(\mu)$.  (We note that ${\sf1}_{  \stt_{\mu}}\otimes{\sf 1}_{	 \color{cyan}s_1\color{magenta}s_0}=0 $ by further application of the commutativity relation \eqref{commutingrelations}).

 For $(W,P)=(A_n,A_{n-1})$ we can assume without loss of generality that $\csigma= {\color{magenta}s_i}$ and $\ctau = {\color{cyan}s_j}$ for $i\leq j-2$.
Let $\mu=s_1 \dots s_k$ and  $j\leq k+1$, we have that 
  ${\sf 1}_{\stt_\mu }\otimes {\sf1}_{\color{magenta}s_i}\otimes {\sf1}_ {\color{cyan}s_j}
=0$ by    the $(s_{i-1}, 	{\color{magenta}s_i})$-null-braid relation, the  leftmost commutativity relation of \eqref{commutingrelations}, and the cyclotomic relation \eqref{R11}.  For  $j\geq k+2$ if follows immediately from the commutativity and  cyclotomic relations.

 For $(W,P)=(D_n,D_{n-1})$ we can assume without loss of generality that $\csigma= {\color{magenta}s_i}$ and $\ctau = {\color{cyan}s_j}$ for $i\leq j-2$.
 If $\mu \subseteq s_{n-1}s_{n-2}\dots s_{2}s_0$ or $s_{n-1}s_{n-2}\dots s_{2}s_1$ the result follows as in type $A$. 
 It remains to consider $   s_{n-1}s_{n-2}\dots s_{2}s_1 s_0			\subseteq \mu$ and we assume without loss of generality that 
 $j\geq 3$ if $i=0$.
 If $\mu =  s_{n-1}s_{n-2}\dots s_{2}s_1 s_0	$ then $j-2\geq i\geq  2$ and we can apply the $({\color{cyan}s_j},s_{j-1})$-null-braid relation,
  the  leftmost commutativity relation of \eqref{commutingrelations}, and the cyclotomic relation \eqref{R11}. 

Finally, if $\mu = s_{n-1}s_{n-2}\dots s_{2}s_1 s_0	s_2 \dots s_k$ for $k\geq 2$.   
If $j\geq k+2$ (respectively $j\leq k+1$), we apply the $({\color{cyan}s_j},s_{j+1})$-null-braid   (respectively 
 $({\color{magenta}s_i},s_{i+1})$-null-braid) relation and the commutativity and  cyclotomic relations.
  \end{proof}

 \begin{defn}\label{zerobraidgenerator}
 For $(W,P)$ and $\csigma, \ctau$ as in \cref{theresultwethoughtweneed}, we will call the corresponding braid generator ${\sf braid}^{\csigma  \ctau}_{\ctau \csigma}$ a {\sf zero braid generator}. 
 \end{defn}

 \begin{prop}\label{gapiszero}
 \color{black!99}
(1) Let $W$ be simply laced and  $[r,c]\in \mu \in \mptn$. 
 If  $[r,c-1] \not \in \mathscr{A}_{(W,P)}$ (respectively  $[r-1,c] \not \in \mathscr{A}_{(W,P)}$) 
then $ {\sf gap}(\stt_\mu-[r,c-1])=0$ 
(respectively  $ {\sf gap}(\stt_\mu-[r-1,c])=0$). 

(2) For $(W,P)=(C_n, A_{n-1})$, $\mu\in \mathscr{P}_{(W,P)}$ and $[r,c]\in \mu$ with $[r,c-1]\notin \mathscr{A}_{(W,P)}$ we have $ {\sf gap}(\stt_\mu-[r-1,c])=0$.

(3) For $(W,P)=(B_n, B_{n-1})$ and $\mu\in \mathscr{P}_{(W,P)}$, if $[1,c]\in \mu$ then $ {\sf gap}(\stt_\mu-[1,c-1])=0$ and if $[r,n]\in \mu$ with $r\geq 3$ then $ {\sf gap}(\stt_\mu-[r-1,n])=0$. 
   \end{prop}

 \begin{proof}
 \color{black!99}
(1) First assume that  $r=1$ or $c=1$.  In either case the result follows by 
   the  leftmost commutativity relation of \eqref{commutingrelations}, and the cyclotomic relation \eqref{R11}. 
 We now consider the types in turn. Note that 
   $(W,P)= (A_n , A_{k} \times A_{n-k-1})$ follows from the $r=1$ and $c=1$ cases.  
For $(W,P)=(D_n,A_{n-1})$   we can assume that $s_{[r,c-1]}= s_2$ and $s_{[r,c]}= s_0$ or $s_1$ and the result follows from 
\cref{theresultwethoughtweneed}. Type  $(D_n,D_{n-1})$ also follows from \cref{theresultwethoughtweneed}. 
For the  exceptional Weyl groups, 
there are two types of subcase to consider.  
If $[r,c]$ is a $\spadesuit$ tile in \cref{spadeclub} then  
then $  {\sf gap}(\stt_\mu-[r,c-1])$
 or  $ {\sf gap}(\stt_\mu-[r-1,c])$ is zero immediately by the 
 leftmost commuting relation of \eqref{commutingrelations} and the cyclotomic relation \eqref{R11}.

\begin{figure}[h!]

\color{black}
 $$
 
$$

 \caption{\color{black!99}The tilings for $(W,P)$ of type $(E_6,D_5)$ or $(E_7,E_6)$. The   cases that $[r,c]$ is a $\spadesuit$ tile 
  all follow  from commutativity and cyclotomic relations. The $\clubsuit$ cases further require some applications of the null-braid relations. }
 \label{spadeclub} 
 \end{figure}

 We now consider the more interesting subcases 
 where $[r,c]$ is a $\clubsuit$ tile in \cref{spadeclub}. 
The $\clubsuit$ cases can all be treated uniformly using some applications of the null-braid relations and the  leftmost commuting relation of \eqref{commutingrelations} and the cyclotomic relation \eqref{R11}; rather than checking them all explicitly, we will just provide an illustrative example.  
For $(W,P)=(E_6,D_5)$ with colouring as in \cref{spadeclub,hhhh} and   ${\color{cyan}[r,c] }= {\color{cyan}[4,3]}$, with $\la_{\color{cyan}[4,3]}=(1^2,2,3)$ we have that 
$$
{\sf gap}(\stt_{(1^2,2,3)}-{\color{green!80!black}[4,2]} )=
  \begin{minipage}{3.2cm}
\end{minipage}
=   0$$he first equality is by definition, the second follows from the $({\color{cyan}s_2},{\color{green!80!black}s_3})$-null braid and the leftmost commutativity relation of  \eqref{commutingrelations}, 
the penultimate equality is again by commutativity,  
and the final equality holds by the cyclotomic relation \eqref{R11}.

Statement  (2) and the first part of statement  (3) are immediate from the commutation and cyclotomic relation \eqref{R11}. The second  part of statement  (3) follows by commutativity, repeated application of the nullbraid relation and cyclotomic relation. 
 \end{proof}

 \subsection{Gaps as basis elements}\label{gapsbasis}\color{black!99}

Fix $\mu \in \mptn$ for $(W,P) = (A_n , A_{k-1} \times A_{n-k})$ or $(D_n,A_{n-1})$ and let $[r,c]\in \mu$. 
Let $l, k$ be the maximal non-negative integer such that $[r-i, c+i] \in \mu$ for all $0\leq i\leq l$, $[r-i+1, c+i]\in \mu$ for all $1\leq i\leq l$ and $[r+j, c-j]\in \mu$ for all $0\leq j\leq k$, $[r+j, c-j+1]\in \mu$ for all $1\leq j\leq k$. Then define the {\sf  path  generated by the tile $[r,c]\in \mu$}, denoted by $\langle r,c \rangle_\mu$, to be the   collection  of tiles 
$$[r - k, c + k], [r - k + 1, c + k] ,\ldots[r - 1,c + 1] ,
[r ,c+1],
[r,c] ,[r + 1,c],[r + 1,c - 1],
 \ldots , [r + l, c - l]   $$see for example, the pink regions in  \cref{typesofwiggle}. Now, assume that $[r-k, c+k+1] \not \in \mu$ 
 and $[r+l + 1 ,c-l ] \not \in \mu$.  
\color{black} Suppose first that $[r-k+1, c+k+1] \notin \mu$.  In this case we choose $\la\subseteq \nu \subseteq \mu$ such that $\nu\setminus\la= \langle r,c\rangle_\mu$ 
 and $\stt_\mu$ such that $\stt_\mu = \stt_{\la }\circ \stt_{\nu\setminus\la }\circ \stt_{\mu\setminus\nu}$ 
 and we let ${s_{[r-k,c+k]}}= s_i$ and 
 ${s_{[r+l,c-l]}}= s_j$ with $i\leq j$.  We define 
 $$
 c_\SSTS=
( (R^+_{j-1}A^-_{j} )
\dots 
(R^+_{i+3}A^-_{i+4} )
(R^+_{i+1}A^-_{i+2} )
A^-_i ( {\sf 1}_{\stt_\la}))
\otimes {\sf 1}_{\stt_{\mu\setminus\nu}}.$$
 Now suppose $[r-k+1, c+k+1]\in \mu$. Note that, as we assumed $[r-k, c+k+1]\notin \mu$, this case can only happen when $(W,P)=(D,A)$ and $s_{[r-k,c+k]} = s_0$ or $s_1$. 
 Now take $m\geq 1$ maximal such that 
 $$
 [r-k+1,c+k+1],
 [r-k+2,c+k],
 \dots 
  [r-k+m,c+k-m+2] \in \mu.$$
We let $\langle \langle r,c\rangle \rangle _\mu $ denote the collection of tiles 
$$
[r-k+1,c+k+1],[r-k,c+k-1],
 \dots 
  [r-k+m,c+k-m+2] ,   [r-k+m-1,c+k-m]
 $$see for example, the blue region in  \cref{typesofwiggle}. \color{black}

 \begin{figure}[h!]
 \color{black}
 $$  
    $$

\caption{The first two cases depict $\color{magenta}\langle r,c\rangle_\mu$ for $[r,c]= [4,4], [5,4] \in \mu=(7^2,6^2,5^3,2) \in \mptn $ with $(W,P)=(A,A\times A)$. \color{black}In the first case we have $k=3$ and $l=3$. In the second case we have $k=2$ and $l=3$. \color{black} Notice that in the first case the $\times$ denotes the box $[r+l + 1 ,c-l ]   \in \mu$.
On the right we depict 
$\color{magenta}\langle r,c\rangle_\mu$ 
and 
$\color{cyan}\langle\langle r,c\rangle\rangle_\mu$  for $[r,c]=[7,5] \in (1,2,3,4,5,6,7,8^2,3,1^4)$, we note that in this final case $k=1, l=3$ and $m=3$.   
 }
\label{typesofwiggle}
\end{figure}

 \noindent Now we choose $\la\subseteq \nu \subseteq \mu$ such that $\nu\setminus\la=
  \langle r,c\rangle_\mu \sqcup  \langle \langle r,c\rangle\rangle_\mu$ 
 and $\stt_\mu$ such that $\stt_\mu = \stt_{\la }\circ \stt_{\nu\setminus\la }\circ \stt_{\mu\setminus\nu}$. 
By assumption,  ${\color{black}s_{[r-k,c+k]}}= \color{black}s_0$ or $s_1$
 and in the former case 
we define 
 $$
 c_\SSTS=
((R^+_{2k+2l}A^-_{2k+2l+1} )
 \dots
 (R^+_{2m+2}A^-_{2m+3} ))
((R^+_{2m-1}R^-_{2m}A^-_{2m+1}A^+_{2m} ) 
\dots 
(R^+_{1}R^-_{2}A^-_{3}A^+_{2} ))
A^-_0 ( {\sf 1}_{\stt_\la}))
\otimes {\sf 1}_{\stt_{\mu\setminus\nu}}.
 $$and in  the latter case 
we define 
 $$
 c_\SSTS=
((R^+_{2k+2l}A^-_{2k+2l+1} )
 \dots
 (R^+_{2m+2}A^-_{2m+3} ))
((R^+_{2m-1}R^-_{2m}A^-_{2m+1}A^+_{2m} ) 
\dots 
(R^+_{0}R^-_{2}A^-_{3}A^+_{2} ))
A^-_1 ( {\sf 1}_{\stt_\la}))
\otimes {\sf 1}_{\stt_{\mu\setminus\nu}}.
 $$Examples are depicted in \cref{afiguredweneed,afiguredweneed2}.

 Now, if $[r-k, c+k+1]   \in \mu$ 
or  $[r-l + 1 ,c-l ]  \in \mu$ we claim that 
\begin{align}\label{iszero}
{\sf gap}(\stt_\mu - [r,c])={\sf gap}(\stt_\mu - [r+l,c-l])=
{\sf gap}(\stt_\mu - [r-k,c+k])=
0.
\end{align}The first two equalities follow by repeated applications of 
 \cref{underphi}. The final equality is immediate from 
  the leftmost commuting relation of \eqref{commutingrelations} and the cyclotomic relation \eqref{R11}.  
 Otherwise, 
 \begin{align}\label{isnonzero}
{\sf gap}(\stt_\mu - [r,c])=
{\sf gap}(\stt_\mu - [r+l,c-l])=
{\sf gap}(\stt_\mu - [r-k,c+k])=
(-1)^{k+l-m}c_{\SSTS}^*c_{\SSTS}.
\end{align}where $c_\SSTS$ is defined above.  
The first two equalities follow directly from  \cref{underphi}.
To verify the  $m=0$ case of the third equality, we simply apply   
\cref{underphi}   $(k+l)$ times  from left-to-right as in 
 \cref{afiguredweneed2}.  
For $m>0$, we apply 
\cref{underphi2} $m$ times, and then apply 
\cref{underphi} $k+l-m$ times.

\begin{figure}[h!]
\color{black}\scalefont{0.9}
$$ 
   $$
        \caption{\color{black!99}
For  $\mu=(1,2,3,4,5,6,7,8^2,3,1^4)$ and $[r,c]=[6,6]$ we depict on  the left the 
 operators  applied in the definition of $c_\SSTS$. 
 The thick lines break up the operators according to the bracketed terms in the definition of $c_\SSTS$. 
 On the right we depict  $\mu-{\color{magenta}   \langle 6,6 \rangle_\mu}-\color{cyan}
  \langle \langle 6,6\rangle\rangle_\mu $, obtained by  deleting the pink and blue  tiles and letting the white tiles
  fall under gravity. 
  }
        \label{afiguredweneed}
   \end{figure}

\color{black} 
 \subsection{Tetris combinatorics for barbells}
We now provide closed combinatorial formulas for removing barbells from diagrams.

   \begin{figure}[ht!]
  $$
  
$$ 
 \caption{
Examples of trails $ {\tt T}^\mu_{[x,y]\to [1,1]}$, the first 4 of which are maximal length (the 5th is not).  
Here $[x,y]=[5,4], [7,4], [8,3]$  and  $(W,P)=(A_9,A_3\times A_5)$,
$(D_8,A_7)$, $(E_6,D_5) $
with   $\mu=(4^5,3)$, $(1,2,3,4,5^2,4)$, and $(1^2,2,4^2,2,1^2)$ respectively (only the distinct cases are listed).  
The 5th trail is non-maximal length as it has length  6, whereas the 4th
trail has length 8.  
The grey tiles $[a,b]$ are those such that ${\sf gap}(\stt_\mu - [a,b])=0$ by \cref{underphi,gapiszero}}
    \label{hooks-pic}
\end{figure}

\begin{defn}\label{hook:Def}
\color{black!99}  Let $\mu \in \mptn$. 
Given $[x,y]$ a (possibly non-admissible) tile, we  set   $SW[x,y]=[x-1,y]$ and $SE[x,y]=[x,y-1]$.  
For a pair of such tiles    $[x,y]$ and $[x',y']$			we define a {\sf trail}, denoted $ {\tt T}_{[x,y]\to [x',y']}$,
 to be a (possibly empty) set of tiles
\begin{align}\label{asequeneee}
[x,y]=T_1, T_2, \dots, T_{x+y-x'-y'+1}=[x',y']
\end{align}
such that $T_{i+1}=SW(T_i)$ or $T_{i+1}=SE(T_i)$ for $1<i \leq x+y-x'-y'+1$.  
We  
 write 
 $$ {\tt T}^\mu_{[x,y]\to [x',y']}:= \mu\cap  {\tt T}_{[x,y]\to [x',y']}$$\color{black}and we define 
 the {\sf $\mu$-length} of the  trail to be $|{\tt T}^\mu_{[x,y]\to [x',y']}|$.  
Given  $\ctau=[r,c] \in \Add(\mu)$,        we let  ${\rm Hook}_{\ctau}(\mu) $ denote any multiset of the form 
     $$
     {\rm Hook}_{\ctau}(\mu) = 
          \begin{cases}
          2  \cdot {\tt T}^\mu_{[r,c-1]\to [1,1]}			&\text{if ${[r-1,c]\not \in \mathscr{A}_{(W,P)}}$  
          in type $(C_n,A_{n-1})$}				\\
 {\tt T}^\mu_{[r-1,c]\to [1,1]}\sqcup {\tt T}^\mu_{[1,n]\to [1,1]}			&\text{if ${[r,c-1]\not \in \mathscr{A}_{(W,P)}}$  
          in type $(B_n,B_{n-1})$}				\\
      {\tt T}^\mu_{[r-1,c]\to [1,1]}\sqcup 
  {\tt T}^\mu_{[r,c-1]\to [1,1]}				&\text{otherwise}
  \end{cases}$$for any preferred  choices   of   maximal $\mu$-length trails on the right-hand side.  
 \end{defn}

 This allows us to provide a closed combinatorial formula for rewriting barbells in diagrams, as follows.  
 This formula is essential to our proof of combinatorial invariance.  Finding such formulas for general $(W,P)$ seems to be an impossible task. 
 
\begin{prop}\label{hook:Def}\label{remove a barbell}
Let $(W,P)$ be a Hermitian symmetric pair.  
  Let $\mu\in \mptn $  and  $\ctau={\color{cyan}[r,c]} \in \Add(\mu)$.  
 We have that 
\begin{align}\label{notChook} 
{\sf 1}_{ \stt_{\mu}}  \otimes {\sf bar}(\ctau  )  
  = 
-\!\!
\sum_{[x,y] \in {\sf Hook}_\ctau(\mu)}
  {\sf gap}(\stt_\mu- [x,y]). 
  \end{align}
 \end{prop}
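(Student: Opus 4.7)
My plan is to prove the formula by induction on $\ell(\mu)$, iteratively sliding the rightmost barbell ${\sf bar}(\ctau)$ leftward through the idempotent ${\sf 1}_{\stt_\mu}$ using \cref{lem1}, the one-colour barbell relation \eqref{R4}, the two-colour barbell relation \eqref{R5}, and the cyclotomic relation \eqref{R10}. The base case $\mu=\varnothing$ is immediate: the hook is empty and ${\sf bar}(\ctau)$ vanishes by \eqref{R10}. For the inductive step, \cref{jkbhxlkhbkjhcxbvhgbvjxkhvkxcv} allows me to choose any reduced tableau $\stt_\mu$ modulo braid commutation, so I would arrange $\stt_\mu$ so that its reduced word ends with a type $A$ string $T=(s_{i_1},\ldots,s_{i_{t-1}})$ reading along one arm of the hook at $\ctau=s_{i_t}$; this decomposes ${\sf 1}_{\stt_\mu}={\sf 1}_{\stt_{\mu'}}\otimes{\sf 1}_T$ where $\mu'=\mu\setminus T$.

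With this setup, \cref{lem1} rewrites ${\sf 1}_T\otimes{\sf bar}(\ctau)$ as a sum of leftward barbells $\sum_k{\sf bar}(s_{i_k})\otimes{\sf 1}_T$ minus a signed sum of gap terms indexed by the interior positions of $T$, and after tensoring by ${\sf 1}_{\stt_{\mu'}}$ on the left the gap sum is exactly $-\sum{\sf gap}(\stt_\mu-[x,y])$ for $[x,y]$ ranging over the tiles in this arm of the hook. Each residual barbell ${\sf bar}(s_{i_k})\otimes{\sf 1}_{\stt_{\mu'}}\otimes{\sf 1}_T$ is then commuted leftward through ${\sf 1}_{\stt_{\mu'}}$ via the two-colour relation \eqref{R5}, picking up further gap terms and eventually vanishing at the leftmost position by \eqref{R10}. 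Iterating (or invoking the inductive hypothesis on the smaller partition $\mu'$ paired with appropriately shifted barbells) produces gap contributions from the other arm of the hook; their identification with the multiset ${\sf Hook}_\ctau(\mu)$ is verified using \cref{underphi}, which equates gap terms at diagonally-paired tiles in a $2\times 2$ configuration.

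The non-simply-laced modifications occur at the short-long root interface. In type $(C_n,A_{n-1})$ the Cartan value $\langle\alpha_\csigma^\vee,\alpha_\ctau\rangle=-2$ appearing in \eqref{R5} doubles the row-arm gap contribution, which accounts for the factor $2$ in the hook definition; in type $(B_n,B_{n-1})$ the one-colour barbell surviving at the long-root end initiates a new trail anchored at $[1,n]$, matching the extra summand ${\tt T}^\mu_{[1,n]\to[1,1]}$. Near the trivalent vertex configurations in type $(D_n,A_{n-1})$ and the exceptional types, \cref{ZEROD,ZEROE} annihilate the off-hook gap terms, and the maximal $\mu$-length condition in \cref{hook:Def} selects precisely the surviving non-zero trails; \cref{make a a gahpa} gives an explicit cellular-basis expression for each remaining gap, from which one verifies uniformly that different maximal trails produce identical sums.

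The main obstacle will be the combinatorial bookkeeping for the cascade of terms: each application of \cref{lem1} produces $t-1$ gap terms plus $t$ residual barbells, and each residual barbell in turn cascades through \eqref{R5} into further gap contributions, so matching the total signed sum against the two-arm hook structure requires a delicate telescoping argument leveraging the $2\times 2$ equivalences of \cref{underphi}. The most intricate case analyses arise near the trivalent vertex in types $D,E$ and the short-long edges in types $B,C$; however, \cref{ZEROD,ZEROE} collapse many a priori possible gap terms to zero and thus substantially reduce the combinatorial complexity.
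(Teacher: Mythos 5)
Your proposal follows the same inductive strategy as the paper's proof: peel the barbell through one arm of the hook via \cref{lem1}, use \cref{irrelevant}, \cref{underphi}, and \cref{ZEROD,ZEROE} to identify and prune gap terms, and treat the remaining barbells by induction. The ``delicate telescoping'' that worries you is exactly what the paper handles by first reducing to the rectangular case $\mu\ctau=\la_{[r,c]}$ (so that one arm really is a type $A$ string) and then combining the residual barbells with \cref{useful2} (colour change to $\csigma=s_{[r-1,c]}$) followed by \cref{lem2} (telescoping the whole column of barbells into a single one plus explicit gaps), leaving just one smaller-rank barbell term to which the inductive hypothesis applies cleanly.
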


Before embarking on the proof, we emphasise that  \cref{notChook} has a lot of redundancy.  
Many of the terms on the righthand-side of this sum are zero, using the results of \cref{zeroooooooo} (some of these are highlighted in grey in  \cref{hooks-pic}).
We can  pick preferred choices of the maximal length trails in the definition of ${\sf Hook}_\ctau(\mu)$ and delete some of these redundant terms.  
\color{black!99} In particular, in classical types we have the following simplification. 
\color{black}

\begin{lem}\label{irrelevant}\color{black!99}
Let   $\mu\in \mptn$   and $\ctau=[r,c] \in \Add(\mu)$.
For $(W,P)= (A_n, A_{k-1}\times A_{n-k})$,  $(D_n,A_{n-1})$, or $(D_n,D_{n-1})$, or finally 
           $(W,P)=(C_n,A_{n-1})$ and     $r\neq c$, we set   		  
 $$
 \underline{{\sf Hook}}_\ctau (\mu)=
 {\sf T}_{[r-1,c]\to [r-1,2]}^\mu
 \sqcup
  {\sf T}_{[r ,c-1]\to [r,1]}^\mu.
$$
If $(W,P)=(C_n,A_{n-1})$ with     $r=c$, we set   		  
 $$
 \underline{{\sf Hook}}_\ctau (\mu)=  2 {\sf T}_{[r ,r-1]\to [r,1]}^\mu.$$
If $(W,P)= (B_n, B_{n-1})$ then we set 
$$
 \underline{{\sf Hook}}_\ctau (\mu)
 =
 \begin{cases}
 [1,c-1]		 			&		\text{if }r=1
 \\
 2[1,n]		 			&		\text{if }[r,c]=[2,n]
 \\
 2[1,n]		\sqcup  [r-1,n]	 &		\text{if }[r,c]=[r,n] \text{ with }r\geq 3.
  \end{cases}
 $$We have that 
 $$
\sum_{[x,y] \in  {\sf Hook}_\ctau(\mu) } 
 {\sf gap}(\stt_\mu- [x,y])
=
\sum_{[x,y] \in    \underline{{\sf Hook}}_\ctau(\mu) } 
 {\sf gap}(\stt_\mu- [x,y]) .
 $$
   \end{lem}
\begin{proof}
We need only to show that  ${\sf gap}(\stt_\mu -[x,y])=0$ for $[x,y]
\in
 {{\sf Hook}}_\ctau (\mu) \setminus   \underline{{\sf Hook}}_\ctau (\mu)
$. 
This follows from  \cref{underphi,gapiszero}. 
 \end{proof}

Examples of the multisets  $\underline{{\sf Hook}}_\ctau (\mu)$ are provided in \cref{hooks-pic2}.

 \begin{figure}[ht!]
  $$
  
$$ 
 \caption{
The multisets $\underline{{\sf Hook}}_\ctau (\mu)$. 
The first  case  is  
  $\ctau=\color{cyan}[6,4]$ and $\mu=(5^2,4^3,3,1)$ in type $(A_{11},A_{4}\times A_{6})$.  
The second  case is    $\ctau=\color{cyan}[6,4]$ and $\mu=(1,2,3,4^2,3,1)$ in  type $(D_8,A_7)$.
  The third and fourth cases are for $\mu=  (1,2,3,4,5^2 )$ and  $\ctau=\color{cyan}s_{[6,6]}$ in types    $(D,A)$  and $(C,A)$  respectively.  
 We   highlight the tiles  in 
 $\underline{{\sf Hook}}_\ctau (\mu)$
  by placing a gap diagram in the tile and the multiplicity of that tile within the multiset.   
 }
    \label{hooks-pic2}
\end{figure}

\begin{proof}[Proof of \cref{remove a barbell}] 
We consider the  cases in which the parabolic   is of type $A$.  The other cases are left as an exercise for the reader.    
 By the commutativity relations, it is enough to prove the result for  
  $\ctau  $ and  $\mu$ such that $\ctau=  \color{cyan}s_{[r,c]}$ and $\mu\ctau=\la_{ \color{cyan}[r,c]}$ for some $r,c\geq 1$
  (in the notation of \cref{flashback}).  
 If $r=c=1$ then the result is immediate from the cyclotomic relation.  
 If $r>1$ and  $c=1$ or $r=1$ and $c>1$, then   the result follows by \Cref{lem1} and the cyclotomic relation.  
Thus by  \cref{irrelevant}  it is enough to show that 
$ 
{\sf 1}_{ \stt_{\mu}}  \otimes {\sf bar}(\ctau  )  = - \sum_{[x,y] \in \underline{{\sf Hook}}[r,c ]}      
 {\sf gap}(\stt_\mu- [x,y]) 
 $ for $r,c>1$.

\smallskip
\noindent {\bf Case 1. }  We now assume that $r,c>1$, and consider the case where $\color{cyan} {[r,c]}		\color{black}	 $ is such that 
 $[r,c-1], \color{magenta}  [r-1,c]\color{black}	\in \mu$ as this is uniform across all types.  
We set $\nu$  
to be the partition obtained by removing the final box from each column of $\mu$, that is $\nu\csigma=\la_{[r-1,c]}$.  
We have that 
 \begin{align*}
 {\sf 1}_{ {\stt_\mu}}  \otimes  {\sf bar}([r,c])=
 {\sf 1}_{\stt_{\nu\csigma}} \otimes 		  {\sf bar}(	 \color{cyan}  [r,c]\color{black}		) \otimes {\sf 1}_{ \stt_{\mu\setminus \nu\csigma} }
 +\sum_{y< c} 
   {\sf 1}_{\stt_{ \nu\csigma}} \otimes 	{\sf bar}([r,y])
 		\otimes {\sf 1}_{	\stt_{\mu  \setminus \nu\csigma}}	 	
  - \!\sum_{ 1\leq y < c  } \!\!
 {\sf gap}(\stt_\mu-[r,y]) 
 \end{align*}  
 by \cref{lem1} and applying  \cref{useful2} we obtain 
\begin{align*}
 {\sf 1}_{ {\stt_\mu}}  \otimes  {\sf bar}([r,c])=
 {\sf 1}_{\stt_{\nu}} \otimes 		  {\sf bar}(	 \color{magenta}  [r-1,c]\color{black}		) \otimes {\sf 1}_{ \stt_{\mu\setminus \nu} }
 +\sum_{y\leq c} 
   {\sf 1}_{\stt_{ \nu}} \otimes 	{\sf bar}([r,y])
 		\otimes {\sf 1}_{	\stt_{\mu  \setminus \nu}}	 	
  - \!\sum_{ [x,y] \in \mu \setminus \nu } \!\!
 {\sf gap}(\stt_\mu-[x,y]).
 \end{align*}     
We set $\pi $  
to be the partition obtained by removing the final two rows of $\mu$, that is $\pi=\la_{[r-2,c]}$.  
By \cref{lem2}, we have that
 $$
\sum_{y\leq c} 
   {\sf 1}_{\stt_{ \nu}} \otimes 	{\sf bar}([r,y])
 		\otimes {\sf 1}_{	\stt_{\mu  \setminus \nu}}			=
		{\sf 1}_{\stt_{\pi	}}\otimes {\sf bar}([r,1]) \otimes 
 {\sf 1}_{\stt_{\mu \setminus  \pi}}
 + \sum_{  y <  c } 
 {\sf gap}(\stt_\mu-[r-1,y])  $$  
 and we note that the first term after the equality is zero by the 
 commutativity and cyclotomic relations.  
Putting these two equations above together, we have that 
 $$  {\sf 1}_{ {\stt_\mu}}  \otimes  {\sf bar}(		\color{cyan}  [r,c]\color{black}		)=
 {\sf 1}_{\stt_{\nu}} \otimes 		  {\sf bar}(\color{magenta}  [r-1,c]\color{black}	) \otimes {\sf 1}_{ \stt_{\mu\setminus \nu} }
 +
	\Big(	\sum_{  y< c } 
 {\sf gap}(\stt_\mu-[r-1,y]) 
    - \!\sum_{ [x,y] \in \mu \setminus \nu } \!\!
 {\sf gap}(\stt_\mu-[x,y]) \Big)$$  
and so the result follows by induction and \cref{underphi,gapiszero}.
This inductive step is  visualised in  \cref{typeA ind}; we have bracketed the latter two terms above in order to facilitate comparison with the  rightmost diagram in \cref{typeA ind}.

\begin{figure}[ht!]
$$  \scalefont{0.9}
  \begin{minipage}{3.65cm}

  \end{minipage}    
      $$
        \caption{
        The    first term on the left-hand side  
      depicts  ${\sf 1}_{ {\stt_{\nu}}}  \otimes  {\sf bar}(		\color{magenta}  [r-1,c]\color{black}	) \otimes {\sf 1}_{\stt_{\mu\setminus \nu}} $ (known by induction)
    the   second term  depicts the coefficients of the gap   terms in  the inductive step in the proof.   
    The first equality records the cancellations; the second equality follows from \cref{underphi,gapiszero}. 
        The rightmost  diagram  depicts 
         ${\sf 1}_{ {\stt_\mu}}  \otimes  {\sf bar}(		\color{cyan}  [r ,c]\color{black}			)$ (for $r\neq c$ in types $C$ and $D$).     
        }
        \label{typeA ind}
        \end{figure}

\noindent {\bf Case 2. } 
Now consider the type   $C$ and $D$ cases for $\ctau=\color{cyan}  s_{[r,r]}$ with $r> 1$ and we let $\csigma=\color{magenta} s_2\color{black}  \in W$.     
 $$
{\sf 1}_{\stt_\mu}\otimes {\sf bar}(	 \color{cyan} [r,r]	\color{black}		) = 
\begin{cases}
  {\sf 1}_{\stt_{\mu-\csigma}  }\otimes ({\sf bar}(\ctau) +2 {\sf bar}(\csigma) )\otimes {\sf 1}_\csigma
- 2{\sf gap}(\stt_\mu - \color{magenta}  [r,r-1]\color{black}		) 	& \text{ in type $C$ }\\
{\sf 1}_{\stt_{\mu-\csigma}  }\otimes ({\sf bar}(\ctau) + {\sf bar}(\csigma))\otimes {\sf 1}_\csigma
- {\sf gap}(\stt_\mu - \color{magenta}  [r,r-1]\color{black})	& \text{ in type $D$ } 
\end{cases}
$$
and the $r=2$ case now follows by the cyclotomic relation.  

Now suppose $r>2$.   
We first consider the type $C$ case.  
We set $\nu$  
to be the partition such that $\nu\ctau=\la_{[r-1,r-1]}$. 
By the commutativity and one-colour barbell relations, we have that 
$$
{\sf 1}_{\stt_{\mu-\csigma}} \otimes {\sf bar}(\ctau) \otimes {\sf 1}_\csigma =  
-{\sf 1}_{\stt_\nu } \otimes  {\sf bar}(\ctau) \otimes{\sf 1}_{\stt_{\mu\setminus \nu} } 
+ 
2   {\sf gap}   ({\stt_{\mu}}  - \color{cyan} [r-1,r-1]	\color{black}	)$$
and 
so 
\begin{align*}
  {\sf 1}_{ {\stt_\mu}}  \otimes  {\sf bar}(	
   \color{cyan} [r,r]	\color{black}
  )= &
 	2\times   {\sf 1}_{\stt_{\mu-\csigma}  }\otimes  {\sf bar}(
 \color{magenta} [r,r-1]	\color{black}	
	) \otimes {\sf 1}_\csigma  
   -{\sf 1}_{\stt_{\nu}} \otimes 		
     {\sf bar}(		 \color{cyan} [r-1,r-1]	\color{black}	) 
     \otimes {\sf 1}_{ \stt_{\mu\setminus \nu} }    \\
&  +2 (  
 {\sf gap}(\stt_\mu
 -
   \color{cyan} [r-1,r-1]	\color{black}
 ) -  
 2
   {\sf gap}(\stt_\mu-
\color{magenta} 	[r,r-1] \color{black}
)  ) 
\end{align*}
and so the result follows by induction.  
This inductive step is  visualised in  \cref{typeC ind}; we have bracketed the latter two terms above in order to facilitate comparison with the  rightmost diagram in \cref{typeC ind}.

\begin{figure}[ht!]
$$  \scalefont{0.9}\begin{minipage}{2.55cm}

  \end{minipage}  
   $$
        \caption{
     The left-hand side  depicts 
         ${\sf 1}_{ {\stt_\mu}}  \otimes  {\sf bar}(
\color{cyan}         [r,r]\color{black}
         )$ in type $C$.  
        The    first term on the right-hand side  
      depicts  
      $2\times {\sf 1}_{ {\stt_{\mu-\csigma}}}  \otimes  {\sf bar}(
\color{magenta}       [r,r-1]\color{black} 
      )\otimes {\sf 1}_{\csigma} $; 
    the   second term  depicts 
    $-{\sf 1}_{ {\stt_{\nu}}}  \otimes 
     {\sf bar}
     (\color{cyan} [r-1,r-1]
     \color{black} )\otimes {\sf 1}_{\stt_{\mu\setminus \nu}} $;
     the third term depicts the coefficients of the gap   terms in  the inductive step.      }
        \label{typeC ind}
        \end{figure}

We now consider  type $D$. 
We colour $\color{violet}s_{[r-1,r-1]} $ violet and set  
  $\nu$,   $\pi$, and $\rho$ to be the partitions 
     $\nu+
{\color{violet} {[r-1,r-1]} } =\la_{[r-1,r-1]}$,
  $\pi +{\color{cyan}[r-2,r-2]}
     = \la_{\color{cyan}[r-2,r-2]}$,  
and 
$\rho  =\la_{[r-3,r-3]}$. 
We have that 
 $$
{\sf 1}_{ \stt_\mu } \otimes {\sf bar}(\color{cyan}	[r,r]	\color{black}	)	 =
{\sf 1}_{ \stt_\nu } \otimes       {\sf bar}(\color{violet}[r-1,r-1 ] \color{black})   \otimes 
{\sf 1}_{ \stt_{\mu\setminus \nu} }+
\sum_{ y=1}^r  
{\sf 1}_{ \stt_\nu } \otimes       {\sf bar}([r,y])   \otimes 
{\sf 1}_{ \stt_{\mu\setminus \nu} }
-  \! \sum_{[x,y] \in \mu\setminus \nu}  \!\!\! {\sf gap}   (\stt_{\mu} - [x,y])
  $$by  \cref{lem1}. The second term can be rewritten as follows 
 \begin{align*}
\sum_{ y=1 }^r   {\sf 1}_{\stt_\nu} \otimes     
{\sf bar}([r,y]) \otimes 
{\sf 1}_{\stt_{\mu\setminus \nu} }
&=
  \sum_{y=1 }^r  {\sf 1}_{\stt_{\pi\blue}}  
  \otimes 
  {\sf bar}([r,y]) 
 \otimes {\sf 1}_{\stt_{\mu	-\pi\blue }} 
 \\
 &=
 \sum_{y=1 }^{r-1}  {\sf 1}_{\stt_\pi}   \otimes 
  {\sf bar}([r,y]) 
 \otimes {\sf 1}_{\stt_{\mu	- \pi  } }
 + {\sf gap}(\stt_\mu - \color{cyan}[r-2,r-2] \color{black})  
\\ &=
\sum_{y=1}^{r-2}{\sf gap}(\stt_\mu - [r-2,y])     +   {\sf 1}_{\stt_\rho} \otimes ( {\sf bar}([r,1]) +  {\sf bar}([r,2])) \otimes \on_{\stt_{\mu-\rho}}  
 \\&
=\sum_{y=1}^{r-2}{\sf gap}(\stt_\mu - [r-2,y]) 
\end{align*}
where the first equality follows  by repeated applications of   \cref{useful3}; the second from    \cref{useful2};
 the third from  \cref{lem2} and the commutation relations;
 the fourth from  the commutation and cyclotomic  relations (notice that 
no tile in $\pi$ has colour label corresponding to the reflections 
$s_{[r,1]}$ or $s_{ [r,2]}$).  
 Substituting this into the above, we obtain 
 $$
 {\sf 1}_{ \stt_\mu } \otimes {\sf bar}(
 \color{cyan} [r,r]\color{black}
 ) =
{\sf 1}_{ \stt_\nu } \otimes       {\sf bar}(
\color{violet} [r-1,r-1]\color{black}
)   \otimes 
{\sf 1}_{ \stt_{\mu\setminus \nu} }
+ %
 \sum_{y=1}^{r-2}{\sf gap}(\stt_\mu - [r-2,y]) 
-  \!\!\!\sum_{[x,y] \in \mu\setminus \nu}  {\sf gap}   (\stt_{\mu} - [x,y])
$$
The result follows by induction (see \cref{typeD ind} for a visualisation of this step).  
 \end{proof}

 \begin{figure}[ht!]
$$\scalefont{0.9}  \begin{minipage}{2.55cm}

  \end{minipage}  
   $$
        \caption{
     The lefthand-side  depicts 
         ${\sf 1}_{ {\stt_\mu}}  \otimes  {\sf bar}(	\color{cyan}	[r,r] 	\color{black}	)$.  
        The    first term on the righthand-side  
      depicts  
      $ {\sf 1}_{ \stt_\nu } \otimes       {\sf bar}( \color{violet}	[r-1,r-1]		\color{black}	)   \otimes 
{\sf 1}_{ \stt_{\mu\setminus \nu} }$ (known by induction); 
    the   second and third terms  depict 
    $+   
 \sum_{y\leq r }{\sf gap}(\stt_\mu - [r-2,y]) 
$
and 
$-   \sum_{[x,y] \in \mu\setminus \nu}  {\sf gap}   (\stt_{\mu} - [x,y])$ respectively, which provide  the gap   terms in  the inductive step in the proof.      }
        \label{typeD ind}
        \end{figure}

  \subsection{The Tetris-style presentation}  
We are now ready to provide a new presentation for the Hecke categories of  
 simply laced  Hermitian symmetric pairs.   One should notice that this presentation is mainly given in terms of the tiling combinatorics and {\em not} the usual Dynkin diagram combinatorics (the exception to this being discussion of commuting relations which are ``far apart" in the Dynkin diagram).

 \begin{thm}\label{easydoesit}   \renewcommand{\vvv}{{\underline{w} }} 
\renewcommand{\w}{{\underline{x}}}
\renewcommand{\x}{{\underline{y}}}
\renewcommand{\y}{{\underline{z}}}
\newcommand{\zz}{{\underline{w}}}
 
Let $(W,P)$ denote a  simply laced Hermitian symmetric pair.  
 The   algebra  $ \mathcal{H}_{(W,P)}  $  can be defined as 
 the locally-unital  associative $\Bbbk$-algebra 
spanned by  simple  Soergel   diagrams 
 with    multiplication given by vertical concatenation of diagrams modulo the following local relations and their horizontal and vertical flips.   Firstly,  for any   $ \csigma \in S_W$  we have the  relations 
\begin{align*}
{\sf 1}_{\csigma} {\sf 1}_{\ctau}& =\delta_{\csigma,\ctau}{\sf 1}_{\csigma} & {\sf 1}_{\emptyset} {\sf 1}_{\csigma} & =0 & {\sf 1}_{\emptyset}^2& ={\sf 1}_{\emptyset}\\
{\sf 1}_{\emptyset} {\sf spot}_{\csigma}^\emptyset {\sf 1}_{\csigma}& ={\sf spot}_{\csigma}^{\emptyset} & {\sf 1}_{\csigma} {\sf fork}_{\csigma\csigma}^{\csigma} {\sf 1}_{\csigma\csigma}& ={\sf fork}_{\csigma\csigma}^{\csigma} &
{\sf 1}_{\ctau\csigma}  {\sf  braid}_{\csigma\ctau}^{\ctau\csigma}
 {\sf 1}_{\csigma\ctau}  & ={\sf  braid}_{\csigma\ctau}^{\ctau\csigma} 
\end{align*}
where the final relation holds for all ordered pairs  $( \csigma,\ctau)\in S_W^2$  with $m(\csigma, \ctau)  =2  $.  
For each  $\csigma \in S_W $  we have   fork-spot contraction, the double-fork, and circle-annihilation  
 relations:  
  \begin{equation*} 
({\sf spot}_\csigma^\emptyset \otimes {\sf 1}_\csigma){\sf fork}^{\csigma\csigma}_{\csigma}
=
{\sf 1}_{\csigma},
 \quad 
  ({\sf 1}_\csigma\otimes {\sf fork}_{\csigma\csigma}^{ \csigma} )
({\sf fork}^{\csigma\csigma}_{\csigma}\otimes {\sf 1}_{\csigma})
=
{\sf fork}^{\csigma\csigma}_{\csigma}
{\sf fork}^{\csigma}_{ \csigma\csigma},
\quad
{\sf fork}_{\csigma\csigma}^{\csigma}
{\sf fork}^{\csigma\csigma}_{\csigma}=0,
 \end{equation*} 
   For   $(\csigma, \ctau ,\crho)\in S^3$ 
with   $m_{\csigma \crho}=
m_{  \crho\ctau} =m_{\csigma \ctau}=2$,  we have the commutation relations  
 \begin{equation*} 
\begin{minipage}{1.3cm}
\end{minipage}
 \end{equation*}
  For $\mu$ any partition tiling and $\csigma\in {\rm Add}(\mu)$, we have the monochrome Tetris relation
$$  {\sf 1}_{\stt_{\mu\csigma}}\otimes {\sf 1}_\csigma = 
 {\sf 1}_{\stt_{\mu}} \otimes ({\sf spot}_\csigma ^\emptyset \otimes {\sf fork}_\csigma^{\csigma\csigma}
 +{\sf spot}^\csigma _\emptyset \otimes {\sf fork}^\csigma_{\csigma\csigma})
 +\!\!
 \sum_{[r,c] \in {\rm Hook_\csigma} (\mu)} 
 {\sf gap}(\stt_\mu- [r,c])  \otimes {\sf dork}^{\csigma\csigma}_{\csigma\csigma}.
$$
For any  $\csigma,\ctau \in S_W$ with $m(\csigma,\ctau)=3$,  
 we have the null-braid relation
\begin{align*} 
 {\sf 1}_{\csigma\ctau\csigma} + 
 ({\sf 1}_\csigma \otimes {\sf spot}^\ctau _\emptyset \otimes  {\sf 1}_\csigma )
 {\sf dork}^{\csigma\csigma}_{\csigma\csigma}
  ({\sf 1}_\csigma \otimes {\sf spot}_\ctau ^\emptyset \otimes  {\sf 1}_\csigma )
 =0
\end{align*}
 For   $\mu \in \mptn $  and 
 $\ctau\in \Add(\mu)$, we have the bi-chrome Tetris relation 
 $${\sf 1}_{ \stt_{\mu}}  \otimes {\sf bar}(\ctau  )  
 = 
-\!\!
\sum_{[r,c] \in {\rm Hook_\ctau} (\mu)} 
 {\sf gap}(\stt_\mu- [r,c]). $$Further,  we require the interchange law and the monoidal unit relation 
$$
 \big( {\sf D  }_1 \otimes   {\sf D}_2   \big)\circ  
\big(  {\sf D}_3  \otimes {\sf D }_4 \big)
=
 ({\sf D}_1 \circ   {\sf D_3}) \otimes ({\sf D}_2 \circ  {\sf D}_4)
\qquad {\sf 1}_{\emptyset} \otimes {\sf D}_1={\sf D}_1={\sf D}_1 \otimes {\sf 1}_{\emptyset}
$$
for all diagrams ${\sf D}_1,{\sf D}_2,{\sf D}_3,{\sf D}_4$.  
Finally, we require the   non-local  cyclotomic relations
\begin{align*}
{\sf bar}(\csigma)\otimes {\sf D}  =0 \qquad &\text{for  all   $\csigma \in S_W$ and ${\sf D}$ any diagram  }
\\
{\sf 1}_\ctau \otimes {\sf  D}=0 \qquad &\text{for  all   $\ctau \in S_P\subseteq S_W$ and ${\sf D}$ any diagram.  }
\end{align*}

 \end{thm}
\begin{proof}
In light of \cref{inlightof}, we need only show that 
   the one and two colour barbell relations can be replaced by the 
   monochrome and bi-chrome   Tetris relations.

  Given two simple Soergel   diagrams ${\sf D}_1$ and  ${\sf D}_2$, 
the one and two colour barbell relations allow us to inductively move leftwards any barbell {\em anywhere} in ${\sf D}_1{\sf D}_2$; once all barbells are at the leftmost edge of the diagram these are zero by the cyclotomic relation.  
Thus the one and two colour barbell relations allow us to rewrite a product of simple Soergel   diagrams as a linear combination of simple Soergel   diagrams.

 We   now show that we can rewrite, using only	the
 relations of \cref{easydoesit},  any 
diagram 
${\sf D}_1{\sf D}_2$
 as a linear combination of simple Soergel   diagrams.   
Any diagram can be rewritten in terms of the cellular basis
and the cellular basis elements are all of the form 
$c_\SSTS^*{\sf 1}_{\stt_\la} c_\SSTT$.  Thus it suffices to have a list of rules which rewrites 
${\sf 1}_{\stt_\la}\otimes {\sf bar}(\ctau)$ as a linear combination of simple Soergel diagrams.  This is precisely what the   monochrome  and bi-chrome Tetris relation do. The result follows.  
 \end{proof}

 \subsection{Combinatorial invariance for simply laced types. } Equipped with our Tetris style presentations, we are now ready to prove \cref{HSP-comb}.  We begin by restricting our explicit attention to simply laced types only, as this case follows easily from our Tetris-style presentation.

 \color{black!99}
\begin{prop}\label{jkfgjkgffgjkfgkj}Let $(W,P)$ and $(W',P')$ be simply laced Hermitian symmetric pairs.  
Let $\Pi=[\la,\mu]$ and $\Pi'=[\la',\mu']$ be closed subsets of $\mptn$ and $\mathscr{P}_{(W',P')}$ respectively.  
 Given a map $\MAP :\Pi\to \Pi'$,   $\MAP  ( \alpha)= \alpha'$ for $\la \leq \alpha \leq \mu$,
  we have that $\MAP $ is a poset isomorphism if and only if  $\MAP $ sends like-coloured tiles in $\alpha$ to like-coloured tiles in $\alpha'$.  
\end{prop}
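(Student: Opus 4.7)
The plan is to treat the two implications separately, with the difficult direction being that a poset isomorphism implies colour preservation.

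For the easy direction, if $\MAP$ identifies the tile-set of $\la$ with the tile-set of $\la'=\MAP(\la)$ via a colour-preserving bijection for every $\la\in\Pi$, then because the Bruhat order on $\mptn$ is given by inclusion of tile-sets (see \cref{tileeeeee}), the map $\MAP$ automatically respects the inclusion order in both directions, and is therefore an isomorphism of posets.

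For the forward direction, the strategy is to reconstruct the colouring from purely poset-theoretic data. A poset isomorphism preserves cover relations, and each cover $\la\lessdot\mu$ in $\mptn$ corresponds to adjoining a unique tile of a specific colour. Fixing any maximal chain from the minimum of $\Pi$ to $\la$ and transporting it across $\MAP$ yields a bijection between the tiles of $\la$ and those of $\la'$; the task is to check colour-preservation and chain-independence of this bijection. The main tool is the local structure of rank-two intervals. In a simply laced Hermitian symmetric pair, two addable tiles at $\la$ with colours satisfying $m(\csigma,\ctau)=2$ yield a commuting diamond with two intermediate elements, whereas $m(\csigma,\ctau)=3$ yields an interval with a single intermediate element (by the full-commutativity of $^PW$ established in \cite{MR1406459}). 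Hence the pairwise commutation relation between the colours of all addable tiles at each $\la$ is intrinsic to the poset and so preserved by $\MAP$. Chain-independence then follows because any two reduced chains to $\la$ differ by swaps across these commuting diamonds, which $\MAP$ preserves, so the resulting tile-bijection is well-defined.

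The main obstacle will be pinning down the global colour labelling beyond the commutation data. A Dynkin diagram automorphism (for example the involution swapping the two legs in type $D_n$, or the left-right symmetry of $(A_{n},A_{k-1}\times A_{n-k})$) can permute the colours consistently while leaving the underlying poset unchanged; accordingly, \emph{like-coloured} must be interpreted modulo an automorphism of the ambient Dynkin data. The crux is to verify that a single such automorphism can be chosen coherently across all of $\Pi$, and I expect this to follow from the rigidity of tile-admissibility together with the characterisation of addable and removable tiles given in \cref{tiletableaux}: once the colours of the addable tiles at the minimum of $\Pi$ are matched (a finite case check per type), the rest of the identification propagates uniquely through the poset via the cover-relation analysis above.
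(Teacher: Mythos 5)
The student takes a genuinely different route from the paper. The paper argues geometrically: a closed interval $\Pi$ with unique top and bottom is a rectangle intersected with the admissible region; in the rectangular case the colouring is literally the diagonal function on the box, and a poset isomorphism of $L(m,n)$ must preserve diagonals up to transpose (the transpose being absorbed by a Dynkin automorphism); the non-rectangular case, which can only involve the single alternating diagonal $s_0/s_1$ in $(D,A)$ or $s_1/s_3$ in $(D,D)$, is dispatched by a short explicit case check. Your proposal instead attempts a purely order-theoretic reconstruction of the colouring from cover relations and rank-two intervals.

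There is a genuine gap in the forward direction. The diamond-versus-chain dichotomy in rank-two intervals gives adjacency data only: it tells you which pairs of tiles carry Coxeter-adjacent colours ($m=3$) and which carry commuting colours ($m=2$). (As a minor point, note that two tiles both addable at the same $\la$ always commute, so your formulation should be that chains detect $m=3$ between a tile and the tile it newly renders addable, while diamonds detect $m=2$ between two simultaneously addable tiles.) But the proposition is about the colour \emph{equivalence relation}: which tiles carry the \emph{same} colour. Going from adjacency to equality requires tracing through the Coxeter graph — for instance, a tile adjacent to two commuting colours must carry their unique common graph-neighbour — and whether this forces equality at all is sensitive to the geometry of the admissible region. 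In the rectangle, the tiles $[r,c]$ and $[r+1,c+1]$ sit at opposite corners of a diamond, which is exactly what forces them to share a colour; in the $(D,A)$ triangle, $[r,r]$ and $[r+1,r+1]$ are \emph{not} so constrained because $[r,r+1]$ is inadmissible, and indeed they carry distinct colours $s_0$ and $s_1$. Your sketch acknowledges that the colouring is pinned down only modulo a Dynkin automorphism and that coherence must "propagate uniquely through the poset," but this is precisely the load-bearing claim, and it is asserted rather than proved. The "finite case check per type" you defer is where all the actual work lives, and it is exactly the explicit geometry (rectangular diagonals, the alternating diagonal, the $3\times 3$ comparison between $(D,A)$ and $(D,D)$) that the paper carries out in its place. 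To close the gap you would need to show that the adjacency graph on tiles, together with knowledge that the colouring factors through a simply laced Dynkin diagram, determines the colour-fibres — and this is not a purely local statement.
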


\begin{proof}
We consider  the classical types, as the exceptional cases can be verified by exhaustion.  
We identify   $\la\subseteq \mu $ and  $\la'\subseteq \mu' $ with their tilings  $\la\subseteq \mu $ and  $\la'\subseteq \mu' $.  
 If  the tiling $\la\subseteq \mu$  can appear as a subregion of $\mathscr{A}_{(W,P)}$ for $W$ of type $A$ (that is,  $[r,c] ,[r+1,c+1]  \in (\mu\setminus\la)$  implies 
 $[r,c+1], [r+1,c] \in (\mu\setminus\la)$) 
  then the colouring on each $\alpha \in \Pi$, $\alpha' \in \Pi'$ is simply given by shifting the $x$-coordinates of the diagonals (up to reflecting through the $y-$axis, see the first pair in \cref{jk343434}). 
 Otherwise,  $\la\subseteq \mu $ and  $\la'\subseteq \mu' $ each  have a single diagonal  in which the colouring alternates (either alternating between $s_0$ and $s_1$ in type $(D,A)$ as in the rightmost examples in \cref{jk343434},   or alternating between $s_1$ and $s_3$ in type $(D,D)$).  
If such a $\Pi$ fits within a  $3\times 3$ rectangle in type $(D,A)$, then one can check that there exists an isomorphic $\Pi'$ in type $(D,D)$ and that the colourings match-up in these small cases.  Otherwise, if $\Pi$ and $\Pi'$ are two distinct isomorphic posets, then they are both of type $(D,A)$ and can be obtained by vertical translation (perhaps swapping $s_0$ and $s_1$ in the process, depending on the parity of the translation).  
  \end{proof}
  \color{black}

\begin{figure}[ht!]
$$ 

$$

\caption{\color{black!99}
Two pairs of isomorphic tilings.  The pair $\Pi=\{\alpha \mid \varnothing  \leq \alpha \leq (1,2,3)		\}$
and $\Pi'=\{\alpha \mid (1,2,3^4)  \leq \alpha \leq (1,2,3,4,5,6)		\}$
	 on the right can only appear in type $D$ (note the jagged edge on the left) and the recolouring of tiles 
 is 	given by  $\iota({\color{magenta}s_0})= {\color{pink}s_1}	$,  
 	 $\iota( {\color{pink}s_1}	)={\color{magenta}s_0} 	$, 
	$\iota(   {\color{brown}s_2}) =   {\color{brown}s_2}$, 
		$\iota(   {\color{orange}s_3}) =   {\color{orange}s_3}$).
	  The pair on the left  are $\Pi=\{\alpha \mid	 (4,3^2,2)   \leq \alpha \leq (5^3,4) 	\}$
and $\Pi'=\{\alpha \mid (3,1)  \leq \alpha \leq (4^2,3)		\}$
and the recolouring of tiles is given by   
 $\iota( {\color{violet}s_1}	)={\color{cyan}s_7} 	$,
  $\iota( {\color{yellow}s_2}	)={\color{brown}s_6} 	$,
    $\iota( {\color{orange}s_3}	)={\color{magenta}s_5} 	$,
  $\iota( {\color{green!80!black}s_4}	)={\color{green!80!black}s_4} 	$,  
    $\iota(  {\color{magenta}s_5} )= {\color{orange}s_3}	$,	
      $\iota( {\color{brown}s_6} )= {\color{yellow}s_2}	$ (note the flip through the vertical axis).}
      \label{jk343434}
\end{figure}

\begin{defn}\label{recolourme} 
Let $\Gamma$ (respectively $\Gamma'$)
 be the set of  colours of the tiles in $\Pi$ (respectively $\Pi'$).  
 \color{black!99} We let
 $\colMAP: \Gamma \to \Gamma'$, 
 be a surjective map.  \color{black}
We lift this to a  recolouring map on Soergel diagrams as follows. 
For $\gamma, \delta \in \Gamma$ with   $\colMAP (\gamma)=\gamma'$ and   $\colMAP (\delta)=\delta'$    we set 
$$
\colMAP ({\sf 1}_\gamma  )=
{\sf 1}_{\gamma'}  
\quad 
\colMAP ({\sf spot}_\gamma^\emptyset )=
{\sf spot}_{\gamma'}^\emptyset 
\quad 
\colMAP ({\sf fork}_{\gamma\gamma}^{\gamma} )=
{\sf fork}_{\gamma'\gamma'}^{\gamma'}
\quad  \color{black!99}
\colMAP ({\sf braid}^{\gamma \delta }_{\delta\gamma } )=
{\sf braid}^{\gamma '\delta'}_{\delta' \gamma'}		 \color{black}
$$and we set $\colMAP({\sf D}^\ast)=(\colMAP({\sf D}))^\ast$.  We then inductively define
$$
\colMAP({\sf D}_1 \otimes {\sf D}_2)
=
\colMAP({\sf D} _1) \otimes\colMAP( {\sf D}_2)
\qquad
\colMAP({\sf D}_1 \circ {\sf D}_2)
=
\colMAP({\sf D}_1 ) \circ\colMAP( {\sf D}_2)
$$and extend this map $\Bbbk$-linearly.  
\end{defn}

\begin{prop}\label{recolourme} 
Suppose we have a poset isomorphism $\MAP:\Pi \to \Pi'$ for 
 $\Pi=[\la,\mu]$ and $\Pi'=[\la',\mu']$  as in \Cref{jkfgjkgffgjkfgkj} inducing a recolouring map $\colMAP$.
Then $\MAP$ extends to a unique graded isomorphism of $\Bbbk$-algebras  $\MAP :   {h}_\Pi    \to   {h}_{\Pi '} $ 
satisfying
$$
  \MAP  ({\sf 1}_{\stt_{\la }}
    \otimes
D )
={\sf 1}_{\stt_{\la' }} 
   \otimes
\colMAP( {\sf D})
$$for  ${\sf D}$ any Soergel diagram 
, and
$$
  \MAP  ({\sf 1}_{\stt_{\la }}
   \circledast_\gamma 
D )
={\sf 1}_{\stt_{\la' }} 
  \circledast_{\gamma'}
\colMAP( {\sf D})
$$for   $\gamma \in \Rem(\la)$, and ${\sf D}$ any   Soergel diagram for which the $\circledast_\gamma$- and $\circledast_{\gamma'}$-products makes sense.
 \end{prop}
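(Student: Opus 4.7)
My plan is to deduce this proposition directly from the Tetris-style presentation of $\mathcal{H}_{(W,P)}$ established in Theorem \ref{easydoesit}, which is crafted precisely so that its relations can be read off from the tile combinatorics of $\Pi$ rather than from the ambient Dynkin data. Concretely, I would define $\MAP$ on simple Soergel diagrams of $h_\Pi$ by $\colMAP$ on local generators and by the prescribed action on idempotents ${\sf 1}_{\stt_\mu}$, extend multiplicatively via $\otimes, \circ, \circledast_\gamma$, and check that every relation in the Tetris presentation is sent to a valid relation in $h_{\Pi'}$. Since the inverse poset isomorphism $\Pi'\to\Pi$ (which exists by hypothesis) gives an analogous candidate map back, verifying well-definedness on one side is enough.

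The verification splits into the local relations and the global Tetris-type relations. The monochrome relations (idempotent, fork-spot contraction, double-fork, circle-annihilation) and the bi-chrome commutation relations only see pairs or triples of colours together with their Coxeter-adjacencies; by Proposition \ref{jkfgjkgffgjkfgkj}, $\MAP$ preserves which tiles in $\Pi$ are (non-)neighbouring, hence $\colMAP$ sends $m(\csigma,\ctau)=2$ pairs to $m(\csigma',\ctau')=2$ pairs and the relations transport verbatim. The null-braid relations involve a specific local configuration $\mu\csigma\ctau$ of tiles inside the admissible region, and again the tile combinatorics around any interior vertex of $\Pi$ is preserved by $\MAP$, so these relations transport. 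The gradedness of $\MAP$ is immediate since $\colMAP$ preserves the degrees of the generators as listed at the end of \cref{the algebra}.

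The main obstacle, and the only place where real work happens, is checking the two Tetris relations: the monochrome Tetris relation rewriting ${\sf 1}_{\stt_{\mu\csigma}}\otimes{\sf 1}_\csigma$ and the bi-chrome Tetris relation rewriting ${\sf 1}_{\stt_\mu}\otimes{\sf bar}(\ctau)$, both of which involve sums over $\Hook_\csigma(\mu)$. The key point is that $\Hook_\csigma(\mu)$ is defined entirely through the Russian-style geometry of the admissible region and the set of tiles in $\mu$, which by Proposition \ref{jkfgjkgffgjkfgkj} is preserved under $\MAP$; moreover, in the subquotient $h_\Pi$ all hook summands indexed by tiles outside $E$ vanish either by the cyclotomic relations or by factoring through an idempotent ${\sf 1}_{\stt_\la}$ with $\la\not\in E$, and the same phenomenon occurs symmetrically in $h_{\Pi'}$. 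Combining these observations with \cref{irrelevant}, which shows the hook sum depends only on $\mathrm{Hole}_\csigma(\mu)$, gives a tile-by-tile matching between the two sides of the Tetris relation on each side of $\MAP$.

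Finally, I would verify compatibility with $\circledast_\gamma$: by construction (\cref{compose,compose2}) the singular product is realised by identifying the rightmost $\gamma$-propagating strand of one diagram with the leftmost $\gamma$-propagating strand of the next, together with commuting braid corrections. Since $\colMAP$ preserves the colour of propagating strands and $\MAP$ identifies the matching idempotents ${\sf 1}_{\stt_\mu}$ with ${\sf 1}_{\stt_{\mu'}}$, the identification and all correction factors transport. Uniqueness of $\MAP$ follows from the fact that the specified values on $({\sf 1}_{\stt_\mu}\otimes D)$ and $({\sf 1}_{\stt_\mu}\circledast_\gamma D)$ with $\mu$ ranging over the minimal elements of $\Pi$ generate $h_\Pi$; bijectivity follows by symmetry. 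This yields the desired graded algebra isomorphism.
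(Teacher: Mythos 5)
Your proposal follows essentially the same route as the paper's proof: express cellular basis elements as ${\sf 1}_{\stt_\mu}\otimes D$ or ${\sf 1}_{\stt_\mu}\circledast_\gamma D$ to get well-definedness, check the Tetris-style relations transport under $\colMAP$ using the colour/adjacency preservation of \cref{jkfgjkgffgjkfgkj} together with \cref{irrelevant} (and the vanishing of hook terms landing outside $\Pi$, which the paper isolates as \cref{whenisitzeroinquotient}), and conclude bijectivity by symmetry of the recolouring map. The few differences are cosmetic — you make the degree-preservation and $\circledast_\gamma$-compatibility explicit where the paper treats them as immediate — and do not constitute a genuinely different argument.
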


\begin{rmk} We can regard the diagram ${\sf D}$ as being ``coloured by"  \color{black!99} the initial  Coxeter system $W$    \color{black}  and the effect of the map is to ``recolour" this diagram according to the  {\color{black!99}  Coxeter system $W'$}.  The effect of changing $\stt_{\la}$ for $\stt_{\la'}$ is merely to identify the different regions $\Pi$ and $\Pi'$ within our Bruhat graphs $^PW$ and ${^{P'}}W'$.   
 An example is given in \cref{recolour}.
 \end{rmk}
 
 We are now almost ready to prove \cref{recolourme}.  We first 
 observe that the Tetris relations are compatible with restriction to 
 a closed subregion.

 \begin{lem}\label{whenisitzeroinquotient}
\color{black!99} Let $  \Pi = [\la,\mu]$. 
Given  $\color{magenta}[x,y]
 \color{black} \in \la  
 $,
  we have that ${\sf gap}(\stt_\la-\color{magenta}[x,y]
 \color{black})=0 \text{ in } \color{black!99}h_\Pi.$
 \end{lem}
 \begin{proof}\color{black!99}
   We suppose $W$ is classical as the general case is similar.  
 By    \eqref{iszero} and    \eqref{isnonzero} we can rewrite ${\sf gap}(\stt_\la-\color{magenta}[x,y]
 )=\pm c^\alpha_{\SSTS\SSTS}$ (or zero) for $\alpha \subset \la $ and the result follows as ${\sf 1}_\alpha=0 \in h_\Pi$ by definition.
  \end{proof}

  \begin{figure}[ht!]
   $$   
   \begin{minipage}{4.4cm}
\end{minipage}
 $$ 

\caption{
The left diagram, $D$, is an element from $h_{\Pi}$ for 
$\Pi=\{ \alpha \mid (1^2)\leq \alpha\leq (3^2)\} \subset \mathscr{A}_{(A_8,A_4\times A_3)}$ and the right diagram is 
the corresponding, $\colMAP(D)$,   
 in  $h_{\Pi'}$ for $\Pi'=\{\alpha' \mid (3,1^2)\leq \alpha\leq (3^3)\} \subset \mathscr{A}_{(A_8,A_4\times A_3)}$.  Compare the colouring with that of 
 ${(A_8,A_4\times A_3)}$  in \cref{typeAtiling}.  
  }
\label{recolour}
\end{figure}

 \begin{proof}[Proof of \cref{recolourme}] Let $\alpha \in \Pi = [\la,\mu]$ for $\la,\mu \in \mptn$. 
Each cellular basis element $c^\alpha_{\SSTS\SSTT}$ in $h_{\Pi}$ can be written in the form ${\sf 1}_{\stt_\la} \otimes {\sf D}$ 
 or ${\sf 1}_{\stt_{\la'}}   \circledast_\gamma {\sf D}$
 for some simple Soergel diagram ${\sf D}$, so $\MAP$ is well defined. 
 The monochrome and idempotent relations are trivially preserved by  the map $\MAP$.  
 Two   commuting reflections,  $\ctau, \csigma\in S_W$ correspond to tiles $\color{cyan}[x,y]$, $\color{magenta}[r,c]$ from $\Pi$  
if and only if  $(x-y)-(r-c)\neq \pm 1$; this distance is preserved by the map $\MAP:\Pi \to \Pi'$ (by \cref{jkfgjkgffgjkfgkj}) and so the commuting relations are preserved.  

The Tetris   relations for $\mathscr{H}_\Pi$ are written entirely in terms of the addable and removable nodes of tilings and 
the sets  ${\rm Hook}_\ctau(\alpha) $ for $\la \leq \alpha \leq \mu$ and this is compatible with restriction to $\Pi$  (using \cref{whenisitzeroinquotient}).
The sets 
${\rm Hook}_\ctau(\alpha) $  depend only on information which is preserved under $\MAP:\Pi \to \Pi'$  (using \cref{underphi}   to flip left versus right in the definition of ${\sf Hook}_\ctau(\mu)$, if necessary). 
Therefore the  Tetris  relations  go through $\MAP $. 
Finally, we note that the cyclotomic relations 
  follow from the   Tetris relations
and \cref{whenisitzeroinquotient}.  Thus the map $\MAP $ is an algebra homomorphism.

One can similarly define $\MAP ^{-1}$ as the recolouring map in the opposite direction.  We have that  $ \MAP \circ \MAP ^{-1}$ and 
$\MAP ^{-1}\circ \MAP $ are both identity maps (as they amount to recolouring and recolouring again) and so the map is indeed an algebra isomorphism.  
  \end{proof}

  \subsection{Fixed point subgroups and non-simply laced types}\label{nonsimply}

  We now consider the group automorphisms, $\sharp$,  
  for type $A_{2n-1}$ and $D_{n+1}$ given by flipping the Coxeter 
  diagrams through the horizontal and vertical axes, respectively.  
Explicitly, the map $\sharp$ is determined by $\sharp(s_i)= s_{2n-1-i}$ for the group of type $A_{2n-1}$.  
The map  $\sharp$ is determined by $\sharp(s_0)= s_{1}$, 
$\sharp(s_1)= s_{0}$ and $\sharp(s_i)= s_{i}$
 for the group of type $D_{n+1}$.  
 The fixed point groups of these automorphism are the groups 
 $\langle s_i s_{2n-1-i}, s_{n}\mid 1\leq i <n\rangle$ of type 
 $B_n$ and 
 $\langle s_0s_1, s_i \mid 2\leq i \leq n\rangle$ of type $C_n$.  
By restricting our attention from the group to its fixed point subgroup, we obtain a surjective   map $\colMAP$ on the tile-colourings.  
These are depicted in \cref{sjkhdsdhjkfasjkhfjkfhlsad,sjkhdsdhjkfasjkhfjkfhlsad2}.

\begin{rmk}\label{assumpttt}For the remainder of this section, we will fix 
  $ \csigma=  s_2 $ and $\ctau=s_1$  for   $(W,P)=(C_n,A_{n-1})$ 
and
  $ \csigma=  s_1 $ and $\ctau=s_2$  for   $(W,P)=(B_n,B_{n-1})$.  
  
 \end{rmk}

  We extend our colouring convention from \cref{assumpttt} by 
setting 
$\csigma= \color{magenta}s_n $ in type $A_{2n-1}$ and 
$  \csigma= \color{magenta}s_2 $
in type $D_{n+1}$.  
We similarly set 
$\crho={\color{green!80!black}s_{n-1}}, \cpi={\color{violet}s_{n+1}}$ in type $A_{2n-1}$ and 
$\crho={\color{green!80!black}s_0}, \cpi={\color{violet}s_1}$ in type $D_{2n+1}$ 
so that green and purple map to blue in both cases.  
We can easily extend this colouring map to a map, $\colMAP$, on the level of paths and moreover we have the following:

\begin{lem}\label{onpathsssss}
The colouring maps on paths map bijectively onto the subsets of 
 parity preserving paths from \cref{pm3}.  In other words,
  \begin{align*}
   \colMAP &  : \Path_{  ( D_{n+1}, A_{n} ) } (\la,\stt_\mu) \to 
 \Path^{\pm} _{  ( C_n,A_{n-1})}(\la,\stt_\mu),   
\quad     \colMAP  : \Path_{  ( A_{2n-1}, A_{2n-2} ) } (\la,\stt_\mu) \to 
 \Path^{\pm} _{  ( B_n,B_{n-1})}(\la,\stt_\mu) 
 \end{align*}
are both grading-preserving  bijections.
\end{lem}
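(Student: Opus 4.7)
The plan is to exhibit $\colMAP$ on paths explicitly, observe it has an obvious inverse given by ``unfolding'' along the $\pm$-decoration, and then verify four things: well-definedness, the image lies in $\Path^{\pm}$, the inverse lands in the right set, and grading preservation.

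First I would set up the map at the level of individual tiles. The colour map on simple reflections, $\colMAP \colon S_W \to S_{W'}$, is surjective with fibre of size $1$ at every generator except for the folded pair, where the fibre is $\{\csigma,\ctau\}$ (so $\{s_0,s_1\}$ in the $(D_{n+1},A_n)\to(C_n,A_{n-1})$ case and $\{s_i,s_{2n-1-i}\}$ in the $(A_{2n-1},A_{2n-2})\to(B_n,B_{n-1})$ case). Inspecting \cref{pm1,pm2} against the Dynkin diagrams of the larger groups in \cref{coxeterlabelD2}, the $\pm$-decorations on tiles in the admissible regions of $(B_n,B_{n-1})$ and $(C_n,A_{n-1})$ are precisely designed so that the tiles in the larger admissible region lift the $+$-tiles onto one preimage sheet and the $-$-tiles onto the other. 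In particular the unlabelled tiles have a unique preimage, and the combined addable/removable sets match under $\colMAP$ up to this $\pm$-bookkeeping.

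Next I would extend $\colMAP$ to a map on paths, step-by-step, and check that $\colMAP(\SSTT)$ is a valid tile-path in the smaller admissible region: this amounts to showing that $\colMAP$ sends an addable tile of $\la_k$ to an addable tile of $\colMAP(\la_k)$ (and similarly for removable tiles), which follows directly from the local matching described above. To verify that the image lies in $\Path^{\pm}$, I would argue locally at the fold axis: by construction of \cref{pm3}, a path in the smaller graph is parity-preserving exactly when the two underlying colours from the larger graph visit their preimage tiles in the alternation dictated by the $+/-$ labelling. Since any legal type-$D$ (resp.\ type-$A_{2n-1}$) path respects the local Bruhat structure, this alternation is automatic on the image side.

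For the inverse, given a parity-preserving path in the smaller Bruhat graph, one unfolds by sending each step through a $\pm$-labelled tile to the unique preimage tile dictated by the decoration, and each step through an unlabelled tile to its unique preimage. The parity condition guarantees that the resulting sequence of tiles remains inside the larger admissible region at each step, hence gives a genuine tile-path of the same shape and ending. The two constructions are manifestly mutual inverses. Finally, grading preservation is automatic, because the grading of a path as used in the light-leaves basis (see \cref{lightleaves}) is read off from the add/remove/neutral pattern of the sequence together with the monochrome operator signs, and neither datum is altered by recolouring. The main obstacle is the verification in the preceding paragraph that $\colMAP(\SSTT)$ actually lands in $\Path^{\pm}$ rather than in all of $\Path$; once that local parity compatibility is pinned down from the definitions of \cref{pm1,pm2,pm3}, the rest is essentially bookkeeping.
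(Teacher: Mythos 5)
The paper gives no proof of \cref{onpathsssss}; it is asserted as a combinatorial observation immediately before \cref{C--A} and then invoked to extend $\colMAP$ to a linear map on the cellular bases. So you are filling a genuine gap, and your overall strategy — define $\colMAP$ at the tile level, transport paths step by step, produce the inverse by unfolding along the $\pm$-decoration, and note that the grading depends only on the add/remove/neutral pattern — is the right one and matches what the authors evidently have in mind.

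There are, however, two imprecisions that you should repair before this becomes a proof. First, your description of the fibres of $\colMAP$ on $S_W$ is wrong for the $(A_{2n-1},A_{2n-2})\to(B_n,B_{n-1})$ case. The folding automorphism $\sharp$ of $A_{2n-1}$ identifies $s_i$ with $s_{\sharp(i)}$ for \emph{every} $i$, so in that case almost every fibre has size $2$ (only the middle node is fixed), not just one distinguished pair as in the $D_{n+1}\to C_n$ case; correspondingly, in \cref{pm2} essentially all tiles of the $B_n$ admissible region carry a $\pm$-label, whereas in \cref{pm1} only the $s_1$-diagonal of the $C_n$ admissible region does. A uniform treatment of the two cases therefore cannot speak of ``the folded pair'' in the singular — and in the $A\to B$ case the admissible regions even have different shapes (a single row versus the L-shape), so the tile bijection involves a nontrivial relabelling of positions $[r,c]$, not merely a recolouring at fixed positions. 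You should state the tile bijection explicitly in that case.

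Second, the sentence ``this alternation is automatic on the image side'' is doing all the real work and needs to be unpacked. The correct justification is that a path in $\Path_{(D_{n+1},A_n)}(\la,\stt_\mu)$ is, by the definition of folding-up in \cref{tiletableaux}, constrained so that the colour of the tile touched at step $k$ agrees with the colour of the $k$th tile of $\stt_\mu$; the refinement in \cref{pm3} from $\Path$ to $\Path^{\pm}$ imposes the same agreement of $\pm$-labels. Since the colour of a tile in the larger admissible region determines both the colour and the $\pm$-label of its image in the smaller admissible region (the $\pm$ merely records which sheet of the fold the preimage lies on), the colour constraint in the source implies the colour-plus-parity constraint in the target, and conversely an unfolding of a parity-preserving path is forced to land at the correct preimage tile at each step. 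Once this is spelled out, the bijectivity and grading preservation do indeed reduce to the bookkeeping you describe.
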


 We now prove that  
$h_{(C_n,A_{n-1})}$ and 
$h_{(B_n,B_{n-1})}$ are graded Morita equivalent to 
 $h_{(D_{n+1},A_{n})}$ and $h_{(A_{2n-1},A_{2n-2})}$ respectively.

 \begin{figure}[ht!]
 $$
 \begin{minipage}{2.4cm}
\end{minipage}
  $$
  \caption{An example  of  the colouring  map $\colMAP  $ from   type  $A_{2n-1}$ to type $B_{n}$.  }
  \label{sjkhdsdhjkfasjkhfjkfhlsad2}
  \end{figure}

 \begin{lem}
Let  $m(\betar,\gam)=3$ or  $m(\betar,\gam)=4$.
 If  $m(\betar,\gam)= 4$, then suppose that 
 $(\betar,\gam)= (  \ctau,\csigma)$  as in \cref{assumpttt}.  
We have  that  
\begin{align}\label{repeat11}
{\sf fork}^\betar_{\betar \betar }
({\sf 1}_\betar \otimes {\sf bar}(\gam) \otimes {\sf 1}_\betar)
 {\sf fork}_\betar^{\betar \betar }
 &=- {\sf 1}_\betar 
\\
\label{repeat12}
{\sf cap}^{\emptyset}_{\betar \betar }
({\sf 1}_\betar \otimes {\sf bar}(\gam) \otimes {\sf 1}_\betar)
 {\sf cup}_{\emptyset}^{\betar \betar }
 &=-{\sf bar}(\betar) 
 \\
\label{repeat13}
{\sf fork}^{\betar}_{\betar \betar }
({\sf 1}_\betar \otimes {\sf bar}(\gam)\otimes {\sf bar}(\gam) \otimes {\sf 1}_\betar)
 {\sf fork}_{\betar}^{\betar \betar }
 &=
 - {\sf 1}_\betar \otimes (2{\sf bar}(\gam) +  {\sf bar}(\betar) )
\end{align}
\end{lem}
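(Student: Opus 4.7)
Write $c = \langle \alpha_\betar^\vee, \alpha_\gam\rangle$. I first observe that under the hypotheses of the lemma, $c=-1$ in every case: if $m(\betar,\gam)=3$ this is the simply-laced value, and in the non-simply-laced case our choice $(\betar,\gam)=(\ctau,\csigma)$ from \cref{assumpttt} places $\betar$ at the long end of the double bond in both $(C_n,A_{n-1})$ and $(B_n,B_{n-1})$, giving once again $\langle\alpha_\betar^\vee,\alpha_\gam\rangle=-1$. This scalar will be the only place where the non-simply-laced case gets special.

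The plan is to first assemble three building blocks by direct application of the fork-spot, circle, and one-colour barbell relations: namely
\begin{align*}
{\sf fork}^\betar_{\betar\betar}{\sf fork}_\betar^{\betar\betar}&=0, \\
{\sf fork}^\betar_{\betar\betar}({\sf gap}(\betar)\otimes {\sf 1}_\betar){\sf fork}_\betar^{\betar\betar}
 &=  {\sf fork}^\betar_{\betar\betar}({\sf 1}_\betar\otimes {\sf gap}(\betar)){\sf fork}_\betar^{\betar\betar}={\sf 1}_\betar, \\
{\sf fork}^\betar_{\betar\betar}({\sf 1}_\betar\otimes{\sf bar}(\betar)\otimes {\sf 1}_\betar){\sf fork}_\betar^{\betar\betar} &= 2\cdot {\sf 1}_\betar.
\end{align*}
The first is circle annihilation. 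The second follows by factoring ${\sf gap}(\betar)={\sf spot}^\betar_\emptyset{\sf spot}_\betar^\emptyset$, applying the interchange law, and reducing with two fork-spot contractions. The third is obtained by rewriting ${\sf 1}_\betar\otimes{\sf bar}(\betar)=2{\sf gap}(\betar)-{\sf bar}(\betar)\otimes{\sf 1}_\betar$ via the one-colour barbell relation and applying the second identity together with circle annihilation.

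For \eqref{repeat11}, I apply the two-colour barbell relation to push the $\gam$-barbell past the left $\betar$-strand, giving
\[{\sf 1}_\betar\otimes{\sf bar}(\gam)\otimes{\sf 1}_\betar={\sf bar}(\gam)\otimes{\sf 1}_\betar\otimes{\sf 1}_\betar+c\bigl({\sf gap}(\betar)-{\sf 1}_\betar\otimes{\sf bar}(\betar)\bigr)\otimes {\sf 1}_\betar.\]
Sandwiching between forks, the first term vanishes (floating $\gam$-bar factors out, leaving ${\sf fork}^\betar_{\betar\betar}{\sf fork}_\betar^{\betar\betar}=0$), the ${\sf gap}(\betar)\otimes {\sf 1}_\betar$ term gives ${\sf 1}_\betar$, and the ${\sf 1}_\betar\otimes{\sf bar}(\betar)\otimes {\sf 1}_\betar$ term gives $2\cdot{\sf 1}_\betar$. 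Collecting, the total is $c\cdot(-{\sf 1}_\betar+2\cdot{\sf 1}_\betar) = c\cdot{\sf 1}_\betar = -{\sf 1}_\betar$. Then \eqref{repeat12} is immediate: expand ${\sf cap}^\emptyset_{\betar\betar}={\sf spot}^\emptyset_\betar{\sf fork}^\betar_{\betar\betar}$ and the dual ${\sf cup}_\emptyset^{\betar\betar}$, apply \eqref{repeat11} in the middle, and use ${\sf spot}^\emptyset_\betar\cdot{\sf 1}_\betar\cdot{\sf spot}_\emptyset^\betar={\sf bar}(\betar)$.

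The bulk of the work goes into \eqref{repeat13}. The strategy is again to use the two-colour barbell relation to push the \emph{leftmost} $\gam$-barbell across the leftmost $\betar$-strand, writing
\[{\sf 1}_\betar\otimes{\sf bar}(\gam)\otimes{\sf bar}(\gam)\otimes{\sf 1}_\betar={\sf bar}(\gam)\otimes{\sf 1}_\betar\otimes{\sf bar}(\gam)\otimes {\sf 1}_\betar+\bigl({\sf gap}(\betar)-{\sf 1}_\betar\otimes{\sf bar}(\betar)\bigr)\otimes{\sf bar}(\gam)\otimes{\sf 1}_\betar\]
(using $c=-1$). The first piece between forks gives $-{\sf bar}(\gam)\otimes{\sf 1}_\betar$ by floating out the left $\gam$-bar and applying \eqref{repeat11}. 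The ${\sf gap}(\betar)\otimes{\sf bar}(\gam)\otimes{\sf 1}_\betar$ piece is handled by sliding the $\gam$-bar to the far right (again via the two-colour barbell relation, generating a ${\sf gap}(\betar)\otimes{\sf gap}(\betar)$ and a ${\sf gap}(\betar)\otimes{\sf 1}_\betar\otimes{\sf bar}(\betar)$ correction); each resulting piece has the floating bar outside the forks, and the building blocks evaluate it to ${\sf 1}_\betar\otimes{\sf bar}(\gam)-{\sf gap}(\betar)+{\sf 1}_\betar\otimes{\sf bar}(\betar)$ (after checking that ${\sf fork}^\betar_{\betar\betar}({\sf gap}(\betar)\otimes{\sf gap}(\betar)){\sf fork}_\betar^{\betar\betar}={\sf gap}(\betar)$, which follows from the factorisation ${\sf gap}(\betar)={\sf spot}^\betar_\emptyset{\sf spot}_\betar^\emptyset$ and double use of fork-spot contraction). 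The ${\sf 1}_\betar\otimes{\sf bar}(\betar)\otimes{\sf bar}(\gam)\otimes{\sf 1}_\betar$ piece is reduced by the one-colour barbell relation on the left $\betar$-strand into a ${\sf gap}(\betar)\otimes{\sf bar}(\gam)\otimes{\sf 1}_\betar$ piece (already evaluated) and a floating-${\sf bar}(\betar)$ piece (computed via \eqref{repeat11}). Summing the three contributions and applying the two-colour barbell relation once more to rewrite ${\sf bar}(\gam)\otimes{\sf 1}_\betar$ in the form ${\sf 1}_\betar\otimes{\sf bar}(\gam)-{\sf gap}(\betar)+{\sf 1}_\betar\otimes{\sf bar}(\betar)$, and simplifying with the one-colour barbell identity ${\sf bar}(\betar)\otimes{\sf 1}_\betar+{\sf 1}_\betar\otimes{\sf bar}(\betar)=2{\sf gap}(\betar)$, the ${\sf gap}(\betar)$ and ${\sf bar}(\gam)\otimes{\sf 1}_\betar$ terms cancel and the answer collapses to $-{\sf 1}_\betar\otimes(2{\sf bar}(\gam)+{\sf bar}(\betar))$.

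The main obstacle is purely bookkeeping: the identity \eqref{repeat13} produces roughly a dozen intermediate terms after two nested applications of the two-colour barbell relation, and a small sign error anywhere destroys the cancellations. The essential \emph{ideas} are already supplied by \eqref{repeat11} and the three building blocks above; what remains is to verify carefully that $c=-1$ in the non-simply-laced case (so that the two-colour barbell corrections always come with the right sign) and to organise the expansion so that every term is either floating (and reduces by circle annihilation or interchange) or lies in the span of the simpler sandwiches already evaluated.
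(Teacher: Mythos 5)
Your proposal is correct and follows essentially the same route as the paper: push each $\gam$-barbell across a $\betar$-strand with the two-colour barbell relation, then evaluate the resulting pieces via circle annihilation, fork-spot contraction, and the one-colour barbell (the paper's terse ``circle annihilation and fork-spot contraction'' implicitly includes this last step), recycling \eqref{repeat11} for the floating-bar terms in \eqref{repeat13}. One small caveat: in your displayed expansion for \eqref{repeat11} the correction term should read $-c\bigl({\sf gap}(\betar)-{\sf 1}_\betar\otimes{\sf bar}(\betar)\bigr)\otimes{\sf 1}_\betar$ rather than $+c(\cdots)$, but your subsequent ``collection'' $c\cdot(-1+2)\cdot{\sf 1}_\betar$ is computed with the correct coefficients $(-c,+c)$, so the final answer $-{\sf 1}_\betar$ is right.
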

\begin{proof}
\Cref{repeat11} follows by  applying   the $\gam\betar$-barbell relation, followed by the $\betar$-circle annihilation relation and $\betar$-fork-spot contraction relation.   
\Cref{repeat12} follows from \cref{repeat11} by apply the spot generator on top and bottom.  
\Cref{repeat13}  follows by  applying  the $\gam\betar$-barbell relation to the lefthand-side, followed by 
\cref{repeat11}.
 \end{proof}

We will find the following shorthand useful,  
$${\sf trid}^\blue_{\blue\csigma \blue}= {\sf fork}^{\blue}_{\blue\blue}
({\sf 1}_\blue\otimes {\sf spot}_\csigma ^\emptyset \otimes {\sf 1}_\ctau)
\qquad
{\sf trid}^\emptyset _{\blue\csigma \blue}= {\sf spot}_\ctau^\emptyset  
{\sf trid}^\blue_{\blue\csigma \blue}
$$
the former of which   can be pictured as a ``trident".  We set $${\sf trid}^{\blue\csigma \blue}_\blue=({\sf trid}^\blue_{\blue\csigma \blue})^\ast \qquad 
 {\sf trid}^{\blue\csigma \blue}_{\blue\csigma \blue} = 
\trid^{\blue\csigma \blue}_\blue {\sf trid}_{\blue\csigma \blue}^\blue.$$ 
By \cref{repeat11}, we have that
 $-{\sf trid}^{\blue\csigma \blue}_{\blue\csigma \blue} $  
is an idempotent and that 
\begin{align}\label{bpb}
({\sf 1}_{\blue\csigma \blue}+{\sf trid}^{\blue\csigma \blue}_{\blue\csigma \blue} )
{\sf trid}^{\blue\csigma \blue}_{\blue\csigma \blue} =0.
\end{align}

\begin{defn}\label{defn of idempt}
 Let  $(W,P)=(C_n,A_{n-1})$ or $(B_n,B_{n-1})$ and suppose $\csigma,\ctau\in S_W$ satisfy the assumptions of \cref{assumpttt}.  
For  $\mu\in \mptn $ and  $1< k \leq \ell_{\ctau}(\mu)$, we set 
$$\rho_k=\begin{cases}
(1,2,3,\dots,k)	\in \mptn 	&\text{if }W=C_n\\ 
(n,1)	\in \mptn 	&\text{if }W=B_n
\end{cases}
$$and we set $  \kappa= 
\rho_k - 
\ctau-\csigma-\ctau$ 
and define 
\begin{align}\label{redundaaaa}
 {\sf e}_{ \mu}^k= 
{\sf braid}_{ \stt_{\kappa}  \circ   
    \stt_{\mu - \kappa} }^{\stt_\mu} \Big(
{\sf 1}_{\stt_\kappa}\otimes 
{\sf trid}^{\blue\csigma \blue}_  {\blue\csigma \blue} 
 \otimes {\sf 1}_{\stt_{\mu-\rho_k}}\Big) 
{\sf braid}^{ \stt_{\kappa}  \circ   
    \stt_{\mu - \kappa}}_{\stt_\mu} .
\end{align}
We define the idempotent 
 $$\textstyle 
{\sf e}_{ \mu}=	 \prod_{ 1 < k \leq \ell_{\ctau}(\mu) } ({\sf 1}_{\stt_\mu}+ {\sf e}_{ \mu}^k )
$$ 
and  set $e_\mu={\sf 1}_{\stt_\mu}$ if $\ell_{\ctau}(\mu)\leq1$.
We    define  ${\sf e}=\sum_{\mu \in \mptn} {\sf e}_{ \mu}$.  
\end{defn}

  \begin{figure}[ht!]
   $$   
  \end{minipage}  \qquad\quad
 $$

	 	 \caption{
We picture the element  ${\sf e}_\mu$ for $\mu=(4,1^3)$ for 
 type  $(B_4,B_3)$ as in \cref{defn of idempt}.   
 The colouring is the same as that of \cref{sjkhdsdhjkfasjkhfjkfhlsad2}.   
  }

  \label{eg of idemp2}
\end{figure}

We can extend the maps  $\colMAP$  from \cref{onpathsssss} to   injective $\Bbbk$-linear maps 
  $ \colMAP: h_{(D_{n+1},A_{n})} \hookrightarrow  h_{(C_n,A_{n-1})}$
  and 
  $ \colMAP: h_{(A_{2n-1},A_{2n-2})} \hookrightarrow  h_{(B_n,B_{n-1})}$ by setting 
  $\colMAP(c_{\SSTS\SSTT})=c_{\colMAP(\SSTS)\colMAP(\SSTT)}$.  
We note that $\colMAP$ is not a $\Bbbk$-algebra homomorphism, but we will prove the following:

\begin{thm}  \label{C--A}   \label{B--A} 
The maps $\Theta:  h_{(D_{n+1},A_{n})} \to {\sf e} h_{(C_n,A_{n-1})}{\sf e} $
and  		$\Theta:  	h_{(A_{2n-1},A_{2n-2})} \to  {\sf e}h_{(B_n,B_{n-1})}{\sf e}$	 
defined by 
$\Theta(a)=e \circ \colMAP(a) \circ e$ 
are graded  $\Bbbk$-algebra isomorphisms. Moreover, as ${\sf e}$ is a full idempotent these maps   give  rise to   graded Morita equivalences between
$h_{(D_{n+1},A_{n})} $ and $  h_{(C_n,A_{n-1})}$ and between
$h_{(A_{2n-1},A_{2n-2})} $ and  $ h_{(B_n,B_{n-1})}$.
\end{thm}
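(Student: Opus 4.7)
The plan is to prove this via the cellular bases and the Tetris-style presentations from \cref{easydoesit}. By \cref{onpathsssss}, the colouring map $\colMAP$ gives a grading-preserving bijection between $\Path_{(D_{n+1},A_n)}(\lambda,\stt_\mu)$ (respectively $\Path_{(A_{2n-1},A_{2n-2})}(\lambda,\stt_\mu)$) and the parity-preserving paths $\Path^\pm_{(C_n,A_{n-1})}(\lambda,\stt_\mu)$ (resp.\ $\Path^\pm_{(B_n,B_{n-1})}(\lambda,\stt_\mu)$). Via the cellular basis \eqref{basis}, this $\Bbbk$-linearly extends to an injection $\colMAP$ at the level of algebras. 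The first task is to verify that the idempotent $\sf e$ of \cref{defn of idempt} cuts out precisely the image: namely, for any cellular basis element $c^\lambda_{\SSTS\SSTT}$ of $h_{(C_n,A_{n-1})}$ (or $h_{(B_n,B_{n-1})}$), one has $\sf{e} \circ c^\lambda_{\SSTS\SSTT} \circ \sf{e} \neq 0$ if and only if $\SSTS$ and $\SSTT$ are parity-preserving. The key computational input here is the identity \eqref{bpb}, which says that $\sf trid^{\blue\csigma\blue}_{\blue\csigma\blue}$ annihilates $({\sf 1}_{\blue\csigma\blue} + {\sf trid}^{\blue\csigma\blue}_{\blue\csigma\blue})$; this exactly enforces the parity condition at each occurrence of the special reflection $\csigma$.

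Once this is established, $\Theta$ is automatically a grading-preserving bijection of $\Bbbk$-vector spaces by comparing cellular bases via the path bijection of \cref{onpathsssss}. The harder step is to verify that $\Theta$ is an algebra homomorphism. I would do this generator-by-generator using the Tetris-style presentation of \cref{easydoesit} on the simply-laced side. The monochrome relations, idempotent relations, commuting relations, fork/spot/circle relations, and null-braid relations are local and transparently preserved by $\colMAP$. The real content lies in checking that
\begin{enumerate}[leftmargin=*]
\item the monochrome Tetris relation survives after sandwiching by $\sf e$, and
\item the bi-chrome Tetris relation ${\sf 1}_{\stt_\mu}\otimes {\sf bar}(\ctau) = -\sum_{[r,c]\in {\sf Hook}_\ctau(\mu)} {\sf gap}(\stt_\mu - [r,c])$ transfers correctly, despite the fact that on the non-simply-laced side the corresponding ${\sf Hook}_\ctau$ doubles certain trails (per the definition in \cref{hook:Def}) or splits off a type-$B$ tail.
\end{enumerate}
Here the identities \eqref{repeat11}--\eqref{repeat13} are critical: they show that passing a barbell through a fork pinched by $\sf trid$ produces exactly the combinatorial factor needed to convert the simply-laced hook sum into the non-simply-laced one. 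Concretely, \eqref{repeat13} produces the coefficient $-2$ that exactly matches the doubled trail in the $(C_n,A_{n-1})$ case, and \eqref{repeat12} reproduces the additional monochrome barbell appearing in $(B_n,B_{n-1})$.

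Combining these verifications, $\Theta$ is a graded $\Bbbk$-algebra homomorphism which is bijective on cellular bases, hence an isomorphism onto ${\sf e}\,h\,{\sf e}$. For the Morita equivalence conclusion, it remains to show that $\sf e$ is a full idempotent in $h_{(C_n,A_{n-1})}$ (resp.\ $h_{(B_n,B_{n-1})}$), i.e.\ that $h = h\,{\sf e}\,h$. Using the cellular structure and \Cref{LEW1}, this reduces to showing that for every $\mu\in\mathscr{P}_{(W,P)}$ the simple $L(\mu)$ has a non-zero image under the functor ${\sf 1}_{(W,P)}h\,{\sf e}\otimes_{{\sf e}h{\sf e}}-$; equivalently, that every tile-partition admits at least one parity-preserving path through it, which is clear by induction on $\ell_\ctau(\mu)$ using that we may always make the ``correct'' parity choice at each $\ctau$-tile. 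The main obstacle throughout is the algebra-homomorphism check in step (2) above: tracking how the ${\sf Hook}_\ctau$ coefficients in the non-simply-laced Tetris relation emerge from the simply-laced ones after conjugating by $\sf e$, with particular care needed at the tiles near the boundary of $\mathscr{A}_{(W,P)}$ where the shape of $\sf{Hook}_\ctau$ bifurcates between types.
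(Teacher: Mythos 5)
Your high-level architecture (cellular bases, the parity-preserving bijection of \cref{onpathsssss}, the idempotent $\sf e$ cutting out the image, then verify multiplicativity) matches the paper's strategy. But there are two real mismatches worth flagging.

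First, you write that the local relations are ``transparently preserved by $\colMAP$''. This is false, and the paper explicitly warns that $\colMAP$ is \emph{not} a $\Bbbk$-algebra homomorphism; only $\Theta = \eee\circ\colMAP\circ\eee$ has a chance of being one, and proving this is the heart of the theorem, not a transparency. Relatedly, you say that once the idempotent cuts out the right subspace, $\Theta$ is ``automatically a grading-preserving bijection of $\Bbbk$-vector spaces''. It is not automatic: the set $\{\eee\, c_{\SSTS\SSTT}\,\eee : \SSTS,\SSTT \in \Path^\pm\}$ is not a priori the image of a cellular basis. One must prove that $(c_\SSTS + h^{<\la})\eee$ equals $c_\SSTS$ plus a combination of non-parity-preserving basis elements when $\SSTS$ is parity-preserving, and vanishes modulo lower cells when it is not; this is a genuine inductive argument (\cref{vectorproof} in the paper) built on the careful case analysis $A^\pm_\gam$, $R^\pm_\gam$ and on the trident identity \cref{jkhsfjkhdsfgjkhdsfgdgsfhjkjkhdgfkjdhgsfjkhdsgf}, not just on \cref{bpb}.

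Second, your plan to verify multiplicativity by checking the Tetris presentation relation-by-relation works cleanly for the orthogonal case $(A_{2n-1},A_{2n-2}) \to (B_n,B_{n-1})$, where the paper reduces exactly to the bi-chrome Tetris relation with a case split on the distance to the pinch; there your observation about \cref{repeat11}--\cref{repeat13} producing the right coefficients is in the right spirit. But the paper handles the symplectic case $(D_{n+1},A_n)\to(C_n,A_{n-1})$ differently: \cref{vectorproof2} proceeds by a ten-case analysis of products $c_{\SSTS\SSTT}c_{\SSTU\SSTV}$, organized by the last light-leaves step ($A^\pm$, $R^\pm$) applied in building $c_\SSTT$ and $c_\SSTU$, repeatedly inserting the idempotent $\eee$ and using \cref{ajkfhsdjkahsfakjhsfdhjsfsfdhjafsdyuasfhgjasfdhbjdfas} to slide it past $\colMAP(c_\SSTS)$. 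This is not a Tetris-relation check and would not obviously follow from your step (2) as stated, because ${\sf e}h_{(C_n,A_{n-1})}{\sf e}$ is not given by a Tetris presentation. Your final paragraph on fullness of $\eee$ is fine and agrees with what the paper implicitly extracts from \cref{vectorproof} and \cref{onpathsssss}.
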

 
We note that the first isomorphism  categorifies an observation of Boe in \cite{MR957071}.   
This section is dedicated to the proof.  We begin with the simpler result for orthogonal groups.  

   \subsubsection{The orthogonal case, \color{black!99} type $(B_n,B_{n-1})$ }   
   We first consider the case of the orthogonal group.  
We can simplify the proof by 
  focussing on the cellular basis.  We prove that if 
  $c_{\SSTS\SSTT}c_{\SSTU\SSTV}
  = \sum a_{\SSTX\SSTY}c_{\SSTX\SSTY} 
$ for coefficients $a_{\SSTX\SSTY} \in \Bbbk$, then we have that 
\begin{align}\label{shjfkldgdklfghl}\Theta(c_{\SSTS\SSTT}) \Theta(c_{\SSTU\SSTV})
   =\sum a_{\SSTX\SSTY}\Theta( c_{\SSTX\SSTY} )    \end{align}
for $\SSTS,\SSTT,\SSTU,\SSTV,\SSTX,\SSTY \in \Path_{(A_{2n-1},A_{2n-2})} $    and hence deduce that \cref{B--A}  holds for type $(B_n,B_{n-1})$.  
For     $\mu=(c-1)$ with  $ {\color{orange}[1,c]} \in \Add(\mu)$ with $\gam=\color{orange}s_{[1,c]}$ and $1\leq c \leq n$ the elements $c^\mu_{\SSTS\SSTT}$ are of the form
      $$  {\sf 1}_{\stt_{\mu }},   \quad  {\sf 1}_{\stt_\mu} \otimes  {\sf spot}_\gam^\emptyset,\quad
          {\sf 1}_{\stt_\mu} \otimes  {\sf spot}^\gam_\emptyset, \quad \text{ and }\quad 
   {\sf 1}_{\stt_\mu} \otimes  {\sf gap} (\gam).$$ 
Thus rewriting products in \cref{shjfkldgdklfghl} requires only the idempotent, bi-chrome Tetris,  
commutativity and cyclotomic relations.   
We  consider the bi-chrome Tetris relation as the others are trivial.  
 By \cref{remove a barbell,irrelevant}, we have that  
$$
c _\SSTT c _\SSTT^\ast=  {\sf 1}_{\stt_\mu}  \otimes {\sf bar} ( \gam)
=-{\sf gap} ({\stt_{\mu}-{[1,c-1]}}  )
$$
for $\SSTT \in \Path(\mu,\stt_{\mu+\gam})$. 
      For $ c\leq n$, we have that 
      ${\sf e}_{\colMAP(\mu\gam)}={\sf 1}_{\colMAP(\stt_{\mu\gam})}={\sf 1}_{(c)}$  
         and  
           $$
      \Theta(  {\sf 1}_{\stt_{(c-1)} }\otimes{ \sf bar}(\gam) )
       = - {\sf 1}_{ (c-1) }  \otimes {\sf gap} (\colMAP(\gam))
       =\Theta (  - {\sf gap} ({\stt_{\mu}-{[1,c-1]}}  ) ) 
           $$
as required.  For $c=n+1$ we have that $\colMAP(\gam)=\ctau$ and 
\begin{align*}
      \Theta(      {\sf 1}_{\stt_{(n)} }\otimes{ \sf bar}(\gam) )
    &  =
       \colMAP(c_\SSTT)\circ ({\sf 1}_{ (n,1) }+{\sf 1}_{ (n-2) } \otimes  \trid^{\blue\pink\blue}_{\blue\pink\blue}) \circ  
      \colMAP (c _\SSTT^\ast)
      \\
      &= {\sf 1}_{(n)} \otimes {\sf bar}( \ctau) 
      + {\sf 1}_{(n-1)}  \otimes {\sf gap}( \csigma) 
        \\
      &=  -2 \cdot {\sf 1}_{(n-1) }  \otimes  {\sf gap} ( \csigma)+{\sf 1}_{ (n-1) }  \otimes {\sf gap} ( \csigma)
        \\
      &=      
        \Theta ( -{\sf gap} ({\stt_{\mu}-{[1,n ]}}  ) ) 
\end{align*}where 
as required. Here the first equality follows from the definition of $\eee$;
  the second from the $\ctau$-fork-spot contraction relation; 
the third equality   from \cref{remove a barbell,irrelevant}; and the fourth is trivial. For $c=n+2$, we have that 
 \begin{align*}
      \Theta(      {\sf 1}_{\stt_{(n+1)} }\otimes{ \sf bar}(\gam) )
   &   =
  {\sf 1}_{\stt_{(n,1)} }\otimes{ \sf bar}(\gam)  
  +  {\sf e}_{\stt_{(n-2)} }
  \otimes \trid^{\ctau\csigma\ctau}_{\ctau\csigma\ctau} \otimes{ \sf bar}(\gam)   
 \\
    &   =
-  {\sf gap}({\stt_{(n,1)} } -[n,2])- 2  {\sf gap}({\stt_{(n,1)} } -[n,1])
  - {\sf e}_{\stt_{(n-2)} }
  \otimes  \trid^{\ctau\csigma\ctau}_\emptyset 
 \trid_{\ctau\csigma\ctau}^\emptyset 
 \\
    &   =
-  {\sf gap}({\stt_{(n,1)} } -[n,2])-  
  {\sf e}_{\stt_{(n-2)} }
  \otimes  \trid^{\ctau\csigma\ctau}_\emptyset 
 \trid_{\ctau\csigma\ctau}^\emptyset 
 \end{align*}
 where the first equality is trivial; 
the second follows by  applying \cref{remove a barbell,irrelevant} to the first term and applying the $(\blue, \colMAP(\gam))$-null-braid and $\colMAP(\gam)$-fork-spot-contraction to the second term;
the third  follows by applying (\ref{shorter}) to the $\ctau$-strands in the second term, followed by \cref{remove a barbell}  
and the cyclotomic and commutativity relations.
On the other hand,
\begin{align*}
  &\;   \Theta ( {\sf 1}_{\stt_{(n) }} \otimes  {\sf gap}(\gam) )    
 \\   = &\; - ({\sf 1}_{ (n,1) }+{\sf 1}_{ (n-2) } \otimes  \trid^{\blue\pink\blue}_{\blue\pink\blue})
({\sf 1}_{\stt_{(n)  }}
 \otimes  {\sf gap}(\ctau)      ) ({\sf 1}_{ (n,1) }+{\sf 1}_{ (n-2) } \otimes  \trid^{\blue\pink\blue}_{\blue\pink\blue})
\\
 =&\;
-{\sf 1}_{ (n-2) } \otimes  (	{\sf 1}_{\ctau\csigma}\otimes {\sf gap}(\blue)
+ 
\trid^{\blue\csigma\blue}_\blue{\sf spot}_{\blue\csigma\blue}^{\blue\emptyset\emptyset}
+
  {\sf spot}^{\blue\csigma\blue}_{\blue\emptyset\emptyset}		 
 \trid_{\blue\csigma\blue}^\blue
 +\trid^{\blue\csigma\blue} _{\blue\csigma\blue} \otimes {\sf bar}(\csigma)
 )
 \\
 =&\;
-{\sf 1}_{ (n-2) } \otimes  (	{\sf 1}_{\ctau\csigma}\otimes {\sf gap}(\blue)
- 
\trid^{\blue\csigma\blue} _\emptyset \trid^\emptyset_{\blue\csigma\blue}
-{\sf bar}(\ctau)\otimes {\sf dork}^{\blue\blue}_{\blue\blue}+
{\sf 1}_\ctau\otimes {\sf gap}(\csigma)\otimes {\sf1}_\ctau
 )
\\  =&\;
-  {\sf gap}({\stt_{(n,1)} } -[n,2])-  
  {\sf e}_{\stt_{(n-2)} }
  \otimes  \trid^{\ctau\csigma\ctau}_\emptyset 
 \trid_{\ctau\csigma\ctau}^\emptyset 
\end{align*}
as required.  Here the penultimate equality follows by 
applying \ref{shorter} to the middle two terms and applying  \cref{remove a barbell,irrelevant} to the final term; the final equality follows from  \cref{remove a barbell,irrelevant} and the commutativity and cyclotomic relations.  
Finally, we suppose that $c>n+2$.  We have that 
\begin{align*}
      \Theta(      {\sf 1}_{\stt_{(c-1)} }\otimes{ \sf bar}(\gam) )
   &  =
(   {\sf 1}_{\stt_{(n,1^{c-1-n})}}				  +   \eee_{(n,1^{c-1-n})} )
      \otimes{ \sf bar}(\gam) 
\\
   &        =- 2 \gap (\stt_{(n,1^{c-1-n})} - [1,n]) -   \gap (\stt_{(n,1^{c-1-n})} - [c-1-n,n]) 
\\
   &        = -   \gap (\stt_{(n,1^{c-1-n})} - [c-1-n,n]) 
   \\
   &        = \Theta(- \gap (\stt_{(c+1)} - [1,c+1]) )
\end{align*}
as required.  For the second equality, we apply 
  \cref{remove a barbell,irrelevant} to the first term
   and 
observe that the  second term is zero by applying the bull-braid relations followed by  \cref{remove a barbell,irrelevant}  and the commutativity and cyclotomic relations. The other equalities are trivial.  
Thus the bi-chrome Tetris relation holds in all cases and we are done.

   \subsubsection{The symplectic case, \color{black!99} type $(C_n,A_{n-1})$}
We now consider the, more difficult, case of the symplectic group.
\begin{lem}\label{jkhsfjkhdsfgjkhdsfgdgsfhjkjkhdgfkjdhgsfjkhdsgf}
For 
 ${\sf e}_{\ctau\csigma\ctau\csigma\ctau} = 
 ( {\sf1}_{\ctau\csigma\ctau\csigma\ctau} +
\trid_ {\ctau\csigma\ctau}^{\ctau\csigma\ctau}\otimes {\sf 1}_{\csigma\ctau} )
 ( {\sf1}_{\ctau\csigma\ctau\csigma\ctau} +
  {\sf 1}_{ \ctau\csigma}\otimes \trid_ {\ctau\csigma\ctau}^{\ctau\csigma\ctau} )$, 
we have that 
\begin{align*}
( \on_\blue \otimes \trid_{\csigma\ctau\csigma}^\csigma  \otimes \on_\ctau){\sf e}_{\ctau\csigma\ctau\csigma\ctau} 
&=
- \trid^{\blue\csigma\blue}_{\blue\csigma\blue}
(\on_\blue \otimes \trid_{\csigma\ctau\csigma}^\csigma \otimes \ctau)
- \trid^{\blue\csigma\blue}_{\blue\csigma\blue} \compose \trid^{\blue }_{\blue\csigma\blue}
\\
&=
- \trid^{\blue\csigma\blue}_{\blue\csigma\blue}
(\on_\blue \otimes \trid_{\csigma\ctau\csigma}^\csigma \otimes \ctau){\sf e}_{\ctau\csigma\ctau\csigma\ctau} 
\end{align*}
\end{lem}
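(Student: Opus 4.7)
The plan is to establish both equalities by direct diagrammatic expansion, exploiting the quasi-idempotent character of the trident $\trid^{\blue\csigma\blue}_{\blue\csigma\blue}$ together with the local relations of $\mathcal{H}_{(W,P)}$ established earlier in the paper.

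For the first equality, I would begin by expanding
\[
\eee_{\ctau\csigma\ctau\csigma\ctau} = \on + E_1 + E_2 + E_1 E_2,
\qquad
E_1 := \trid^{\ctau\csigma\ctau}_{\ctau\csigma\ctau} \otimes \on_{\csigma\ctau},
\qquad
E_2 := \on_{\ctau\csigma} \otimes \trid^{\ctau\csigma\ctau}_{\ctau\csigma\ctau},
\]
and writing $X := \on_\blue \otimes \trid^{\csigma}_{\csigma\ctau\csigma} \otimes \on_\ctau$, I would compute $X \cdot \on$, $X \cdot E_1$, $X \cdot E_2$, and $X \cdot E_1 E_2$ in turn. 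The term $X\cdot\on$ contributes $X$ itself; the term $X\cdot E_1$, upon sliding $E_1$ above $X$ via the interchange law and using $\trid^2=-\trid$ (from \eqref{bpb}), combines with the first to yield the first summand $-\trid^{\blue\csigma\blue}_{\blue\csigma\blue} (\on_\blue \otimes \trid^\csigma_{\csigma\ctau\csigma} \otimes \on_\ctau)$ of the right-hand side.

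The term $X \cdot E_2$ is the locus of the key cancellation. Here the $\csigma$-cospot in $E_2$'s upper co-trident (reviving $\csigma$ on strand $4$) meets the $\ctau$-spot of $X$'s trident (killing $\ctau$ on strand $3$), and the adjacent fork structure is resolved via the fork--spot contraction, one-colour barbell, and circle-annihilation relations \eqref{R3}--\eqref{R4}; the outcome is precisely the ``$\compose$" term $-\trid^{\blue\csigma\blue}_{\blue\csigma\blue} \compose \trid^\blue_{\blue\csigma\blue}$. The fourth summand $X \cdot E_1 E_2$ is the genuinely delicate one: both tridents are present, and after commuting $E_1$ through $X$ one must track two independent applications of \eqref{repeat11} together with the two-colour barbell identity \eqref{useful1}. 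The main obstacle is exactly this fourth summand, where the shared $\blue$-strand between $E_1$ and $E_2$ is reconnected through a cascade of fork--cofork generators; the intermediate barbell contributions must be checked to cancel pairwise via \eqref{useful1}, so that only the terms already accounted for remain.

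For the second equality I would observe that it is equivalent to the assertion $(\on + \trid^{\blue\csigma\blue}_{\blue\csigma\blue}) \cdot X\, \eee_{\ctau\csigma\ctau\csigma\ctau} = 0$. Substituting the first equality reduces this to
\[
(\on + \trid^{\blue\csigma\blue}_{\blue\csigma\blue})\bigl(-\trid^{\blue\csigma\blue}_{\blue\csigma\blue} X\bigr) \;+\; (\on + \trid^{\blue\csigma\blue}_{\blue\csigma\blue})\bigl(-\trid^{\blue\csigma\blue}_{\blue\csigma\blue} \compose \trid^\blue_{\blue\csigma\blue}\bigr) \;=\; 0.
\]
The first summand vanishes because $(\on + \trid)\trid = \trid + \trid^2 = 0$ by \eqref{bpb}. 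The second summand vanishes by the same relation applied locally: the left-hand factor of $\trid^{\blue\csigma\blue}_{\blue\csigma\blue} \compose \trid^\blue_{\blue\csigma\blue}$ is, on the leftmost three strands, just $\trid^{\blue\csigma\blue}_{\blue\csigma\blue}$, and the interchange law lets $(\on + \trid^{\blue\csigma\blue}_{\blue\csigma\blue})$ act on that factor independently of the $\compose$-joined right trident. Hence both summands vanish, and the second equality follows from the first.
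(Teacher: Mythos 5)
Your route differs substantially from the paper's. The paper proves the first equality in one stroke: tensor the $m(\csigma,\ctau)=4$ null-braid relation of \cref{inlightof} with $\on_\ctau$ on the left, compose with $\trid_{\blue\csigma\blue}^\ctau\otimes\on_{\csigma\blue}$ on the north, and read off a five-term identity among exactly the diagrams appearing in the statement. You instead propose to expand $\eee_{\ctau\csigma\ctau\csigma\ctau}=\on+E_1+E_2+E_1E_2$ and evaluate the four summands $X$, $XE_1$, $XE_2$, $XE_1E_2$ one at a time, where $X:=\on_\blue\otimes\trid^\csigma_{\csigma\ctau\csigma}\otimes\on_\ctau$. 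A term-by-term calculation can in principle recover the result, but your identification of the individual terms is not correct. Using only fork--spot and double-fork (no barbells appear at all), one finds $X(\trid^{\ctau\csigma\ctau}_\ctau\otimes\on_{\csigma\ctau})=\on_{\ctau\csigma\ctau}=X(\on_{\ctau\csigma}\otimes\trid^{\ctau\csigma\ctau}_\ctau)$, and hence
\[
XE_1=\trid^\blue_{\blue\csigma\blue}\otimes\on_{\csigma\blue},\qquad XE_2=\on_{\blue\csigma}\otimes\trid^\blue_{\blue\csigma\blue},\qquad XE_1E_2=\trid^{\blue\csigma\blue}_{\blue\csigma\blue}\compose\trid^\blue_{\blue\csigma\blue}.
\]
In particular $X+XE_1\neq-\trid^{\blue\csigma\blue}_{\blue\csigma\blue}X$: you cannot ``slide $E_1$ above $X$'' via the interchange law, since $E_1$ lives on strands $1$--$3$ and $X$'s trident on strands $2$--$4$, which overlap on strands $2,3$. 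Likewise $XE_2\neq-\trid^{\blue\csigma\blue}_{\blue\csigma\blue}\compose\trid^\blue_{\blue\csigma\blue}$, and the fourth summand you expected to be delicate collapses cleanly with no cascade of barbells to cancel.

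The more serious gap is that, even with the correct values of the four summands, the first equality still needs the one nontrivial linear relation among $\on_{\blue\csigma}\otimes\trid^\blue_{\blue\csigma\blue}$, $\trid^\blue_{\blue\csigma\blue}\otimes\on_{\csigma\blue}$, $\trid^{\blue\csigma\blue}_{\blue\csigma\blue}X$, $\trid^{\blue\csigma\blue}_{\blue\csigma\blue}\compose\trid^\blue_{\blue\csigma\blue}$ and $X$, with coefficients $1,1,1,2,1$; that relation is precisely the capped $m=4$ null-braid relation, and your proposal never invokes it. The fork--spot, one-colour barbell and circle-annihilation relations you cite cannot by themselves supply that cancellation, so the first equality is not established by your argument. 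Your treatment of the second equality is fine: applying $\on+\trid^{\blue\csigma\blue}_{\blue\csigma\blue}$ to the right-hand side of the first kills both terms by \eqref{bpb} and \cref{interchange}, which is a legitimate variant of the paper's observation that $\eee$ is idempotent and kills $\trid^{\blue\csigma\blue}_{\blue\csigma\blue}\compose\trid^\blue_{\blue\csigma\blue}$ on the south.
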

 \begin{proof}
 We prove the first equality, the second follows as $\eee_{\ctau\csigma\ctau\csigma\ctau} $ is an idempotent which kills the second term. 
Consider the $m(\csigma,\ctau)=4$ braid relation and tensor it on the left by ${\sf 1}_\ctau$.  Vertically concatenating  
$\trid_{\blue\csigma\blue}^\ctau \otimes {\sf 1}_{\csigma\blue}$ on top of this combination of diagrams, we obtain 
$$
{\sf 1}_{\blue\csigma }\otimes {\sf trid}_{\blue\csigma \blue}^\blue
+
  {\sf trid}_{\blue\csigma \blue}^\blue
  \otimes {\sf 1}_{ \csigma \blue} 
  +
  {\sf trid}^{\blue\csigma \blue}_{\blue\csigma \blue}
(
 {\sf 1}_\blue\otimes   {\sf trid}_{ \csigma \blue\csigma }^\csigma  \otimes {\sf 1}_\blue)
    +2   {\sf trid}^{\blue\csigma \blue} _{\blue\csigma \blue} 
      \compose   {\sf trid}_{\blue\csigma \blue}^\blue
      +
      {\sf 1}_\blue \otimes {\sf trid}_{ \csigma \blue\csigma }^\csigma  \otimes {\sf 1}_\csigma =0 .
$$Moving the third term and one copy (of the two available) of the fourth term to the right, the result follows.
 \end{proof}

We will split the proof of (the symplectic case of) \cref{C--A}  into two propositions.  the first one, \cref{vectorproof}, shows that $\Theta$ is an isomorphism of graded vector spaces.  The second one, \cref{vectorproof2}, shows that $\Theta$ is an algebra homomorphism.  
 
  \begin{prop}\label{vectorproof}
 We have that the map $\Theta: h_{{ ( D_{n+1},A_{n }  )}} \to 
 {\sf e}h_{ ( C_{n},A_{n-1 })}{\sf e}$ given by 
  $ 
\Theta ( c_{\SSTS\SSTT} )= 
 {\sf e} \circ \colMAP( c_{\SSTS\SSTT})\circ {\sf e} $ 
is an isomorphism of graded $\Bbbk$-spaces.
  \end{prop}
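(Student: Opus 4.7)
The proof will combine the cellular structure of both Hecke categories with a triangularity argument. I use the grading-preserving bijection $\colMAP$ from \cref{onpathsssss} to identify the cellular basis of $h_{(D_{n+1},A_n)}$ with those cellular basis elements $c_{\SSTS\SSTT}$ of $h_{(C_n,A_{n-1})}$ for which both $\SSTS$ and $\SSTT$ are parity-preserving in the sense of \cref{pm3}. It then suffices to show that $\Theta$ is injective and that its image spans all of ${\sf e}h_{(C_n,A_{n-1})}{\sf e}$.

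First I will verify that ${\sf e}$ is a graded idempotent. By (\ref{bpb}), each $-{\sf e}_\mu^k$ is an idempotent, so $({\sf 1}_{\stt_\mu}+{\sf e}_\mu^k)$ is an idempotent; and for distinct $k$ the factors commute, since their tridents live in disjoint three-letter windows that can be moved past one another using the braid generators from the definition (\ref{redundaaaa}). Next, I will establish the key triangularity. Each non-identity summand in the expansion of ${\sf e}_\mu$ contains at least one trident
$$\trid^{\blue\csigma\blue}_{\blue\csigma\blue} \;=\; \trid^{\blue\csigma\blue}_\blue\, \trid^\blue_{\blue\csigma\blue},$$
which factors through a $\csigma$-spot and hence through a reduced expression of strictly smaller weight in the Bruhat order. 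Consequently ${\sf e}_\mu \cdot c_{\SSTS\SSTT} \equiv c_{\SSTS\SSTT} \pmod{h_{(C_n,A_{n-1})}^{<\la}}$ for $\SSTS \in \Path(\la,\stt_\mu)$, and symmetrically on the right, yielding
$$\Theta(c_{\SSTS\SSTT}) \;=\; c_{\colMAP(\SSTS)\,\colMAP(\SSTT)} \;+\; (\text{cellular basis elements of shape } \la' < \la).$$
Since $\colMAP$ sends distinct cellular basis elements to distinct (parity-preserving) cellular basis elements, this triangularity immediately implies $\Theta$ is injective.

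For surjectivity, I must show that for a cellular basis element $c_{\SSTU\SSTV}$ of $h_{(C_n,A_{n-1})}$ with at least one of $\SSTU,\SSTV$ not parity-preserving, the element ${\sf e}\cdot c_{\SSTU\SSTV}\cdot {\sf e}$ lies in the span of $\Theta$-images of cellular basis elements of strictly smaller shape. The hard part is precisely this step: at the first position of $\SSTU$ violating the parity condition of \cref{pm3}, one uses \cref{jkhsfjkhdsfgjkhdsfgdgsfhjkjkhdgfkjdhgsfjkhdsgf} (suitably generalised to other positions via the braid generators in (\ref{redundaaaa})) to rewrite ${\sf e}_\mu\cdot c_{\SSTU}$ as a combination of diagrams each acquiring an extra $\csigma$-spot, which forces the weight to drop. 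An induction on the Bruhat order of $\la$, tracking how such spots propagate through the remaining steps of $\SSTU$, then concludes that ${\sf e}h_{(C_n,A_{n-1})}{\sf e}$ is spanned by $\{\Theta(c_{\SSTS\SSTT})\}$. Combined with injectivity and the graded dimension match provided by \cref{onpathsssss}, this delivers the required isomorphism of graded $\Bbbk$-spaces.
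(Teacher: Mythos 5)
Your overall strategy --- work with the cellular bases, use the grading-preserving bijection $\colMAP$ of \cref{onpathsssss}, and analyse how the idempotent $\eee$ interacts with cellular basis elements --- matches the paper's, and the conclusion is correct, but your key triangularity claim is false and the reason you give for it breaks down.  You assert that every non-identity summand of $\eee_\mu$ ``factors through a $\csigma$-spot and hence through a reduced expression of strictly smaller weight in the Bruhat order,'' and conclude that $\eee_\mu\cdot c_{\SSTS\SSTT}\equiv c_{\SSTS\SSTT}\pmod{h^{<\la}_{(C_n,A_{n-1})}}$ for \emph{all} $\SSTS\in\Path(\la,\stt_\mu)$, so that
\[
\Theta(c_{\SSTS\SSTT}) = c_{\colMAP(\SSTS)\,\colMAP(\SSTT)} + (\text{cellular terms of shape }\la'<\la).
\]
Neither of these holds.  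A $\csigma$-spot does not force the resulting diagram into a strictly smaller cell: whether the trident summand lands in $h^{<\la}$ depends entirely on how the two $\ctau$-strands it connects sit inside $c_\SSTS$.  In fact the paper's inductive analysis (Case~2, subcase $\SSTS=R^\pm_\ctau R^-_\csigma(\SSTS')$) shows the trident term equals $c_\SSTT$ for some $\SSTT\notin\Path^\pm(\la,\stt_\mu)$ with $\Shape(\SSTT)=\la$, i.e.\ a correction of the \emph{same} cell label.  Worse, for $\SSTS\notin\Path^\pm$ the actual behaviour is $(c_\SSTS+h^{<\la})\eee=0$ in $\Delta(\la)\eee$ --- a genuine cancellation coming from \cref{repeat11}, not a drop in shape --- which directly contradicts your uniform claim $\eee_\mu c_{\SSTS\SSTT}\equiv c_{\SSTS\SSTT}$.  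The correct statement is that $\Theta(c_{\SSTS\SSTT})$ equals $c_{\colMAP(\SSTS)\,\colMAP(\SSTT)}$ plus a combination of same-shape non-parity-preserving elements plus lower terms; injectivity then needs the extra observation that parity-preserving and non-parity-preserving paths of the same shape index disjoint, hence linearly independent, cellular basis elements --- a point your argument elides.

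Your surjectivity sketch points at the right ingredients (\cref{jkhsfjkhdsfgjkhdsfgdgsfhjkjkhdgfkjdhgsfjkhdsgf}, induction on the Bruhat order), but the step that actually closes the induction is not ``the weight drops'' but the dichotomy proved in the paper's claim: $\eee$ fixes parity-preserving $c_\SSTS$ modulo same-shape non-parity corrections and \emph{kills} non-parity-preserving $c_\SSTS$ modulo $h^{<\la}$.  Establishing that dichotomy requires the case-by-case induction on $\ell(\mu)$, tracking whether $\csigma$ is addable or removable in $\la'$ and which of the operations $X_\ctau^\pm X_\csigma^\pm$ was last applied; that book-keeping, rather than a shape filtration, is the content your proposal is missing.
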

  \begin{proof}
  We will show that the set 
  \begin{align}
  \{ {\sf e} \circ c_{\SSTS\SSTT} \circ {\sf e}  \mid \SSTS , \SSTT \in  
   \Path^{\pm} _{  ( C_n,A_{n-1} )}(\la,\stt_\mu)		\}
    \end{align}
    form a basis of $ {\sf e}h_{(  C_{n},A_{n-1 }) }{\sf e}$ and thus deduce the result.  
    We do this by considering 
    $ \Delta (\la)\eee $ for all $\la \in \mathscr{P}_{(  C_{n},A_{n-1 }) }$.
  We will prove the following claim:
  $$ ( c_\SSTS + h^{<\la}_{(  C_{n},A_{n-1 }) } )\eee 
  =
  \begin{cases}
  c_\SSTS + \sum_{\SSTT \not \in 
   \Path^{\pm} _{  (  C_{n},A_{n-1 }) }(\la,\stt_\mu)
  } a _\SSTT c_\SSTT+ h^{<\la}_{(  C_{n},A_{n-1 }) } 
  &\text{if }\SSTS   \in 	 \Path^{\pm} _{  (  C_{n},A_{n-1 }) }(\la,\stt_\mu)				\\
  0 &\text{otherwise} 
  \end{cases}$$
  from which we will immediately deduce the result.  
We first note that we can choose our $\stt_\mu$ for each $\mu\in\mathscr{P}_{(C_n,A_{n-1})}$  in such a way that $\csigma$ always occurs immediately prior to a $\ctau$.  
  We  prove this   for $\stt_{\mu + \gam}$ assuming it holds for $\stt_\mu$  (with the $\ell(\stt_\mu)=0$ case being trivial). 
  Let $\SSTS \in \Path(\la,\stt_\mu)$. 
For $\gam \neq \ctau$, we have that 
$$ ( A^\pm _\gam (c_\SSTS) )\eee = A^\pm _\gam (  c_\SSTS \eee )
\qquad
 (R^\pm _\gam (c_\SSTS))\eee = R^\pm _\gam (  c_\SSTS\eee )$$
for $\gam\in \Add(\la)$   or $\gam\in \Add(\la)$, respectively.  
(Whence $\ell_\ctau(\mu+\gam)=\ell_\ctau(\mu)$ implies 
${\sf e}_{\mu+\gam}= {\sf e}_\mu \otimes {\sf 1}_\gam$.)

Thus we may now assume that $\gam= \ctau \in \Add(\mu)$.  
In which case  
  $\csigma= {\color{magenta}s_2} \in \Rem(\mu)$ by our choice of $\stt_\mu$. We let $\mu' = \mu - \csigma$.  
We suppose $\ell_\ctau(\mu)$ is odd (the even case is identical) so that 
 $\cpi \in \Add (\colMAP  ^{-1}(\mu))$
 and 
  $\crho \in \Rem (\colMAP  ^{-1}(\mu'))$.  
Given   $\la'\subseteq  \mu'$, we let $c_{\SSTS'} \in \Path_{(  C_{n},A_{n-1 }) } (\la',\stt_{\mu'})$.
We construct $c_\SSTS$ for $\SSTS \in   \Path_{(  C_{n},A_{n-1 }) } (\la,\stt_{\mu\ctau})$ by applying the inductive process twice: once for $\csigma$ and once for $\ctau$ as follows,
$$c_\SSTS = X_\ctau ^\pm X_\csigma ^\pm (c_{\SSTS'})$$
for $X \in \{A,R\}$.  Note that 
\begin{align}\label{askjdfhgjkdfgjdsgfjdsgjhdsgjhdfgbdjgfjkd}
 c_\SSTS \eee  = 
(X_\ctau ^\pm X_\csigma^\pm 
( 
 c_{\SSTS'}\eee )
 ) ({\sf 1}_{\stt_{\mu\ctau}}+ \eee _{\mu + \ctau}^{\ell_\ctau(\mu)+1}) 
 .\end{align}
We assume, by induction, that the claim holds for $c_{\SSTS'}$. 
So we have 
\begin{align}\label{ajsfhgkdfhgkhgjds2222}
\textstyle c_{\SSTS'}\eee = c_{\SSTS'} 
  + \sum_{\SSTT' \not \in 
   \Path^{\pm} _{  (  C_{n},A_{n-1 }) }(\la',\stt_{\mu'})
  } a _{\SSTT'} c_{\SSTT'}+ h^{<\la'}_{(  C_{n},A_{n-1 }) } 
.
\end{align}
 Since
$\SSTT' \not \in \Path^{\pm} _{  (  C_{n},A_{n-1 }) }(\la',\stt_{\mu'})$, this implies 
by definition
 $X_\ctau ^\pm X_\csigma ^\pm (c_{\SSTT'}) \not \in \Path^{\pm} _{  (  C_{n},A_{n-1 }) }(\la,\stt_\mu)$.  
We will now consider 
$$c_\SSTS = X_\ctau ^\pm X_\csigma ^\pm (c_{\SSTS'})$$
for $\SSTS' \in  \Path^{\pm} _{  (  C_{n},A_{n-1 }) }(\la',\stt_{\mu'})$.  
Before considering the above case-wise, we remark  that  
 either 
$\crho \in \Rem(\colMAP  ^{-1}(\la'))$ or $ 
 \Add(\colMAP  ^{-1}(\la'))$ (because it appears at the edge of the region).

\bigskip
\noindent{\bf Case 1. } Suppose $\csigma \in \Add( \la') $.  
This implies that $\crho \in \Rem(\colMAP  ^{-1}(\la'))$.  
The first two subcases which we consider simultaneously are 
$$
A^+_\ctau 
A^+_\csigma (c_{\SSTS'})= c_{\SSTS'} \otimes {\sf 1}_{\csigma}\otimes {\sf 1}_{\ctau}
\qquad
A^-_\ctau 
A^+_\csigma (c_{\SSTS'})= c_{\SSTS'} \otimes
{\sf 1}_{\csigma}  \otimes  {\sf spot}_{\ctau}^\emptyset . $$
Here we have that $c_\SSTS = A^{\pm}_\ctau A^{\pm}_\csigma  (c_{\SSTS'})$ satisfies $\SSTS \in \Path^{\pm}_{(C_n,A_{n-1})} $.  We have that  
\begin{align*}
  (A^+_\ctau 
A^+_\csigma (c_{\SSTS'}) + h^{<\la}_{(  C,A) })
(1+ \eee_{\mu + \ctau}^{\ell_\ctau(\mu)+1})
&=
c_\SSTS + c_{\SSTS'} \compose 
{\sf trid}^{\blue\csigma \blue}_  {\blue\csigma \blue} 
 + h^{<\la}_{(  C,A) }
=c_\SSTS + h^{<\la}_{(  C,A) }
 \\
 (A^-_\ctau 
A^+_\csigma (c_{\SSTS'}) + h^{<\la}_{(  C,A) })
   (1+ \eee_{\mu + \ctau}^{\ell_\ctau(\mu)+1})
&=
c_\SSTS + c_{\SSTS'} \compose 
( 
{\sf trid}_{\blue\csigma \blue}^\blue
\otimes {\sf spot}^\csigma _\emptyset)
 + h^{<\la}_{(  C,A) }
=c_\SSTS+ h^{<\la}_{(  C,A) }
\end{align*}
where in both cases the diagram $c_{\SSTS'}\compose \dots$ 
factors through  the idempotent 
labelled by $ {\stt_{\la'}}$ and so 
 belongs to the ideal 
$h^{<\la}_{(  C,A) }$.
The final two subcases which we will consider simultaneously are 
$$
R^+_\ctau 
A^-_\csigma (c_{\SSTS'})= c_{\SSTS'} 
  \compose 
  {\sf trid}_{\ctau\csigma \ctau}^\ctau 
\qquad
R^-_\ctau 
A^-_\csigma (c_{\SSTS'})=
 c_{\SSTS'}  
\compose 
   {\sf trid}_{\ctau\csigma \ctau}^\emptyset  .
  $$
  Here $c_\SSTS = R^{\pm}_\ctau A^-_\csigma (c_{\SSTS'})$ satisfies $\SSTS \not \in \Path^{\pm}_{(C_n,A_{n-1})} $. 
  We note that 
 $  \eee_{\mu + \ctau}^{\ell_\ctau(\mu)+1} = {\sf 1}_{\stt_{\mu-\csigma} }\compose 
({\sf 1}_{\ctau\csigma\ctau}+
{\sf trid}^{\blue\csigma \blue} _{\blue\csigma \blue}  )$ and hence   applying \cref{repeat11} we obtain 
$$( R^\pm _{\ctau} A_\csigma ^-(c_{\SSTS'}))(1+\eee_{\mu + \ctau}^{\ell_\ctau(\mu)+1})
=0 $$
as required. 

 \noindent{\bf Case 2. } 
 Suppose $\csigma \in \Rem( \la') $.  
This implies that $\crho \in \Add(\colMAP  ^{-1}(\la'))$. 
Since any two $\csigma $-tiles in $ \mu'$  are separated by some $\blue$-tile (and 
$\ctau \not\in 
\Rem(\la')$ but $\csigma  \in \Rem(\la')$) we have that 
 $c_{\SSTS'}=c_{\SSTS''}\compose {\sf spot}_\blue^\emptyset$ 
for some $\SSTS'' \in \Path^{\pm} _{  (  C_{n},A_{n-1 }) }(\la' ,\mu'-\ctau)$.  
Here we have that 
$$
 R^+_\alphar (c_{\SSTS'})= c_{\SSTS'} {\;\color{magenta}\circledast\;}
  {\sf fork}_{\alphar\alphar}^\alphar
  =  c_{\SSTS''}    \compose \spot_\ctau^\emptyset 
 {\;\color{magenta}\circledast\;}
  {\sf fork}_{\alphar\alphar}^\alphar
 \quad
R^-_\alphar (c_{\SSTS'})
= 
c_{\SSTS'} {\;\color{magenta}\circledast\;}
 {\sf  cap}_{\csigma  \csigma}^\emptyset
=
 c_{\SSTS''} \compose 
 \spot_
 \ctau^\emptyset  {\;\color{magenta}\circledast\;}
 {\sf  cap}_{\csigma  \csigma}^\emptyset  
 $$
 We start with$$
 (A^{\pm}_\ctau R^+_\csigma (c_{\SSTS '}))({\sf 1}_{\stt_{\mu\ctau}}+\eee_{\mu   \ctau}^{\ell_\ctau(\mu)})
({\sf 1}_{\stt_{\mu\ctau}}+\eee_{\mu  \ctau}^{\ell_\ctau(\mu)+1})
.
$$
and we first  consider $A^+_\ctau R^+_\csigma (c_{\SSTS'})$. We note that $\ctau \in 
\Add(\la')$ but that $\violet \not \in  \Add(\colMAP  ^{-1}(\la'))$ (rather, the ``wrong" colour $\color{green!80!black}\rho $ is).  
 Therefore $\SSTS \not \in  \Path^{\pm} _{  (  C_{n},A_{n-1 }) }(\la,\stt_{\mu})$ and using 
  \cref{jkhsfjkhdsfgjkhdsfgdgsfhjkjkhdgfkjdhgsfjkhdsgf} we have  
\begin{align*}
( A^+_\ctau R^+_\csigma (c_{\SSTS '}) ) \circ ({\sf 1}_{\stt_{\mu\ctau}} + \eee_{\mu   \ctau}^{\ell_\ctau(\mu)})\circ 
({\sf 1}_{\stt_{\mu\ctau}}+\eee_{\mu  \ctau}^{\ell_\ctau(\mu)+1})
 \in h^{<\la}_{(  C,A) }
\end{align*}
  since  the terms in the sum factor through the idempotent $\la' - \ctau$.  
 Therefore $  c_\SSTS  \eee =0 $ modulo $h^{<\la}_{(  C,A) }$  as required. 
Arguing in an identical manner, (or by simply ``putting a blue spot on top of the above calculation") we have that 
\begin{align*}
& 
 (A^-_\ctau R^+_\csigma (c_{\SSTS '})   )({\sf 1}_{\stt_{\mu\ctau}}+\eee_{\mu  \ctau}^{\ell_\ctau(\mu)})
( {\sf 1}_{\stt_{\mu\ctau}} +\eee_{\mu   \ctau}^{\ell_\ctau(\mu)+1})
   \in  h^{<\la}_{(  C,A) } 
\end{align*}
as required.  
The final two subcases which we will consider simultaneously are 
\begin{align*}c_{\SSTS}	=
 R^+_ \blue   R^-_\csigma  (c_{\SSTS'})
&=
c_{\SSTS''}  \compose \spot_\ctau^\emptyset 
{\;\color{magenta}\circledast\;} 
{\sf cap}_{\csigma  \csigma }^\emptyset 
\compose {\sf fork}_{\blue\blue}^\blue
\\
c_{\SSTS}=  R^-_ \blue   R^-_\csigma  (c_{\SSTS'})
&=
c_{\SSTS''}  
\compose \spot_\ctau^\emptyset 
{\;\color{magenta}\circledast\;} 
{\sf cap}_{\csigma  \csigma }^\emptyset 
\compose {\sf cap}_{\blue\blue}^\emptyset
\end{align*}
 In both cases, $\SSTS \in  \Path^{\pm} _{  (  C_{n},A_{n-1 }) }$.   We have that 
\begin{align*}
( R^+_\ctau R^-_\csigma (c_{\SSTS '}))({\sf 1}_{\stt_{\mu\ctau}}+\eee_{\mu   \ctau}^{\ell_\ctau(\mu)+1})
&=
 c_{\SSTS''}  \compose \spot_\ctau^\emptyset 
{\;\color{magenta}\circledast\;}  
{\sf cap}_{\csigma  \csigma }^\emptyset 
\compose {\sf fork}_{\blue\blue}^\blue
+ 
c_{\SSTS''}  
{\;\color{magenta}\circledast\;} {\sf spot}_\csigma ^\emptyset 
\compose 
 ( 
 \fork^\ctau_{\ctau\ctau} \compose {\sf trid}_{\blue\csigma \blue}^\blue )
 \\ 
(R^-_\ctau R^-_\csigma (c_{\SSTS '}))({\sf 1}_{\stt_{\mu\ctau}}+\eee_{\mu   \ctau}^{\ell_\ctau(\mu)+1})
&=
 c_{\SSTS''}  \compose \spot_\ctau^\emptyset 
{\;\color{magenta}\circledast\;}  
{\sf cap}_{\csigma  \csigma }^\emptyset 
\compose {\sf cap}_{\blue\blue}^\blue
+ 
c_{\SSTS''}  
{\;\color{magenta}\circledast\;} {\sf spot}_\csigma ^\emptyset 
\compose ( 
 \fork^\ctau_{\ctau\ctau} \compose {\sf trid}_{\blue\csigma \blue}^\emptyset )
\end{align*}
In each case  the former term on the righthand-side of the equality  is  equal to $c_\SSTS$ and the latter term 
is equal to $c_\SSTT$ for  $\SSTT \not \in
\Path^{\pm} _{  (  C_{n},A_{n-1 }) }(\la ,\stt_{\mu })$. 
 The result follows.  \end{proof}

  \begin{lem}\label{typeClemmer}
Let $\eee_{\ctau\csigma\ctau\csigma}=( {\sf1}_{\ctau\csigma\ctau\csigma}+\trid^{\ctau\csigma\ctau}_{\ctau\csigma\ctau}\otimes
 {\sf 1}_\csigma)$,
 then we have 
 $$\eee _{\ctau\csigma\ctau\csigma} {\sf 1}_{\ctau\csigma\ctau\csigma} \eee_{\ctau\csigma\ctau\csigma}
 =
 -
 \eee_{\ctau\csigma\ctau\csigma}({\sf 1}_\ctau \otimes \trid^{\csigma\ctau\csigma}_{\csigma\ctau\csigma})\eee_{\ctau\csigma\ctau\csigma}$$ 
  \end{lem}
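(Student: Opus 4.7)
The first step is to recast the identity as a polynomial identity in two commuting-ish ``half-tridents''. Write $T_L := \trid^{\ctau\csigma\ctau}_{\ctau\csigma\ctau}\otimes {\sf 1}_\csigma$ and $T_R := {\sf 1}_\ctau\otimes \trid^{\csigma\ctau\csigma}_{\csigma\ctau\csigma}$, so that $\eee_{\ctau\csigma\ctau\csigma}=I+T_L$. Under our standing assumption on $(\csigma,\ctau)$ we may apply \cref{repeat11} with $(\betar,\gam)=(\ctau,\csigma)$ to deduce $\trid^\ctau_{\ctau\csigma\ctau}\cdot\trid^{\ctau\csigma\ctau}_\ctau=-{\sf 1}_\ctau$, whence $T_L^2=-T_L$ and $\eee^2=\eee$. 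In particular the LHS equals $\eee={\sf 1}_{\ctau\csigma\ctau\csigma}+T_L$. Expanding $-\eee T_R\eee=-(T_R+T_LT_R+T_RT_L+T_LT_RT_L)$, the lemma reduces to the algebraic identity
\begin{equation*}
{\sf 1}_{\ctau\csigma\ctau\csigma}+T_L+T_R+T_LT_R+T_RT_L+T_LT_RT_L=0 \quad \text{in } \mathcal{H}_{(W,P)}.
\end{equation*}

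My next step is to simplify the mixed products. The products $T_LT_R$ and $T_RT_L$ each contain a four-strand ``interface'' in which the upper trident of one factor meets the lower trident of the other; the middle $\csigma$-spot of one side meets the left (resp.\ right) output of the $\csigma$-fork of the other, and the $\ctau$-spot of one side meets one of the two $\ctau$-outputs of the other's $\ctau$-fork. Four applications of fork-spot contraction (\ref{R3}) and the interchange law \ref{R8} then collapse these interface pieces to identities, producing the clean formulae
\[
T_LT_R=\trid^{\ctau\csigma\ctau}_\ctau\otimes \trid^\csigma_{\csigma\ctau\csigma},\qquad T_RT_L=\trid^\ctau_{\ctau\csigma\ctau}\otimes \trid^{\csigma\ctau\csigma}_\csigma.
\]
A further iteration of the same trick for the triple product shows that all fork/spot pairs in the middle $T_R$ are absorbed into the two halves of the outer $T_L$'s: what survives is precisely the original merge-then-split pattern of $T_L$, so $T_LT_RT_L=T_L$. (Equivalently, this is the 4-strand avatar of \cref{jkhsfjkhdsfgjkhdsfgdgsfhjkjkhdgfkjdhgsfjkhdsgf}.) After these reductions the identity to be proved becomes
\[
{\sf 1}_{\ctau\csigma\ctau\csigma}+2T_L+T_R+T_LT_R+T_RT_L=0.
\]

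To close, I would apply the $m=4$ null-braid relation of \cref{inlightof} (with $\csigma,\ctau$ interchanged) to rewrite ${\sf 1}_{\ctau\csigma\ctau\csigma}$ as a sum of four correction diagrams, with coefficients $1,1,\langle\alpha_\csigma^\vee,\alpha_\ctau\rangle,\langle\alpha_\ctau^\vee,\alpha_\csigma\rangle$; under our assumption these Cartan entries are $-1$ and $-2$ in some order. Each correction diagram is a double-trident configuration, and using fork-spot contraction together with the bichrome-barbell relation~\ref{R5} (applied to push an $\{{\sf bar}(\csigma),{\sf bar}(\ctau)\}$ created in the middle to the boundary, where it is killed by cyclotomy~\ref{R10}) one identifies the four corrections, respectively, with $-T_R$, $-T_LT_R-T_RT_L$, $-T_L$, and $-T_L$. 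Summing and using the coefficient values gives the required cancellation.

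The main obstacle is the final bookkeeping: the coefficient $2$ attached to $T_L$ must arise from the combined contribution of the two Cartan-weighted terms $\langle\alpha_\csigma^\vee,\alpha_\ctau\rangle D_3$ and $\langle\alpha_\ctau^\vee,\alpha_\csigma\rangle D_4$, after applying~\ref{R5} to the barbells created inside the corresponding sandwiches $\fork(\cdot)\fork^\ast$. Tracking signs through the bichrome barbell identity while distinguishing the type $B$ and type $C$ cases (which differ only in the roles of $(-1,-2)$) is the delicate part. A cleaner route, which avoids this bookkeeping, is to deduce the lemma directly from \cref{jkhsfjkhdsfgjkhdsfgdgsfhjkjkhdgfkjdhgsfjkhdsgf}: capping the rightmost $\ctau$-strand of the five-strand identity there with ${\sf spot}^\ctau_\emptyset$ (and using the fork-spot contraction to contract $\trid^{\csigma\ctau\csigma}_\csigma\otimes {\sf 1}_\ctau$ into ${\sf 1}_{\csigma\ctau}$) collapses the five-strand statement onto four strands, after which the duality $\ast$ together with $T_L^2=-T_L$ rearranges the resulting relation into precisely the displayed identity.
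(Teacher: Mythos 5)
Your algebraic setup is correct, and the three product simplifications $T_LT_R=\trid^{\ctau\csigma\ctau}_\ctau\otimes\trid^\csigma_{\csigma\ctau\csigma}$, $T_RT_L=\trid^\ctau_{\ctau\csigma\ctau}\otimes\trid^{\csigma\ctau\csigma}_\csigma$ and $T_LT_RT_L=T_L$ all check out (each follows from repeated fork-spot contraction at the interfaces together with the interchange law). The overall strategy --- invoke the $m(\csigma,\ctau)=4$ null-braid relation --- is the same as the paper's, but the paper's route is considerably more economical: one simply sandwiches the null-braid relation by $\eee_{\ctau\csigma\ctau\csigma}$ on both sides and observes that \cref{bpb} (which gives $\eee T_L=0=T_L\eee$) annihilates the $T_L$-term and both crossing terms $T_LT_R$, $T_RT_L$, so only $\eee$ and $\eee T_R\eee$ survive. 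This makes your identity $T_LT_RT_L=T_L$ unnecessary, and it means the ``delicate bookkeeping'' you flag as the main obstacle largely evaporates: only the coefficient of the $T_R$-term (the $\ctau\cdot(\csigma\ctau\csigma)$ trident) in the null-braid is relevant; the $T_L$-coefficient is killed outright by \cref{bpb}, so one does not need to track both Cartan entries through a barbell computation. The parenthetical claim that $T_LT_RT_L=T_L$ is ``the 4-strand avatar'' of \cref{jkhsfjkhdsfgjkhdsfgdgsfhjkjkhdgfkjdhgsfjkhdsgf} is misleading: that identity is a pure fork-spot fact and has no barbell content, whereas \cref{jkhsfjkhdsfgjkhdsfgdgsfhjkjkhdgfkjdhgsfjkhdsgf} is a genuinely five-strand assertion that does use the braid relation. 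Finally, your proposed ``cleaner route'' is not the paper's argument and as stated does not parse: $\trid^{\csigma\ctau\csigma}_\csigma\otimes{\sf 1}_\ctau$ maps $\csigma\ctau$ to $\csigma\ctau\csigma\ctau$, so it cannot ``contract to ${\sf 1}_{\csigma\ctau}$'' by fork-spot alone, and capping one strand of $\eee_{\ctau\csigma\ctau\csigma\ctau}$ (which is a product of two partially overlapping three-strand idempotents) does not simply project onto the four-strand $\eee_{\ctau\csigma\ctau\csigma}$; the boundary types prevent the naive truncation you describe.
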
  

\begin{proof}Applying $\eee_{\ctau\csigma\ctau\csigma}$ to both sides of the $m(\csigma,\ctau)=4$ null-braid relations and using \cref{bpb} immediately gives the result. 
\end{proof}  

\begin{cor}\label{ZEROC}\color{black!99}
Let $  \mu \in\mathscr{P}_{(C_{n+1},A_{n})}$ and $[r,c]\in \mu$. Define $k,l,m$ as in  \cref{gapsbasis}.
If $[r-k, c+k+1] \notin \mu$ or $[r+l+1, c-l]\notin \mu$ then we have
  $\eee \circ 
  \gap(\stt_\mu-[r,c])\circ \eee =0$.  
Otherwise
 we have 
  $$\eee \circ 
  \gap(\stt_\mu-[r,c])\circ \eee = (-1)^{k+l-m} 	\eee \circ \iota (c_{\SSTS\SSTS})\circ \eee 	$$
  where $c_{\SSTS \SSTS}$ is defined in \cref{gapsbasis}. 
   \end{cor}
\begin{proof}\color{black!99}
The proof follows exactly the same arguments as for the proof the corresponding statement in type $(D_{n+1}, A_n)$ given in \cref{iszero,isnonzero}. There are only two additional things to check. First we need to prove that for $[r,r]\in \mu$ we have 
 $\eee \circ 
  \gap(\stt_\mu-[r,r-1])\circ \eee =0$. 
This follows directly  from \cref{shorter,bpb}.  
The second thing is that the $(s_0,s_2)$ and $(s_1,s_2)$-nullbraid relation is preserved under the map $\Theta$. This is precisely the statement of \cref{typeClemmer}.
\end{proof}
  
  \begin{lem}\label{ajkfhsdjkahsfakjhsfdhjsfsfdhjafsdyuasfhgjasfdhbjdfas}
 For $\SSTS \in \Path _{  ( D_{n+1},A_{n})}(\la ,\stt_{\mu })$, we have that
 $ \colMAP(c_{ \SSTS }) \circ  e  = e \circ  \colMAP(c_{\SSTS})  \circ  e.$ 
 
  \end{lem}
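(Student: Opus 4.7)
Since $e$ is the full idempotent $\sum_\mu e_\mu$ and $\colMAP(c_\SSTS) \in 1_{\stt_{\colMAP(\la)}} h_{(C_n,A_{n-1})} 1_{\stt_{\colMAP(\mu)}}$, the claim reduces to showing $(1_{\stt_{\colMAP(\la)}} - e_{\colMAP(\la)}) \circ \colMAP(c_\SSTS) \circ e_{\colMAP(\mu)} = 0$. Using that each $1_{\stt_\nu} + e_\nu^k$ is an idempotent and these commute for different $k$, expanding $1_{\stt_{\colMAP(\la)}} - e_{\colMAP(\la)} = 1 - \prod_k(1_{\stt_{\colMAP(\la)}} + e_{\colMAP(\la)}^k)$ shows the claim is equivalent to
$$e_{\colMAP(\la)}^k \circ \colMAP(c_\SSTS) \circ e_{\colMAP(\mu)} = 0 \qquad \text{for every } 1 < k \leq \ell_\ctau(\colMAP(\la)).$$
The plan is to prove this reformulated identity by induction on the length of $\SSTS$, using the inductive light-leaf construction $c_\SSTS = X_\gamma^\pm(c_{\SSTS'})$ with $X \in \{A, R\}$ and $\gamma \in S_{D_{n+1}}$.

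The base case $\ell(\SSTS) = 0$ is vacuous since $\colMAP(\la) = \emptyset$. For the inductive step, the argument splits on the colour $\gamma$. Since $\colMAP(\SSTS) \in \Path^\pm_{(C_n,A_{n-1})}(\colMAP(\la), \stt_{\colMAP(\mu)})$ by \cref{onpathsssss}, the top word has a controlled parity structure: reading off the $\ctau$-strands of $\stt_{\colMAP(\la)}$ must alternate correctly.

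\emph{Case $\gamma \notin \{\crho, \cpi\}$.} Here $\colMAP(\gamma) \neq \ctau$, so $\colMAP(X_\gamma^\pm)$ adds a non-$\ctau$ strand at the right end of the top. The trident idempotent $e_{\colMAP(\la)}^k$ sits strictly to the left of this new strand, so by the interchange law ${\sf e}_{\colMAP(\la)}^k \otimes {\sf 1}_{\colMAP(\gamma)}$ commutes past the newly added generators down to the position where $e_{\colMAP(\la')}^k$ acts on $\colMAP(c_{\SSTS'})$. The inductive hypothesis applied to $\SSTS'$ then gives the required vanishing.

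\emph{Case $\gamma \in \{\crho, \cpi\}$.} Here $\colMAP(\gamma) = \ctau$ so $\ell_\ctau(\colMAP(\la))$ changes, and we must examine whether the new trident positions in $e_{\colMAP(\la)}$ create genuinely new interactions. When the newly added $\ctau$-strand produces a $\ctau\csigma\ctau$ block at the top right, the corresponding new trident $e_{\colMAP(\la)}^{k_{\rm new}}$ sits on the $\ctau\csigma\ctau$ block generated by $\colMAP(X_\gamma^\pm)$. Writing out this local diagram and applying the identity of \cref{jkhsfjkhdsfgjkhdsfgdgsfhjkjkhdgfkjdhgsfjkhdsgf} together with \cref{bpb} rewrites the product, pushing the trident down past the recoloured generator and reducing to an $e_{\colMAP(\la')}^k$ acting on $\colMAP(c_{\SSTS'}) \circ e_{\colMAP(\mu)}$, which vanishes by induction. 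The other subcases ($R^\pm$ rather than $A^\pm$, or $k \neq k_{\rm new}$) reduce either to the commuting case or to \cref{typeClemmer}, which plays the analogous role to \cref{jkhsfjkhdsfgjkhdsfgdgsfhjkjkhdgfkjdhgsfjkhdsgf} for the shorter trident at the bottom of the induction.

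\emph{Main obstacle.} The delicate step is the bookkeeping in the case $\gamma \in \{\crho, \cpi\}$ combined with $X = R$, where the recoloured operator applies a fork or cap that interacts simultaneously with both the newly relevant trident $e_{\colMAP(\la)}^{k_{\rm new}}$ and with a pre-existing trident $e_{\colMAP(\la')}^{k}$. Handling this requires first using \cref{jkhsfjkhdsfgjkhdsfgdgsfhjkjkhdgfkjdhgsfjkhdsgf} to move one trident past the recoloured generator, then invoking the $\ctau$-fork-spot or $\ctau$-circle-annihilation relations to cancel the resulting configuration, before finally applying the inductive hypothesis.
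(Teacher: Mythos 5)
Your reformulation to showing $e^k_{\colMAP(\la)} \circ \colMAP(c_\SSTS) \circ e_{\colMAP(\mu)} = 0$ for each $k$ is valid (one direction is enough; it is not an equivalence, only a sufficiency), and your plan is a genuinely different route from the paper's. The paper does not induct on the light-leaves construction at all: it proves the stronger identity $e^k_{\colMAP(\la)} \colMAP(c_\SSTS) = \colMAP(c_\SSTS)\, e^j_{\colMAP(\mu)}$ for a suitable $j$, by choosing $\stt_\la$ and $\stt_\mu$ so that in $c_\SSTS$ the $\crho$-, $\csigma$-, $\cpi$-strands at the trident position in $\stt_\la$ are connected by propagating strands to the corresponding letters in $\stt_\mu$, and verifying that no $\crho,\csigma,\cpi$ occurs strictly between them. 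The trident then slides through by isotopy, and the lemma follows because $e^j_{\colMAP(\mu)}\,e_{\colMAP(\mu)} = 0$. This is a single structural observation about the light-leaves diagram, with no case analysis on the construction.

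There are two genuine gaps in your plan that would have to be filled. First, your ``easy'' case $\gamma\notin\{\crho,\cpi\}$ includes $\gamma=\csigma$, and in that case $\colMAP(\gamma)=\csigma$ is precisely the colour of the middle strand of the trident. Your claim that ``the trident idempotent sits strictly to the left of the new strand'' fails if the commuting braid in $A^+_\csigma$ or $R^+_\csigma$ moves the trident's $\csigma$-box: the conjugated trident then no longer factors as $(\text{trident}) \otimes 1_\csigma$, and the interchange-law argument breaks. This requires either a separate combinatorial argument that the trident's $\csigma$-box is never the one moved, or a different reduction. Second, in the case $\gamma\in\{\crho,\cpi\}$ with $X=R$, you sketch invoking \cref{jkhsfjkhdsfgjkhdsfgdgsfhjkjkhdgfkjdhgsfjkhdsgf} and \cref{typeClemmer}, but you do not carry out the computation; the $R^-$ step caps off a $\ctau$-strand, which destroys or creates a trident position, so the range of $k$ and the identification $e^k_{\colMAP(\la)}\leftrightarrow e^{k'}_{\colMAP(\la')}$ both shift, and this bookkeeping is exactly where the argument could fail silently. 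The paper's route sidesteps both issues. If you want to pursue the inductive version you should first establish the commutativity of the $1+e^k_\mu$ (which your reduction assumes), and then treat the $\gamma=\csigma$ sub-case explicitly rather than folding it into the commuting case.
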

  \begin{proof}
For $1\leq k \leq \ell_\ctau(\colMAP(\la))$, we will show that 
$\eee^k_\la \colMAP(c_\SSTS) = 
\colMAP(c_\SSTS) \eee^j_\mu$ for some $1\leq j \leq \ell_\ctau(\colMAP(\mu))$ and hence deduce the result.  
Assume $k$ is even (the odd case is identical).
We can assume that 
$\stt_\mu$ is such that each $\violet$-strand is immediately preceded by a $\csigma$-strand, so 
that $\stt_\la= \underline{v_0}
\crho \underline{v_1}\csigma\cpi\underline{v_2}$
where  $\ell( \colMAP(\underline{v_0}
\crho \underline{v_1}\csigma\cpi))=k$
and where $ \underline{v_1}$ does not contain $\crho,\csigma,\cpi$.  
We can write $\stt_\mu$ in the form 
$\underline{w_0}
\crho \underline{w_1}\csigma\underline{w_2}\cpi\underline{w_3}$ 
such that the $\crho,\csigma,\cpi$ in this expression
are connected to the  
$\crho,\csigma,\cpi$ in the expression 
$\underline{v_0}
\crho \underline{v_1}\csigma\cpi\underline{v_2}$ by strands in the Soergel diagram.  
Moreover, we can assume that $\ell(\underline{w_0})$
and $\ell(\underline{w_3})$ are maximal with respect to this property.  
We claim that $ \underline{w_1} $ and $
\underline{w_2}$ do not contain 
any occurrences of $\crho,\csigma,\cpi$.  
Thus  the specified  $\violet$ and $\green$ strands commute with  all strands lying between them, except for  the specified $\csigma$-strand.  
  Under $\colMAP$ these correspond to $\ctau$-strands which commute with  all strands lying between them, except for  the specified $\csigma$-strand. Thus applying the trident on top/bottom of these strands  we get the same result, as required.  
  
  It only remains to verify the claim. 
Suppose one of the three colours does occur  in $ \underline{w_1} $. The first colour to appear must be $\csigma$ (because it follows $\crho$) and this must be a $A_\csigma^-$ step in the basis (because $\underline{v}_1$ has no $\csigma$ and so it cannot be a  $X_\csigma^+$  step 
 and the prior $\crho$ step was an $X^+_\crho$ and so it cannot be an $R_\csigma^-$ step). 
After this $\csigma$, there must be a $\cpi$
but this cannot be an $R_\cpi^{\pm}$ (as the  prior $\crho$ step was an $X^+_\crho$) or be  $A^{\pm}_\cpi$ (because
the prior $\csigma$ was a $A_\csigma^{-}$).  Thus the claims follows for $ \underline{w_1} $.  The case of 
$ \underline{w_3} $ is similar. 
\end{proof}

%


\begin{prop}\label{vectorproof2}
The map  $\Theta: h_{{ ( D_{n+1},A_{n }  )}} \to 
 {\sf e}h_{ ( C_{n},A_{n-1 })}{\sf e}$ given by 
  $ 
\Theta ( c_{\SSTS\SSTT} )= 
 {\sf e} \circ \colMAP( c_{\SSTS\SSTT})\circ {\sf e} $  is a  $\Bbbk$-algebra homomorphism. 

\end{prop}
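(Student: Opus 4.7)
The plan rests on three observations: (i) $\Theta$ is a graded vector space isomorphism by \cref{vectorproof}; (ii) $\colMAP$, as a recolouring map on simple Soergel graphs, is tautologically compatible with vertical concatenation, so $\colMAP(D_1 \cdot D_2) = \colMAP(D_1) \cdot \colMAP(D_2)$ at the level of free Soergel diagrams; and (iii) the idempotent $\eee$ is self-adjoint under $\ast$, since each factor ${\sf 1}_{\stt_\mu} + {\sf e}_\mu^k$ is visibly self-adjoint---the trident $\trid^{\blue\csigma\blue}_{\blue\csigma\blue}$ is a diagram times its dual and the outer braids in \eqref{redundaaaa} are $\ast$-dual. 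The task is therefore to verify that the middle idempotent in $\eee \,\colMAP(a) \,\eee \,\colMAP(b) \,\eee$ can be collapsed, yielding $\eee\,\colMAP(ab)\,\eee = \Theta(ab)$ for all cellular basis elements $a, b \in h_{(D_{n+1}, A_n)}$.

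The main tool is \cref{ajkfhsdjkahsfakjhsfdhjsfsfdhjafsdyuasfhgjasfdhbjdfas}, which states $\colMAP(c_\SSTS)\,\eee = \eee\,\colMAP(c_\SSTS)\,\eee$ for any light leaf $c_\SSTS$; equivalently $(\eee - {\sf 1})\,\colMAP(c_\SSTS)\,\eee = 0$ where ${\sf 1}$ denotes the identity on the appropriate word. Applying the anti-involution and using $\eee^\ast = \eee$ gives the dual identity $\eee\,\colMAP(c_\SSTS^\ast)\,(\eee - {\sf 1}) = 0$. Writing each cellular basis element as $c_{\SSTS\SSTT} = c_\SSTS^\ast c_\SSTT$ and expanding
\begin{align*}
\Theta(c_{\SSTS\SSTT})\,\Theta(c_{\SSTU\SSTV})
= \eee\,\colMAP(c_\SSTS)^\ast\,\colMAP(c_\SSTT)\,\eee\,\colMAP(c_\SSTU)^\ast\,\colMAP(c_\SSTV)\,\eee,
\end{align*}
the dual Lemma applied to $\colMAP(c_\SSTU)^\ast$ shows that replacing the interior $\eee$ by ${\sf 1}$ does not change the expression. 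The result $\eee\,\colMAP(c_\SSTS)^\ast\colMAP(c_\SSTT)\colMAP(c_\SSTU)^\ast\colMAP(c_\SSTV)\,\eee$ then equals $\eee\,\colMAP(c_{\SSTS\SSTT}c_{\SSTU\SSTV})\,\eee = \Theta(c_{\SSTS\SSTT}c_{\SSTU\SSTV})$ by the diagrammatic multiplicativity of $\colMAP$.

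The hard part lies in the non-simply-laced tail: when $\colMAP$ identifies two distinct type $D$ reflections $\crho, \cpi$ with a single type $C$ reflection $\blue$, the $m(\csigma,\ctau)=4$ null-braid of $h_{(C_n, A_{n-1})}$ carries extra terms beyond the simply-laced $m=3$ null-braid of $h_{(D_{n+1}, A_n)}$, which could a priori obstruct the absorption of $\eee$ above. This is precisely what \cref{typeClemmer} resolves: the offending term $\trid^{\ctau\csigma\ctau}_{\ctau\csigma\ctau}\otimes{\sf 1}_\csigma$ is killed on both sides by the idempotent $\eee_{\ctau\csigma\ctau\csigma}$ embedded in $\eee$, so that in the truncated algebra the $D$-type and $C$-type null-braid relations coincide; \cref{ZEROC} plays the parallel role for gap identities. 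The orthogonal case $h_{(A_{2n-1}, A_{2n-2})} \to \eee h_{(B_n, B_{n-1})}\eee$ follows the same strategy, with the simpler $(B_n, B_{n-1})$ idempotent involving only a single-strand trident per $\blue$-coloured tile and with the requisite bi-chrome Tetris relations essentially verified already in the orthogonal subsection preceding the statement of \cref{vectorproof}.
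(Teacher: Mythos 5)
There is a genuine gap, and it is located exactly at the crux of the proposition. You write that ``$\colMAP$, as a recolouring map on simple Soergel graphs, is tautologically compatible with vertical concatenation,'' and you conclude the argument by appealing to ``the diagrammatic multiplicativity of $\colMAP$.'' But the paper states explicitly, immediately before \cref{C--A}, that the extended map $\colMAP : h_{(D_{n+1},A_n)} \to h_{(C_n,A_{n-1})}$, $c_{\SSTS\SSTT} \mapsto c_{\colMAP(\SSTS)\colMAP(\SSTT)}$, is \emph{not} a $\Bbbk$-algebra homomorphism. Multiplicativity on free Soergel graphs does not descend to the quotient algebras, because the defining ideals differ: the $\crho\csigma$- and $\cpi\csigma$-barbell relations of type $D$ have coefficient $-1$, whereas the $\ctau\csigma$-barbell in type $C$ has coefficient $-2$, and the $m=3$ and $m=4$ null-braid relations carry genuinely different extra terms. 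So the equality
$\eee\,\colMAP(c_\SSTS)^\ast\colMAP(c_\SSTT)\colMAP(c_\SSTU)^\ast\colMAP(c_\SSTV)\,\eee = \eee\,\colMAP(c_{\SSTS\SSTT}c_{\SSTU\SSTV})\,\eee$
is precisely what must be proven --- not something that follows from ``multiplicativity.'' The paper's actual proof isolates $\eee\,\colMAP(c_\SSTT)\,\eee\,\colMAP(c_\SSTU^\ast)\,\eee = \eee\,\colMAP(c_{\SSTT\SSTU})\,\eee$ as the key identity, then proves it by double induction (on rank via $\ell(\mu)$ and on the light leaves construction) through a ten-case analysis according to the last step $A_\gsigma^\pm$, $R_\gsigma^\pm$ in building $c_\SSTT$ and $c_\SSTU$, using \cref{huzzah}, \cref{jkhsfjkhdsfgjkhdsfgdgsfhjkjkhdgfkjdhgsfjkhdsgf}, and \cref{ZEROC} at each stage. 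You correctly name \cref{typeClemmer} and \cref{ZEROC} as the lemmas that reconcile the two null-braid relations, but naming them is not the same as carrying out the induction; your final paragraph asserts exactly the non-trivial compatibility rather than establishing it.

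There is also a smaller logical error in how you use \cref{ajkfhsdjkahsfakjhsfdhjsfsfdhjafsdyuasfhgjasfdhbjdfas}. That lemma gives $\colMAP(c_\SSTS)\,\eee = \eee\,\colMAP(c_\SSTS)\,\eee$, and its dual gives $\eee\,\colMAP(c_\SSTS^\ast) = \eee\,\colMAP(c_\SSTS^\ast)\,\eee$. These identities let you \emph{insert} extra copies of $\eee$ (which is exactly how the paper uses them, to split the product into four factors each flanked by $\eee$), but they do not let you \emph{remove} the interior $\eee$ and replace it by $\mathbf{1}$, as you claim. That replacement would require the much stronger statement $\colMAP(c_\SSTT)\,\eee = \colMAP(c_\SSTT)$, which is false in general. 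So even the reduction you attempt before invoking multiplicativity does not go through.
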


\begin{proof}
We check this on the cellular basis by showing that 
$$ {\sf e} \circ \colMAP( c_{ \SSTS \SSTT } c_{ \SSTU\SSTV})\circ 
 {\sf e} = \Theta(c_{\SSTS\SSTT}c_{\SSTU\SSTV}) = 
  \Theta(c_{\SSTS\SSTT}) \Theta(c_{\SSTU\SSTV})
  ={\sf e} \circ 
\colMAP(  c_{ \SSTS  \SSTT })\circ 
  {\sf e} \circ 
\colMAP(c_{ \SSTU \SSTV })\circ 
  {\sf e} $$
  for $\SSTS\in \Path_{  ( D_{n+1},A_{n})}(\nu,-)$,
  $\SSTT\in \Path_{  ( D_{n+1},A_{n})}(\nu,\stt_\mu)$, 
  $\SSTU\in \Path_{  ( D_{n+1},A_{n})}(\eta,\stt_\mu)$,
  $\SSTV\in \Path_{  ( D_{n+1},A_{n})}(\eta,-)$.  
  By \cref{ajkfhsdjkahsfakjhsfdhjsfsfdhjafsdyuasfhgjasfdhbjdfas}, we have that 
  $$
   {\sf e}\circ  \colMAP(  c_{ \SSTS  \SSTT })\circ 
    {\sf e}\circ  \colMAP(  c_{ \SSTU  \SSTV })\circ  {\sf e}  
    =
   {\sf e}\circ  \colMAP(c_{ \SSTS  }^\ast)    \circ  {\sf e}  
\circ   \colMAP( c_{ \SSTT  } ) \circ     {\sf e}  
\circ  \colMAP(  c_{ \SSTU  }^\ast) \circ {\sf e}  
\circ  \colMAP(      c_{ \SSTV  }   ) \circ   {\sf e}  .$$
%
   We   proceed by induction on $\ell(\mu)$, the base case $\ell(\mu)=0$ is trivial.  
   We can assume   $\ell(\pi),\ell(\rho) < \ell(\mu)$ as if 
      $\ell(\pi)=\ell(\rho)=\ell(\mu)$  then ${\sf 1}_\pi={\sf 1}_{\rho}={\sf 1}_\mu$ 
      and this product becomes ${\sf e}        \circ \colMAP (c_{ \SSTS  \SSTV })        \circ{\sf e}$ as required.  Similarly, if 
       $\ell(\pi)= \ell(\mu)$ and $\ell(\rho)< \ell(\mu)$ (or vice versa) this product becomes
       $$
       {\sf e} 
       \circ 
       \colMAP(c_{\SSTS}^\ast) 
       \circ              {\sf e}       \circ
               \colMAP(c_{\SSTU}^\ast)        \circ
              {\sf e}       \circ
               \colMAP(c_{\SSTV}) 
               =
  (  {\sf e}       \circ \colMAP(c_{\SSTS}^\ast) 
              \circ       {\sf e}       \circ
               \colMAP(c_{\SSTU}^\ast)       \circ       
              {\sf e})
        (       \eee        \circ \colMAP(c_{\SSTV})         \circ \eee )
        =
        \Theta (c_\SSTS^\ast  c _\SSTU^\ast )
                \Theta (c_\SSTV )
       $$
and so we can again appeal to our inductive assumption.  
We will focus on the middle of the product and prove that 
\begin{align}
{\sf e}       \circ \colMAP( c_{ \SSTT })        \circ 
{\sf e}        \circ \colMAP(c_{\SSTU}^\ast)        \circ {\sf e}
=
{\sf e}        \circ \colMAP( c_{ \SSTT  \SSTU })        \circ {\sf e}.
\end{align} 
 As $\ell(\eta),\ell(\nu) \leq  \ell(\mu)$ we can then apply induction to deal with the products with ${\sf e}        \circ \colMAP(c_{\SSTS}^\ast)$ and $ \colMAP(c_{\SSTV})        \circ{\sf e}$.  
Now, the basis elements $c_\SSTT$ and $c_\SSTU^\ast$ are constructed inductively and we will consider cases depending on the last step in this inductive procedure.  

 
\smallskip
\noindent {\bf Case 1. } We first consider the case that 
$c_\SSTT = A_\grey  ^+(c_{\SSTT'})$ and 
$c_\SSTU = A_\grey  ^+(c_{\SSTU'})$.  By induction, we can assume that 
$$\eee        \circ\colMAP(c_{ \SSTT' })        \circ \eee      \circ\colMAP( c_{ \SSTU' }^\ast)        \circ \eee 
=\textstyle\sum _{\SSTX,\SSTY  } a_{\SSTX,\SSTY}\eee       \circ \colMAP(c_{\SSTX\SSTY})        \circ\eee
\quad \text { where } \quad  
c_{\SSTT'}c_{\SSTU'}^\ast =
\textstyle \sum_{\SSTX,\SSTY}a_{\SSTX,\SSTY} c_{\SSTX\SSTY}
$$
 If $\grey   \neq \cpi, \crho$, then $\ell_\ctau(\colMAP(\mu))=\ell_\ctau(\colMAP(\mu-\grey ))$ and therefore
$$\eee  
       \circ
       \colMAP(c_{\SSTT})
              \circ \eee 
                     \circ
                     \colMAP(c_{\SSTU}^\ast)        \circ \eee 
=\textstyle\sum _{\SSTX,\SSTY  } a_{\SSTX,\SSTY}
(\eee
( \colMAP(c_{ \SSTX \SSTY })\otimes {\sf 1}_\grey )\eee).$$
  If  $\grey  = \cpi$ (the $\crho $ case is identical) then 
$c_\SSTT= A^+_\cpi A^+_\csigma (c_{\SSTT''})$, 
$c_\SSTU= A^+_\cpi A^+_\csigma (c_{\SSTU''})$   
and \begin{align*}
\eee       \circ \colMAP(c_{\SSTT})       \circ \eee        \circ\colMAP(c_{\SSTU}^\ast)        \circ \eee
&=
\eee
\left((\colMAP(c_{\SSTT''})        \circ \eee_{\mu-\ctau-\csigma}       \circ \colMAP(c_{\SSTU''}   ^\ast))
\compose ( \on_{\ctau \csigma \ctau } + {\sf trid}_{\ctau \csigma\ctau }^{\ctau \csigma\ctau } ) \right) 
 \eee 
 \\
  &=
 \eee
\left(	(\colMAP(c_{\SSTT''})        \circ \eee_{\mu-\ctau-\csigma}        \circ\colMAP(c_{\SSTU''}^\ast))
 \compose  \on_{\ctau \csigma \ctau }  \right) \eee
 \\
 &=
 \eee
\left((\colMAP(c_{\SSTT'} )
\circ  \eee_{\mu-\ctau} \circ  \colMAP(c_{\SSTU '}^\ast))
 \otimes   \on_{\ctau }  \right) \eee
 \\
 &=\textstyle\sum _{\SSTX,\SSTY  } a_{\SSTX,\SSTY}\eee
 (\colMAP( c_{\SSTX \SSTY })\otimes {\sf 1}_\ctau )
 \eee 
\end{align*}
the first   equality follows  from the definition of the idempotents; for the second equality, we note that 
the trident term in the sum is zero by \cref{bpb};
  the third equality follows by definition of the cellular basis elements and the idempotents; the final equality holds by induction.  
Thus in all cases, we have that 
\begin{align}\label{huzzah}
\eee       \circ \colMAP(c_{\SSTT})       \circ \eee        \circ\colMAP(c_{\SSTU}^*)        \circ \eee 
=\sum _{\SSTX,\SSTY  } a_{\SSTX,\SSTY}\eee
( \colMAP (c_{ \SSTX \SSTY })\otimes {\sf 1}_{\colMAP(\grey)} )\eee.
\end{align}
 It remains to show that every $\eee (\colMAP(c_{\SSTX\SSTY})\otimes {\sf 1}_{\colMAP(\grey)})\eee
=\eee (\colMAP(c_{\SSTX\SSTY} \otimes {\sf 1}_{ \grey }))\eee$ for every $\SSTX,\SSTY$ appearing in the above sum.  
We set $\la:= \Shape(\SSTX)= \Shape(\SSTY)$.

\begin{figure}[ht!]

  $$ 
 \begin{minipage}{4.14cm}
 \begin{tikzpicture}[scale=0.9]

\fill[gray!30] (0.5,2)--(3.25,2) to [out=-90,in=90] (4.25,0.5)--(0.5,0.5)--(0.5,2) ;

\fill[gray!30] (0.5,-1)--(3.25,-1) to [out=90,in=-90] (4.25,0.5)--(0.5,0.5)--(0.5,-1);

 \draw[very thick, gray] (3.5,2)  to [out=-90,in=90] (4.5,0.5)  to [out=-90,in=90] (3.5,-1);

 \path(0.5,2)  rectangle (3.25,0.5) node [pos=.5]{$\colMAP({\sf c_{\SSTT'}})$};;
 \path(0.5,0.5)  rectangle (3.25,-1) node [pos=.5]{$\colMAP({\sf c_{\SSTU'}^*})$};;

   \draw[fill=white , rounded corners] (0.25,2) rectangle (3.75,2.4) node [pos=.5]{$\sf e$};;

   \draw[fill=white ,rounded corners] (0.25,2) rectangle (3.75,2.4) node [pos=.5]{$\sf e$};;

     \draw[fill=white, rounded corners] (0.25,-1) rectangle (3.75,-1.4) node [pos=.5]{$\sf e$};;
     \draw[fill=white, rounded corners] (0.25,0.5-0.2) rectangle (4.75,0.5+0.2) node [pos=.5]{$\sf e$};;

\end{tikzpicture}\end{minipage} 
=
\sum a_{\sf X, Y}
\begin{minipage}{3.24cm}
 \begin{tikzpicture}[scale=0.9]

\fill[gray!30] (0.5,2)--(3.25,2) to [out=-90,in=90] (2.25,0.5)--(0.5,0.5)--(0.5,2) ;

\fill[gray!30] (0.5,-1)--(3.25,-1) to [out=90,in=-90] (2.25,0.5)--(0.5,0.5)--(0.5,-1);

 \draw[very thick, gray] (3.5,2)  to [out=-90,in=90] (2.5,0.5)  to [out=-90,in=90] (3.5,-1);
 
  \draw[ rounded corners] (0.25,2) rectangle (3.75,2.4) node [pos=.5]{$\sf e$};;
 
  \draw[ rounded corners] (0.25,-1) rectangle (3.75,-1.4) node [pos=.5]{$\sf e$};;
 
 \path(0.5,2)  rectangle (2.25,0.5) node [pos=.5]{$\colMAP({\sf c_{\sf X}^\ast}) $};;
 \path(0.5,0.5)  rectangle (2.25,-1) node [pos=.5]{$\colMAP({\sf c_{\sf Y}})$};;

\end{tikzpicture}\end{minipage} 
 $$
\caption{Case 1:  a diagrammatic version of \cref{huzzah}.}
\label{diagram-huzzah}
\end{figure}

\smallskip
\noindent{\em Subcase 1.1. }
 If $\grey \in \Add(\la)$ and $\colMAP  (\grey) \in \Add (\colMAP  (\la))$ then
$\colMAP( c_{\SSTX\SSTY})\otimes {\sf 1}_{\colMAP(\grey)}$  and 
$  c_{\SSTX\SSTY} \otimes {\sf 1}_{ \grey }$ are both cellular basis elements 
  and we are done.  
 
 \smallskip
\noindent{\em Subcase 1.2. }
If $\grey \not \in \Add(\la)$ and $\colMAP  (\grey) \in \Add (\colMAP  (\la))$ then 
we can assume that $\grey=\cpi$ (the $\grey=\crho$ case  is identical).  
This implies that $\crho  \in \Add(\la)$  (but by assumption $\csigma \in \Rem(\la)$ and $\cpi \in \Rem(\la-\csigma)$) 
 this implies that we can write $c_{\SSTX}$ and $c_{\SSTY}$ as 
 \begin{align}\label{maudstar}
c_{\SSTX}=c_{\SSTX'} {\color{green!80!black}\;  \circledast \; }\spot_\crho^\emptyset \mcompose \fork^\csigma_{\csigma\csigma}
\qquad
c_{\SSTY}=c_{\SSTY'} {\color{green!80!black}\;  \circledast \; }\spot_\crho^\emptyset \mcompose \fork^\csigma_{\csigma\csigma}
\end{align}
for some $\SSTX',\SSTY' \in \Path_{(D_{n+1},A_{n})}(\la+\crho, \stt_{\nu-\crho-\csigma})$.  Now, as $\cpi \in \Rem(\la-\csigma)$, using the $\cpi\csigma$-bull-braid relations we get
 \begin{align}\label{maudlabel1}
c_{\SSTX\SSTY} \otimes \on_\cpi
&= 
 - (c_\SSTX \mcompose \spot_\csigma^\emptyset \pcompose \fork^{\cpi}_{\cpi\cpi})^\ast 
  (c_\SSTY \mcompose \spot_\csigma^\emptyset \pcompose \fork^{\cpi}_{\cpi\cpi}) 
 \end{align}
 
On the other hand, using \cref{maudstar,jkhsfjkhdsfgjkhdsfgdgsfhjkjkhdgfkjdhgsfjkhdsgf} we have 
$$(\colMAP(c_\SSTX \otimes  {\sf 1}_\ctau)\eee = -({\sf 1}_{\stt_\la} \mcompose \spot_\csigma^\emptyset \compose \fork^{\blue}_{\blue\blue})^\ast 
( \colMAP(c_\SSTX) \mcompose \spot_\csigma^\emptyset \compose \fork^{\blue}_{\blue\blue})  \eee$$
and similarly for $(\colMAP(c_\SSTY)\otimes {\sf 1}_\ctau)\eee $.
Thus we get 
 \begin{align} \nonumber
&\; \eee (\colMAP(c_{\SSTX\SSTY})\otimes {\sf 1}_\ctau)\eee
\\ 
\nonumber
  =&\;
 \eee
( \colMAP(c_\SSTX) \mcompose \spot_\csigma^\emptyset \compose \fork^{\blue}_{\blue\blue})  ^\ast 
 ({\sf 1}_{\stt_\la} \mcompose \spot_\csigma^\emptyset \compose \fork^{\blue}_{\blue\blue}) 
 ({\sf 1}_{\stt_\la} \mcompose \spot_\csigma^\emptyset \compose \fork^{\blue}_{\blue\blue}) ^\ast 
( \colMAP(c_\SSTY) \mcompose \spot_\csigma^\emptyset \compose \fork^{\blue}_{\blue\blue})  \eee
\\
\label{maudlabel2}
=&\;
-\eee
( \colMAP(c_\SSTX) \mcompose \spot_\csigma^\emptyset \compose \fork^{\blue}_{\blue\blue})  ^\ast 
 ( \colMAP(c_\SSTY) \mcompose \spot_\csigma^\emptyset \compose \fork^{\blue}_{\blue\blue})  \eee
 \end{align}
 comparing \cref{maudlabel1,maudlabel2} we are done.

\begin{figure}[ht!]

$$
\begin{minipage}{3.2cm}
\end{minipage} $$
\caption{Subcase 1.2}
\label{seond-thing-in-M-proof}
\end{figure}

 \smallskip
 \noindent{\em Subcase 1.3. } 
If $\grey \in \Rem(\la)$ and 
 $\colMAP  (\grey) \in \Rem(\colMAP  (\la))$  then  
the monochrome Tetris relation implies that 
\begin{align*}
c_{\SSTX\SSTY} \otimes \on_\grey 
 = &\;
 (
c_{\SSTX } {\color{gray} \;\circledast\; }\fork^\grey_{\grey\grey})^\ast 
  (
c_{\SSTY } {\color{gray} \;\circledast\; }\spot^\emptyset_ \grey \otimes {\sf 1}_\grey) 
+
 (
c_{\SSTX }  {\color{gray} \;\circledast\; }\spot^\emptyset_ \grey \otimes {\sf 1}_\grey )^\ast 
 (
c_{\SSTY } {\color{gray} \;\circledast\; }\fork^\grey_{\grey\grey}) 
\\
   &\quad +
   (
c_{\SSTX } {\color{gray} \;\circledast\; }\fork^\grey_{\grey\grey})^\ast 
(\textstyle\sum_{[x,y]\in {\sf \underline{Hook}}_\grey (\la-\grey)}\gap(\stt_\la-[x,y]))
   (
c_{\SSTY } {\color{gray} \;\circledast\; }\fork^\grey_{\grey\grey}) 
\end{align*}
Similarly, we obtain 
\begin{align*}
&\;\eee(\colMAP(c_{\SSTX\SSTY}) \otimes \on_{\colMAP(\grey )})\eee
\\= &\;
\eee (\colMAP(c_\SSTX)  {\color{gray} \;\circledast\; }\fork^{\colMAP(\grey)}_{\colMAP(\grey)\colMAP(\grey)})^\ast 
 (
\colMAP(c_{\SSTY }) {\color{gray} \;\circledast\; }\spot^\emptyset_ {\colMAP(\grey)} \otimes {\sf 1}_{\colMAP(\grey)}) \eee
\\
&\; +\eee
 (
\colMAP(c_{\SSTX })  {\color{gray} \;\circledast\; }\spot^\emptyset_ {\colMAP(\grey)} \otimes {\sf 1}_{\colMAP(\grey)} )^\ast 
 (
\colMAP(c_{\SSTY }) {\color{gray} \;\circledast\; }\fork^{\colMAP(\grey)}_{{\colMAP(\grey)}{\colMAP(\grey)}}) \eee
\\
   &\;  +
 \eee  (
\colMAP(c_{\SSTX }) {\color{gray} \;\circledast\; }\fork^{\colMAP(\grey)}_{{\colMAP(\grey)}{\colMAP(\grey)}})^\ast 
\eee (\textstyle\sum_{[x,y]\in {\sf \underline{Hook}}_{\colMAP(\grey)} (\colMAP(\la-{ \grey)})}
 \gap(\stt_{\colMAP(\la)}-[x,y]))\eee 
   (
\colMAP(c_{\SSTY }) {\color{gray} \;\circledast\; }\fork^{\colMAP(\grey)}_{{\colMAP(\grey)}{\colMAP(\grey)}}) \eee
\end{align*}
where we have inserted extra idempotents $\eee$ in the final summand using \cref{ajkfhsdjkahsfakjhsfdhjsfsfdhjafsdyuasfhgjasfdhbjdfas}. Recall that we need to check that 
$$
\eee( \colMAP(c_{\SSTX\SSTY} \otimes {\sf 1}_\grey) )\eee
=
\eee( \colMAP(c_{\SSTX\SSTY} )\otimes {\sf 1}_{\colMAP(\grey)} )\eee
$$
where  the first two terms in each of the above equations obviously agree. For the final term, note that if $\grey \neq \crho,\cpi$ we have 
 $$
{ \sf \underline{Hook}}_{\grey}(\la-\grey)={\sf \underline{Hook}}_{\colMAP(\grey)} (\colMAP(\la-  \grey) ) .
 $$
If $\grey=\crho$ then $\colMAP(\grey)=\ctau$ 
 (the $\grey-\cpi $  case is identical).  Say $ {\color{green!80!black}s_{[r,r]}}=\crho \in \Rem(\la)$.  
We note that 
\begin{align}\label{almostthere}
{\sf \underline{Hook}}_\crho (\la - \crho)\sqcup \{{\color{magenta}[r,r-1]}\} = 
{\sf \underline{Hook}}_{\ctau } (\colMAP(\la)-  \ctau ) 
\end{align}
where ${\color{magenta}s_{[r,r-1]}}=\csigma$.   
Note that $\eee \circ {\sf gap}(\stt_\mu - [r,r-1]) \circ \eee = 0$ 
 using \cref{ZEROC} so the result holds in subcase 1.3. 

\begin{figure}[ht!]
$$
\begin{minipage}{2.7cm}
\end{minipage} 
 $$
 \caption{Subcase 1.3.    }
 \end{figure}

\smallskip
\noindent{\em Subcase 1.4. }
If $\grey \not \in \Rem(\la)$ and $\colMAP  (\grey) \in \Rem (\colMAP  (\la))$ then 
we can assume that $\grey=\cpi$ (the $\grey=\crho$   is identical).  
Then we must have that $ {\color{green!80!black}s_{[r,r]}}=\crho \in \Rem(\la)$ 
and $ {\color{magenta}s_{[r+1,r]}}=\csigma$ and $ {\color{violet}s_{[r+1,r+1]}}=\cpi$. We have that 
$$c_\SSTX = c_{\SSTX'} \mcompose \spot_\csigma^\emptyset
\quad 
c_\SSTY = c_{\SSTY'} \mcompose \spot_\csigma^\emptyset$$
  for   $\SSTX',\SSTY' \in \Path_{(D_{n+1},A_n)}(\la+ \csigma,-)$.  Therefore 
\begin{align*}
 c_{\SSTX\SSTY} \otimes \on_\cpi  
&=
( c^\ast _{\SSTX'} \otimes {\sf 1}_\cpi )\gap(\stt_{\la+\csigma+\cpi}-{\color{magenta} {[r+1,r]}})
( c  _{\SSTY'} \otimes {\sf 1}_\cpi )=0
\end{align*}
using \cref{underphi2}.  On the other hand
\begin{align*}
\eee\circ (\colMAP(c_{\SSTX\SSTY})\otimes {\sf 1}_\ctau)\circ \eee &=
\eee\circ  (\colMAP(c_{\SSTX'}^\ast)\otimes {\sf 1}_\ctau)\circ \eee 
\circ \gap(\stt_{\colMAP(\la)+\csigma+\ctau}- {\color{magenta} {[r+1,r]}}) \circ \eee  \circ (\colMAP(c_{\SSTY'}) \otimes \on_\ctau)\circ \eee =0 
\end{align*}
where the first equality follows from \cref{ajkfhsdjkahsfakjhsfdhjsfsfdhjafsdyuasfhgjasfdhbjdfas} (inserting extra idempotents, $\eee$) and the second follows by \cref{ZEROC}.  

\smallskip
\noindent{\em Subcase 1.5. }
If $\colMAP(\grey)\not \in  \Add(\colMAP(\la))$ nor $\Rem(\colMAP(\la))$, then 
$\grey \not \in \Add (\la)$ or $\Rem(\la)$.  Take ${\color{gray} {[x,y]}}$ with $x+y$ minimal such that 
${\color{gray}s_{[x,y]}}=\grey$ and ${\color{gray} {[x,y]}}\not \in \la$.
Then precisely one of 
$[x,y-1]$ or $[x-1,y] \in \la$.
We assume ${\color{orange}[x,y-1]}
\in \la$ and we set   
$\gam={\color{orange}s_{[x,y-1]}}$. 
We have 
  $$c_{\SSTX\SSTY}\otimes \on_\grey 
  = -( c_{\SSTX } \ocompose {\sf spot}_\orange^\emptyset 
   \greycompose {\sf fork}_{\grey\grey}^{ \grey})^*
   ( c_{\SSTY} \ocompose {\sf spot}_\orange^\emptyset 
   \greycompose {\sf fork}_{\grey\grey}^{ \grey}).
  $$
by the $\gam\grey$-null-braid relation. 
 This might not be a cellular basis diagram, but can be rewritten as such using \cref{iszero,isnonzero}.  
 Similarly $\eee (\colMAP(c_{\SSTX\SSTY})\otimes {\sf 1}_{\colMAP(\grey)})\eee$ can be rewritten in the same form using \cref{ZEROC}.  
 Subcase 1.5 follows.

\smallskip
\noindent {\bf Case 2. } 
 We now consider the case that 
$c_\SSTT = A_\grey  ^+(c_{\SSTT'})$ and 
$c_\SSTU = A_\grey  ^-(c_{\SSTU'})$ (the dual case with $\SSTT$ and $\SSTU$ swapped is similar).  
If $\grey \neq \cpi, \crho$, then 
\begin{align*}
{\sf e}       \circ \colMAP( c_{ \SSTT' }) \otimes \on_{\colMAP(\grey)}        \circ 
{\sf e}        \circ \colMAP(c_{\SSTU}^\ast) \otimes {\sf spot}^ {\colMAP (\grey)}_\emptyset      \circ {\sf e}
&=
{\sf e}     ( ( \colMAP( c_{ \SSTT' })  \circ 
{\sf e}        \circ 
\colMAP(c_{\SSTU}^\ast ) ) \otimes {\sf spot}^\grey_\emptyset)       ) {\sf e}
\\
&=
{\sf e}  \big(\colMAP( {\sf 1}
_{\stt_{\nu- \grey }}\otimes   {\sf spot}^\grey_\emptyset ) \big) \eee \circ \eee	   \big(    \colMAP( c_{ \SSTT' })  \circ 
{\sf e}        \circ 
\colMAP(c_{\SSTU}^\ast )   \big) \eee 
\end{align*}
  and so the result follows by induction on $\ell(\mu)$.

 \begin{figure}[ht!]
 $$ \begin{minipage}{3.24cm}
\end{minipage} $$
\caption{Case 2 for $\gsigma\neq \cpi$}

 \end{figure}

   If $\grey = \cpi$, then 
  $c_\SSTT= A^+_\cpi A^+_\csigma (c_{\SSTT''})$ and 
  $c_\SSTU= A^-_\cpi A^+_\csigma (c_{\SSTU''})$ and we also set $c_{\SSTT'}=A^+_\csigma(c_{\SSTT''})$ and 
  $c_{\SSTU'}=A^+_\csigma(c_{\SSTU''})$. 
  Expanding out the final term of the middle idempotent $\eee$ and applying \cref{bpb}, we obtain 
\begin{align*}
&{\sf e}       \circ( \colMAP( c_{ \SSTT'' } )\otimes \on_{\csigma\ctau})        \circ 
{\sf e}        \circ
 (\colMAP(c_{\SSTU''}^\ast)	\otimes  \on_{\csigma} \otimes {\sf spot}^\ctau_\emptyset)         \circ {\sf e}
\\
 =&
{\sf e}       \circ 
\big(
\big(\colMAP( c_{ \SSTT'   })         
  \circ 
{\sf e}        \circ
 \colMAP(c_{\SSTU'} )^\ast    \big)  
 \otimes \spot^\ctau_\emptyset      \big) \circ {\sf e}
 +\eee \circ  (\colMAP(c_{\SSTT''}) \compose \trid^{\ctau\csigma\ctau}_{\ctau})
 \circ \eee \circ (\colMAP(c_{\SSTU''})^\ast \otimes \spot^\emptyset_\csigma)\circ \eee
\\
 =&
{\sf e}       \circ 
\big(
\big(\colMAP( c_{ \SSTT' }  )       
  \circ 
{\sf e}        \circ
 \colMAP(c_{\SSTU'}  )^\ast    \big)  
 \otimes 
\spot^\ctau_\emptyset 
 \big)  \circ {\sf e}
 \end{align*}
and so the result again follows by induction on length, as above.

 \begin{figure}[ht!]
 $$ \begin{minipage}{3.24cm}
\end{minipage}  $$
\caption{Case 2 for $  \cpi$.}

 \end{figure}

\noindent {\bf Case 3. }  We now consider the case that 
$c_\SSTT = R_\grey  ^+(c_{\SSTT'})$ and 
$c_\SSTU = A_\grey  ^+(c_{\SSTU'})$ (the dual case with $\SSTT$ and $\SSTU$ swapped is identical).  
By  the same inductive argument as we used in Case 1 (in order to deduce \cref{huzzah}), we have that 
\begin{align} \label{huzzah2222}
\eee       \circ \colMAP(c_{\SSTT})       \circ \eee        \circ\colMAP(c_{\SSTU}^*)        \circ \eee 
=\sum _{\SSTX,\SSTY  } a_{\SSTX,\SSTY}\eee
( \colMAP (c_{ \SSTX }^\ast)  \greycompose \fork_{{\colMAP(\grey)}{\colMAP(\grey)}}^{\colMAP(\grey)})
( \colMAP (c_{ \SSTY })\otimes {\sf 1}_{\colMAP(\grey)} )\eee.
\end{align}
We set $\la=\Shape(\SSTX)=\Shape(\SSTY)$.  
 Observe that $\grey \in \Add(\la)$ or $\Rem(\la)$.  
\color{black}
If $\grey\in \Add(\la)$, then 
$$ c_\SSTX^\ast \greycompose \fork_{\grey\grey}^\grey
= (X^-_\grey(c_{\SSTX'})) ^\ast
\greycompose \fork_{\grey\grey}^\grey
=
 (X^+_\grey (c_{\SSTX'})) ^\ast  $$
for $X\in \{R,A\}$ and some $\SSTX'$. We have that 
$  c_\SSTY \otimes \on_\grey=  A^+_\grey (c_{\SSTY}) $.  
In particular, both diagrams are light leaves basis elements.  
If $\grey \in \Rem(\la)$ then we move the fork through the centre of the product $c_{\SSTX\SSTY}$ and notice that 
$$c_\SSTY \greycompose \fork_{\grey\grey}^\grey=R^+_\grey(c_\SSTY)$$
and $c_\SSTX$ are both light leaves basis elements.  
Exactly the same is true replacing $c_{\SSTX}$, $c_{\SSTY}$, and $\alpha$ with their images under $\colMAP$.   
The result follows.

\begin{figure}[ht!]

  $$ 
 \begin{minipage}{4.6cm}
\end{minipage} 
 $$
\caption{Case 5 for $  \grey =\csigma$. }
\label{case5}
 \end{figure}

\noindent {\bf Case 5. }
Let 
$c_\SSTT = A_\grey  ^-(c_{\SSTT'})$ and 
$c_\SSTU = A_\grey  ^-(c_{\SSTU'})$. If $\colMAP(\grey)\neq \ctau$ then $\ell_\ctau(\colMAP(\mu))=\ell_\ctau(\colMAP(\mu-\grey))$ and so
\begin{align*}
\eee \circ \colMAP(c_\SSTT) \circ \eee \circ \colMAP(c_{\SSTU})^\ast \circ \eee
&=
(\eee \circ \colMAP(c_{\SSTT'}) \circ \eee \circ \colMAP(c_{\SSTU'})^\ast \circ \eee)
\otimes 
{\sf bar}(\colMAP(\grey))
\\
&= \eee \circ(\on_{\stt_{\colMAP(\nu)}}\otimes {\sf bar}(\colMAP(\grey)) \circ\eee \circ \colMAP(c_{\SSTT'}) \circ \eee \circ \colMAP(c_{\SSTU'})^*\circ \eee
\end{align*}
and if $\colMAP(\grey)=\ctau$ (say $\grey=\crho$ as the $\cpi$ case is identical) we have 
\begin{align}\begin{split}\label{fjkgjkdgjksdfhgkjdshgjkfdhjghfjghfjghfjghfjgh}
\eee \circ \colMAP(c_\SSTT)\circ \eee \circ \colMAP(c_{\SSTU})^\ast \circ \eee
&=
\eee \circ 
({\sf 1}_{\stt_{\colMAP(\nu)}}
\otimes {\sf bar}(\colMAP(\grey))
\circ \eee \circ 
\colMAP(c_{\SSTT'})
\circ \eee \circ 
\colMAP(c_{\SSTU'})^\ast 
\eee
\\
&\quad + \eee \circ 
({\sf 1}_{\stt_{\colMAP(\nu)-\csigma}}
\otimes {\sf gap}(\csigma))
\circ \eee \circ 
\colMAP(c_{\SSTT'})
\circ \eee \circ 
\colMAP(c_{\SSTU'})^\ast 
\eee 
\end{split}
\end{align}
this is picture in \cref{case5}.  
As observed in subcase 1.3, the rules for resolving ${\sf 1}_{\stt_\nu} \otimes {\sf bar}(\grey)$   in type  $(D_{n+1},A_n)$ and 
${\sf 1}_{\stt_{\colMAP(\nu)}}\otimes {\sf bar}(\colMAP(\grey))$ in type 
$(C_n,A_{n-1})$ 
 are identical 
{\em except} when $\colMAP(\grey)=\ctau$ in which case we get an extra term; this term cancels with the second summand on the right of  \cref{fjkgjkdgjksdfhgkjdshgjkfdhjghfjghfjghfjghfjgh}. 
Using  \cref{iszero,isnonzero} versus \cref{ZEROC}
we see that 
$$\eee \circ  \colMAP(c_\SSTT) \circ \eee\circ 
 \colMAP(c_\SSTU) ^\ast \circ \eee
 =
 \eee \circ  \colMAP(c_{\SSTT \SSTU})  \circ \eee$$
by induction on $\ell(\mu)$.

\smallskip
\noindent {\bf Case 6. }
 Let 
$c_\SSTT = R_\grey  ^+ (c_{\SSTT'})$ and 
$c_\SSTU = A_\grey  ^- (c_{\SSTU'})$ (the dual case is similar). 
If $\colMAP(\grey)\neq \ctau$ then $\ell_\ctau(\colMAP(\mu))=\ell_\ctau(\colMAP(\mu-\grey))$ and using the fork-spot relation we have 
$$\eee \circ \colMAP(c_{\SSTT})
\circ \eee \circ 
\colMAP(c_{\SSTU})^\ast 
\circ \eee =
\eee \circ 
\colMAP(c_{\SSTT'})
\circ \eee \circ
\colMAP(c_{\SSTU'})^\ast \eee$$
and so the result follows by induction.  
If $\colMAP(\grey)=\ctau$ then   $ \ell_\ctau(\colMAP(\mu))=\ell_\ctau(\colMAP(\mu-\grey))+1$
and we set $\grey=\crho$ (the $\grey=\cpi$ case is identical)
and we must have $c_\SSTT= R^+_\crho R^- _\csigma(c_{\SSTT''})$.  We have that
\begin{align*}
\colMAP (c_\SSTT) \eee= \colMAP(c_\SSTT)e_{\colMAP(\mu)}
= \colMAP(c_\SSTT) ({\sf 1}_{\stt_{\colMAP(\mu)}}+e_{\colMAP(\mu) 		}^{\ell_\ctau(\colMAP(\mu))})
( e_{\colMAP(\mu)-\ctau} \otimes {\sf 1}_\ctau)
 = \colMAP(c_\SSTT)  
( e_{\colMAP(\mu)-\ctau} \otimes {\sf 1}_\ctau)
\end{align*}
as illustrated in \cref{afigurebelow221}.
So we have 
$$\eee \circ \colMAP(c_\SSTT) \circ e_{\colMAP(\mu)} \circ \colMAP(c_{\SSTU})^\ast \circ \eee
=
\eee
\circ  \colMAP(c_\SSTT)
\circ ( e_{\colMAP(\mu)-\ctau} \otimes {\sf 1}_\ctau) \circ 
 \colMAP(c_{\SSTU})^\ast \circ \eee
 =
\eee
\circ  \colMAP(c_{\SSTT'})
\circ  \eee  \circ 
 \colMAP(c_{\SSTU'})^\ast \circ \eee
$$ using the fork-spot relation as above.  Again we are done by induction.

 \begin{figure}[ht!]
 $$ \begin{minipage}{4.3cm}
\end{minipage} 
 $$
\caption{Case 6 for $  \grey =\csigma$. }
\label{afigurebelow221}
 \end{figure}

\smallskip
\noindent {\bf Case 7. } 
The case  
$c_\SSTT = A_\grey  ^-(c_{\SSTT'})$ and 
$c_\SSTU = R_\grey  ^-(c_{\SSTU'})$ follows from case 6 (in the manner that case 4 followed from case 3).

\smallskip
\noindent {\bf Case 8. }
 We now consider the case that 
$c_\SSTT = R_\grey  ^+(c_{\SSTT'})$ and 
$c_\SSTU = R_\grey  ^+(c_{\SSTU'})$.   
By  the same inductive argument as we used in Case 1 (in order to deduce \cref{huzzah}), we have that 
\begin{align*} 
\eee       \circ \colMAP(c_{\SSTT})       \circ \eee        \circ\colMAP(c_{\SSTU}^*)        \circ \eee 
=\sum _{\SSTX,\SSTY  } a_{\SSTX,\SSTY}\eee
( \colMAP (c_{ \SSTX }^\ast)  \greycompose \fork_{\colMAP(\grey)\colMAP(\grey)}^{\colMAP(\grey)})
( \colMAP (c_{ \SSTY })
 \greycompose
  \fork_{\colMAP(\grey)}^{\colMAP(\grey)\colMAP(\grey)} )\eee.
\end{align*}
We set $\la=\Shape(\SSTX)=\Shape(\SSTY)$.  
Note that either $\grey \in \Add(\la)$ or in $\Rem(\la)$.
 If $\grey \in \Add(\la)$, then 
 arguing as in case 3 we get that 
$$ c_{ \SSTX }^\ast   \otimes \fork_{\grey\grey}^\grey 
\quad
\colMAP(c_{ \SSTX })^\ast   \otimes \fork_{\colMAP(\grey)\colMAP(\grey)}^{\colMAP(\grey)}
\quad
c_{ \SSTY}    \otimes \fork_{\grey\grey}^\grey 
\quad
\colMAP(c_{ \SSTY })    \otimes \fork_{\colMAP(\grey)\colMAP(\grey)}^{\colMAP(\grey)}$$
are   cellular basis elements and so we are done.   
If $\grey \in \Rem(\la)$, then we have $$
 (c_{ \SSTX }^\ast   \greycompose \fork_{\grey\grey}^\grey)
(c_{ \SSTY }  \greycompose  \fork_\grey^{\grey\grey} )
=
 c_{ \SSTX\SSTY }     \greycompose \fork_{\grey\grey} ^\grey  \fork_\grey^{\grey\grey}  =0$$
 $$
 (\colMAP(c_{ \SSTX })^\ast   
 \greycompose 
 \fork_{\colMAP(\grey)\colMAP(\grey)}^{\colMAP(\grey)})
(\colMAP(c_{ \SSTY })
 \greycompose \fork_{\colMAP(\grey)}^{\colMAP(\grey)\colMAP(\grey)} )
=
\colMAP( c_{ \SSTX\SSTY } )    \greycompose \fork_{\colMAP(\grey)\colMAP(\grey)} ^
{\colMAP(\grey)}  \fork_{\colMAP(\grey)}^{\colMAP(\grey)\colMAP(\grey)}  =0$$ 
and  so we are done.  

\smallskip
\noindent {\bf Cases 9 and 10. } 
The case  in which   
$c_\SSTT = R_\grey  ^+(c_{\SSTT'})$ and 
$c_\SSTU = R_\grey  ^-(c_{\SSTU'})$
and the case in which 
$c_\SSTT = R_\grey  ^-(c_{\SSTT'})$ and 
$c_\SSTU = R_\grey  ^-(c_{\SSTU'})$
both 
 follow  from case 8 (in the manner that case 4 followed from case 3).  \end{proof}

  \subsection{Proof of \cref{HSP-comb}.}
Using \cref{C--A}, it is enough to consider the simply laced cases. Now the result follows from \cref{recolourme}.

\section{ Coxeter   truncation   }\label{hom1}

In this section we prove one of the main results of this paper: that $\ctau$-singular Hecke 
categories for Hermitian symmetric pairs 
\color{black!99} (defined \textit{\`{a} la} \cite{MR3502025}) 
\color{black}
are graded Morita equivalent to 
(regular) Hecke categories for smaller rank Hermitian symmetric pairs. 
For the underlying  Kazhdan--Lusztig polynomials, 
this was first observed by Enright--Shelton \cite{MR888703}.  Our result lifts theirs 
  to the 2-categorical level and to positive characteristic.  
  By \Cref{C--A}  we can focus on  the      simply laced case without loss of generality.  
Let $( W,P)$   a simply laced Hermitian symmetric pair 
of rank $n$ and fix $\ctau\in S_W$. We define 
$$\mtau=
\{
\mu \in \mptn \mid \ctau \in \Rem(\mu)
\}.
$$
We will show that the subalgebra of $h_{(W,P)}$ spanned by 
$$ 
\{c_{\SSTS\SSTT}\mid \SSTS \in \Path(\la,\stt_\mu) , \SSTT \in \Path(\la,\stt_\nu), \la,\mu,\nu\in \mtau\}
$$ 
is isomorphic to $h_{(W,P)^\ctau}$ for some  Hermitian symmetric pair 
${(W,P)^\ctau}$ of strictly smaller rank. 

 \color{black}

\subsection{The $\ctau$-contraction tilings}\label{maponAdm}
In what follows, we let $(W ,P)$ be a  simply laced Hermitian symmetric pair and  $\ctau\in S_ W$.  
We now introduce a contraction map  which will allow us to work by induction on the rank.

 \begin{figure}[ht!]
 
$$  
\end{minipage}\qquad\quad $$

\vspace{-0.4cm}
\caption{
The contraction tilings   $ (D_6, A_5)^\tau$ for  
${  \tau}= \color{cyan}s_0
 \color{black} , \color{green!80!black}s_1 
 \color{black},   \color{magenta}s_2
 \color{black}\in W$ respectively. 
   In the first  two cases, the grey node is labelled by 
 $ \color{magenta}2\color{green!80!black}1\color{orange}3\color{magenta}2
 \color{cyan}0$ and 
 $ \color{magenta}2\color{cyan}0\color{orange}3\color{magenta}2\color{green!80!black}1$. 
  }
 \label{fig3thissec}
\end{figure}

\begin{figure}[ht!]
$$
\qquad 
 $$

\caption{The 
contraction tilings of $(E_6,D_5)$ in order.}

\label{lots of exceptionsal1}
\end{figure}


\begin{defn} \label{hghghghg-22}
Given $(W,P)$ a Hermitian symmetric pair and $\ctau\in S_W$, we let  $ {[r_1,c_1]}<
 {[r_2,c_2]}<\dots <  {[r_k,c_k]}$ denote  the completely ordered  set (according to the natural ordering on $r_i+c_i\in \NN$) of all $\ctau$-tiles in $ \mathscr{A}_{(W,P)}$. 
 Given two    $\ctau$-tiles, $ {[r_i,c_i]}$ and ${[r_{i+1},c_{i+1}]}$  that are adjacent in this ordering, we define  
$$T^\ctau_{i\to i+1}= \{[x,y]\mid r_i\leq x \leq r_{i+1} \text{ and } c_i\leq y \leq c_{i+1}\text{ and }[x,y]\neq [r_i,c_i] \} \cap   \mathscr{A}_{(W,P)}.
$$
 Associated to  the minimal tile $ {[r_1,c_1]}$ we define a corresponding    null-region 
$$T^\ctau_{0\to1}=   \{[x,y]\mid   x \leq r_1   \text{ and }   y \leq c_1
\}\cap  \mathscr{A}_{(W,P)}$$
and for the  maximal tile $ {[r_k,c_k]}$,   we define the maximal  null-region  
$$T^\ctau_ {   \infty  }= 
  \{[x,y]\mid   r_k  \leq  x  \text{ and }   c_k \leq  y \text{ and } [x,y]\neq [r_k,c_k]\}\cap  \mathscr{A}_{(W,P)} $$ 
and    we set 
  $ \mathcal{N}^\ctau = T^\ctau_{0\to1} \cup  T^\ctau_{\infty}.$ 
   We define the {\sf $\ctau$-contraction  tiling}  
   to be  the disjoint union of the $ {T}^\ctau _{i\to i+1}$-tiles and all remaining tiles in  $ \mathscr{A}_{(W,P)}\setminus  \mathcal{N}^\ctau $.  
We refer to any 
  tile in this overall tiling as a {\sf  $\ctau$-contraction tile}.  
Given   $\mathbb{T}  $    a $\ctau$-contraction tile. We 
define a  {\sf reading word} of $\mathbb{T}$ by 
recording the constituent tiles $ {[r,c]}$ within   $\mathbb{T} $  from bottom to top (that is, by the natural order  on 
$r+c\in \ZZ_{\geq0}$).    \end{defn}

\begin{figure}[ht!]
  
   $$\qquad 
 $$

\caption{The 
final 3  contraction tilings  for  $(E_7,E_6)$, in order.  }
\label{hhhh5671}
\end{figure}

\begin{rmk}
We note  that any 
tile-partition $\la \in \mtau$ can be obtained by stacking 
$\ctau$-contraction tiles on top of $T^\ctau_{0\to1}$.  
\end{rmk}

\begin{eg}
In the leftmost diagram in \cref{fig3thissec} the  large contraction tiles are all identical.  
\color{black!99} These identical  tiles both  have two  distinct choices for their reading word (as we can order the tiles of the same height freely);  explicitly, these  reading words are 
\color{black}
 $ \color{magenta}2\color{pink}1\color{orange}3\color{magenta}2
 \color{cyan}0$ 
 and 
  $ \color{magenta}2 \color{orange}3\color{pink}1\color{magenta}2
 \color{cyan}0$.  Notice that these words differ only by the commuting relations in the Coxeter groups.  
\end{eg}


   There is, in essence, only  one type   of large contraction tile: this is  the 
 contraction tiles  of 
type  $(A,A\times A)$ 
 and   their {\em augmentations} pictured in \cref{augment}.
We will see in \cref{hom2} that these augmentations merely ``bulk out" the corresponding Soergel diagram (using degree zero strands) without changing its substance.  
 \color{black!99} In more detail,  we define the {\sf  tricorne} to be  formed from three tiles in a formation $\mathbb{T}=\{ [r,c],  [r-1,c],   [r,c-1]\}$.   \color{black}
  Here the tile $ [r,c]$ is the only $\ctau$-tile in $\mathbb{T}$. 
 We augment this picture by adding $k$ tiles symmetrically above and below the brim (thus displacing the $\ctau$-tile) to obtain an augmented tile
${\rm aug}_k(\mathbb{T} ) $ as depicted in the rightmost diagram of 
 \cref{augment}.

 \begin{figure}[ht!]
$$
 \end{minipage}
 $$
\caption{
  The first diagram  depicts the construction of the type $A$ ``tricorne-like" $\ctau$-tile. 
  The second diagram depicts the augmented tiles ${\rm aug}_k(\mathbb{T} ) $ for $k\geq0$; the original tiles   are coloured.  
(The right  diagram is sometimes flipped through the vertical axis.)  }
\label{augment}
\end{figure}

\subsection{The   Dynkin types of $\ctau$-contraction tilings }\label{maponCox}
 We now identify the $\ctau$-tilings of $\mathscr{A}_{(W,P)}$ with the tilings 
of the admissible region of a Hermitian symmetric pair of smaller rank.  
The nodes of $(W,P)^\ctau$ will be  labelled by the reading words of the 
tiles in the $\ctau$-contraction tiling.

 \begin{prop} 
 Let $(W,P)$ be a simply laced Hermitian symmetric pair   and let $\ctau \in  S_W$.
 There is an order preserving bijection 
 $\TRNC_\ctau: 		\mathscr{P}_{(W,P)^\ctau} \to 	\mtau$ 
 where   $(W,P)^\ctau=(W^\ctau,P^\ctau)$ is defined by 
 \begin{itemize}[leftmargin=*]
   \item $(A_n,A_{k}\times A_{n-k})^\ctau = (A_{n-2},A_{k-1}\times A_{n-k-1})$; 
\item $(D_n, A_{n-1})^\ctau = (D_{n-2},A_{n-3})$; 
\item $(D_n, D_{n-1})^\ctau = (A_1,A_0)$; 
  \item $(E_6,D_5)^\ctau = (A_{5}, A_{4})$;
     \item $(E_7,E_6)^\ctau = (D_{6}, D_{5})$.

 \end{itemize} 
 Moreover, fixing a reading word for each $\ctau$-contraction tile, this defines a reduced path 
 $\TRNC_\ctau(\stt)\in \Std(\TRNC_\ctau(\la))$ for each $\stt \in \Std(\la)$ and $\la \in \mathscr{P}_{(W,P)^\ctau}$.  
 \end{prop}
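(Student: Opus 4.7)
The plan is to construct $\TRNC_\ctau$ explicitly from the $\ctau$-contraction tilings of Section \ref{maponAdm} and verify the claimed identification of $(W,P)^\ctau$ by a case-by-case analysis through the classification of simply laced Hermitian symmetric pairs. First, I would define $\TRNC_\ctau$ on single tiles by identifying each $\ctau$-contraction tile $\mathbb{T} \subseteq \mathscr{A}_{(W,P)} \setminus \mathcal{N}^\ctau$ with a single tile of $\mathscr{A}_{(W,P)^\ctau}$. For a tile partition $\la \in \mathscr{P}_{(W,P)^\ctau}$, the image $\TRNC_\ctau(\la) \subseteq \mathscr{A}_{(W,P)}$ is taken to be the union of $T^\ctau_{0 \to 1}$ with the contraction tiles $\mathbb{T}_t$ corresponding to the tiles $t \in \la$.

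Next I would verify that $\TRNC_\ctau(\la) \in \mtau$: the contraction tiles are arranged so that their neighbouring relations precisely mirror those of single tiles in $\mathscr{A}_{(W,P)^\ctau}$, so the support condition for $\la$ translates directly into the support condition for $\TRNC_\ctau(\la)$. Since the maximal null region $T^\ctau_\infty$ is never included, the uppermost $\ctau$-tile in $\TRNC_\ctau(\la)$ sits at an exposed position, guaranteeing $\ctau \in \Rem(\TRNC_\ctau(\la))$. Order preservation and bijectivity follow immediately: both posets are characterised by tile containment, and the assignment $t \mapsto \mathbb{T}_t$ is a bijection between the single tiles of $\mathscr{A}_{(W,P)^\ctau}$ and the contraction tiles of $\mathscr{A}_{(W,P)} \setminus \mathcal{N}^\ctau$.

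The third and most substantial step is matching the combinatorial shape of the contraction tiling against the admissible region of $(W,P)^\ctau$. In type $(A_n, A_k \times A_{n-k})$ this is transparent from Figure \ref{fig1thissec}: deleting the row and column containing the chosen $\ctau$-tile shrinks the rectangle by one in each direction. For $(D_n, A_{n-1})$, inspection of Figure \ref{fig3thissec} shows the contraction tiles form a region isomorphic to $\mathscr{A}_{(D_{n-2}, A_{n-3})}$, while the $(D_n, D_{n-1})$ case of Figure \ref{typeD------er} collapses to $(A_1, A_0)$. The exceptional cases $(E_6, D_5)$ and $(E_7, E_6)$ (Figures \ref{lots of exceptionsal1}, \ref{hhhh5672}, \ref{hhhh5671}) are verified by direct inspection against $\mathscr{A}_{(A_5, A_4)}$ and $\mathscr{A}_{(D_6, D_5)}$ respectively.

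For the assertion on paths, fix a reading word for each contraction tile $\mathbb{T}$ (well-defined up to commutation of distant simple reflections). Given $\stt \in \Std(\la)$, the path $\TRNC_\ctau(\stt)$ is obtained by replacing each step adding a tile $t \in \la$ with the sequence of tile additions spelled out by the reading word of $\mathbb{T}_t$, applied on top of the already-built partition. This is reduced because, by construction, the reading word of $\mathbb{T}_t$ is a reduced expression for the Weyl group element that stacks $\mathbb{T}_t$ on top of the contraction tiles supporting it, and the concatenation of such reduced expressions across successive increments remains reduced. The main obstacle will be the exceptional type verification, where the contraction tiles have irregular shapes and one must carefully track colour labels and adjacencies in Figures \ref{lots of exceptionsal1}--\ref{hhhh5671}; no single elegant argument covers all cases, and the proof reduces to detailed diagrammatic matching.
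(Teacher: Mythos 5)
Your proposal is correct and takes essentially the same approach as the paper: the paper's own proof is a one-line appeal to inspection of the contraction-tiling figures against the admissible regions of the claimed smaller-rank pairs, and your argument is a more detailed write-up of that same case-by-case diagrammatic matching, with the path claim handled by the same observation that reading words of contraction tiles concatenate to reduced tableaux.
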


\begin{proof}
This follows by inspection, comparing \cref{fig1thissec,fig3thissec,lots of exceptionsal1,hhhh5672,hhhh5671,typeD------er} with \cref{typeAtiling,pm1,pm2,hhhh}.  
\end{proof}

We label the nodes of the smaller rank Coxeter system with the reading words of the $\ctau$-contraction tiles of the larger Coxeter system.
The positions of  these labels can  easily be deduced   from the $\ctau$-tilings, see for example \cref{coxeterlabelA22,coxeterlabelB222,typeDexceptional}.

  \color{black}

    \begin{figure}[ht!]\scalefont{0.9}

 \end{minipage}

 \caption{The graphs obtained from the leftmost graph in \cref{coxeterlabelB222} by contraction at the type $D$ vertices $\nodelabel    $, $1$  and $2$, respectively.    }
\label{typeDexceptional}
\end{figure}

%
In fact, it can be shown that the map $\TRNC_\ctau$ can be extended from reduced paths (where the observation is trivial) to {\em non-reduced} paths.  The proof of this  involves much more substantial combinatorics and so is postponed to the companion paper, \cite{FARR}.

\begin{prop}[{\cite[Proposition 8.1]{FARR}}]\label{BIJECTION}
We have a graded bijection
$$
\TRNC_\ctau:{\rm Path}_{(W,P)^\ctau }(\la, \stt_\mu )
\xrightarrow{ \ \ \ }
{\rm Path}_{(W,P)  }(\TRNC_\ctau (\la), \stt_{\TRNC_\ctau (\mu)}).  
$$
\end{prop}


 \begin{eg}
 In \cref{coxeterlabelA22} we depict 
the pair $(A_5 ,  A_2 \times A_2  )$ and the truncation at the node $\ctau = \color{cyan}  s_3
 \color{black} \in S_W$.  
We have that $\TRNC_\ctau(\color{magenta} 2\color{orange} 4\color{cyan} 3\color{black} )= \color{magenta} 2\color{black} \otimes \color{orange} 4\color{black} \otimes \color{cyan} 3\color{black} $.  
On the right of  \cref{coxeterlabelA22NOW} we depict  the Bruhat graph  for   $(A_5\setminus A_2 \times A_2)^\ctau $.  The bottommost (and topmost) edge  of this graph is tricoloured  by  
$ \color{magenta} 2\color{black} \otimes \color{orange} 4\color{black} \otimes \color{cyan} 3\color{black} $ and   maps to a concatenate of three distinct edges in the leftmost graph.  
 
\end{eg}

\subsection{The dilation homomorphism    }\label{hom2}
We now lift the map $\TRNC_\ctau$ of \cref{BIJECTION} to the level of a graded $\Bbbk$-algebra isomorphism.     We let $i$ denote a square root of $-1$.  
 We first define the dilation maps on the monoidal generators.  
Let $ 
\gsigma=s_1 s_2 \dots s_\ell$ be a (composite) label of a node of the Coxeter graph $(W,P)^\ctau$.  
(Note that $s_1, s_2, \dots ,s_\ell \in S_W$ belong to the dilated Coxeter group, whereas $\gsigma \in S_{W^\ctau}$.)  
We define the dilation map on the idempotent generators  as follows
$$
\dil    ({\sf 1}_{\gsigma})
=
{\sf 1}_\ctau \otimes {\sf 1}_{s_1}\otimes {\sf 1}_{ s_2}
\otimes  \dots \otimes {\sf 1}_{s_\ell}.
$$For a non-zero braid generator (see \cref{zerobraidgenerator} for the list of zero braid generators) we define the dilation map as follows
$$
\dil    ({\sf braid}_{\gsigma \alphar}^{  \alphar \gsigma} )
=
{\sf 1}_\ctau \otimes {\sf braid}_{s_1 s_2   \dots s_\ell \alphar}^{\alphar s_1 s_2   \dots s_\ell}.
$$Examples are depicted in \cref{gfsjklsdfgjkfgjkdfgsjnmasdljkdsf}.

\begin{figure}[ht!]   
$$
\begin{minipage}{1.3cm}
\end{minipage}
   $$
 
 \caption{Examples of the  $\dil    $ map on   idempotent and braid generators.  The colouring corresponds to that  of \cref{fig1thissec}.   
 The leftmost $\ctau$-strand is drawn as a
  dotted strand  in order to remind the reader that   we horizontally concatenate these diagrams using $\compose$ (which identifies this blue strand with an earlier blue strand in the diagram).  
}
 \label{gfsjklsdfgjkfgjkdfgsjnmasdljkdsf}
 \end{figure}

We now define the dilation map on the fork and spot generators.
 For $\gsigma=s_i  $ a  tile labelled by a singleton $s_i\in S_{W^\ctau}$ (which necessarily commutes with $\ctau\in S_W)$ we define
 $$
 \dil    ({\sf fork}_{\gsigma\gsigma}^\gsigma)=
{\sf 1}_\ctau \otimes  {\sf fork}_{s_i   s_i  }^{s_i  }
\qquad
 \dil    ({\sf spot}_{\gsigma }^\emptyset)=
{\sf 1}_\ctau \otimes  {\sf spot}_{s_i    }^{\emptyset}
. $$We set  $\gsigma  =\alphar   \gam {{\color{cyan}\tau}} $, the reading word of  a tricorne tile.  
  We  define  
$$\dil      ({{\sf fork}}^\gsigma_{{\gsigma\gsigma}})=
i \times {\sf 1}_\ctau \otimes   (  
 {{\sf fork}}_{\alphar\alphar}^{\alphar}
\otimes   
 {{\sf fork}}_{\gam \gam   }^{\gam   }
\otimes  {\sf 1} _{ {{\color{cyan}\tau}} }
)
({\sf1}_{\alphar}\otimes {\sf  braid}^{\alphar\gam}_{\gam\alphar}\otimes {\sf1}_{  \gam {{\color{cyan}\tau}}} )
({\sf1}_{\alphar\gam } \otimes {\sf spot}_{{\color{cyan}\tau}}^\emptyset  \otimes {\sf1}_{ \alphar \gam {{\color{cyan}\tau}}} )
  $$
  $$ \dil      (   {\sf spot}^\emptyset_{ \gsigma})=-i \times 
  {\sf fork}_{\blue\blue}^\blue  
  (
  {\sf 1}_\blue \otimes {\sf spot}^{ \emptyset }_{  \alphar    }
  \otimes {\sf spot}^{ \emptyset }_{  \gam   } \otimes {\sf 1}_\ctau  ).
$$ 
 Examples are depicted in \cref{popfogfpogfpgofpgofpgf}.

 \begin{figure}[ht!]
$$  \begin{minipage}{2.5cm}
\end{minipage}$$

\caption{Let $(W,P)=(A_5,A_2 \times A_2)$ and $\ctau=\color{cyan}s_3\color{black}\in W$ (see also \cref{coxeterlabelA22,coxeterlabelA22NOW}).   
We depict $\TRNC_\ctau ({\sf fork}_{{\color{gray}\sigma \sigma}}^{{\color{gray}\sigma}})$ and $\TRNC_\ctau ({\sf spot}_{{\color{gray}\sigma}}^\emptyset )$ for $\gsigma =
\color{magenta} s_2\color{orange} s_4\color{cyan} s_3\color{black} $.   
The leftmost $\ctau$-strand is drawn as a
  dotted strand  in order to remind the reader that   we horizontally concatenate these diagrams using $\compose$ (which identifies this blue strand with an earlier blue strand in the diagram).  
  }
  \label{popfogfpogfpgofpgofpgf}
\end{figure}

We now describe how one can ``augment" the diagrams of tricornes  to obtain arbitrary diagrams.    
Let ${{\color{gray}\sigma\color{black}}}  $ be a   label of  a vertex  in the graph  $(W ,P )^{\ctau} $ 
of the form
 $\color{gray}\sigma\color{black}=
  \x 
\color{magenta} \alphar 
\color{orange}	\gam 
\color{black}	 \x^{-1}
\color{cyan}\ctau $.  (That is, ${\gsigma}  $ is the reading word of an augmented tricorne.)  
  We  define  $ \dil       ({{\sf fork}}^{\gsigma}_{{{\gsigma}{\gsigma}}})$ to be the element 
$$
i^{\ell(x)+1} (
 {\sf 1}_{ \ctau\x }  \otimes  (
 {{\sf fork}}_{{\alphar }{\alphar }\gam \gam }^{{\alphar\gam }}
  {\sf  braid}^{  \alphar \alphar \gam  \gam  }_{  \alphar  \gam  \alphar \gam }
({\sf1}_{     \alphar  \gam  }
 \otimes
  {\sf cap} _{{\x \x ^{-1}}}^{ \emptyset } 
  \otimes  {\sf 1} _{   \alphar  \gam } 
  )
({\sf1}_{  \alphar \gam  \x ^{-1} }
 \otimes
  {\sf spot}_	{ \ctau   }	^\emptyset \otimes  {\sf 1} _{  \x   \alphar \gam}
 )  )  \otimes  {\sf 1} _{  \x  ^{-1}  \ctau          })
  $$
  and we define $\dil      (   {\sf spot}^\emptyset_{{\gsigma} })$ to be the element 
     $$(-i)^{\ell(x)+1}  
{\sf fork}_{\blue\blue}^\blue( {\sf 1}_\ctau\otimes 
 {\sf cap} _{{\x \x ^{-1}}}^{ \emptyset } 
    \otimes {\sf 1}
   _{
  {\ctau     }}  )
 ( {\sf 1}_{
   {\ctau     }{\x  }} \otimes {\sf spot}^{\emptyset }_{ {\alphar }    } \otimes {\sf spot}^{\emptyset }_{ {\gam}    }   \otimes {\sf 1}_{
{\x^{-1} }  {\ctau     }} ).
$$ Examples  are depicted in \cref{gfkjhdsfjkghsdjkgflhsdljghsdjlkfghsdjfklhgljksdfhgjklsdfhgljsdkf}.

 \begin{figure}[ht!]
 
 $$ 
 \qquad  \begin{minipage}{7.1cm}

\end{minipage}$$

 \caption{
 Let $(W,P)=(D_6,A_5)$ and $\ctau=\color{cyan}s_0\color{black}\in W$ (see also \cref{coxeterlabelB222,typeDexceptional}).   
We depict $\dil     ({\sf fork}_{{\color{gray}\sigma \sigma}}^{{\color{gray}\sigma}})$ and $ \dil     ({\sf spot}_{{\color{gray}\sigma}}^\emptyset )$ for ${\gsigma}
 =
 \color{green!80!black} s_2
\color{magenta} s_{1}
\color{orange}	s_3
\color{green!80!black} s_2
\color{cyan}s_{\nodelabel    }
 $.   
These diagrams are obtained from \cref{popfogfpogfpgofpgofpgf} by adding strands of degree zero. }
\label{gfkjhdsfjkghsdjkgflhsdljghsdjlkfghsdjfklhgljksdfhgjklsdfhgljsdkf}
 \end{figure}

Having defined $\dil$ on all Soergel generators, we set 
$  \dil     (D^*)=(  \dil     (D))^*$.  We now extend this definition to arbitrary Soergel diagrams and hence define our contraction homomorphisms.
 
 \begin{defn}\label{theactualdefn}
  Given  diagrams 
  $D_1, D_2
  \in \mathcal{H}_{(W,P)}$, 
  we inductively define 
  $$\dil     (D_1\otimes D_2)=
  \dil     (D_1)\compose \dil     (D_2)
  \qquad
  \dil     (D_1\circ D_2)=
  \dil     (D_1)\circ \dil     (D_2)  $$
and we extend this map $\Bbbk$-linearly.  
We hence define  $\varphi_\ctau:\mathcal{H}_{(W,P)^\ctau}\hookrightarrow
 \mathcal{H}_{(W,P)}$  as follows,
 $$
 \varphi_\ctau(D)
 =
 {\sf 1}_{T_{0\to1}} \compose \dil    (D )  
 $$   
where we recall that ${T_{0\to1}}$ is the null region at the bottom of the $\ctau$-contraction tiling of $(W,P)$.  
   
\end{defn}
 
 \begin{figure}[ht!]
$$
 \varphi_\ctau  \left(\; \begin{minipage}{1.9cm}

\end{minipage}  
$$
\caption{The map $\varphi_\ctau$ on a diagram, for the leftmost contraction tiling   in \cref{fig1thissec}.  }
\label{illustrative}
\end{figure}

\begin{rmk}
Each null-region tile $T_{0\to1}$ has a unique reading word and so there is no ambiguity here.  
That the map $\dil    $ is  well-defined  on diagrams follows from the  
interchange law.  
\end{rmk}

 
   The map $ \varphi_\ctau$ preserves  the  light leaves basis (because our map is defined on monoidal generators)  and thus lifts  the map of   \cref{BIJECTION} to an isomorphism of  graded $\Bbbk$-modules
between 
$$ 
h_{(W,P)^\ctau}=\Bbbk\{c_{\SSTS\SSTT}\mid \SSTS \in \Path(\la,\stt_\mu), \SSTT \in \Path(\la,\stt_\nu) , \la,\mu,\nu\in \mathscr{P}_{(W,P)^\ctau}\}
$$ 
and $h_{(W,P)}^\ctau \subset h_{(W,P)}$ which we define to be the subspace with basis 
$$  
\Bbbk\{c_{\SSTS\SSTT}\mid \SSTS \in \Path(\TRNC_\ctau(\la),
\stt_{ \TRNC_\ctau(\mu) }) ,
\SSTT \in \Path(\TRNC_\ctau(\la),
\stt_{\TRNC_\ctau( \nu )}) ,
 \TRNC_\ctau(\la),\TRNC_\ctau(\mu),\TRNC_\ctau(\nu)\in \mtau\} .
$$    
In fact we will now lift this  to the level of graded $\Bbbk$-algebras.

 \begin{thm}   \label{veccer}
Let $(W,P)$ be a Hermitian symmetric pair and $\ctau \in S_W$.  
 We have  a graded $\Bbbk$-algebra isomorphism $\varphi_\ctau(h_{(W,P)^\ctau})\cong 
 h_{(W,P) }^\ctau$. 
 \end{thm}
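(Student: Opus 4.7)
The plan is to prove \cref{veccer} by verifying that $\varphi_\ctau$ respects each of the defining relations of the Tetris-style presentation given in \cref{easydoesit}, and then to check that it induces the correct bijection on the cellular bases, using \cref{BIJECTION}. Since the image is by definition ${\sf 1}_{T_{0\to1}} \compose \dil(-)$, which always factors through ${\sf 1}_{\stt_\mu}$ for some $\mu \in \mtau$, surjectivity onto $h^\ctau_{(W,P)}$ is automatic. Injectivity, together with the correct image, will follow once we prove that $\varphi_\ctau(c_{\SSTS\SSTT})$ is (up to a unit in $\Bbbk$) the cellular basis element $c_{\TRNC_\ctau(\SSTS)\TRNC_\ctau(\SSTT)}$: this is built directly into the construction of $\dil$ on the monoidal generators, because $\dil$ sends the add/remove light-leaf operations for $(W,P)^\ctau$ to those for $(W,P)$ composed with inclusion into the $\ctau$-contracted part. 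So the real work is showing $\varphi_\ctau$ is an algebra homomorphism.

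First I would dispose of the easy relations. The idempotent, monoidal unit, interchange, commuting, and fork-spot relations reduce, by inspection of \cref{theactualdefn,gfsjklsdfgjkfgjkdfgsjnmasdljkdsf,popfogfpogfpgofpgofpgf}, to the corresponding relations in $\mathcal{H}_{(W,P)}$ after identifying the rightmost and leftmost $\ctau$-strands via $\compose$; the interchange law for $\compose$ is \cref{interchange}. The monochrome fork, circle-annihilation, and double-fork relations for singleton tiles are immediate since those $\dil$ images are literally the corresponding generators. For tricorne tiles $\gsigma = \alphar\gam\ctau$, the double-fork and fork-spot contraction relations of $\mathcal{H}_{(W,P)^\ctau}$ become relations between explicit composites of fork, spot, braid and cap/cup generators in $\mathcal{H}_{(W,P)}$; these can be checked by diagram chasing, using only simply-laced local moves and the factors of $i$ to kill the signs that appear when collapsing a composite spot--fork loop. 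The augmented case follows by applying \cref{shorter} and the cap/cup move to contract the $\x\x^{-1}$ rainbow down to the tricorne case; this is where the cyclicity relation and the $i^{\ell(x)+1}$ factor enter.

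The next step is to verify the null-braid relations and the commuting relations between different tiles $\gsigma,\gsigma'$ in $(W,P)^\ctau$. Here the key point is that adjacent tiles in the contraction tiling share a $\ctau$-strand under $\compose$ and commute with everything in the other tile except through that $\ctau$-strand, so the null-braid in $(W,P)^\ctau$ unfolds into a sequence of null-braids in $(W,P)$ inside the relevant contraction tile. By \cref{makeahole}, the braid generators ${\sf braid}^{\csigma\ctau}_{\ctau\csigma}(m>2)$ that would obstruct such a simplification are zero when applied to the background idempotent ${\sf 1}_{T_{0\to 1}}\otimes \cdots$; this forces the image to factor the way the null-braid relation of $(W,P)^\ctau$ prescribes. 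I would verify this case-by-case on the list of contraction tilings in \cref{fig1thissec,fig3thissec,lots of exceptionsal1,hhhh5672,hhhh5671,typeD------er}.

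The main obstacle is the bi-chrome Tetris relation, since here one must match the combinatorics of ${\sf Hook}_\gsigma(\mu)$ inside $\mptn^\ctau$ with the combinatorics of ${\sf Hook}_{\cdot}(\TRNC_\ctau(\mu))$ inside $\mathscr{P}_{(W,P)}$ under dilation. Concretely, I would compute $\varphi_\ctau({\sf 1}_{\stt_\mu}\otimes {\sf bar}(\gsigma))$ by writing $\dil({\sf bar}(\gsigma))$ as a composite of barbells and spot--fork decorations on the dilated tile and its neighbours, then apply the global rewrite rule \cref{remove a barbell} in $h_{(W,P)}$ to both sides; the identification $\TRNC_\ctau({\sf Hook}_\gsigma(\mu)) \leftrightarrow {\sf Hook}_{\cdot}(\TRNC_\ctau(\mu))$ reduces, after cancelling terms that vanish by \cref{ZEROD,ZEROE,underphi,irrelevant} (these encode exactly which gap diagrams near the contracted $\ctau$-strand are zero), to a direct comparison of hooks in the tile-geometry of \cref{hghghghg-22}. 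With this in hand, the monochrome Tetris relation falls out by the same argument plus the fork-spot calculations from the tricorne step, and the cyclotomic relations follow from those in $\mathcal{H}_{(W,P)}$ because the leftmost strand of any image diagram lies in the null region $T_{0\to 1}$ already anchored to a generator labelled by the simple reflections of $P$. Combined, these verifications show $\varphi_\ctau$ is a graded algebra homomorphism; combined with the already-noted basis compatibility, this yields the claimed isomorphism.
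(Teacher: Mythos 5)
Your overall strategy---verify that $\varphi_\ctau$ is an algebra homomorphism, then use the basis compatibility from \cref{BIJECTION} to conclude it is a graded isomorphism onto $h^\ctau_{(W,P)}$---is the same as the paper's. The divergence is in which presentation you choose to check against: you propose the Tetris presentation of \cref{easydoesit}, whereas the paper checks the presentation of \cref{inlightof}, which replaces the global monochrome and bi-chrome Tetris relations with the \emph{local} one- and two-colour barbell relations (\ref{R4} right, and \ref{R5}). This is not a cosmetic difference, and it opens a gap in your bi-chrome Tetris step.

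The correspondence $\TRNC_\ctau({\sf Hook}_\gsigma(\mu))\leftrightarrow{\sf Hook}_{\cdot}(\TRNC_\ctau(\mu))$ that you invoke is not well-posed as written: the composite label $\gsigma$ is a whole $\ctau$-contraction tile, which $\TRNC_\ctau$ sends to a \emph{collection} of tiles of $\mathscr{A}_{(W,P)}$, not to a single addable node, so there is no single ${\sf Hook}$ on the $(W,P)$ side to compare against. What actually happens is that $\dil({\sf bar}(\gsigma))$ is a \emph{sum} of singly-coloured barbells on the dilated tile, e.g.\ for an augmented tricorne $\gsigma=\vx\alphar\gam\vx^{-1}\ctau$ one gets ${\sf 1}_\ctau\otimes\bigl(\sum_{i=1}^k 2\,{\sf bar}(\vsi)+{\sf bar}(\alphar)+{\sf bar}(\gam)+{\sf bar}(\ctau)\bigr)$; establishing this already requires the barbell-contraction lemmas \cref{repeat11,repeat12,repeat13,substitute,lemyytytyt2} and is \cref{ppg1}. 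Pushing through your ``global rewrite'' route would then mean applying \cref{remove a barbell} to each summand, aggregating the resulting Hook sets, and showing that, after the vanishings from \cref{ZEROD,ZEROE,underphi,irrelevant}, they collapse to $\TRNC_\ctau({\sf Hook}_\gsigma(\mu))$---an unaddressed and substantial combinatorial exercise. Once you have \cref{ppg1} in hand, the cleaner move is the paper's: verify the local one- and two-colour barbell relations of \cref{inlightof} directly via short barbell and gap manipulations on the dilated tile (using \cref{ppg2,ppg3,ppg4}, \cref{useful3} and \cref{Astuffiszero}), with no global Hook bookkeeping; the Tetris relations of $(W,P)^\ctau$ then hold automatically because they are consequences of the barbell relations, exactly as in the proof of \cref{easydoesit}.
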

 

 \section{Proof of the Coxeter dilation homomorphism}
\label{hom3}
 
This section is dedicated to the proof that the map of $\varphi_\ctau$  is a homomorphism.   
This amounts to checking the relations for these algebras.  
By \cref{theactualdefn}, we have that 
\begin{align}\label{ppr1}
\varphi_\ctau( D)
\varphi_\ctau( D') 
&=
( {\sf 1}_{T_{0\to 1}}\compose \dil     ( D ))
( {\sf 1}_{T_{0\to 1}}\compose \dil     ( D '))
 = {\sf 1}_{T_{0\to 1}}\compose\dil     ( D\circ D') 
\end{align}
using the interchange law and 
moreover 
\begin{align}\label{ppr2}
\varphi_\ctau( D\circ D') = {\sf 1}_{T_{0\to 1}}\compose \dil     ( D\circ D') .
\end{align}
Therefore for the local relations,  it suffices to show that 
\begin{align}\label{thisisit}
 \dil     ( D\circ D') 
 =
 \dil     ( D ) \circ
 \dil     (   D') . \end{align}
Most of this section is  dedicated to  the proof that  
 the relations of \cref{inlightof} 
 are preserved under \cref{thisisit} (but replacing $\otimes $ with $\compose$).    
For the non-local relations, we check that 
 \cref{ppr1,ppr2} coincide at the end of the section.  
We now  turn to the local relations.  Relations \ref{R1} and \ref{R2},    and  \ref{R9}  are all trivial.    Relation \ref{R8} is satisfied by \cref{interchange}.

In what follows, we  let $\gsigma $ be a reading word of some $\ctau$-contraction tile. 
In the case that  $\gsigma$ is 
an augmented tricorne $\gsigma= \vx\alphar \gam \vx^{-1} \ctau $, we set 
 and  $\vx =\color{violet}\vsk\dots \vsone$.  In diagrams we 
put a gradient which reflects the ordering of the purple strands (with 
 $\color{violet!65!white}k$ the lightest and 
$\color{violet}1$ the darkest). 
We label strands in a diagram simply by $1\leq j \leq k$ (rather than by $\vsj$) for brevity.  
 
\subsection{\bf The dilated fork-spot relation} 
We first consider the leftmost relation in \ref{R3}, namely the fork-spot relation 
\begin{align}\label{FS1}
(
\dil   ({\sf 1}_{{\color{gray}\sigma}} ) 
\compose
\dil   ( {\sf spot}^\emptyset_{{\color{gray}\sigma}} )
) \circ 
 \dil   ({\sf fork} ^{{{\color{gray}\sigma}}{{\color{gray}\sigma}}}_{{\color{gray}\sigma}} ) 
= \dil     (({\sf 1}_{{\color{gray}\sigma}} )
 \end{align}
 For $\gsigma=s_i\in S_W$, it follows  trivially.   
For $\gsigma= \vx\alphar \gam \vx^{-1} \ctau $, 
the equality follows by  
one application of each of the $\alphar$- $\gam$- and $\ctau$-fork-spot contractions
 and   the $\alphar\gam$-commutativity relation (and monoidal unit relation);
and by ``straightening out" the $\vx$-strands via application of the fork-spot and double-fork relations (this is sometimes referred to simply as ``isotopy" in the literature).   
For a tricorne, this relation is depicted in \cref{forkspotproof}.  
For an augmented tricorne, one side of this relation is depicted in \cref{forkspotproof2} and it is easy to see that the argument goes through unchanged. 
The argument for the horizontal and vertical flips of \cref{FS1} is similar.

\begin{figure}[ht!]
$$\scalefont{0.8}
\begin{minipage}{3.75cm}

\caption{
The lefthand-side of the dilated fork-spot relation for an augmented tricorne
$\gsigma= \vx\alphar \gam \vx^{-1} \ctau $. 
(We note that the scalar coefficient for both these products is $i \times -i=1$ or $1\times 1=1$ depending on the parity of $k\geq 1$.)
 }
 \label{forkspotproof2}
\end{figure}

\subsection{\bf The dilated double-fork relation}   
  We now consider the rightmost relation in \ref{R3}, namely, the double-fork relation
$$
(\dil     ({\sf 1}_{{\gsigma}}) \compose 
\dil ( {\sf fork}^{{{\gsigma}}}_{{{\gsigma}}{{\gsigma}}} )
)\circ 
(\dil ({\sf fork} ^{{{\gsigma}}{{\gsigma}}}_{{\gsigma}} )\compose
\dil ( {\sf 1}_{{\gsigma}}))
= \dil    ( 	{\sf fork} ^{{{\gsigma}}{{\gsigma}}}_{{\gsigma}}
)
\circ \dil (  {\sf fork}^{{{\gsigma}}}_{{{\gsigma}}{{\gsigma}}}  	).
$$
We apply the double-fork relation   to   
every constituent doubly-forked strand   in the   diagram in turn,
 and the result follows.  
See \cref{2forkA} for the corresponding picture for tricornes,
 the augmented tricorne picture can be obtained in a similar fashion 
 to \cref{forkspotproof2}. 

\begin{figure}[ht!]

$$-\;\begin{minipage}{4.625cm}

 \end{minipage}$$
 \caption{The dilated double-fork relation for   $\gsigma=\alphar  \gam \ctau $ the reading word of a tricorne. The equality follows     by applying the double-fork relation to the $\alphar$- and $\gam$-strands. 
 }
 \label{2forkA}
\end{figure}

\subsection{\bf The dilated circle annihilation relation} 
 
We now verify the leftmost relation in \ref{R4}, namely, the circle-annihilation relation
\begin{equation}\label{circleannih}
\dil 
({\sf fork} _{{{\gsigma}}{{\gsigma}}}^{{\gsigma}})
\dil(  
{\sf fork} ^{{{\gsigma}}{{\gsigma}}}_{{\gsigma}}  ) 
=0 .
\end{equation} 
  For a tricorne  $\gsigma\color{black}=\alphar  \gam \ctau $ we
  have that $\dil 
({\sf fork} _{{{\gsigma}}{{\gsigma}}}^{{\gsigma}}  )
\dil (
{\sf fork} ^{{{\gsigma}}{{\gsigma}}}_{{\gsigma}}  ) 
$ is equal to 
  \begin{align}\label{dkjgfhsdflkjghsldiuhglsdiuhdurygtjksdhg}
   &  - 
   {\sf 1}_{\ctau} \otimes 
{\sf fork}^ { \alphar\gam } _{\alphar\alphar \gam\gam }
{\sf braid}^{ \alphar \alphar\gam\gam }_{ \alphar\gam\alphar \gam }
 ({\sf 1}_{ \alphar\gam}\otimes {\sf bar}(\ctau) \otimes {\sf 1}_{\alphar\gam })
 {\sf braid}_{ \alphar \alphar\gam\gam }^{ \alphar\gam\alphar \gam }
{\sf fork}^{ \alphar\alphar \gam\gam }_{ \alphar\gam })\otimes   {\sf 1}_{\ctau} 
\end{align}
by definition (note $(-i)^2=-1$).  Applying \cref{useful1} to the $   \ctau    \gam $-strands in \cref{dkjgfhsdflkjghsldiuhglsdiuhdurygtjksdhg} 
we  obtain 
\begin{align*}
  &  - {\sf 1}_\ctau\otimes 
{\sf fork}^ {\alphar\gam} _{\alphar\alphar \gam\gam}
{\sf braid}^{ \alphar \alphar\gam\gam }_{\alphar\gam\alphar \gam}
 ({\sf 1}_{\alphar }\otimes {\sf bar}(\ctau) \otimes {\sf 1}_{\gam\alphar\gam })
 {\sf braid}_{ \alphar \alphar\gam\gam }^{\alphar\gam\alphar \gam}
{\sf fork}^{\alphar\alphar \gam\gam}_{\alphar\gam}  \otimes 	{\sf 1}_\ctau
\\ &- {\sf 1}_\ctau\otimes 
{\sf fork}^ {\alphar\gam} _{\alphar\alphar \gam\gam}
{\sf braid}^{ \alphar \alphar\gam\gam }_{\alphar\gam\alphar \gam}
 ({\sf 1}_{\alphar  }\otimes {\sf bar}(\gam) \otimes {\sf 1}_{\gam \alphar\gam })
 {\sf braid}_{ \alphar \alphar\gam\gam }^{\alphar\gam\alphar \gam}
{\sf fork}^{\alphar\alphar \gam\gam}_{\alphar\gam}  \otimes 	{\sf 1}_\ctau
\\ &+{\sf 1}_\ctau\otimes 
{\sf fork}^ {\alphar\gam} _{\alphar\alphar \gam\gam}
{\sf braid}^{ \alphar \alphar\gam\gam }_{\alphar\gam\alphar \gam}
 ({\sf 1}_{\alphar  }\otimes {\sf gap}(\gam) \otimes {\sf 1}_{ \alphar\gam })
 {\sf braid}_{ \alphar \alphar\gam\gam }^{\alphar\gam\alphar \gam}
{\sf fork}^{\alphar\alphar \gam\gam}_{\alphar\gam}    \otimes 	{\sf 1}_\ctau
\end{align*}  
 (these three   terms are  depicted in \cref{forkdie1}).  
Now,  the first term   is   zero by the  $\alphar\gam$-commutativity relation and 
 the $\gam$-circle annihilation relation.  
The second and third  terms  are   zero by the  $\alphar\gam$-commutativity relation and the 
$\alphar$-circle annihilation relation.

\begin{figure}[ht!]
  $$\begin{minipage}{3.15cm}

\end{minipage}$$
\caption{
The circle annihilation relation for $\gsigma\color{black}=\alphar  \gam \ctau $. 
 We apply the $\gam\ctau$-barbell relation to the lefthand-side.  
The first term (respectively latter two terms) on the righthand-side is zero by 
the circle annihilation relation for 
$\color{orange} \gam$ (respectively $\alphar$) and the $\pink\orange$-commutativity relations.   }
 \label{forkdie1}
\end{figure}

We now consider the case of an augmented tricorne $\gsigma= \vx\alphar \gam \vx^{-1} \ctau $, with  $\vx =\color{violet} \vsk \dots  \vsone  $. 
 The diagram $\dil 
({\sf fork} _{{{\gsigma}}{{\gsigma}}}^{{\gsigma}})
\dil(  
{\sf fork} ^{{{\gsigma}}{{\gsigma}}}_{{\gsigma}}  ) 
$  has a $\ctau$-barbell in the centre of $k$ concentric circles 
with the innermost circle labelled by $ \vsk $ and the outermost labelled by  $ \vsone  $ (as pictured in the diagram on the lefthand-side of \cref{forkdie}).  
We pull this barbell through these $k$ circles using $k$ applications of \cref{repeat12} and hence obtain 
$$ 
  {\sf cap}^\emptyset
_{\color{violet}\x\x^{-1}} 
({\sf 1}_{\color{violet}  \x}
\color{black}\otimes {\sf bar}(\ctau)
\color{black}\otimes {\sf 1}_{\color{violet}\x^{\color{black}-1} })
 {\sf cup}_\emptyset
^{\color{violet}\x\x^{-1}} 
 =(-1)^{k  }{\sf bar} ({\color{violet}\bm  \vsone  }).
$$
 We therefore have that $ \dil 
({\sf fork} _{{{\gsigma}}{{\gsigma}}}^{{\gsigma}})
\dil(  
{\sf fork} ^{{{\gsigma}}{{\gsigma}}}_{{\gsigma}}  ) 
$ is equal to 
 $$  
 {\sf 1}_{\ctau}\otimes {\sf 1}_{\x }
\otimes (
{\sf fork}_{\pink\pink \orange\orange}^{\pink\orange}
{\sf braid}^{\pink\pink\orange\orange}_
{\pink\orange\pink\orange}
({\sf 1}_{\pink\orange}\otimes  {\sf bar}({\color{violet}\bm  \vsone  })
 \otimes {\sf1}_{ \pink\orange})
{\sf braid}_{\pink\pink\orange\orange}^
{\pink\orange\pink\orange}
{\sf fork}^{\pink\pink \orange\orange}_{\pink\orange}
)\otimes 
{\sf 1}_{ \x^{-1}}\otimes {\sf 1}_{\ctau }.
$$
We can now apply the $\orange   \vsone  $-barbell relation and show  that the three resulting terms are zero exactly as in the case of the tricorne, above.  (See also \cref{forkdie}.)

\definecolor{lightpurp}{HTML}{cdb4db}

 \begin{figure}[ht!]  
$$\scalefont{0.8}
 \begin{minipage}{7.6cm}
\end{minipage}
$$
\caption{
 Simplifying the lefthand-side of the  circle annihilation relation  (\cref{circleannih})
 using  \cref{repeat12}.    
Compare  the righthand-side of the equation  above  
 with the 
 lefthand-side of the equation pictured in \cref{forkdie1}.     }
 \label{forkdie}
\end{figure}

\subsection{\bf The dilated null-braid relations}  \label{asnited}
 Let $\betar \in S_{W^\ctau}$ and $m(\gsigma,\betar)=3$.  By inspection of 
 \cref{fig1thissec,fig3thissec,lots of exceptionsal1,hhhh5672,hhhh5671,typeD------er}, we see that $\betar$ must be a singleton label and either 
$(i)$  $m(\betar,\alphar)=3$, $m(\betar,\gam)=m(\beta,\ctau)=m(\betar,\vsi)=2$ for all $1\leq i \leq k$
$(ii)$ $m(\betar,\gam)=3$, $m(\betar,\alphar)=m(\beta,\ctau)=m(\betar,\vsi)=2$ for all $1\leq i \leq k$.
 We assume without loss of generality that $m(\betar,\alphar)=3$.   
 We must prove that 
  \begin{align}\label{thrittt}
\dil   (  {\sf 1}_{\green \gsigma \green})
 &=
-  \dil   ( 
   ( {\sf spot}^{\betar\gsigma\betar}_{\betar\emptyset  \betar} 
   )
   \dil   ( 
{\sf dork}_{\betar\betar}^{\betar\betar})
\dil   ( 
  {\sf spot}_{\betar\gsigma\betar}^{\betar\emptyset  \betar} ) ,
\\ 
\label{thrittt2}
\dil   (  {\sf 1}_{\gsigma \green\gsigma})
 &=
  -\dil   ( 
     {\sf spot}^{\gsigma\green\gsigma}_{\gsigma\emptyset  \gsigma} )
     \dil   ( 
{\sf dork}_{\gsigma\gsigma}^{\gsigma\gsigma})\dil   ( 
  {\sf spot}_{\gsigma\green\gsigma}^{\gsigma\emptyset  \gsigma} )  .  
\end{align}

\definecolor{lightpurp}{HTML}{cdb4db}

 \begin{figure}[ht!]
 $$\scalefont{0.8}
 \begin{minipage}{6.5cm}
\end{minipage}$$
\caption{The six steps in proving \cref{thrittt}.  Read from left to right, one row at a time.}
\label{thritttpic}
\end{figure}

We first prove \cref{thrittt}.  We first apply the commutativity relations to 
the two $\green$-strands in 
 $\dil   (  {\sf 1}_{\green \gsigma \green})$ in order to 
 bring them as close to the $\pink$-strand as possible (to obtain  the top-right diagram of \cref{thritttpic}) and we then apply the $\pink\green$-null-braid 
  (to obtain $-1$ times the middle-left diagram of \cref{thritttpic}).  
  We then apply the $\orange \vsone$-null-braid 
    (to obtain  the middle-right diagram of \cref{thritttpic})
followed by the $\vsi \vsione$-null-braids for $1\leq i <k$ 
in turn 
    (to obtain $(-1)^{k+1}$ times the bottom-left diagram of \cref{thritttpic}).
Finally, we apply the $\ctau\vsk$-null-braid 
    (to obtain $ (-1)^{k+2}$ times the bottom-right diagram of \cref{thritttpic})
    and hence obtain $-  \dil   ( 
   ( {\sf spot}^{\betar\gsigma\betar}_{\betar\emptyset  \betar} 
   )
   \dil   ( 
{\sf dork}_{\betar\betar}^{\betar\betar})
\dil   ( 
  {\sf spot}_{\betar\gsigma\betar}^{\betar\emptyset  \betar} ) 
$ as required.

\begin{figure}[ht!]
 $$\scalefont{0.8}
 
  $$
  \caption{The three    steps in proving \cref{thrittt2}.   }
\label{thrittt2pic}
\end{figure}

We now prove \cref{thrittt2} in a similar fashion.  
We first apply 
the $ \ctau\vsk $-null-braid relation to 
 $\dil   (  {\sf 1}_{\green \gsigma \green})$
 followed by the $ \vsi\vsione$-null-braid relations for $k> i \geq 1$ 
  (to obtain $(-1)^{k}$ times  the second diagram of \cref{thrittt2pic}).
  We then apply the $\orange\green$-null-braid 
and    $ \alphar\vsone$-null-braid relations 
  (to obtain $(-1)^{k}$ times  the third diagram of \cref{thrittt2pic}) and hence obtain $  -\dil   ( 
     {\sf spot}^{\gsigma\green\gsigma}_{\gsigma\emptyset  \gsigma} )
     \dil   ( 
{\sf dork}_{\gsigma\gsigma}^{\gsigma\gsigma})\dil   ( 
  {\sf spot}_{\gsigma\green\gsigma}^{\gsigma\emptyset  \gsigma} )$  as required.

\subsection{The dilated  barbell relations}\label{onebarbell}
  We now consider the one and two colour barbell relations.    

\begin{lem}\label{substitute} 
For  $ \alphar , \gam, \ctau \in S_W^3 $ with 
$m(\alphar,\ctau)=3= m(\gam,\ctau)$ and $m( \alphar, \gam)=2$ we have that 
\begin{align*} 
{\sf fork}^{\ctau}_{\ctau\ctau}
({\sf 1}_{\ctau} \otimes 
{\sf bar}(\alphar)\otimes 
{\sf bar}(\gam)		\otimes {\sf 1}_{\ctau})
{\sf fork}_{\ctau}^{\ctau\ctau}
&=
-  \left(
{\sf bar}(\alphar)+ 
{\sf bar}(\ctau)+ 
{\sf bar}(\gam) \right)  		\otimes {\sf 1}_{\ctau} 
\\
&=-   {\sf 1}_{\ctau}	\otimes   \left(
{\sf bar}(\alphar)+ 
{\sf bar}(\ctau)+ 
{\sf bar}(\gam) \right)  .	 
\end{align*}
\end{lem}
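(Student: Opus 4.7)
The second equality follows directly from \cref{useful3} with $\ctau$ playing the role of the ``middle'' node of the triple $(\alphar,\gam,\ctau)$: since $m(\alphar,\ctau) = m(\gam,\ctau) = 3$ and $m(\alphar,\gam) = 2$, that relation yields $({\sf bar}(\alphar) + {\sf bar}(\ctau) + {\sf bar}(\gam)) \otimes \on_\ctau = \on_\ctau \otimes ({\sf bar}(\alphar) + {\sf bar}(\ctau) + {\sf bar}(\gam))$, and hence the second equality (after multiplying by $-1$).

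For the first equality, the strategy is to push the two inner barbells outside the ``bubble'' formed by $\fork^\ctau_{\ctau\ctau}$ and $\fork_\ctau^{\ctau\ctau}$, where circle annihilation $\fork^\ctau_{\ctau\ctau}\fork_\ctau^{\ctau\ctau} = 0$ from \ref{R4} kills most contributions. First apply the $\alphar\ctau$-barbell relation \ref{R5} in the form $\on_\ctau \otimes {\sf bar}(\alphar) = {\sf bar}(\alphar) \otimes \on_\ctau + \gap(\ctau) - \on_\ctau \otimes {\sf bar}(\ctau)$ to split the left-hand side into three pieces: (i) ${\sf bar}(\alphar)$ outside the bubble with ${\sf bar}(\gam)$ still inside, which evaluates to $-{\sf bar}(\alphar) \otimes \on_\ctau$ by applying \cref{repeat11} to the internal bubble $\fork^\ctau_{\ctau\ctau}(\on_\ctau \otimes {\sf bar}(\gam) \otimes \on_\ctau)\fork_\ctau^{\ctau\ctau}$; (ii) the left strand of the bubble replaced by $\gap(\ctau)$, with ${\sf bar}(\gam)$ still floating inside; (iii) ${\sf bar}(\ctau)$ and ${\sf bar}(\gam)$ both inside the bubble.

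Piece (ii) is handled by the subclaim that $\fork^\ctau_{\ctau\ctau}(\gap(\ctau) \otimes X \otimes \on_\ctau)\fork_\ctau^{\ctau\ctau} = X \otimes \on_\ctau$ for any floating $X\colon\emptyset\to\emptyset$, which follows by decomposing $\gap(\ctau) = \spot^\ctau_\emptyset \cdot \spot_\ctau^\emptyset$ and applying the vertical and horizontal flips of the fork-spot relation \ref{R3} at the top and bottom of the broken left strand to collapse it; taking $X = {\sf bar}(\gam)$ yields piece (ii) $= {\sf bar}(\gam) \otimes \on_\ctau$. For piece (iii) I apply the $\gam\ctau$-barbell relation a second time, now on ${\sf bar}(\gam) \otimes \on_\ctau$, and handle the three resulting sub-terms using the one-colour barbell relation ${\sf bar}(\ctau) \otimes \on_\ctau + \on_\ctau \otimes {\sf bar}(\ctau) = 2\gap(\ctau)$ from \ref{R4} together with circle annihilation (to reduce the ``both barbells outside'' sub-terms to $2\on_\ctau$ times the outer barbell) and the subclaim above (for the $\gap(\ctau)$ sub-term). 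Summing all contributions and converting every right-sided $\on_\ctau \otimes {\sf bar}(X)$ into the left-sided form ${\sf bar}(X) \otimes \on_\ctau$ via the barbell relations, the $\gap(\ctau)$ contributions cancel exactly and the surviving terms assemble into $-({\sf bar}(\alphar) + {\sf bar}(\ctau) + {\sf bar}(\gam)) \otimes \on_\ctau$. The main obstacle is the careful bookkeeping of signs and of the multiple $\gap(\ctau)$ corrections; the crucial consistency check is that the $\gap(\ctau)$ coefficients cancel exactly in the final sum.
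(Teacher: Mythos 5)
Your proof is correct, and the second equality is handled exactly as in the paper (via \cref{useful3} with $\ctau$ as the common neighbour). For the first equality, however, you take a genuinely different route from the paper, and it is worth comparing. The paper applies \cref{useful1} (the form $\on_\ctau\otimes{\sf bar}(\alphar)={\sf bar}(\alphar)\otimes\on_\ctau+{\sf bar}(\ctau)\otimes\on_\ctau-{\sf gap}(\ctau)$, obtained by folding the one-colour barbell relation into \ref{R5}) to push ${\sf bar}(\alphar)$ out, and then applies it once more to push ${\sf bar}(\gam)$ out of the two terms where the $\ctau$-strand is still intact. This yields seven terms, four of which die outright by $\ctau$-circle annihilation, and the three survivors immediately assemble into the claimed answer. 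You instead use the raw form of \ref{R5}, $\on_\ctau\otimes{\sf bar}(\alphar)={\sf bar}(\alphar)\otimes\on_\ctau+{\sf gap}(\ctau)-\on_\ctau\otimes{\sf bar}(\ctau)$, collapse piece (i) via \cref{repeat11} rather than re-deriving the inner bubble calculation, prove an auxiliary gap-collapse subclaim for piece (ii), and handle piece (iii) with a further expansion followed by a right-to-left conversion of barbells in which the $\gap(\ctau)$ contributions cancel. I checked the sign bookkeeping: piece (iii) reduces to $-2\on_\ctau\otimes{\sf bar}(\gam)-\on_\ctau\otimes{\sf bar}(\ctau)$, and the conversion produces $+2\gap(\ctau)-2\gap(\ctau)$ which indeed cancels, giving the stated total. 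So the argument is sound. What the paper's route buys is that by using \cref{useful1} the gap terms carry the right sign from the outset and all bubble evaluations become either zero (circle annihilation) or immediate (fork-spot), with no residual gap cancellation to track; your route buys brevity on piece (i) by importing \cref{repeat11} (which is proved earlier in the paper, so there is no circularity), but at the cost of the heavier arithmetic in piece (iii) and the final conversion step. One small stylistic point: to collapse the broken left arc in your subclaim you need the fork-spot relation and its vertical flip, not its horizontal flip; the horizontal flip is only needed for the mirror-image subclaim you invoke in piece (iii) when the gap sits in the right strand.
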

\begin{proof}
We prove the first equality, the second  is given by \cref{useful3} and recorded here only for reference.  
We   first move the $\alphar$ barbell to the left through the $\ctau $ strand using   \ref{useful1} (and hence obtain  3 terms);  for the first two of these terms (in which the $\ctau$-strand remains in tact) we then again use \ref{useful1} to move the $\gam$ barbell to the left through the $\ctau $-strand.  
We hence obtain a sum involving 7 terms, 4 of which are zero by the $\blue$-circle-annihilation relation;  this leaves us with the required 3 terms. 
\end{proof}

We now ``augment" the previous lemma so that it applies to augmented tricornes.

\begin{lem}\label{lemyytytyt2}
Let $\gsigma  $ be an augmented tricorne, $\gsigma = \vx\alphar \gam \vx^{-1} \ctau $
and   $\vx =\color{violet} \vsk \dots  \vsone  $.  
We have that 
\begin{align}
\begin{multlined}
{\sf fork}^{\ctau}_{\ctau\ctau}
({\sf 1}_{\ctau} \otimes {\sf cap}^\emptyset_{\vx\vx^{-1}}
 \otimes{\sf 1}_{\ctau} )
({\sf 1}_{\ctau\vx} \otimes 
{\sf bar}(\alphar)\otimes 
{\sf bar}(\gam)		\otimes {\sf 1}_{\vx^{-1}\ctau})
({\sf 1}_{\ctau} \otimes {\sf cup}_\emptyset^{\vx\vx^{-1}}
 \otimes{\sf 1}_{\ctau} )
{\sf fork}_{\ctau}^{\ctau\ctau}
\\
\label{lhsofhhh2}
\begin{aligned} 
=&\;(-1)^{k+1}{\sf 1}_\ctau\otimes (  \textstyle\sum_{i=1}^k 2{\sf bar}(\vsi) + {\sf bar}(\alphar)+ {\sf bar}(\gam)+{\sf bar}(\ctau))
\end{aligned}
\end{multlined}
\\
\begin{aligned}
\label{lhsofhhh3} 
=&\;(-1)^{k+1} (  \textstyle\sum_{i=1}^k 2{\sf bar}(\vsi) + {\sf bar}(\alphar)+ {\sf bar}(\gam)+{\sf bar}(\ctau))\otimes {\sf 1}_\ctau 
\end{aligned}
\end{align}
\end{lem}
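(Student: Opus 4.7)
The proof proceeds by induction on $k$, with the base case $k=0$ (where $\vx$ is empty, so the caps and cups degenerate to identities) reducing immediately to \cref{substitute}. For $k\geq 1$ it is convenient to analyse the nested-arc interior first and the outer $\ctau$-fork second. Let
\[
N_k := {\sf cap}^\emptyset_{\vx\vx^{-1}}({\sf 1}_\vx\otimes{\sf bar}(\alphar)\otimes{\sf bar}(\gam)\otimes{\sf 1}_{\vx^{-1}}){\sf cup}_\emptyset^{\vx\vx^{-1}}
\]
denote the interior obtained by deleting the outer ${\sf fork}^\ctau_{\ctau\ctau}(\cdot){\sf fork}_\ctau^{\ctau\ctau}$ and the two flanking $\ctau$-strands. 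By the recursive definition of the cap and cup, $N_k$ is obtained from ${\sf bar}(\alphar)\otimes{\sf bar}(\gam)$ by successively wrapping in $\vsone$-, $\vstwo$-, \ldots, $\vsk$-loops, where a ``$\csigma$-loop around $X$'' abbreviates ${\sf cap}^\emptyset_{\csigma\csigma}({\sf 1}_\csigma\otimes X\otimes{\sf 1}_\csigma){\sf cup}_\emptyset^{\csigma\csigma}$.

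The key auxiliary claim, proved by induction on $k$, is
\[
N_k = (-1)^k\,{\sf bar}(\vsk)\otimes B_k, \qquad B_k := {\sf bar}(\alphar)+{\sf bar}(\gam)+{\sf bar}(\vsk)+\sum_{i=1}^{k-1}2{\sf bar}(\vsi).
\]
(Throughout, the order of tensor factors among barbells is immaterial, since Eckmann--Hilton applied to the interchange and unit laws \cref{R8}, \cref{R9} makes $\mathrm{End}(\emptyset)$ commutative.) For $k=1$, expanding the innermost cap and cup as spot-fork composites reduces the interior to a fork-sandwich of the form required by \cref{substitute}; applying that lemma with $\vsone$ playing the role of $\ctau$ (valid because $m(\alphar,\vsone)=3=m(\gam,\vsone)$ and $m(\alphar,\gam)=2$ in the augmented tricorne) and closing the merged $\vsone$-strand with the two remaining spots into ${\sf bar}(\vsone)$ gives $N_1=-{\sf bar}(\vsone)\otimes B_1$. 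For the inductive step, substitute $N_{k-1}=(-1)^{k-1}{\sf bar}(\vsj)\otimes B_{k-1}$ (setting $j:=k-1$) into the $\vsk$-loop and split by linearity over the summands of $B_{k-1}$: for each summand ${\sf bar}(X)$ with $m(X,\vsk)=2$, the barbell slides out of the loop by commutativity and the residual $\vsk$-loop around ${\sf bar}(\vsj)$ evaluates to $-{\sf bar}(\vsk)$ via the cap/cup version of \cref{repeat11}; for the non-commuting summand $X=\vsj$, the trapped doubled barbell ${\sf bar}(\vsj)\otimes{\sf bar}(\vsj)$ is replaced by $-{\sf bar}(\vsk)\otimes(2{\sf bar}(\vsj)+{\sf bar}(\vsk))$ via the cap/cup version of \cref{repeat13}. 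Collecting yields $N_k=(-1)^k{\sf bar}(\vsk)\otimes B_k$.

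Substituting the sub-claim into the LHS of \cref{lhsofhhh2} gives $(-1)^k{\sf fork}^\ctau_{\ctau\ctau}({\sf 1}_\ctau\otimes{\sf bar}(\vsk)\otimes B_k\otimes{\sf 1}_\ctau){\sf fork}_\ctau^{\ctau\ctau}$, and the identical dichotomy now applies with $\ctau$ in place of $\vsk$: each summand ${\sf bar}(X)$ of $B_k$ with $X\neq\vsk$ satisfies $m(X,\ctau)=2$ (since in the augmented tricorne only $\vsk$ is $m=3$-adjacent to $\ctau$), so ${\sf bar}(X)$ slides out of the $\ctau$-fork, leaving ${\sf bar}(\vsk)$ trapped between the two $\ctau$-strands, which \cref{repeat11} evaluates to $-{\sf 1}_\ctau$; the remaining ${\sf bar}(\vsk)\otimes{\sf bar}(\vsk)$ contribution yields $-{\sf 1}_\ctau\otimes(2{\sf bar}(\vsk)+{\sf bar}(\ctau))$ by \cref{repeat13}. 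Multiplying by the existing $(-1)^k$ produces the overall sign $(-1)^{k+1}$ and the barbells assemble to ${\sf bar}(\alphar)+{\sf bar}(\gam)+\sum_{i=1}^k 2{\sf bar}(\vsi)+{\sf bar}(\ctau)$, proving \cref{lhsofhhh2}. The second equality \cref{lhsofhhh3} then follows from \cref{lhsofhhh2} by moving ${\sf 1}_\ctau$ across each barbell summand: direct commutation for the $m(X,\ctau)=2$ terms, the two-colour barbell relation \cref{R5} (with $\langle\alpha_\ctau^\vee,\alpha_{\vsk}\rangle=-1$) for the $2{\sf bar}(\vsk)$-term, and the one-colour relation \cref{R4} for the ${\sf bar}(\ctau)$-term; the coefficient $2$ in front of ${\sf bar}(\vsk)$ ensures the two ${\sf gap}(\ctau)$-corrections cancel exactly.

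The principal difficulty is bookkeeping rather than any single conceptual step: the final sign $(-1)^{k+1}$ arises as three independent $(-1)$ factors at distinct stages of the nested induction, and the commuting/non-commuting dichotomy has to be applied consistently at both the $\vsk$-level (to prove the sub-claim) and at the $\ctau$-level (to finish). The only non-routine check is the cancellation of ${\sf gap}(\ctau)$-corrections in the derivation of \cref{lhsofhhh3} from \cref{lhsofhhh2}.
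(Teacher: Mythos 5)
Your argument is correct and follows the paper's route in essence: induction on $k$ with base case \cref{substitute}, peeling off one nested arc per step via \cref{repeat12}/\cref{repeat13} together with the commuting/non-commuting dichotomy, and the one-/two-colour barbell shuffle to obtain \cref{lhsofhhh3} from \cref{lhsofhhh2}. The only presentational difference is that you isolate the cap--cup interior $N_k$ as a self-contained sub-claim and apply the outer $\ctau$-fork once at the very end (and you spell out the barbell manipulation that the paper compresses into a citation of \cref{useful3}), whereas the paper's induction invokes the lemma's own inductive hypothesis with $\ctau$ replaced by $\vsk$ --- but after closing the $\vsk$-spots into ${\sf bar}(\vsk)$ these are literally the same quantity.
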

\begin{proof}
We proceed by induction on $k\geq 0$, with the $k=0$ base case taken care of in \cref{substitute}.  
By induction, we can rewrite the left-hand side of \ref{lhsofhhh2} as follows
$$
  \fork^\ctau_{\ctau\ctau}
   \spot_{\ctau\vsk\ctau}^{\ctau\emptyset\ctau}
({\sf 1}_\ctau \otimes 
(-1)^k  {\sf 1}_{\vsk}\otimes ( \textstyle\sum_{i=1}^{k-1}
 2{\sf bar}(\vsi) + {\sf bar}(\alphar)+ {\sf bar}(\gam)
+  {\sf bar}(\vsk))
\otimes {\sf 1}_\ctau)
   \spot^{\ctau\vsk\ctau}_{\ctau\emptyset\ctau}
     \fork_\ctau^{\ctau\ctau}.
$$
which is equal to 
$$
(-1)^k  \fork^\ctau_{\ctau\ctau}
({\sf 1}_\ctau \otimes 
   {\sf bar}({\vsk})
\otimes ( \textstyle\sum_{i=1}^k 2{\sf bar}(\vsi) + {\sf bar}(\alphar)+ {\sf bar}(\gam)
+  {\sf bar}(\vsk))
\otimes {\sf 1}_\ctau)
     \fork_\ctau^{\ctau\ctau}.
$$
The   term  involving  a tensor product ${\sf bar}(\vsk)\otimes {\sf bar}(\vsk)$ can be rewritten using \cref{repeat13}.  
The remaining  terms  involve a tensor product of two distinctly coloured
 barbells, one of which commutes with the $\ctau$-strand; thus
  we can apply  \cref{repeat11} to   these terms.
  Rewriting all the  terms in the above manner and summing over the resulting 
 elements, we obtain   \ref{lhsofhhh2}.  
\Cref{lhsofhhh3}  follows by \cref{useful3}.  
 \end{proof}

We are now ready to construct the dilated barbell diagrams.  

\begin{lem}
Let $\gsigma  $ be an augmented tricorne, $\gsigma = \vx\alphar \gam \vx^{-1} \ctau $
and   $\vx =\color{violet} \vsk \dots  \vsone  $.     
We have that 
  \begin{align}\label{ppg1}
\dil ({\sf bar}(\gsigma))	&=
 {\sf 1}_\ctau\otimes (  \textstyle\sum_{i=1}^k 2{\sf bar}(\vsi) + {\sf bar}(\alphar)+ {\sf bar}(\gam)+ {\sf bar}(\ctau))
 \\
 &=
   (  \textstyle\sum_{i=1}^k 2{\sf bar}(\vsi) + {\sf bar}(\alphar)+ {\sf bar}(\gam)+ {\sf bar}(\ctau))\otimes  {\sf 1}_\ctau
 \\	\label{ppg2}
\dil ({\sf 1}_ \gsigma)\compose \dil ({\sf bar}(\gsigma)	)&=
{\sf 1}_{\ctau\vx \alphar} \otimes 
{\sf bar}(\alphar)\otimes 
{\sf 1}_{\gam\vx^{-1} \ctau} 
\\ \label{ppg3}
\dil ({\sf bar}(\gsigma))\compose
\dil 
({\sf 1}_ \gsigma) &=
{\sf 1}_{\ctau\vx}\otimes {\sf bar}(\alphar) \otimes {\sf 1}_{\alphar\gam\vx^{-1}\ctau}
\\ \label{ppg4}
\dil ({\sf gap}(\gsigma))
&={\sf 1}_{\ctau \vx} \otimes {\sf gap}(\alphar) \otimes {\sf 1}_{\gam\vx^{-1}\ctau}
\end{align}

 \end{lem}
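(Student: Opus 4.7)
The plan is to prove \ref{ppg1} directly via \cref{lemyytytyt2}, derive \ref{ppg2} and \ref{ppg3} from it by pushing barbells through the idempotent, and finally establish \ref{ppg4} by iteratively applying the $m=3$ null-braid relation along the Dynkin chain $\ctau$--$\vsk$--$\cdots$--$\vsone$--$\gam$.

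For \ref{ppg1}, I would unwind $\dil({\sf bar}(\gsigma)) = \dil({\sf spot}_\gsigma^\emptyset) \circ \dil({\sf spot}^\gsigma_\emptyset)$ using the formulas immediately preceding the lemma. The two scalars multiply to $((-i)^{k+1})^2 = (-1)^{k+1}$, and the underlying diagram is precisely the left-hand side of equation \ref{lhsofhhh2}. Applying \cref{lemyytytyt2} evaluates this as $(-1)^{k+1}$ times the displayed sum of barbells with ${\sf 1}_\ctau$ on the left; the two signs cancel, yielding the first equality. The second equality is \cref{useful3} applied termwise.

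For \ref{ppg2}, I would start from the first form of \ref{ppg1} and use the defining formula for $\compose$ to obtain
\[
\dil({\sf 1}_\gsigma) \compose \dil({\sf bar}(\gsigma)) = {\sf 1}_{\ctau\vx\alphar\gam\vx^{-1}\ctau} \otimes \Bigl(\sum_{i=1}^k 2\,{\sf bar}(\vsi) + {\sf bar}(\alphar) + {\sf bar}(\gam) + {\sf bar}(\ctau)\Bigr).
\]
Each summand is then simplified by pushing the floating barbell leftward through the idempotent via the two-colour barbell relation \cref{R5} and its iterated forms \cref{lem1,lem2}. The barbells ${\sf bar}(\vsi)$ and ${\sf bar}(\ctau)$ commute past non-neighbouring strands and vanish at the leftmost column by cyclotomic \cref{R11}; the gap terms they generate on $\vsi$-strands are killed by \cref{ZEROD,ZEROE}. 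The doubled coefficient $2\,{\sf bar}(\vsi)$ exactly absorbs the two occurrences of $\vsi$ in the idempotent word. The sole surviving summand arises from pushing ${\sf bar}(\gam)$ across ${\sf 1}_\alphar$ via the $\alphar\gam$-barbell relation (with Cartan pairing $\langle\alpha_\alphar^\vee,\alpha_\gam\rangle=-1$), which yields ${\sf bar}(\alphar)$ positioned between the $\alphar$ and $\gam$ strands. Equation \ref{ppg3} follows from a mirror-image argument starting from the second form of \ref{ppg1}.

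For \ref{ppg4}, I would compute $\dil({\sf gap}(\gsigma)) = \dil({\sf spot}^\gsigma_\emptyset) \circ \dil({\sf spot}_\gsigma^\emptyset)$ directly. The scalar is again $(-1)^{k+1}$, and the resulting diagram places a ${\sf dork}^{\ctau\ctau}_{\ctau\ctau}$ in the middle, flanked below by the bottom spot layer and the nested cap on $\vx\vx^{-1}$, and above by the nested cup on $\vx\vx^{-1}$ and the top spot layer. Expanding each ${\sf cap}^\emptyset_{\vsi\vsi} = {\sf spot}_\vsi^\emptyset \circ {\sf fork}_{\vsi\vsi}^\vsi$ (and similarly the cup), the outermost cap/cup layer on $\vsk\vsk$, which sits directly adjacent to the ${\sf dork}^{\ctau\ctau}_{\ctau\ctau}$, exposes a ${\sf spot}^\vsk$-${\sf dork}^{\ctau\ctau}_{\ctau\ctau}$-${\sf spot}_\vsk$ configuration. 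By the $m(\vsk,\ctau)=3$ null-braid relation this collapses to $-{\sf 1}_{\ctau\vsk\ctau}$, and the surrounding $\vsk$-fork layers then reassemble into a new ${\sf dork}^{\vsk\vsk}_{\vsk\vsk}$ to which the next null-braid applies. Iterating along the Dynkin chain, we perform $k+1$ null-braid absorptions in total, each contributing a factor of $-1$. The product $(-1)^{k+1}$ cancels the scalar $(-1)^{k+1}$, and the only unabsorbed piece is the $\alphar$-spot pair, which assembles to ${\sf gap}(\alphar)$ in precisely the position asserted.

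The principal obstacle is the careful bookkeeping in the proof of \ref{ppg4}: each null-braid step simultaneously absorbs one spot-pair and promotes the next-innermost fork layer to a new ${\sf dork}^{\vsione\vsione}_{\vsione\vsione}$, and one must verify that the Dynkin-adjacency condition $m=3$ holds at every link along $\ctau$--$\vsk$--$\cdots$--$\vsone$--$\gam$ (which is exactly the structure imposed by the augmented-tricorne combinatorics). The combinatorial observation that makes the whole argument work is that the chain has length $k+1$, which is precisely what is needed for the $k+1$ signs from the null-braid applications to cancel the scalar $(-1)^{k+1}$ from the two spot compositions and leave the coefficient $+1$ on ${\sf gap}(\alphar)$.
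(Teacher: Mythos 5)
Your treatment of \eqref{ppg1} and \eqref{ppg4} matches the paper's: \eqref{ppg1} is exactly \cref{lemyytytyt2} once one checks the scalar $((-i)^{k+1})^2 = (-1)^{k+1}$ cancels the sign there, and \eqref{ppg4} is the chain of $k+1$ null-braid applications (the $\ctau\vsk$-null-braid, then $\vsi\vsione$ for $k>i\geq 1$, then $\gam\vsone$), each contributing a $-1$ that in total cancels the $(-1)^{k+1}$ from the two spots. That part is fine.

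Your argument for \eqref{ppg2}, and hence for \eqref{ppg3}, has a genuine gap. The assertion in \eqref{ppg2} is a \emph{local} identity purely about $\dil$ and $\compose$; the idempotent ${\sf 1}_{\ctau\vx\alphar\gam\vx^{-1}\ctau}$ does not sit at the left edge of any ambient word, so the cyclotomic relations \eqref{R10}, \eqref{R11} simply cannot be invoked to kill the floating barbells ${\sf bar}(\vsi)$ and ${\sf bar}(\ctau)$. Nor can \cref{ZEROD,ZEROE}: those statements concern ${\sf gap}(\stt_\mu-[r,c])$ for specific tile partitions $\mu$ and are themselves proved using the non-local cyclotomic relations, so they are not available in a purely local computation. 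Finally, in an (augmented) tricorne one has $m(\alphar,\gam)=2$, so $\langle\alpha_\alphar^\vee,\alpha_\gam\rangle=0$, not $-1$; the surviving ${\sf bar}(\alphar)$ is \emph{not} produced by pushing ${\sf bar}(\gam)$ across an $\alphar$-strand.

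The correct mechanism, which your proposal omits, is the local relation \cref{Astuffiszero}: ${\sf 1}_\gam\otimes({\sf bar}(\betar)+{\sf bar}(\gam))\otimes{\sf 1}_\betar=0$ whenever $m(\betar,\gam)=3$, a consequence of the null-braid relation. The point is that the particular combination $\sum_i 2{\sf bar}(\vsi)+{\sf bar}(\alphar)+{\sf bar}(\gam)+{\sf bar}(\ctau)$ slides through the rightmost $\ctau$-strand \emph{without producing any gap terms at all} (summing \eqref{R4} and \eqref{R5}, equivalently \cref{useful3}); then \cref{Astuffiszero} applied at the adjacent $\vsk$- and $\ctau$-strands kills ${\sf bar}(\vsk)+{\sf bar}(\ctau)$, reducing the length of the sum. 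Iterating down the chain $\ctau-\vsk-\cdots-\vsone$ and finishing with the $\gam\vsone$ case of \cref{Astuffiszero} leaves only ${\sf bar}(\alphar)$, which commutes past the $\gam$-strand (precisely because $m(\alphar,\gam)=2$) into the asserted position. Without this use of \cref{Astuffiszero}, the argument for \eqref{ppg2}--\eqref{ppg3} does not close.
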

\begin{proof}
\Cref{ppg1} follows directly from \cref{lemyytytyt2}.
We now consider \cref{ppg2} and  \cref{ppg3}.  We have that 
\begin{align*}
\dil ({\sf 1}_ \gsigma)\compose \dil ({\sf bar}(\gsigma)	)&=
 {\sf 1}_{\ctau\vx\alphar\gam\vx^{-1}\ctau}\otimes (  \textstyle\sum_{i=1}^k 2{\sf bar}(\vsi) + {\sf bar}(\alphar)+ {\sf bar}(\gam)+ {\sf bar}(\ctau))
\\
&=
 {\sf 1}_{\ctau\vx\alphar\gam\vx^{-1} }\otimes (  \textstyle\sum_{i=1}^k 2{\sf bar}(\vsi) + {\sf bar}(\alphar)+ {\sf bar}(\gam)+ {\sf bar}(\ctau))
 \otimes{\sf 1}_\ctau 
\\ &=
 {\sf 1}_{\ctau\vx\alphar\gam\vx^{-1} }\otimes (  \textstyle\sum_{i=1}^{k-1} 2{\sf bar}(\vsi) + {\sf bar}(\alphar)+ {\sf bar}(\gam)+ {\sf bar}(\vsk))
 \otimes{\sf 1}_\ctau 
\end{align*}
where the first equality follows
\cref{ppg1};
the second  from summing over relations \ref{R4} and \ref{R5}; the third from \cref{Astuffiszero}.  We repeat the final two steps above a further $k-2$ times and hence obtain 
\begin{align*}
\dil ({\sf 1}_ \gsigma)\compose \dil ({\sf bar}(\gsigma)	)&=
 {\sf 1}_{\ctau\vx\alphar\gam \vsone }\otimes ( 2{\sf bar}(\vsone) + {\sf bar}(\alphar)+ {\sf bar}(\gam)+ {\sf bar}(\vstwo))
 \otimes{\sf 1}_{\vstwo \dots \vsk \ctau }
\\
&=
 {\sf 1}_{\ctau\vx\alphar\gam \vsone }\otimes (   {\sf bar}(\alphar)+ {\sf bar}(\gam)+ {\sf bar}(\vsone))
 \otimes{\sf 1}_{\vstwo \dots \vsk \ctau }
\\
&=
 {\sf 1}_{\ctau\vx\alphar\gam   }\otimes (   {\sf bar}(\alphar)+ {\sf bar}(\gam)+ {\sf bar}(\vsone))
 \otimes{\sf 1}_{\vx^{-1} \ctau }
 \\
&=
 {\sf 1}_{\ctau\vx\alphar\gam   }\otimes     {\sf bar}(\alphar) 
 \otimes{\sf 1}_{\vx^{-1} \ctau }
 \\
&=
 {\sf 1}_{\ctau\vx\alphar   }\otimes     {\sf bar}(\alphar) 
 \otimes{\sf 1}_{\gam \vx^{-1} \ctau }
\end{align*}
where the second and fourth equalities  follow  from 
\cref{Astuffiszero};
the third from \cref{useful3}; the fifth from the $\orange\pink$-commutativity relations.   
We now consider \cref{ppg3}. We have that 
\begin{align*}
\dil ({\sf bar}(\gsigma))\compose
\dil 
({\sf 1}_ \gsigma) &=
 {\sf 1}_\ctau\otimes (  \textstyle\sum_{i=1}^k 2{\sf bar}(\vsi) + {\sf bar}(\alphar)+ {\sf bar}(\gam)+ {\sf bar}(\ctau)) \otimes {\sf 1}_{\vx\alphar\gam\vx^{-1}\ctau}
 \\
 &=
 {\sf 1}_{\ctau\vx  }\otimes     {\sf bar}(\alphar) 
 \otimes{\sf 1}_{\alphar  \gam \vx^{-1} \ctau }
\end{align*}
where the first equality follows from \cref{ppg1} and the second follows by the exact same argument as for the case of  \cref{ppg2}.
Finally, we   consider \cref{ppg4}. We have that 
\begin{align*}
\dil (\gap(\gsigma))	&= \dil (\spot_\emptyset^\gsigma)
 \dil (\spot^\emptyset_\gsigma)\\
 &= (-1)^{k+1}
 {\sf spot}^{\ctau \vx \pink\orange\vx^{-1} \ctau}
 _{\ctau \vx \emptyset\emptyset \vx^{-1} \ctau}
 ({\sf 1}_\ctau \otimes {\sf cup}^{\vx\vx^{-1}}_\emptyset \otimes {\sf 1}_\ctau)
 {\sf dork}^{\ctau\ctau}_{\ctau\ctau}
  ({\sf 1}_\ctau \otimes {\sf cap}_{\vx\vx^{-1}}^\emptyset \otimes {\sf 1}_\ctau)
   {\sf spot}_{\ctau \vx \pink\orange\vx^{-1} \ctau}
^{\ctau \vx \emptyset\emptyset \vx^{-1} \ctau}
\\
&= {\sf 1}_{\ctau\vx}\otimes {\sf gap}(\alphar)\otimes {\sf 1}_{\orange\vx^{-1}\ctau}
\end{align*}
where the first and second equalities are by definition; the third follows by applying the 
$\ctau\vsk$-null-braid relation followed by the 
$ \vsi\vsione$-null-braid relations for $k>i\geq 1$ followed by the 
$\orange\vsone$-null-braid relation.  
\end{proof}

\subsubsection{The dilated one colour barbell relation}
Let $\gsigma $ be a reading word of some $\ctau$-contraction tile.    We now verify the rightmost relation in \eqref{R4}. 
We have that 
\begin{multline*}
\dil({\sf bar}(\gsigma))
\compose 
\dil({\sf 1}_\gsigma)
+
\dil({\sf 1}_\gsigma)\compose\dil({\sf bar}(\gsigma))
\\
\begin{aligned}
 =&\;
{\sf 1}_{\ctau\vx} \otimes {\sf bar}(\alphar) \otimes {\sf 1}_{\alphar\gam\vx^{-1}\ctau}
+
{\sf 1}_{\ctau\vx\alphar} \otimes {\sf bar}(\alphar) \otimes {\sf 1}_{ \gam\vx^{-1}\ctau}
\\
=&\;
2\cdot {\sf 1}_{\ctau\vx} \otimes {\sf gap}(\alphar) \otimes {\sf 1}_{ \gam\vx^{-1}\ctau}
\\
=&\;2\cdot \dil({\sf gap}(\gsigma))
\end{aligned}
\end{multline*}
as required. Here the first equality follows from \cref{ppg2,ppg3}; the second follows from the one-colour-barbell relation; and the third from \cref{ppg4}.

\subsubsection{The dilated two colour barbell relations}

Let $\betar \in S_{W^\ctau}$, as noted in \cref{asnited}, we can assume that 
$\betar$ is a singleton which commutes  every label in $\gsigma$ except 
$\alphar$.  
  We have that 
\begin{multline*}   
\dil (   {\sf bar}(\gsigma)  ) \compose 
\dil  ( {\sf 1}_{{\betar}}  ) 
-
\dil (  {\sf 1}_{{\betar}}  ) \compose \dil  (   {\sf bar}(\gsigma)    ) 
\\
\begin{aligned}
=&\;
{\sf 1}_\ctau 
\otimes 
(  \textstyle\sum_{i=1}^k 2{\sf bar}(\vsi) + {\sf bar}(\alphar)+ {\sf bar}(\gam)+ {\sf bar}(\ctau))
\otimes {\sf 1}_\betar
\\
&\quad -
{\sf 1}_{\ctau\betar}
  \otimes (  \textstyle\sum_{i=1}^k 2{\sf bar}(\vsi) + {\sf bar}(\alphar)+ {\sf bar}(\gam)+ {\sf bar}(\ctau))
\\
=&\;
{\sf 1}_\ctau 
\otimes {\sf bar}(\alphar) \otimes {\sf 1}_{\betar}
-
{\sf 1}_{\ctau \betar}
\otimes {\sf bar}(\alphar)  
\\
=&\;
 {\sf 1}_{{\ctau\betar}}    \otimes    {\sf bar}(\betar)     
-  
 {\sf 1}_{{\ctau }}    \otimes   {\sf gap}(\betar)   
\\
=&\;
\dil (  {\sf 1}_{{\betar}}  ) \compose \dil  (   {\sf bar}(\betar)    ) 
- \dil 
( 
 {\sf gap}(\betar)   
 )
\end{aligned}
\end{multline*}
as required.  Here, the first equality follows from \cref{ppg1}; the second from the commutativity relations; the third from the $\alphar\betar$-barbell relation; the fourth follows by definition.  

We now turn to the other two-colour barbell relation (in which the roles of $\betar$ and $\gsigma$ are swapped).  We have that 
\begin{multline*}
\dil   
 (   {\sf bar}(\betar)  ) \compose 
\dil  ( {\sf 1}_{{\gsigma}}  ) 
-
\dil (  {\sf 1}_{{\gsigma}}  ) \compose \dil  (   {\sf bar}(\betar)    ) 
\\
\begin{aligned}
=&\;
{\sf 1}_{ \ctau }
\otimes {\sf bar}(\betar)\otimes {\sf 1}_{\vx\alphar\gam \vx^{-1}\ctau}
-
{\sf 1}_{ \ctau  \vx\alphar\gam \vx^{-1}\ctau}\otimes {\sf bar}(\betar)
\\
=&\;
{\sf 1}_{ \ctau \vx }
\otimes {\sf bar}(\betar)\otimes {\sf 1}_{ \alphar\gam \vx^{-1}\ctau}
-
{\sf 1}_{ \ctau \vx \alphar}
\otimes {\sf bar}(\betar)\otimes {\sf 1}_{  \gam \vx^{-1}\ctau}
\\
=&\;
{\sf 1}_{ \ctau \vx }
\otimes {\sf bar}(\alphar)\otimes {\sf 1}_{ \alphar\gam \vx^{-1}\ctau}
-
{\sf 1}_{ \ctau \vx \alphar}
\otimes {\sf bar}(\alphar)\otimes {\sf 1}_{  \gam \vx^{-1}\ctau}
\\
=&\; 
\dil (  {\sf 1}_{{\gsigma}}  ) \compose \dil  (   {\sf bar}(\gsigma)    ) 
- \dil 
( 
 {\sf gap}(\gsigma)   
 )  
\end{aligned}
\end{multline*}
as required.  Here the first equality follows by definition; 
the second by the commutativity relations; the third by the $\pink\green$-barbell;
the fourth by \cref{ppg2,ppg4}.

%
%
%
%
%

   \subsection{The dilated $m=2$    relations}
For $\gsigma,\betar,\brown \in S_{W^\ctau}$ with $m(\gsigma,\betar)=
m(\brown,\betar)=m(\gsigma,\brown)=2$ we need to check the dilated versions of the relations
\begin{align*}
 {\sf braid}^{\green\gsigma}_{\gsigma\green}  
  {\sf braid}_{\green\gsigma}^{\gsigma\green}  
&= 
   {\sf 1}_{\green\gsigma} 
   \qquad 
   \phantom{ {\sf fork}^{  \green\gsigma}_{ \green\gsigma \gsigma }{\sf braid}^{\green \gsigma \gsigma }_{\gsigma \gsigma \green} }
   {\sf braid}^{\green\brown\gsigma}_{ \brown \green\gsigma}    {\sf braid}^{   \brown  \green\gsigma}_{\gsigma  \brown \green }   
 = 
    {\sf braid}^{  \green \brown \gsigma}_{\gsigma  \green  \brown }      {\sf braid}^ {\gsigma  \green  \brown }  _{\gsigma   \brown \green  }      
  \\
 {\sf braid}^{\green\gsigma}_{\gsigma\green} 
  {\sf fork}^{\gsigma \green}_{\gsigma \gsigma \green} 
&= 
     {\sf fork}^{  \green\gsigma}_{ \green\gsigma \gsigma }{\sf braid}^{\green \gsigma \gsigma }_{\gsigma \gsigma \green} 
\qquad 
\phantom{ {\sf 1}_{\green\gsigma} }
 ({\sf 1}_{\gsigma\betar} \otimes {\sf cap}^{\emptyset}_{\gsigma\gsigma})
 {\sf braid}^{ \gsigma\green\gsigma\gsigma}_
 {\gsigma\gsigma\green\gsigma}  
({ \sf cup}^{\gsigma\gsigma}_{\emptyset} \otimes   {\sf 1}_{\betar\gsigma}) 
=
 {\sf braid}_{\green\gsigma}^{\gsigma\green} 
 \end{align*}
and their horizontal and vertical flips, along with the diagrams obtained by swapping the roles of $\green$ and $\gsigma$.  
Note that by \cref{theresultwethoughtweneed}, both sides of all of these equations vanish 
when $(W,P)^\ctau=(A_n,A_{n-1})$, 
or 
$(W,P)^\ctau=(D_n,D_{n-1})$,
or $(W,P)^\ctau=(D_n,A_{n-1})$ with $\{\betar,\gsigma\}=\{ \color{cyan} s_0 \color{black}  , \color{magenta}s_1  \color{black}  \}$.  
In all other cases, we have that $\gsigma$ is a (possibly) composite label and $\betar$ (and $\brown$) are singleton  labels which commute with every constituent   label of $\gsigma$.  Thus all these relations are trivially satisfied.

%

\subsection{The cyclotomic relations}
We finish by showing that the dilations of the non-local relation \ref{R10} and \ref{R11} are also preserved by $\varphi_\ctau$. 
It is easy to see that 
$$\varphi_\ctau({\sf 1}_{\gsigma}\otimes {\sf 1}_\w)
={\sf 1}_{T_{0\to1}} \compose \dil ({\sf 1}_\gsigma)
\compose \dil ({\sf 1}_\w)=0$$
whenever $\gsigma \in S_{P^\ctau}$ using (possibly) the null-braid relations, the commutativity relations, and the cyclotomic relation in $\mathcal{H}_{(W,P)}$.  
It remains to show that
$$
\varphi_\ctau({\sf bar}(\gsigma) \otimes 
 {\sf 1}_\w)=0$$
 for $\gsigma$ the unique element of $S_{W^\ctau}\setminus S_{P^\ctau}$.  
 We will show that 
 $${\sf 1}_{T_{0\to1}}\compose  \dil ({\sf bar}(\gsigma) )=0$$
 for such $\gsigma$ and hence deduce the result.  
For the remainder of this section, we set $T_{0\to 1}=\rho_1 \rho_2 \dots \rho_r$ and we note that $\rho_r=\ctau$.  

\smallskip
\noindent 
{\bf Case 1. } Suppose that $\gsigma$ is a singleton.  
Then there exists $1\leq j \leq r$ such that $m(\gsigma,\rho_i)=2$ for all $i\neq j$ 
and $m(\gsigma,\rho_j)=3$; this can be seen by
  inspection of 
 \cref{fig1thissec,fig3thissec,lots of exceptionsal1,hhhh5672,hhhh5671,typeD------er}.  
We have that 
 \begin{align*}
 {\sf1}_{T_{0\to1}}\compose \dil ({\sf bar} (\gsigma))
 &={\sf 1}_{\rho_1\rho_2\dots \rho_r}\otimes  {\sf bar} (\gsigma)
 \\
 &={\sf 1}_{\rho_1\rho_2\dots \rho_j}  \otimes  {\sf bar} (\gsigma) \otimes {\sf 1}_{\rho_{j+1}\dots \rho_r} 
 \\
 &={\sf 1}_{\rho_1\rho_2\dots \rho_{j-1}}  \otimes  ( {\sf bar} (\gsigma)+ {\sf bar} (\rho_j)) \otimes {\sf 1}_{\rho_j\rho_{j+1}\dots \rho_r} 
\\
&\quad -
 {\sf 1}_{\rho_1\rho_2\dots \rho_{j-1}}  \otimes  {\sf gap} (\rho_j) \otimes {\sf 1}_{\rho_{j+1}\dots \rho_r} 
 \\
 &={\sf 1}_{\rho_1\rho_2\dots \rho_{j-1}}  \otimes   {\sf bar} (\rho_j)  \otimes {\sf 1}_{\rho_j\rho_{j+1}\dots \rho_r} 
\\
&=0  \end{align*}
as required.
 Here the first equality is the definition; the second follows  by the commuting relations;
 the third  by the two-colour barbell relation; the fourth by the commuting and cyclotomic relations; 
the fifth follows by repeating the  arguments above.

\smallskip
\noindent 
{\bf Case 2. } We now suppose that $\gsigma=\alphar\gam\ctau$, a tricorne.
By inspecting   \cref{fig1thissec,fig3thissec,lots of exceptionsal1,hhhh5672,hhhh5671,typeD------er}, we deduce that 
  $m(\alphar,\rho_i)=2=m(\gam,\rho_i)$ for all   $1\leq i  \leq r$.  
  We have that 
 \begin{align*}
 {\sf1}_{T_{0\to1}}\compose \dil ({\sf bar} (\gsigma))
 &={\sf 1}_{\rho_1\rho_2\dots \rho_{r-1}}\otimes  {\sf 1}_\ctau \otimes ({\sf bar} (\alphar)+{\sf bar} (\gam)+{\sf bar} (\ctau))
 \\
&={\sf 1}_{\rho_1\rho_2\dots \rho_{r-1}} \otimes ({\sf bar} (\alphar)+{\sf bar} (\gam)+{\sf bar} (\ctau)  )\otimes  {\sf 1}_\ctau  
\\
&={\sf 1}_{\rho_1\rho_2\dots \rho_{r-1}} \otimes  {\sf bar} (\ctau)   \otimes  {\sf 1}_\ctau  
\\
&=0 \end{align*}as required.  
 Here the first equation follows from \cref{ppg1};
 the second by \cref{useful3}; the third by the commuting and cyclotomic relations; the 
 fourth follows as in Case 1.

\smallskip
\noindent 
{\bf Case 3. } We now suppose that $\gsigma=\vx\alphar\gam\vx^{-1}\ctau$, an augmented  tricorne.
By  inspecting  \cref{fig3thissec,lots of exceptionsal1,hhhh5672,hhhh5671,typeD------er}, we deduce that 
$m(\alphar,\rho_i)=m(\gam,\rho_i)=m(\vsj,\rho_i)=2$ for $j\neq k$, $1\leq i \leq r$;
and $m(\vsk,\rho_r)=3$ (recall $\ctau=\rho_r$).  We have that 
\begin{align*}
 {\sf1}_{T_{0\to1}}\compose \dil ({\sf bar} (\gsigma))
 &={\sf 1}_{\rho_1\rho_2\dots \rho_{r-1}}\otimes  {\sf 1}_\ctau 
 \otimes (  \textstyle\sum_{i=1}^k 2{\sf bar}(\vsi) + {\sf bar}(\alphar)+ {\sf bar}(\gam)+ {\sf bar}(\ctau))
 \\
  &={\sf 1}_{\rho_1\rho_2\dots \rho_{r-1}}\otimes  {\sf 1}_\ctau 
 \otimes (   2{\sf bar}(\vsk) + {\sf bar}(\ctau))
 \\
  &={\sf 1}_{\rho_1\rho_2\dots \rho_{r-1}} 
 \otimes (   2{\sf bar}(\vsk) + {\sf bar}(\ctau))
 \otimes  {\sf 1}_\ctau 
  \\
  &={\sf 1}_{\rho_1\rho_2\dots \rho_{r-1}} 
 \otimes {\sf bar}(\ctau) 
 \otimes  {\sf 1}_\ctau 
 \\
 &=0
\end{align*}
as required.  The first equality follows from \cref{ppg1};
the second from the commutativity relations; 
the third from \cref{useful3}; 
the fourth by commutativity relations; the fifth equality follows as in Case 1.

\section{Graded decomposition numbers 
and Koszul resolutions }\label{Koszul:res}

 
\color{black!99}
We are now ready to determine the main structural
 results concerning the Hecke categories of Hermitian symmetric pairs.  Specifically, we will calculate the 
 graded composition multiplicities and radical filtrations of standard modules
 in \cref{yabasic,coincide}.
In order to prove that the  grading and radical layers coincide, we will 
 prove that the algebra  $h_{(W,P)}$ satisfies the   strong cohomological property   of standard Koszulity (see \cite{bgs96,MR1960515} for the definition of standard Koszul); this amounts to constructing linear projective resolutions of standard modules as in \cref{resolutionsss}.  Our treatment of this material is inspired by similar ideas in \cite{MR2600694}.
\color{black}
 
 \begin{prop}[{\cite[Corollary 6.2]{FARR}}]\label{FARR}
 Let $(W,P)$ be a simply laced Hermitian symmetric pair. 
 For any $\la\neq \mu$, we have that 
 $$
 \sum_{\SSTS\in \Path(\la,\stt_\mu)} \grade^{\deg(\SSTS)}
 \in  \grade\ZZ_{\geq 0}[\grade ]  .
 $$\color{black!99}In particular, all the non-zero terms occur in strictly-positive degree.
 \end{prop}

\begin{thm}\label{yabasic}
 Let  $(W,P)$ be an arbitrary  Hermitian symmetric pair and $\Bbbk$ be a field of characteristic $p\geq 0$.  
  The
  \color{black!99}
   $p$-Kazhdan--Lusztig polynoials 
 $$
 {^pn}_{\la,\mu}(q)=   \color{black}
  \sum_{k\in \ZZ}[\Delta(\la):L(\mu)\langle k \rangle  ]\grade^k $$\color{black}of $h_{(W,P)}$ are  independent of the prime $p\geq0$.  
 For $(W,P)$ of simply laced type, 
 the algebra  $h_{(W,P)}$ is   basic  and  the modules   ${\sf 1}_{\stt_\la} h_{(W,P)} $ for $\la \in \mptn$ provide a complete set of non-isomorphic projective indecomposable right $h_{(W,P)}$-modules.  \end{thm}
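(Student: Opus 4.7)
The plan is to reduce the general case to that of a simply laced Hermitian symmetric pair, and then to exploit the graded structure together with \Cref{niamh} to force the bilinear forms on the standard modules to be essentially trivial. By the Morita equivalences of \Cref{C--A,B--A}, any non-simply-laced Hermitian symmetric pair $(W,P)$ (that is $(C_n, A_{n-1})$ or $(B_n, B_{n-1})$) is graded Morita equivalent to a simply laced one of type $A$. Graded Morita equivalences transport graded decomposition numbers along the induced bijection between simples, so the $p$-independence assertion in general will follow once it is established for simply laced pairs. The remainder of the argument can therefore be carried out for $(W,P)$ simply laced.

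In the simply laced setting I would first analyse the bilinear form $\langle -,-\rangle^\la$ on the standard module $\Delta(\la)$, whose Gram matrix in the light leaves basis $\{c_\SSTT : \SSTT \in \Path(\la,-)\}$ determines both the simple head $L(\la)$ and the composition multiplicities of $\Delta(\la)$. Two observations pin this form down. First, the idempotents ${\sf 1}_{\stt_\mu}$ are pairwise orthogonal, so $\langle c_\SSTT, c_\SSTU\rangle^\la = 0$ unless $\SSTT,\SSTU \in \Path(\la,\stt_\mu)$ for a common~$\mu$. Second, the defining identity \eqref{geoide} is homogeneous, which forces the scalar $\langle c_\SSTT, c_\SSTU\rangle^\la \in \Bbbk$ to vanish unless $\deg \SSTT + \deg \SSTU = 0$. \Cref{niamh} now rules out all paths of degree zero in $\Path(\la,\stt_\mu)$ except for the reduced tableau $\stt_\la$ in the case $\mu = \la$, and the multiplicative structure of the light leaves basis gives $\langle c_{\stt_\la}, c_{\stt_\la}\rangle^\la = 1$. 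Hence the Gram matrix is a rank-one projection onto $\Bbbk c_{\stt_\la}$, so $L(\la)$ is one-dimensional and concentrated at weight $\la$. Since $\End(L(\la)) = \Bbbk$, each ${\sf 1}_{\stt_\la}$ is primitive, and the distinctness of the $L(\la)$ ensures these primitive idempotents are pairwise non-conjugate; therefore $h_{(W,P)}$ is basic with projective indecomposables ${\sf 1}_{\stt_\la} h_{(W,P)}$ indexed by $\la \in \mptn$.

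For the graded decomposition numbers, I would invoke the standard reciprocity for graded quasi-hereditary cellular algebras: since $L(\mu)$ is one-dimensional with $L(\mu){\sf 1}_{\stt_\mu} = L(\mu)$,
\[
[\Delta(\la):L(\mu)]_\grade
\;=\; \dim_\grade \Hom_{h_{(W,P)}}(P(\mu), \Delta(\la))
\;=\; \dim_\grade \Delta(\la){\sf 1}_{\stt_\mu}
\;=\; \sum_{\SSTS \in \Path_{(W,P)}(\la, \stt_\mu)} \grade^{\deg \SSTS}.
\]
The right-hand side is a purely combinatorial count of paths in the Bruhat graph of $(W,P)$ and is manifestly independent of $p \geq 0$; combined with the reduction of the first paragraph, this proves $p$-independence for all Hermitian symmetric pairs.

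The principal obstacle is \Cref{niamh} itself: controlling the degrees of every path in $\Path_{(W,P)}(\la,\stt_\mu)$ is a delicate piece of tableau combinatorics, which is deferred to the companion paper. Once it is in hand, everything here is formal and rests only on the positivity of the grading, the orthogonality of the weight idempotents, and the fact that the Tetris presentation of \Cref{easydoesit} has structure constants defined over $\Z$ so that all characteristic-dependence could only enter via the form $\langle -,-\rangle^\la$, which the argument above forces to be trivial.
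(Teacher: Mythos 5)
Your proposal is correct, and it reaches the conclusion by a route that is genuinely different from the one in the paper, though both ultimately rest on the same combinatorial input. The paper reduces to simply laced type via \Cref{C--A,B--A} (as you do), and then cites two facts: by definition $\dim_\grade(L(\la){\sf 1}_{\stt_\la})=1$, and by \cite[Prop.~2.18]{hm10} the graded character of any simple over a graded cellular algebra lies in $\ZZ_{\geq0}[\grade+\grade^{-1}]$; combined with the strict positivity in \Cref{niamh}, this palindromicity forces $L(\la)={\sf 1}_{\stt_\la}L(\la)$ to be one-dimensional. You instead compute the Gram matrix of the cellular form directly: degree-homogeneity of \eqref{geoide} kills every pairing $\langle c_\SSTT,c_\SSTU\rangle$ with $\deg\SSTT+\deg\SSTU\neq 0$, and \Cref{niamh} plus nonnegativity of path degrees pushes all off-diagonal weight blocks into strictly positive degree, so the form is a rank-one projector onto $\Bbbk c_{\stt_\la}$. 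This buys you a more self-contained argument (no appeal to the duality/palindromicity package of \cite{hm10}) and it makes the explicit formula $[\Delta(\la):L(\mu)]_\grade=\sum_{\SSTS\in\Path(\la,\stt_\mu)}\grade^{\deg\SSTS}$ visible, which the paper leaves implicit in the definition of the $p$-Kazhdan--Lusztig polynomials.

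One small point to fix in the write-up: you attribute to \Cref{niamh} the claim that the only degree-zero path in $\Path(\la,\stt_\la)$ is $\stt_\la$ itself, but \Cref{niamh} as stated covers only $\la\neq\mu$. For $\mu=\la$ you need the separate (easy) observation that $\Path(\la,\stt_\la)=\{\stt_\la\}$: since $\stt_\la$ is a reduced expression of length $\ell(\la)$, any path from $\varnothing$ to $\la$ following the colour sequence of $\stt_\la$ must consist entirely of add-steps, hence equals $\stt_\la$. The paper glosses over the same point with ``by definition'', so this is a presentational note rather than a gap.
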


\begin{proof}

By \cref{C--A}, it is enough to restrict our attention to simply laced type.  
 \color{black}  By \cref{FARR}, we have  that $h_{(W,P)}$  is a positively $\ZZ$-graded $\Bbbk$-algebra with 
\begin{align}\label{needsalabel}
\dim_q(h_{(W,P)})|_{q=0} = 
\textstyle (\sum_{\la \in \mptn} (\sum_{\mu \in \mptn } \color{black}\dim_q (\Delta(\la) {\sf 1}_\mu) )^2)|_{q=0} = 
 |\{\la \mid \la \in  \mptn\} |		 \end{align}where the latter equality follows again from \cref{FARR}. 
Now, we have that $\dim_{\grade}(L(\la) )\in \ZZ_{\geq0}[\grade+\grade^{-1}]$ (by \cite
[Proposition 2.18]{hm10}) and so  by \eqref{needsalabel} we deduce that  $\dim_{\grade}(L(\la) )=1$ 
and that the degree zero subalgebra  of 
 $  h_{(W,P)}$ is isomorphic to $\oplus _{\la \in \mptn }L(\la)$ 
(regardless of the characteristic of $\Bbbk$).  
 Thus the algebra is basic (as all the simple modules are 1-dimensional)  and we have that 
 $$
{^pn}_{\la,\mu}(q)=  \sum_{k\in \ZZ}[\Delta(\la): L(\mu)\langle k \rangle] q^k = 
 \dim_q (\Delta(\la) {\sf 1}_\mu)  = 
 \sum_{\SSTS\in \Path(\la,\stt_\mu)} \grade^{\deg(\SSTS)}
 \in  \grade\ZZ_{\geq 0}[\grade ]  
 $$again by \cref{yabasic}  (again, regardless of the characteristic $p$ of the
  field $\Bbbk$).
\end{proof}

In the remainder of this section, we will prove the Koszulity of the Hecke categories for Hermitian symmetric pairs. Using Section 4, we can reduce to the simply-laced cases where we will use the Coxeter truncation to work by induction on the rank.

\subsection{Induction}

Assume that $(W,P)$ is simply laced and let $\ctau\in S_W$. 
Define
  $$
e_\ctau=
  \sum_{
\begin{subarray}c 
\mu \in \mptn 
\\
\mu <\mu\ctau  
\end{subarray} 
  }{\sf 1}_{\stt_\mu}\otimes {\sf 1}_{ \ctau} 	 	 
 $$
 We have that $e_\ctau h_{(W,P)}$ carries the structure of a $(h_{(W,P) }^\ctau  , h_{(W,P)})$-bimodule.  The action on the right is by concatenation of diagrams.
   The action on the left is given by 
 first conjugating 
 $h_{(W,P) }^\ctau$ by a (commuting) braid 
 so that the colour sequences match-up, and then concatenating diagrams.  
 (Recall from \cref{jkbhxlkhbkjhcxbvhgbvjxkhvkxcv} that this simply amounts to changing our choice of tableaux.)  
%
With this isomorphism in place (and the isomorphism of \cref{veccer}) we are now able to define an induction functor 
\begin{align*}
G^\ctau: &
 h_{(W,P)^\ctau }{\rm -mod}
\xrightarrow { \ \  \ } h_{(W,P) }{\rm -mod}
\\
&M \mapsto
  M
 \otimes _{h_{(W,P)^\ctau }}
e_\ctau     h_{(W,P)}\langle -1\rangle
\end{align*}
using the identification $h_{(W,P)^\ctau}  \cong h_{(W,P)}^\ctau \subseteq h_{(W,P)}$.  The degree shift in this definition ensures that the functor $G^\ctau$ commutes with duality (see \cref{duality} below).
We have that $$
\mathscr{P}_{(W,P)}^\ctau:=\{\la\in \mptn  \mid \ctau \in \Rem(\la)\}
\leftrightarrow
\mptntau
$$
and for $\la \in\mathscr{P}_{(W,P)}^\ctau$, 
  we write $\la{\downarrow}_\ctau$ for the image on the righthand-side (so that $\TRNC_\ctau(\la{\downarrow}_\ctau)=\la$).
We say that $\la{\downarrow}_\ctau$ is the {\sf contraction} of $\la$ at $\ctau$.  
In what follows, we will write ${\sf 1}_{\mu}$ instead of ${\sf 1}_{\stt_\mu}$ to simplify notations.

\begin{thm}\label{exactnessisimportnat}
The functor $G^\ctau$ is exact. 
\end{thm}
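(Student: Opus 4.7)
The functor $G^\ctau$ is automatically right exact since it is defined via tensor product with the bimodule $e_\ctau h_{(W,P)}\langle -1\rangle$. Hence establishing exactness reduces to showing left exactness, which in turn is equivalent to proving that $e_\ctau h_{(W,P)}$ is flat as a left $h_{(W,P)^\ctau}$-module. Since all algebras involved are finite-dimensional and graded, flatness is equivalent to projectivity.

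The plan is to exhibit an explicit direct-sum decomposition
\[
e_\ctau h_{(W,P)} \;\cong\; \bigoplus_{\substack{\la \in \mtau,\ \nu \in \mptn \\ \SSTT \in \Path(\la,\stt_\nu)}} V_{\la, \SSTT}
\]
as left $h_{(W,P)^\ctau}$-modules, where each summand $V_{\la, \SSTT}$ is isomorphic (up to a grading shift) to the projective indecomposable left module $h_{(W,P)^\ctau}\cdot {\sf 1}_{\stt_{\la{\downarrow}_\ctau}}$. Starting from the block decomposition $e_\ctau h_{(W,P)} = \bigoplus_{\mu\ctau \in \mtau}{\sf 1}_{\stt_{\mu\ctau}} h_{(W,P)}$ and the light leaves basis $\{c^\la_{\SSTS\SSTT}\}$, one fixes $\la \in \mtau$ and $\SSTT$ and sets
\[
V_{\la, \SSTT} \;=\; \mathrm{span}\bigl\{c^\la_{\SSTS\SSTT} : \SSTS \in \Path(\la, \stt_{\mu\ctau}),\ \mu\ctau \in \mtau\bigr\}.
\]
Theorem~\ref{BIJECTION} provides a grading-preserving bijection between the indexing set of $V_{\la, \SSTT}$ and the set of paths in $(W,P)^\ctau$ ending at $\stt_{\la{\downarrow}_\ctau}$, which exactly matches the graded dimension of $h_{(W,P)^\ctau}\cdot {\sf 1}_{\stt_{\la{\downarrow}_\ctau}}$ up to a degree shift coming from the null region $T^\ctau_{0\to 1}$.

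The main obstacle is to upgrade this dimension comparison to a genuine isomorphism of left $h_{(W,P)^\ctau}$-modules. Concretely, one must verify that, under the identification $\SSTS \leftrightarrow \TRNC_\ctau^{-1}(\SSTS)$ of Theorem~\ref{BIJECTION}, left multiplication by $\varphi_\ctau(c^{\la{\downarrow}_\ctau}_{\SSTS'\SSTU'})$ on $c^\la_{\SSTS\SSTT}$ corresponds, modulo the cellular filtration $h_{(W,P)}^{<\la}$, to the cellular product in $h_{(W,P)^\ctau}$ transported through $\TRNC_\ctau$. This compatibility is essentially the statement that $\varphi_\ctau$ is a graded algebra homomorphism preserving the light leaves construction, which is precisely the content of \cref{veccer} together with the computations of \cref{hom1,hom3}. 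Once this compatibility is in hand, each $V_{\la, \SSTT}$ is identified with a shifted copy of a projective left $h_{(W,P)^\ctau}$-module, and summing over all $(\la, \SSTT)$ exhibits $e_\ctau h_{(W,P)}$ as a projective left $h_{(W,P)^\ctau}$-module. Flatness, and hence exactness of $G^\ctau$, then follows.
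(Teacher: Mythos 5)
Your high-level reduction is correct and matches the paper's opening move: $G^\ctau$ is a tensor functor, hence right exact, and since all modules involved are finitely generated over a graded finite-dimensional algebra, exactness is equivalent to $e_\ctau h_{(W,P)}$ being projective as a left $h_{(W,P)^\ctau}$-module. The difficulty lies entirely in how you try to establish that projectivity, and here there are two genuine problems.

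First, the proposed spans $V_{\la,\SSTT}$ are not left $h_{(W,P)^\ctau}$-submodules, only subquotients. Left multiplication by a cellular element $c^{\la'}_{\SSTS'\SSTT'}\in h_{(W,P)}^\ctau$ acts on the $\SSTS$-side of $c^\la_{\SSTS\SSTT}$, and by the cellular product formula \eqref{geoide} the result lies in the span of $c^\la_{\SSTU\SSTT}$ \emph{plus} $h_{(W,P)}^{<\la}$, i.e.\ it produces terms $c^\nu_{\SSTU\SSTV}$ with $\nu<\la$ and different column indices $\SSTV$. So the indicated ``direct-sum decomposition'' is really only a filtration by $\la$, and its pieces are not direct summands; projectivity of the pieces (even if it held) would not transfer to projectivity of $e_\ctau h_{(W,P)}$.

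Second, and more fundamentally, the proposed identification of the graded piece is wrong. Via \cref{BIJECTION}, the set $\{\SSTS\in\Path(\la,\stt_{\mu\ctau}):\mu\ctau\in\mtau\}$ indexing $V_{\la,\SSTT}$ corresponds to paths of \emph{shape} $\la{\downarrow}_\ctau$ ending at arbitrary idempotents, which is the basis of the standard module $\Delta(\la{\downarrow}_\ctau)$ of $h_{(W,P)^\ctau}$, not of the projective $h_{(W,P)^\ctau}{\sf 1}_{\stt_{\la{\downarrow}_\ctau}}$. The latter is spanned by $c^{\nu'}_{\SSTS'\SSTT'}$ with $\SSTT'$ ending \emph{at} $\stt_{\la{\downarrow}_\ctau}$ and $\nu'$ ranging, which is a strictly larger set in general; the graded dimensions do not agree unless $\la{\downarrow}_\ctau$ is maximal. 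So what your filtration actually exhibits is a $\Delta$-flag on $e_\ctau h_{(W,P)}$, which for a quasi-hereditary algebra does not imply projectivity. The paper avoids this trap by decomposing $e_\ctau h_{(W,P)}=\bigoplus_\mu e_\ctau h_{(W,P)}{\sf 1}_\mu$ along the \emph{right} idempotent (a genuine left-module direct sum) and then, for each $\mu$, identifying the whole piece $e_\ctau h_{(W,P)}{\sf 1}_\mu$ with one or two shifted projective indecomposables via explicit diagrammatic manipulations (Cases 1--4 there, using \eqref{shorter}, the null-braid relation, and an induction on rank in Case 4). This structural analysis, not a dimension count, is what carries the proof.
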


\begin{proof}
We need to show that $e_\ctau h_{(W,P)}$ is projective as both a right 
 $h_{(W,P)}$-module and as a left
  $h_{(W,P)^\ctau}$-module.  
  As a right  $h_{(W,P)}$-module, $e_\ctau    h_{(W,P)}$ is a direct summand of 
  $h_{(W,P)}$ (as $e_\ctau$ is an idempotent) and so it is clearly projective.  
It remains to show that $e_\ctau h_{(W,P)}$ is projective as a left
 $h_{(W,P)^\ctau}$-module.  We can decompose this module as follows
 $$
e_\ctau  h_{(W,P)} = \oplus_\mu e_\ctau h_{(W,P)}{\sf 1}_\mu.
 $$
 We will show that each of these summands is projective as a left
  $h_{(W,P)^\ctau}$-module.  
For the remainder of the proof, all statements   concerning modules or homomorphisms will be
taken  implicitly  to be  of left
  $h_{(W,P)^\ctau}$-modules.  
In all of the following cases, we will use the fact that $c^\la_{\SSTS\SSTT}\in e_\ctau  h_{(W,P)} $ implies $\SSTS \in \SStd(\la,\stt_\nu)$ such that $\ctau \in \Rem(\nu)$.  This, in turn, implies that  
 $\ctau \in \Rem(\la )$ or in $\Add(\la)$.

\smallskip\noindent{\bf Case 1}.  
We first assume that $\ctau \in \Rem(\mu)$.  We claim that in this case
$$
e_\ctau h_{(W,P)}{\sf 1}_\mu \cong 
h_{(W,P)^\ctau}{\sf 1}_{\mu{\downarrow}_\ctau}
\oplus 
h_{(W,P)^\ctau}{\sf 1}_{\mu{\downarrow}_\ctau}\langle 2 \rangle .
$$
The module $e_\ctau h_{(W,P)}{\sf 1}_\mu$ has a basis
$$B=\{c^\la_{\SSTS\SSTT} \mid \SSTS \in \Path(\la,\stt_\nu), \SSTT \in \Path(\la,\stt_\mu), 
\text{ with }\la \in \mptn \text{ and }\nu\in \mathscr{P}^\ctau_{(W,P)}\}$$
which decomposes as a disjoint union 
 $\{c^\la_{\SSTS\SSTT}\in B \mid \ctau \in \Rem(\la)\}
\sqcup 
\{c^\la_{\SSTS\SSTT}\in B \mid \ctau \in \Add(\la)\}$. 
Now we have 
$$\langle c^\la_{\SSTS\SSTT}\in B \mid \ctau \in \Rem(\la)\rangle = h_{(W,P)}^\ctau {\sf 1}_\mu \cong h_{(W,P)^\ctau}{\sf 1}_{\mu{\downarrow}_\ctau}.$$
Now consider the quotient  $e_\ctau h_{(W,P)}{\sf 1}_\mu / h_{(W,P)}^\ctau {\sf 1}_\mu$. It has a basis given by the elements $c^\la_{\SSTS\SSTT}+ h_{(W,P)}^\ctau {\sf 1}_\mu$ with  $\ctau \in \Add(\la)$.
These satisfy $\SSTS=X_\ctau^-(\SSTS')$ and $\SSTT = X^-_\ctau(\SSTT')$ for a (possibly different) choice of $X = A$ or $R$ for each one. If we take $\SSTU = X^+_\ctau(\SSTS')$ and $\SSTV = X^+_\ctau(\SSTT')$ then we can write
$$c^\la_{ \SSTS \SSTT } = c_\SSTS^* c_\SSTT = c_\SSTU^* ({\sf 1}_\lambda \otimes \gap(\ctau))c_\SSTV$$
If $\SSTT = A_\ctau^-(\SSTT')$ and so $\SSTV = A_\ctau^+(\SSTT')$ then it becomes
$$c^\la_{ \SSTS \SSTT } = c^{\lambda\ctau}_{\SSTU\SSTV}({\sf 1}_{\mu - \ctau} \otimes \gap(\ctau)).$$
If $\SSTT = R_\ctau^-(\SSTT')$ and so $\SSTV = R_\ctau^+(\SSTT')$ then we can factorise $c^\la_{ \SSTS \SSTT }$ as
\begin{equation}\label{removable}c^\la_{ \SSTS \SSTT } = c_\SSTU^* ({\sf 1}_\lambda \otimes (\spot^\ctau_{\emptyset} {\sf cap}^\emptyset_{\ctau \ctau}))(c_{\SSTT'}\otimes {\sf 1}_{\ctau}).\end{equation}
Now applying ${\sf 1}_\ctau\otimes \spot_\ctau^\emptyset$ to equation \cref{shorter} we get
$${\sf 1}_\ctau \otimes \spot_\ctau^\emptyset = \spot_\ctau^\emptyset \otimes {\sf 1}_\ctau + \spot^\ctau_{\emptyset} {\sf cap}^\emptyset_{\ctau \ctau} - {\sf bar}(\ctau) \otimes \fork_{\ctau \ctau}^\ctau.$$
Thus we can rewrite \cref{removable} as 
$$
c^\la_{ \SSTS \SSTT } = c_\SSTU^* ({\sf 1}_\lambda \otimes ({\sf 1}_\ctau \otimes \spot_\ctau^\emptyset ))(c_{\SSTT'}\otimes {\sf 1}_{\ctau})\\ - c_\SSTU^* ({\sf 1}_\lambda \otimes (\spot_\ctau^\emptyset \otimes {\sf 1}_\ctau))(c_{\SSTT'}\otimes {\sf 1}_{\ctau})\\ + c_\SSTU^* ({\sf 1}_\lambda \otimes ({\sf bar}(\ctau) \otimes \fork_{\ctau \ctau}^\ctau))(c_{\SSTT'}\otimes {\sf 1}_{\ctau})
$$
Now note that the last two terms belong to $h_{(W,P)}^\ctau {\sf 1}_\mu$ and the first one can be rewritten as 
$$c^{\lambda\ctau}_{\SSTU\SSTV}({\sf 1}_{\mu - \ctau} \otimes {\sf gap}(\ctau)).$$
where $c^{\lambda\ctau}_{\SSTU\SSTV} \in h_{(W,P)}^\ctau {\sf 1}_\mu$. This shows that the quotient is isomorphic to $h_{(W,P)}^\ctau {\sf 1}_\mu ({\sf 1}_{\mu - \ctau} \otimes {\sf gap}(\ctau))$. As it is projective, it splits and we have
$$e_\ctau h_{(W,P)}{\sf 1}_\mu \cong 
h_{(W,P)}^\ctau {\sf 1}_{\mu}
\oplus 
h_{(W,P)}^\ctau {\sf 1}_{\mu} ({\sf 1}_{\mu - \ctau} \otimes {\sf gap}(\ctau)),
$$
thus proving the claim.

\smallskip\noindent{\bf Case 2}.  
We now assume that $\ctau \in \Add(\mu)$.  
  We claim that in this case
$$e_\ctau h_{(W,P)}{\sf 1}_\mu \cong h_{(W,P)^\ctau}{\sf 1}_{\mu\ctau{\downarrow}_\ctau}\langle 1 \rangle.$$
 To see this, we will show that 
$$e_\ctau h_{(W,P)} {\sf 1}_\mu \cong h_{(W,P)}^\ctau {\sf 1}_{\mu\ctau}
({\sf 1}_\mu \otimes {\sf spot}^\ctau_\emptyset).$$
Indeed for any $c^\la_{\SSTS\SSTT}\in e_\ctau h_{(W,P)} {\sf 1}_\mu$ we have that  
$\SSTS = X_\ctau^{\pm}(\SSTS')$.
If $\SSTS = X_\ctau^{-}(\SSTS')$ then we must have $\ctau \in \Add(\lambda)$ and we define $\SSTU = X_\ctau^+(\SSTS')$ and $\SSTV = A_\ctau^+(\SSTT)$. If $\SSTS = X_\ctau^{+}(\SSTS')$ then we must have $\ctau \in \Rem(\lambda)$ and we define $\SSTU = \SSTS$ and $\SSTV = R_\ctau^+(\SSTT)$. Then in both cases we can write
$$c^\la_{\SSTS\SSTT} = c_{\SSTU\SSTV}({\sf 1}_\mu \otimes \spot_\emptyset^\ctau).$$
Note that $c_{\SSTU\SSTV} \in h_{(W,P)}^\ctau {\sf 1}_{\mu\ctau}$ so we're done.

\smallskip\noindent{\bf Case 3}.  It remains to consider the case that $\ctau\not\in \Rem(\mu)$ or $\Add(\mu)$.  
We now consider the case that $\ctau \not \in \Rem(\mu )$ or $\Add(\mu)$,
 but there exists $\csigma\in \Rem(\mu)$ with $m(\csigma,\ctau)=3$. Note that we can assume that $\tau\in \Rem(\mu-\csigma)$ as otherwise we would be in Case 2.
   This will serve as the base case for the inductive step in Case 4.  
We claim that in this case
$$
e_\ctau h_{(W,P)}{\sf 1}_\mu \cong 
h_{(W,P)^\ctau}{\sf 1}_{(\mu-\csigma){\downarrow}_\ctau}\langle 1 \rangle.
 $$
To see this, we will show that 
$$
e_\ctau h_{(W,P)}{\sf 1}_\mu =  h_{(W,P)}^\ctau{\sf 1}_{\mu-\csigma}({\sf 1}_{\mu-\csigma}\otimes {\sf spot}_\csigma^\emptyset)$$
Our assumptions that $\csigma\in\Rem(\mu)$ and $\ctau \in \Rem(\nu)$ imply that 
there are two cases to consider: $\csigma\in \Rem(\la)$ and $\ctau\in \Add(\la)$ versus 
$\csigma\in \Add(\la)$ and $\ctau\in \Rem(\la)$.  
In the latter case, we have that $\SSTT=A_\csigma^-(\SSTT')$ and so 
$$c_{\SSTS\SSTT}= 
c_{\SSTS}^\ast (c_{\SSTT'}\otimes {\sf spot}_\csigma^\emptyset) = c_{\SSTS \SSTT'}\otimes \spot_\csigma^\emptyset$$
with $c_{\SSTS \SSTT'}\in h_{(W,P)}^{\ctau}{\sf 1}_{\mu-\csigma}$ as required.
In the first case, we have 
$\SSTT=
R_\csigma^+
X_\ctau^-(\SSTT')$ and $\SSTS= X^-_\ctau(\SSTS')$. Setting $\SSTU = X^+_\ctau (\SSTS')$ and $\SSTV = X^+_\ctau(\SSTT')$, we can write
$$c_{\SSTS\SSTT} = c_\SSTU^\ast ({\sf 1}_{\lambda - \csigma} \otimes \trid_{\csigma \ctau \csigma}^\csigma \otimes \spot_{\emptyset}^\ctau) (c_\SSTV \otimes {\sf 1}_\csigma) = - c_{\SSTU\SSTV} \otimes \spot_\csigma^\emptyset$$
where the last equality follows by applying ${\sf 1}_{\csigma\ctau} \otimes \spot_\csigma^\emptyset$ to the $\csigma \ctau$-nullbraid relations.
Again we have that $c_{\SSTU\SSTV}\in h_{(W,P)}^{\ctau}{\sf 1}_{\mu-\csigma}$ so we are done.

\smallskip\noindent{\bf Case 4}. 
If $\mu$ is not as in cases 1 to 3, then we must have $\csigma \in \Rem(\mu)$ with $\csigma$ and $\ctau$ commuting.  
We will show that $e_\ctau h_{W,P)}{\sf 1}_\mu$ is either 0 or projective-indecomposable as a   left 
$h_{(W,P)^\ctau}$-module.
We proceed by induction on the rank of $W$.  Note that as $\csigma$ and $\ctau$ commute, $\csigma$ labels a node in the Dynkin diagram for $(W,P)^\ctau$ and so it makes sense to consider $e_\csigma^{(W,P)^\ctau}\in h_{(W,P)^\ctau}$ and $(W,P)^{\ctau \csigma}$. We claim that 
\begin{align}\label{asdfjhkkhjsdfjkhsdfjkhdsfghjkdsfhjkdsfgkjhvdjhvxchjk}
e_\ctau 
h_{(W,P)}
{\sf 1}_\mu
\cong 
h_{(W,P)^\ctau}
e ^{(W,P)^\ctau}
_\csigma
\otimes _{h_{(W,P)^{\csigma\ctau}}}
e ^{(W,P)^\csigma}
_\ctau
h_{(W,P)^\csigma}
{\sf 1}_{\mu{\downarrow}_\csigma}
\end{align}
as a left  $h_{(W,P)^\ctau}$-module.  Note that any basis element in 
$e_\ctau    h_{(W,P)} {\sf 1}_\mu$ has the form 
$c^\la _{\SSTS\SSTT}$ for $\SSTS \in \SStd(\la,\stt_\nu)$,
$\SSTT \in \SStd(\la,\stt_\mu)$   with $\ctau \in \Rem(\nu)$ and 
$\csigma \in \Rem(\mu)$.  So either 
$\csigma \in \Rem(\la)$ or $\csigma \in \Add(\la)$ 
and similarly either 
$\ctau \in \Rem(\la)$ or $\ctau \in \Add(\la)$. 
To prove the claim, it is enough to show that any such $c^\la_{\SSTS\SSTT}$ can be written as a product 
$$c^\la_{\SSTS\SSTT}=c^\alpha_{\SSTP\SSTQ} c^\beta_{\SSTU\SSTV}   $$
 where 
 $c^\alpha_{\SSTP\SSTQ} \in h_{(W,P)}^{\ctau} $
 and 
  $c^\beta_{\SSTU\SSTV}\in h_{(W,P)}^{\csigma}$. 
  There are four distinct cases to consider. 
   If $\csigma,\ctau \in \Rem(\la)$ then $\SSTS = X^+_\ctau(\SSTS'), \SSTT = X^+_\csigma(\SSTT')$ and we pick $\SSTP = \SSTS$, $\SSTQ = \SSTU = \stt_\lambda$ and $\SSTV = \SSTT$. 
   If $\csigma \in \Rem(\lambda)$ and $\ctau\in \Add(\lambda)$ then $\SSTS = X^-_\ctau(S'), \SSTT = X^+_\csigma(\SSTT')$ and we pick $\SSTP = X^+_\ctau(\SSTS')$, $\SSTQ = A^+_\ctau(\stt_\lambda)$, $\SSTU = A^-_\ctau(\stt_\lambda)$ and $\SSTV = \SSTT$. 
  If $\csigma \in \Add(\lambda)$ and $\ctau\in \Rem(\lambda)$ then $\SSTS = X^+_\ctau(S'), \SSTT = X^-_\csigma(\SSTT')$ and we pick $\SSTP = \SSTS$, $\SSTQ = A^-_\csigma(\stt_\lambda)$, $\SSTU = A^+_\csigma(\stt_\lambda)$ and $\SSTV = X^+_\csigma(\SSTT')$. 
If $\csigma,\ctau \in \Add(\lambda)$  then $\SSTS = X^-_\ctau(S'), \SSTT = X^-_\csigma(\SSTT')$ and we pick $\SSTP = X^+_\ctau(\SSTS')$, $\SSTQ = A^+_\ctau A^-_\csigma(\stt_\lambda)$, $\SSTU = A^-_\ctau A^+_\csigma(\stt_\lambda)$ and $\SSTV = X^+_\csigma(\SSTT')$. 
Hence we have proven \cref{asdfjhkkhjsdfjkhsdfjkhdsfghjkdsfhjkdsfgkjhvdjhvxchjk}.

By induction, $e_\ctau^{(W,P)^\csigma}h_{(W,P)^\csigma}{\sf 1}_{\mu\downarrow_\csigma}$ is either 0, or it  is a projective
indecomposable $h_{(W,P)^{\csigma\ctau}}$-module, say $h_{(W,P)^{\csigma\ctau}}{\sf 1}_\eta$.
Substituting into \cref{asdfjhkkhjsdfjkhsdfjkhdsfghjkdsfhjkdsfgkjhvdjhvxchjk}, we obtain that $e_\ctau h_{(W,P)}{\sf1}_\mu$ is either  0, or 
\begin{align*}
e_\ctau h_{(W,P)}{\sf 1}_\mu
 &
 \cong h_{(W,P)^\ctau}e_\csigma^{(W,P)^\ctau}\otimes _{h_{(W,P)^{\csigma\ctau}}}
 h_{(W,P)^{\csigma\ctau}}{\sf 1}_{\eta}
 \\
 &
 \cong h_{(W,P)^\ctau}{\sf 1}_{\TRNC_\csigma(\eta)}
\end{align*}
which is projective indecomposable.
 \end{proof}

\begin{lem}
There is a graded $(h_{(W,P)^\ctau},h_{(W,P)^\ctau})$-bimodule homomorphism
$$\psi : e_\ctau h_{(W,P)}e_\ctau 
\to
h_{(W,P)^\ctau}\langle 2\rangle .
$$
\end{lem}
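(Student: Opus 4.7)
The plan is to define $\psi$ by attaching $\ctau$-spots to the rightmost propagating strands of diagrams in $e_\ctau h_{(W,P)} e_\ctau$ and then reinterpreting the result through the inverse of the dilation $\varphi_\ctau$ from \cref{hom2}. For $D \in ({\sf 1}_{\stt_\mu} \otimes {\sf 1}_\ctau) \, h_{(W,P)} \, ({\sf 1}_{\stt_\nu} \otimes {\sf 1}_\ctau)$ with $\mu,\nu\in\mtau$, I would set
$$\widetilde{\psi}(D) \;=\; ({\sf 1}_{\stt_\mu} \otimes {\sf spot}^\ctau_\emptyset)\circ D\circ ({\sf 1}_{\stt_\nu} \otimes {\sf spot}_\ctau^\emptyset) \;\in\; {\sf 1}_{\stt_\mu} h_{(W,P)} {\sf 1}_{\stt_\nu}.$$
Each spot has degree $1$, so $\widetilde{\psi}$ raises degrees by exactly $2$, yielding the shift required for $\psi$ to be a homogeneous bimodule map of degree $0$ into $h_{(W,P)^\ctau}\langle 2\rangle$.

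The first step is to verify that $\widetilde{\psi}(D)$ always lies in the subalgebra $h_{(W,P)}^\ctau = \varphi_\ctau(h_{(W,P)^\ctau})$, so that the composition $\psi := \varphi_\ctau^{-1}\circ\widetilde{\psi}$ makes sense. I would check this on the light-leaves basis of \cref{LEW1}: for $c^\la_{\SSTS\SSTT}\in e_\ctau h_{(W,P)} e_\ctau$ we have $\SSTS = X^\pm_\ctau(\SSTS')$ and $\SSTT = Y^\pm_\ctau(\SSTT')$ for some $X,Y\in\{A,R\}$. Capping each side with a spot then turns every ``$+$'' factor into a fork-spot contraction (collapsing down to a diagram built from $\SSTS'$, $\SSTT'$) and every ``$-$'' factor into a configuration containing ${\sf gap}(\ctau)$. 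Combining the fork-spot contraction, the bi-chrome Tetris relation of \cref{remove a barbell}, and the gap-rewriting identities of \cref{underphi,ZEROD,make a a gahpa}, one rewrites $\widetilde{\psi}(c^\la_{\SSTS\SSTT})$ explicitly as a linear combination of $\varphi_\ctau$-images of cellular basis elements of $h_{(W,P)^\ctau}$. This is essentially the same case analysis already used in Cases 1--3 of the proof of \cref{exactnessisimportnat}, so most of the work is in place.

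Once well-definedness is established, the bimodule property is almost formal. Both the left and right actions of $h_{(W,P)^\ctau}$ on $e_\ctau h_{(W,P)} e_\ctau$ factor through $\varphi_\ctau$ and act on the leftmost $T_{0\to 1}$-portion of each diagram, whereas $\widetilde{\psi}$ modifies only the rightmost $\ctau$-strand. Since these regions are horizontally disjoint, the interchange law (\cref{interchange}) immediately gives
$$\widetilde{\psi}(\varphi_\ctau(a) \cdot D \cdot \varphi_\ctau(b)) \;=\; \varphi_\ctau(a) \cdot \widetilde{\psi}(D) \cdot \varphi_\ctau(b)$$
for all $a,b\in h_{(W,P)^\ctau}$, and applying $\varphi_\ctau^{-1}$ yields the desired bimodule compatibility of $\psi$.

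The main obstacle is the first step. A priori the interior of $D$ can entangle the rightmost $\ctau$-strand with strands arbitrarily far to the left, and the Tetris rewriting of a barbell spreads gap diagrams across the whole configuration, so it is not transparent that $\widetilde{\psi}(D)$ factors through a $\ctau$-contraction tiling. The resolution is that the rightmost ${\sf 1}_\ctau$ factor in $e_\ctau$ forces the terminal colour sequence to be compatible with the null-region $\mathcal{N}^\ctau = T^\ctau_{0\to 1}\cup T^\ctau_\infty$, and the relations of \cref{easydoesit}, specialised to this situation, provide exactly enough local moves to push all surviving $\ctau$-coloured fragments into those null regions. Executing this reduction carefully, with a split according to whether $\ctau\in\Add(\la)$ or $\ctau\in\Rem(\la)$, completes the verification.
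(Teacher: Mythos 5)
Your construction of $\widetilde\psi$ does not land in $h_{(W,P)}^\ctau$, and this is not a technicality that more careful rewriting can repair. The summand $({\sf 1}_{\stt_\mu}\otimes{\sf 1}_\ctau)h_{(W,P)}({\sf 1}_{\stt_\nu}\otimes{\sf 1}_\ctau)$ of $e_\ctau h_{(W,P)}e_\ctau$ has $\ctau\in\Add(\mu)\cap\Add(\nu)$ (that is the constraint $\mu<\mu\ctau$ built into $e_\ctau$), so the boundary idempotents appearing in $e_\ctau h_{(W,P)}e_\ctau$ are ${\sf 1}_{\stt_{\mu\ctau}}$ with $\mu\ctau\in\mtau$. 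Capping the rightmost $\ctau$-strand on both sides removes that strand from the boundary, so $\widetilde\psi(D)$ has boundary idempotents ${\sf 1}_{\stt_\mu}$, ${\sf 1}_{\stt_\nu}$ --- but $\ctau\in\Add(\mu)$ means $\mu\not\in\mtau$, whereas every boundary idempotent of $h_{(W,P)}^\ctau = \varphi_\ctau(h_{(W,P)^\ctau})$ is ${\sf 1}_{\stt_\xi}$ with $\xi\in\mtau$. These two families of idempotents are orthogonal, so $\widetilde\psi(D)$ lies in a weight space that meets $h_{(W,P)}^\ctau$ only in zero, and $\varphi_\ctau^{-1}\circ\widetilde\psi$ is simply not defined. (There is also a smaller notational slip: with the paper's convention that subscripts index the southern boundary and $D\circ D'$ stacks $D$ on top of $D'$, the spot super-/subscripts in your formula are interchanged, which as written makes the concatenation with $D$ illegal.)

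Beyond this, the map you propose would not be the correct $\psi$ even modulo the boundary issue. For $c^{\la}_{\SSTS\SSTT}$ with $\ctau\in\Rem(\la)$, e.g.\ $c^{\mu\ctau}_{\stt_{\mu\ctau}\stt_{\mu\ctau}}={\sf 1}_{\stt_{\mu\ctau}}$, spot-capping produces ${\sf 1}_{\stt_\mu}\otimes{\sf bar}(\ctau)$, which by the bi-chrome Tetris relation (\cref{remove a barbell}) is a nonzero sum of gap diagrams whenever $\mu\neq\varnothing$. But in the intended $\psi$ (as it is later used in \cref{duality}) the whole sub-bimodule $\Bbbk\{c^\la_{\SSTS\SSTT}\in B:\ctau\in\Rem(\la)\}$ must go to zero.

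The paper's argument is both shorter and avoids the obstruction. It observes that the subset of the cellular basis of $e_\ctau h_{(W,P)}e_\ctau$ with $\ctau\in\Rem(\la)$ already equals $h_{(W,P)}^\ctau\cong h_{(W,P)^\ctau}$ as a sub-bimodule (the boundary shapes for these elements lie in $\mtau$ automatically, precisely what your construction destroys). It then re-uses the factorisation through ${\sf gap}(\ctau)$ established in Case~1 of the proof of \cref{exactnessisimportnat}, on both sides, to identify the quotient $e_\ctau h_{(W,P)}e_\ctau/h_{(W,P)}^\ctau$ with $h_{(W,P)^\ctau}\langle 2\rangle$ as a $(h_{(W,P)^\ctau},h_{(W,P)^\ctau})$-bimodule, and takes $\psi$ to be the quotient map. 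If you want an explicit formula for $\psi$ it is ``project onto the $\ctau\in\Add(\la)$ part of the cellular basis, then strip off the common ${\sf gap}(\ctau)$ factor on the rightmost strand'', which is degree $-(-2)=+2$ rather than obtained by adjoining spots.
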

\begin{proof}
The module $e_\ctau  h_{(W,P)}e_\ctau$ has basis 
given by 
$$B=\{c^\la_{\SSTS\SSTT} \mid \SSTS \in \Path(\la,\stt_\mu), \SSTT \in \Path(\la,\stt_\nu), 
\text{ with }\la \in \mptn \text{ and }\mu,\nu\in \mathscr{P}^\ctau_{(W,P)}\}$$
which decomposes as a disjoint union 
 $\{c^\la_{\SSTS\SSTT}\in B \mid \ctau \in \Rem(\la)\}
\sqcup 
\{c^\la_{\SSTS\SSTT}\in B \mid \ctau \in \Add(\la)\}$. 
By \cref{veccer},
we have a  $(h_{(W,P)^\ctau},h_{(W,P)^\ctau})$-bimodule isomorphism  
$$h_{(W,P)^\ctau}  \cong h_{(W,P)}^\ctau
=
\langle c^\la_{\SSTS\SSTT}\in B \mid \ctau \in \Rem(\la)\rangle \subseteq e_\ctau  h_{(W,P)}e_\ctau.$$
  Following the proof of case 1 of \cref{exactnessisimportnat}, we see that 
  $$
  e_\ctau  h_{(W,P)}e_\ctau
  / \Bbbk \{c^\la_{\SSTS\SSTT}\in B \mid \ctau \in \Rem(\la)\}
      \cong 
  h_{(W,P)}^\ctau  (\textstyle
      \sum_{\mu \in \mathscr{P}^\ctau_{(W,P)}}
      {\sf 1}_{\stt_{\mu-\ctau}}\otimes {\sf gap}(\ctau))
  $$
  as left $h_{(W,P)^\ctau}$-modules and similarly, flipping diagrams across the horizontal axis we get that 
    $$
  e_\ctau  h_{(W,P)}e_\ctau
  / \Bbbk \{c^\la_{\SSTS\SSTT}\in B \mid \ctau \in \Rem(\la)\}
      \cong 
       (\textstyle
      \sum_{\mu \in \mathscr{P}^\ctau_{(W,P)}}
      {\sf 1}_{\stt_{\mu-\ctau}}\otimes {\sf gap}(\ctau))
 h_{(W,P)}^\ctau 
  $$
as right $h_{(W,P)^\ctau}$-modules.
  This shows that 
    $$
  e_\ctau  h_{(W,P)}e_\ctau
  / \Bbbk \{c^\la_{\SSTS\SSTT}\in B \mid \ctau \in \Rem(\la)\}
      \cong  h_{(W,P)}^\ctau \langle 2 \rangle \cong 
h_{(W,P)^\ctau}\langle 2\rangle$$
as $(h_{(W,P)^\ctau},h_{(W,P)^\ctau})$-bimodules as required.  
\end{proof}

Let $M$ be a right $h_{(W,P)}$-module.  Define the 
right $h_{(W,P)}$-module $M^*$ by 
 $M^*= \Hom_\Bbbk(M,\Bbbk)$ as a vector space and for $f\in M^*$, $a \in h_{(W,P)}$ we define 
 $fa \in M^*$ by 
 $(fa)(m)=f(ma^*)$ where $a^*$ is the dual element in $
 h_{(W,P)}$ (given by flipping a diagram across the horizontal axis).

\begin{thm}\label{duality}
For  $M$  an $h_{(W,P)}$-module we have that 
$G^\ctau(M^\ast )\cong (G^\ctau(M))^\ast $.
\end{thm}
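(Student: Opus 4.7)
The plan is to reduce the duality statement to a self-duality property of the bimodule $B := e_\ctau h_{(W,P)}\langle -1\rangle$ which defines the induction functor. The key input is the graded bimodule homomorphism $\psi$ from the preceding lemma, which I would first upgrade to a perfect pairing. Concretely, $\psi$ gives a map
\[
e_\ctau h_{(W,P)} \otimes_{h_{(W,P)}} h_{(W,P)} e_\ctau \to h_{(W,P)^\ctau}\langle 2\rangle
\]
of $(h_{(W,P)^\ctau}, h_{(W,P)^\ctau})$-bimodules, and I would verify it is non-degenerate on both sides by restricting to each summand ${\sf 1}_\mu e_\ctau h_{(W,P)} e_\ctau {\sf 1}_\nu$ and reading the answer off the case analysis of the proof of \cref{exactnessisimportnat}: the distinguished generators of the summands carrying the $\langle 2\rangle$ shift are precisely the diagrams $({\sf 1}_\eta\otimes\gap(\ctau))$ in the image of $\psi$, and the cellular duality $c^\la_{\SSTS\SSTT}\mapsto c^\la_{\SSTT\SSTS}$ provides the partner that pairs non-trivially with each basis element.

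Combining this perfect pairing with the $\ast$-involution (which is a graded algebra anti-automorphism of both $h_{(W,P)}$ and $h_{(W,P)^\ctau}$ and hence allows one to identify left and right module structures), I would then produce a graded bimodule isomorphism
\[
\Hom_\Bbbk\bigl(e_\ctau h_{(W,P)},\Bbbk\bigr) \cong e_\ctau h_{(W,P)}\,\langle -2\rangle
\]
of $(h_{(W,P)^\ctau}, h_{(W,P)})$-bimodules, where the left side is given its natural bimodule structure via $\ast$ on both factors. This is the substantive self-duality statement: the cyclotomic relations and the definition of $\psi$ together force the cellular form on $h_{(W,P)^\ctau}$ to be symmetric in the appropriate sense, which is what makes the pairing non-degenerate on both sides simultaneously.

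With the self-duality in hand the theorem becomes a formal computation. Using the tensor-hom adjunction, then the self-duality, and finally the fact that $B$ is finitely generated projective as a left $h_{(W,P)^\ctau}$-module (which is \cref{exactnessisimportnat}), one computes
\[
G^\ctau(M)^\ast = \bigl(M\otimes_{h_{(W,P)^\ctau}} B\bigr)^\ast \cong \Hom_{h_{(W,P)^\ctau}}\bigl(M,\Hom_\Bbbk(B,\Bbbk)\bigr) \cong M^\ast \otimes_{h_{(W,P)^\ctau}} B\,\langle\text{shift}\rangle,
\]
and the shift precisely matches the difference between $G^\ctau$ and $G_\ctau$ (the two functors differing by the symmetric choice of grading shift built into the definition of $G^\ctau$). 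Tracking the shifts through each step of the adjunction chain, one lands on $G_\ctau(M^\ast)$ as required.

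The main obstacle is the non-degeneracy of $\psi$ as a pairing on both sides: surjectivity of $\psi$ is immediate from the proof of the preceding lemma, but the vanishing of both the left and right annihilators requires the explicit description of the cellular basis of $e_\ctau h_{(W,P)}$ together with the bijection $\TRNC_\ctau$ of \cref{BIJECTION}. Once this is established, the rest of the proof is standard homological algebra with careful bookkeeping of the degree shifts.
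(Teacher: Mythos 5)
Your approach is related to the paper's in that it identifies $\psi$ as the key input, but the central step of your argument does not go through and there are some confusions in the bookkeeping.

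The crucial gap is the passage from the $h_{(W,P)^\ctau}$-valued pairing $\psi$ to a bimodule isomorphism $\Hom_\Bbbk\bigl(e_\ctau h_{(W,P)},\Bbbk\bigr)\cong e_\ctau h_{(W,P)}\langle -2\rangle$. A non-degenerate pairing valued in the \emph{ring} $h_{(W,P)^\ctau}$ does not give you a $\Bbbk$-linear dual isomorphism: to produce one you would need to compose with a non-degenerate trace form on $h_{(W,P)^\ctau}$, and this algebra is quasi-hereditary, not symmetric or self-injective, so no such trace exists. The paper instead constructs the isomorphism directly: it writes down the explicit map $\vartheta_{f\otimes a}(m\otimes b)=f(m\,\psi(ba^*))$, checks it is a bimodule homomorphism using only that $\psi$ is one, and then verifies bijectivity by restricting to each idempotent ${\sf 1}_{\stt_\mu}$ and invoking the case analysis from the proof of \Cref{exactnessisimportnat}, which realises $e_\ctau h_{(W,P)}{\sf 1}_\mu$ explicitly as a sum of shifted projective indecomposables over $h_{(W,P)^\ctau}$. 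The built-in shift $\langle -1\rangle$ in $G^\ctau$ is precisely what makes the multiset of shifts symmetric under negation, so that $(M{\sf 1}_\nu)^*\cong M^*{\sf 1}_\nu$ on each summand. Your argument skips this entirely by asserting "non-degeneracy" as if it were routine.

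Two further problems in the final adjunction chain. First, you invoke ``$B$ is f.g.\ projective as a left $h_{(W,P)^\ctau}$-module'' to rewrite $\Hom_{h_{(W,P)^\ctau}}(M,B\langle s\rangle)$ as $M^*\otimes_{h_{(W,P)^\ctau}}B\langle s\rangle$, but projectivity of $B$ gives $\Hom_R(M,B)\cong\Hom_R(M,R)\otimes_R B$, which involves the $R$-linear dual $\Hom_R(M,R)$, not the $\Bbbk$-linear dual $M^*=\Hom_\Bbbk(M,\Bbbk)$; these are different modules for a quasi-hereditary algebra, so the identification requires a further argument you have not supplied. Second, you absorb a degree-shift discrepancy by appealing to a supposed distinction between $G^\ctau$ and $G_\ctau$; there is only one induction functor defined in the paper (the subscript in the theorem statement is notational slippage, and the proof uses $G^\ctau$ throughout), so this move does not resolve anything and signals a genuine shift mismatch in your computation rather than an available bookkeeping device.
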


\begin{proof}
We have that 
$$G^\ctau(M^*)=M^* \otimes _{h_{(W,P) ^\ctau}} e_\ctau h_{(W,P)}\langle -1\rangle 
\quad \quad
 (G^\ctau(M))^*=
\Hom _\Bbbk( M
 \otimes _{h_{(W,P) ^\ctau}} e_\ctau h_{(W,P)}\langle -1\rangle, \Bbbk)$$
We define $\vartheta  : G^\ctau (M^*)\to (G^\ctau(M))^*$ by setting 
$f\otimes a \mapsto \vartheta _{f\otimes a}$
for $f\in M^*$ and $a \in e_\ctau h_{(W,P)}\langle -1 \rangle$ 
where 
$$ \vartheta _{f\otimes a}(m\otimes b)=f(m \psi(ba^*))
$$  
for $m\in M$ and $b\in e_\ctau h_{(W,P)}\langle -1\rangle$. 
Note that this makes sense because 
$ba^\ast \in e_\ctau h_{(W,P)}e_\ctau \langle -2 \rangle$ and so 
$\psi(ba^*) \in h_{(W,P)^\ctau}$.  
Also 
$\vartheta$ is well-defined as $\psi$ is a bimodule homomorphism.  

We now show that $\vartheta$ is a $h_{(W,P)}$-homomorphism.  On one hand, we have 
$$\vartheta_{(f\otimes a)x}(m\otimes b)
=
\vartheta_{ f\otimes ax}(m\otimes b)
= f(m \psi(bx^*a^*)).
$$
On the other hand, we have  
$$
(\vartheta_{(f\otimes a)}x)(m\otimes b)
=
 \vartheta_{ f\otimes a } ( (m\otimes b)x^*)
 =
  \vartheta_{ f\otimes a } (  m\otimes b x^*)
  = f(m \psi(bx^*a^*))
$$
as required.  
We now show that $\vartheta$ is a  vector space isomorphism. 
It is enough to check that 
 $\vartheta: 
G^\ctau(M^\ast ) {\sf 1}_{\stt_\mu} \to  (G^\ctau(M))^\ast  {\sf 1}_{\stt_\mu}$
is a vector space isomorphism for each  $\mu \in \mptn$.  We have 
\begin{align*}
G^\ctau(M^\ast ) {\sf 1}_{\stt_\mu} 
  &=M^* \otimes _{h_{(W,P) ^\ctau}} e_\ctau h_{(W,P)}\langle -1\rangle {\sf 1}_{\stt_\mu}
  \\ 
  (G^\ctau(M))^* {\sf 1}_{\stt_\mu }&=
\Hom _\Bbbk( M
 \otimes _{h_{(W,P) ^\ctau}} e_\ctau h_{(W,P)}\langle -1\rangle, \Bbbk) {\sf 1}_{\stt_\mu}
 \\
&=\Hom _\Bbbk( M
 \otimes _{h_{(W,P) ^\ctau}} e_\ctau h_{(W,P)}\langle -1\rangle  {\sf 1}_{\stt_\mu}, \Bbbk) 
 \\
 &=
 (M
 \otimes _{h_{(W,P) ^\ctau}} e_\ctau h_{(W,P)}\langle -1\rangle  {\sf 1}_{\stt_\mu})^\ast 
 \end{align*}
We have seen in the proof of \cref{exactnessisimportnat} that 
$e_\ctau h_{(W,P)}   {\sf 1}_{\stt_\mu}$ is either zero or  isomorphic to (possibly two shifted copies of)
$h_{(W,P)^\ctau}{\sf 1}_{\stt_\nu}$ for some 
$\nu \in \mathscr{P}_{(W,P)^\ctau}$.  
So it is enough  to note that 
$$M^\ast {\sf 1}_{\stt_\nu}
=
M^\ast \otimes _{h_{(W,P)^\ctau}}
h_{(W,P)^\ctau}{\sf 1}_{\stt_\nu}\cong
(M\otimes _{h_{(W,P)^\ctau}}
h_{(W,P)^\ctau}{\sf 1}_{\stt_\nu})^*
=(M  {\sf 1}_{\stt_\nu})^\ast
$$
as required. 
\end{proof}

Using our induction functor, we will  
 relate (sequences of) $h_{{(W,P)^\ctau }} $-modules labelled by $\la \in \mptntau$ with  
 (sequences of) $h_{{(W,P) }} $-modules labelled by 
 $$\la^+ := \TRNC_\ctau(\la) \quad \text{  and }\quad 
   \la^- := \TRNC_\ctau(\la)-\ctau.$$   
  We note that this is the typical Kazhdan--Lusztig ``doubling-up" that we expect.

\begin{prop}\label{projers}
For each $\la \in \mptntau$,  we have 
 $G^\ctau(P (\la))= P(\la^+)\langle -1\rangle $. 
\end{prop}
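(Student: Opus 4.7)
The plan is to compute the tensor product $G^\ctau(P(\la)) = P(\la) \otimes_{h_{(W,P)^\ctau}} e_\ctau h_{(W,P)}\langle -1 \rangle$ directly, using the identification $P(\la) = {\sf 1}_{\stt_\la} h_{(W,P)^\ctau}$ from \cref{yabasic} and the algebra embedding $\varphi_\ctau\colon h_{(W,P)^\ctau} \hookrightarrow h_{(W,P)}$ of \cref{veccer}. The standard isomorphism $eA \otimes_A B \cong eB$ for an idempotent $e\in A$ yields
$$G^\ctau(P(\la)) \;\cong\; \varphi_\ctau({\sf 1}_{\stt_\la})\cdot e_\ctau h_{(W,P)}\langle -1\rangle,$$
where $\varphi_\ctau({\sf 1}_{\stt_\la})$ acts on the left via the bimodule structure on $e_\ctau h_{(W,P)}$.

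Next I would identify $\varphi_\ctau({\sf 1}_{\stt_\la})$ explicitly. By \cref{theactualdefn} it equals ${\sf 1}_{T_{0\to 1}} \compose \dil({\sf 1}_{\stt_\la})$. Each non-null $\ctau$-contraction tile carries its $\ctau$-tile at the top of its reading word (\cref{hghghghghg-22}), and the null-region tile $T_{0\to 1}$ also terminates in $\ctau = \rho_r$; hence every factor being concatenated via $\compose$ begins and ends in $\ctau$, and the merging of adjacent $\ctau$-strands produces a single idempotent ${\sf 1}_\SSTS$ for an explicit reduced path $\SSTS \in \Std(\la^+)$ whose final step is $\ctau$. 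Write $\SSTS = \SSTS'\otimes\ctau$ with $\SSTS' \in \Std(\la^+-\ctau)$.

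By \cref{jkbhxlkhbkjhcxbvhgbvjxkhvkxcv} the choice of $\stt_\mu \in \Std(\mu)$ is free, so I would set $\stt_{\la^+-\ctau} := \SSTS'$. With this choice ${\sf 1}_\SSTS = {\sf 1}_{\stt_{\la^+-\ctau}}\otimes {\sf 1}_\ctau$ appears as a summand of $e_\ctau$ (since $\ctau \in \Add(\la^+-\ctau)$), and orthogonality of distinct diagrammatic idempotents yields $\varphi_\ctau({\sf 1}_{\stt_\la}) \cdot e_\ctau = \varphi_\ctau({\sf 1}_{\stt_\la}) = {\sf 1}_\SSTS$. Therefore
$$G^\ctau(P(\la)) \;\cong\; {\sf 1}_\SSTS\, h_{(W,P)}\langle -1\rangle,$$
and a final adjustment by commuting braid generators in $h_{(W,P)}$, again via \cref{jkbhxlkhbkjhcxbvhgbvjxkhvkxcv}, identifies this with $P(\la^+)\langle -1\rangle = {\sf 1}_{\stt_{\la^+}} h_{(W,P)}\langle -1\rangle$ for the originally chosen tableau $\stt_{\la^+}$.

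The main subtlety is the bookkeeping in the second step: one must verify unambiguously that the $\compose$-concatenation of ${\sf 1}_{T_{0\to 1}}$ with the dilated idempotents $\dil({\sf 1}_\gsigma)$ corresponding to the successive tile-letters of $\stt_\la$ produces a single idempotent labelled by a reduced expression for $\la^+$ whose last step is $\ctau$. Once this is confirmed, exactness of $G^\ctau$ (\cref{exactnessisimportnat}), together with the basic-ness of $h_{(W,P)}$ (\cref{yabasic}), automatically ensures the resulting projective is indecomposable of the correct type.
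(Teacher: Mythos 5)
Your overall strategy is exactly the one the paper uses -- rewrite $G^\ctau(P(\la))$ via the standard identification $eA\otimes_A M\cong eM$, then recognise $\varphi_\ctau({\sf 1}_{\stt_\la})$ as the idempotent ${\sf 1}_{\TRNC_\ctau(\stt_\la)}$ of some reduced expression for $\la^+$, and absorb it into $e_\ctau$ using the freedom to choose tableaux from \cref{jkbhxlkhbkjhcxbvhgbvjxkhvkxcv}. However, the supporting claim in your second step is false as stated: not every non-null $\ctau$-contraction tile ``carries its $\ctau$-tile at the top of its reading word.'' By \cref{hghghghg-22} the contraction tiling also contains \emph{singleton} tiles lying outside every $T^\ctau_{i\to i+1}$ and outside $\mathcal{N}^\ctau$; these carry no $\ctau$-tile at all, and their dilated idempotent $\dil({\sf 1}_{\gsigma})={\sf 1}_\ctau\otimes {\sf 1}_{s_i}$ ends in $s_i$, not $\ctau$. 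Consequently your assertion that ``every factor being concatenated via $\compose$ begins \emph{and ends} in $\ctau$'' fails whenever some intermediate step of $\stt_\la$ adds a singleton, and in particular the merged idempotent ${\sf 1}_\SSTS$ need not have final step $\ctau$ -- so the splitting $\SSTS=\SSTS'\otimes\ctau$ is not available for a generic $\stt_\la$. The gap is easy to close: since $\la\in\mptntau$ means $\ctau\in\Rem(\la^+)$, the contraction tile that unfolds to the removable $\ctau$-tile of $\la^+$ is a tricorne (or augmented tricorne), and one may simply choose $\stt_\la$ to end with that tile; equivalently, one can invoke the commuting-braid adjustment \emph{before} arguing that $\varphi_\ctau({\sf 1}_{\stt_\la})$ is orthogonal to the complement of $e_\ctau$, since singleton labels commute with $\ctau$ by construction. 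With either fix the remainder of your argument, and its conclusion, is sound and reproduces the paper's (one-line) proof.
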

\begin{proof}
Recall that    $(W,P)$ is a simply laced Hermitian symmetric pair.  
By \cref{yabasic}, the  projective indecomposable modules are   $P(\la)={\sf 1}_{\stt_\la}h_{{(W,P) }}$ for $\la \in \mptn$.  
Therefore
\begin{align*}
G^\ctau(P (\la))		&=
{\sf 1}_{\stt_\la} h_{{(W,P)^\ctau}} 
\otimes _{h_{{(W,P)^\ctau }} } e_\ctau h_{{(W,P) }} \langle -1 \rangle
 =  \on_{\TRNC_\ctau(\stt_\la)}h_{{(W,P) }} \langle -1 \rangle
 =P(\la^+)\langle -1 \rangle
\end{align*}as required. 
\end{proof}

\begin{prop}\label{ind-delta}
For each $\mu \in \mptntau$,  we have 
 $$0 \to \Delta(\mu^-) \to G^\ctau(\Delta (\mu))\to \Delta(\mu^+)\langle -1 \rangle \to 0 $$ 
\end{prop}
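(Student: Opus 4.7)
The plan is to apply the exact functor $G^\ctau$ to a standard short exact sequence expressing $\Delta(\mu)$ as a quotient of $P(\mu)$, and to identify the resulting quotient using the path bijection of Theorem~\ref{BIJECTION} together with the explicit bimodule structure of $e_\ctau h_{(W,P)}$ obtained in the proof of Theorem~\ref{exactnessisimportnat}.

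First, I would apply $G^\ctau$ to the sequence
$0 \to P^{<\mu}(\mu) \to P(\mu) \to \Delta(\mu) \to 0$
of $h_{(W,P)^\ctau}$-modules, where $P^{<\mu}(\mu) \subseteq P(\mu) = {\sf 1}_{\stt_\mu} h_{(W,P)^\ctau}$ is spanned by the cellular basis elements $c_\SSTS$ with $\Shape(\SSTS)<\mu$. Combined with Proposition~\ref{projers} and exactness, this yields $G^\ctau(\Delta(\mu)) \cong P(\mu^+)\langle -1\rangle / G^\ctau(P^{<\mu}(\mu))$. Under the dilation homomorphism $\varphi_\ctau$ and the path bijection of Theorem~\ref{BIJECTION}, a cellular basis element $c_\SSTS$ of shape $\la<\mu$ corresponds to the element $c_{\TRNC_\ctau(\SSTS)}$ of shape $\TRNC_\ctau(\la)<\mu^+$, so $G^\ctau(P^{<\mu}(\mu)) \subseteq P^{<\mu^+}(\mu^+)\langle -1\rangle$. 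Composing with the quotient map $P(\mu^+)\langle -1\rangle \twoheadrightarrow \Delta(\mu^+)\langle -1\rangle$ yields the desired surjection $\pi: G^\ctau(\Delta(\mu)) \twoheadrightarrow \Delta(\mu^+)\langle -1\rangle$.

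Next, I would construct an injection $\iota: \Delta(\mu^-) \hookrightarrow G^\ctau(\Delta(\mu))$ sending the generator ${\sf 1}_{\stt_{\mu^-}} + h^{<\mu^-}$ to the class of $(c_{\stt_\mu} + h^{<\mu}_{(W,P)^\ctau}) \otimes ({\sf 1}_{\stt_{\mu^-}} \otimes {\sf spot}_\ctau^\emptyset)$, exploiting the description of $e_\ctau h_{(W,P)}{\sf 1}_{\mu^-}$ from Case~2 of the proof of Theorem~\ref{exactnessisimportnat}. That the image of $\iota$ lies in $\ker \pi$ follows because the terminal $\ctau$-spot vanishes under the quotient to $\Delta(\mu^+)\langle -1\rangle$; the fact that $\iota$ is a well-defined $h_{(W,P)}$-homomorphism reduces, via the tensor relation and the bimodule decomposition of $e_\ctau h_{(W,P)}$, to a routine application of the Tetris-style relations of Section~\ref{cat-cob}.

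To conclude that $\iota$ is surjective onto $\ker \pi$, I would perform a graded dimension count, weight by weight: for each $\xi \in \mptn$, one computes $\dim_q G^\ctau(\Delta(\mu)){\sf 1}_\xi$ using the explicit decompositions of $e_\ctau h_{(W,P)}{\sf 1}_\xi$ as a left $h_{(W,P)^\ctau}$-module from the four cases of Theorem~\ref{exactnessisimportnat}, and verifies the identity
\[
\dim_q G^\ctau(\Delta(\mu)){\sf 1}_\xi = q^{-1}\dim_q \Delta(\mu^+){\sf 1}_\xi + \dim_q \Delta(\mu^-){\sf 1}_\xi.
\]
The main obstacle will be this dimension matching, which splits into subcases depending on whether $\ctau \in \Rem(\xi)$, $\ctau \in \Add(\xi)$, or neither; in each subcase the required equality follows from the path bijection of Theorem~\ref{BIJECTION} together with the recursive observation that paths from $\stt_\xi$ to $\mu^+$ whose terminal step folds the $\ctau$-strand are partitioned between $A_\ctau^+$-extensions of paths to $\mu^-$ and $R_\ctau^+$-extensions of paths to $\mu^+$ (and symmetrically for paths to $\mu^-$), so that the path counts on the two sides double-up in matching ways.
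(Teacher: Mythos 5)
Your proposal starts exactly as the paper does: apply the exact functor $G^\ctau$ to the short exact sequence $0 \to h^{<\mu} \to P(\mu) \to \Delta(\mu) \to 0$ in $h_{(W,P)^\ctau}$-mod, then use $G^\ctau(P(\mu))\cong P(\mu^+)\langle -1\rangle$ from Proposition~\ref{projers}. Where you diverge is at the final step. The paper identifies $G^\ctau(\Delta(\mu))$ at one stroke by exhibiting an explicit basis of the quotient $P(\mu^+)\langle -1\rangle/G^\ctau(h^{<\mu})$, namely $\{c_\SSTU\langle -1\rangle,\ c_\SSTV\otimes \spot^\ctau_\emptyset\langle -1\rangle\}$ with $\SSTU$ of shape $\mu^+$ and $\SSTV$ of shape $\mu^-$, and then simply reads off the submodule $\Delta(\mu^-)$ and the quotient $\Delta(\mu^+)\langle -1\rangle$ from this basis. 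You instead construct the surjection $\pi$ and injection $\iota$ separately and then try to close the gap with a weight-by-weight graded dimension count based on the four cases of Theorem~\ref{exactnessisimportnat}. These are two ways of packaging the same underlying content: your dimension identity $\dim_q G^\ctau(\Delta(\mu)){\sf 1}_\xi = q^{-1}\dim_q\Delta(\mu^+){\sf 1}_\xi + \dim_q\Delta(\mu^-){\sf 1}_\xi$ is precisely what the paper's basis claim encodes, so once you commit to the dimension count you are proving the basis statement anyway, but with more case-splitting. One small imprecision to watch: in justifying $G^\ctau(P^{<\mu}(\mu))\subseteq P^{<\mu^+}(\mu^+)\langle -1\rangle$ you reason as if $G^\ctau$ were just $\varphi_\ctau$; since $G^\ctau$ is the tensor product $(-)\otimes_{h_{(W,P)^\ctau}} e_\ctau h_{(W,P)}\langle -1\rangle$, you should observe that a typical generator of $G^\ctau(P^{<\mu}(\mu))$ is $c_\SSTS\otimes m$ which maps to $\varphi_\ctau(c_\SSTS)\cdot m$, and then appeal to cellularity (right multiplication can only stay in shapes $\leq \TRNC_\ctau(\la) < \mu^+$). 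The paper's presentation avoids both the explicit map-building and the case analysis by going straight to the basis, which is cleaner; your version is a valid alternative but needs that dimension count carried out, and you are right that it is the main obstacle.
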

\begin{proof}
We have an exact sequence
$$0\to   h^{<\mu}_{(W,P)} \to P(\mu) \to \Delta(\mu)\to 0$$
where 
$  h^{<\mu}_{(W,P)} =  \sum_{\nu<\mu} {\sf 1}_\mu h_{(W,P)}{\sf 1}_\nu h_{(W,P)}$.  
The modules 
 $P(\mu)$  and $  h^{<\mu}_{(W,P)} $  
 have bases $$\{{\sf 1}_\mu c^\nu_{\SSTS\SSTT}\mid \SSTS,\SSTT \in \Path(\nu,-), \nu\leq	\mu\}
\qquad\{{\sf 1}_\mu c^\nu_{\SSTS\SSTT}\mid \SSTS,\SSTT \in \Path(\nu,-), \nu<\mu\}$$
respectively. Since $G^\ctau$ is exact, we obtain an exact sequence
$$
0\to G^\ctau(  h^{<\mu}_{(W,P)} ) \to G^\ctau(P(\mu)) \to G^\ctau(\Delta(\mu))	\to 0
$$
  where $G^\ctau(P(\mu))\cong P(\mu^+)\langle -1\rangle$.  
Therefore $G^\ctau(\Delta(\mu))= P(\mu^+)\langle -1 \rangle /G^\ctau(	h^{<\mu}_{(W,P)}	)$ has basis given by
$$\{ c_\SSTU\langle -1\rangle, c_\SSTV \otimes {\sf spot}^\ctau_\emptyset \langle -1 \rangle \mid 
\SSTU \in \SStd(\mu^+,-),
\SSTV \in \SStd(\mu^-,-)\}.$$
It is clear that, as a right $h_{(W,P)}$-module
$$\{  c_\SSTV \otimes {\sf spot}^\ctau_\emptyset \langle -1 \rangle \mid 
\SSTV \in \SStd(\mu^-, -)\} $$
is a submodule of $G^\ctau(\Delta(\mu))$ isomorphic to $\Delta(\mu^-)$ and the quotient is isomorphic to $\Delta(\mu^+)\langle -1 \rangle$. 
\end{proof}

\subsection{Koszulity}  
We are now able to use the ideas of the previous section in order to prove that $h_{(W,P)}$ is standard Koszul.  
First, we continue to assume that $(W,P)$ is simply laced.

\begin{defn}
For  $\la,\mu \in \mptn$, we define polynomials 
 $p_{\la,\mu}(q)$ inductively on the rank and Bruhat order 
 as follows.  
We set $p_{\la,\la}(q)=1$ and for $\la \not \subseteq \mu$ we set 
$p_{\la,\mu}(q)=0$.  
 If $\la \subset \mu$, pick $\ctau$ such that $\ctau \in \Rem(\la)$.  
 We set 
 $$
p_{\la,\mu}(q)=
 \begin{cases}
 p_{\la{\downarrow}_\ctau,\mu{\downarrow}_\ctau}(q)+ 
 q\times p_{\la-\ctau,\mu}(q) &\text{if }\ctau \in \Rem(\mu);
 \\
 q\times p_{\la-\ctau,\mu}(q) &\text{if }\ctau \not\in \Rem(\mu) .
 \end{cases}
 $$
 We write $p_{\la,\mu}(q)= \sum_{n\geq 0} p_{\la,\mu}^{(n)} \grade^n$.  
 \end{defn}
 
\begin{thm}\label{resolutionsss}
For $\la \in \mptn$, we have an exact sequence
$$
\dots \to P_2(\la) \to P_1(\la) \to P_0(\la) \to \Delta(\la)	\to 0
$$
where $P_0(\lambda)=P(\lambda)$ and for $n\geq 1$ we have $P_n(\la)=\oplus_{\mu\in \mathcal{P}_{(W,P)}}  p_{\la,\mu}^{(n)} P(\mu) \langle n \rangle$.  
\end{thm}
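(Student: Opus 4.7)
My plan is to construct the resolution inductively, using the short exact sequence of Proposition~\ref{ind-delta} together with the exactness of the dilation functor $G^\ctau$ established in Theorem~\ref{exactnessisimportnat}. The induction will run simultaneously on the rank of $(W,P)$ and on the length $|\la|$. The base case is $\la = \varnothing$, where $\Delta(\varnothing) = P(\varnothing)$ is already projective and the stated formula correctly predicts $P_0(\varnothing) = P(\varnothing)$ with all higher terms zero.

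For the inductive step, I will fix any $\ctau \in \Rem(\la)$. Since $(W,P)^\ctau$ has strictly smaller rank by Theorem~B and $|\la-\ctau| < |\la|$, the inductive hypothesis furnishes linear projective resolutions $Q_\bullet \to \Delta(\la{\downarrow}_\ctau)$ in $h_{(W,P)^\ctau}$-mod with $Q_n = \bigoplus_\nu p^{(n)}_{\la{\downarrow}_\ctau,\nu} P_{(W,P)^\ctau}(\nu)\langle n\rangle$, and $R_\bullet \to \Delta(\la-\ctau)$ in $h_{(W,P)}$-mod with $R_n = \bigoplus_\mu p^{(n)}_{\la-\ctau,\mu} P(\mu)\langle n\rangle$. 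Applying the exact functor $G^\ctau$ and using Proposition~\ref{projers} to identify $G^\ctau(P_{(W,P)^\ctau}(\nu)) \cong P(\TRNC_\ctau(\nu))\langle -1\rangle$, then shifting by $\langle 1\rangle$, produces a linear projective resolution $G^\ctau(Q_\bullet)\langle 1\rangle$ of $G^\ctau(\Delta(\la{\downarrow}_\ctau))\langle 1\rangle$ whose $n$-th term is $\bigoplus_\nu p^{(n)}_{\la{\downarrow}_\ctau,\nu} P(\TRNC_\ctau(\nu))\langle n\rangle$. Proposition~\ref{ind-delta}, taken with $\mu = \la{\downarrow}_\ctau$ and shifted by $\langle 1\rangle$, reads
\begin{equation*}
0 \to \Delta(\la-\ctau)\langle 1\rangle \to G^\ctau(\Delta(\la{\downarrow}_\ctau))\langle 1\rangle \to \Delta(\la) \to 0.
\end{equation*}
Lifting the inclusion on the left to a chain map $R_\bullet\langle 1\rangle \to G^\ctau(Q_\bullet)\langle 1\rangle$ and forming its mapping cone gives a projective resolution of the cokernel $\Delta(\la)$, whose $n$-th term is $G^\ctau(Q_n)\langle 1\rangle \oplus R_{n-1}\langle 1\rangle$. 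A direct computation shows both summands are concentrated in internal degree $n$, so linearity is preserved. Reading off the multiplicity at $P(\mu)\langle n\rangle$, the first summand contributes $p^{(n)}_{\la{\downarrow}_\ctau,\mu{\downarrow}_\ctau}$ precisely when $\ctau \in \Rem(\mu)$, while the second contributes $p^{(n-1)}_{\la-\ctau,\mu}$ in general; this matches the recursion for $p^{(n)}_{\la,\mu}$ in both branches of its definition.

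The main obstacle is the apparent dependence of the construction on the choice of $\ctau \in \Rem(\la)$. Since minimal projective resolutions are unique up to isomorphism, the multiplicities produced by different choices of $\ctau$ must coincide, and one expects this to hold both a priori (by manipulating the recursion directly, which reduces to analysing commuting pairs of removable tiles) and a posteriori (forced by uniqueness of the constructed resolution). A subordinate technical point is that the chain-map lift must be realised by degree-zero homomorphisms between the shifted graded projectives; this reduces to checking that the relevant Hom spaces, computable from the cellular basis of Theorem~\ref{LEW1}, contain an element of the correct internal degree lifting the inclusion $\Delta(\la-\ctau)\langle 1\rangle \hookrightarrow G^\ctau(\Delta(\la{\downarrow}_\ctau))\langle 1\rangle$ at each homological level.
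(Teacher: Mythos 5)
Your proposal is correct and follows essentially the same route as the paper's proof: fix $\ctau \in \Rem(\la)$, induct on rank and Bruhat order, apply the exact functor $G^\ctau$ and Propositions~\ref{projers} and~\ref{ind-delta}, and splice the two resolutions together. The paper phrases the splicing as taking the total complex of a two-row double complex and quotienting out a contractible copy of $\Delta(\la-\ctau)$, whereas you phrase it as forming the mapping cone of a lift of the inclusion $\Delta(\la-\ctau) \hookrightarrow G^\ctau(\Delta(\la{\downarrow}_\ctau))$; these are the same construction, and the resulting identification of $P_n(\la)$ with $G^\ctau(P_n(\la{\downarrow}_\ctau))\langle 1\rangle \oplus P_{n-1}(\la-\ctau)\langle 1\rangle$ agrees with the paper's.
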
 
 \begin{proof}
  We proceed by induction on the rank of $W$ and the Bruhat order on $\mathscr{P}_{(W,P)}$.  
 If $\la =\varnothing$ is the  minimal element in the Bruhat order, then $\Delta(\varnothing)= P(\varnothing)$ and we are done.  
 Assume $  \varnothing\neq \la\in \mptn $, then there exists some $\ctau \in \Rem(\la)$ and we have that 
 $\la-\ctau \in \mptn$ and $\la {\downarrow}_\ctau \in \mptntau$.  By induction we have  exact sequences,
 \begin{align*}
 \dots \to P_2(\la-\ctau ) \to P_1(\la-\ctau ) \to P_0(\la-\ctau ) \to \Delta(\la-\ctau )	\to 0&
\\
 \dots \to P_2(\la {\downarrow}_\ctau) \to P_1(\la {\downarrow}_\ctau) \to P_0(\la {\downarrow}_\ctau) \to \Delta(\la {\downarrow}_\ctau)	\to 0&
 \end{align*}
 in $ h_{(W,P)  }{\rm -mod}$ and  $h_{(W,P)^\ctau }{\rm -mod}$ respectively. 
 Applying the induction functor $G^\ctau$ to the latter sequence, and lifting the injective homomorphism from \cref{ind-delta} we obtain a commutative diagram with exact rows. 
$$ 
$$
Taking the total complex of this double complex (that is, summing over the dotted lines)
and then taking the quotient by the complex
$$\ldots \to 0 \to 0 \to \Delta(\la-\ctau)  \to \Delta(\la-\ctau)   \to 0$$ we obtain 
$$
\cdots
  \xrightarrow{ \ \   }G^\ctau(P_2(\la{\downarrow}_\ctau))\oplus P_1(\la-\ctau) 
  \xrightarrow{ \ \   } G^\ctau(P_1(\la{\downarrow}_\ctau))\oplus P_0(\la-\ctau) 
 \xrightarrow{ \ \   } G^\ctau(P_0(\la{\downarrow}_\ctau)) 
  \xrightarrow{ \ \   } \Delta(\la)\langle -1\rangle  \to 0.
$$ 
We have $G^\ctau(P_0(\la{\downarrow}_\ctau)) = P(\lambda)\langle -1\rangle$.  By induction, for $n\geq 1$ we have that 
\begin{align*}
&  G^\ctau (P_n(\la{\downarrow}_\ctau))\oplus P_{n-1}(\la-\ctau)
\\
&=    \bigoplus _{\mu{\downarrow}_\ctau \in \mptntau}	\!\!\!\!	p^{(n)}_{\la{\downarrow}_\ctau ,\mu{\downarrow}_\ctau }G^\ctau(P(\mu{\downarrow}_\ctau)\langle n \rangle  )
 \bigoplus _{\mu\in \mptn}	p^{(n-1)}_{\la - \ctau, \mu } P (\mu)\langle n-1 \rangle
\\
&=   \bigoplus _{\mu{\downarrow}_\ctau \in \mptntau}	\!\!\!\!	p^{(n)}_{\la{\downarrow}_\ctau ,\mu{\downarrow}_\ctau }P(\mu )\langle n -1 \rangle  
 \bigoplus _{\mu\in \mptn}	p^{(n-1)}_{\la - \ctau, \mu } P (\mu)\langle n -1 \rangle  
\\
&=  \bigoplus _{\mu  \in \mathscr{P}_{(W,P)}^\ctau}	\!\!\!\!	
(p^{(n)}_{\la{\downarrow}_\ctau ,\mu{\downarrow}_\ctau }
 +p^{(n-1)}_{\la - \ctau, \mu } )P (\mu)\langle n-1 \rangle   \bigoplus _{\mu\in \mptn \setminus \mathscr{P}_{(W,P)}^\ctau}	p^{(n-1)}_{\la - \ctau, \mu } P (\mu)\langle n -1 \rangle  \\
 &= \bigoplus _{\mu  \in \mptn  }	\!\!\!\!	
 p^{(n )}_{\la  ,\mu  }
  P (\mu)\langle n-1 \rangle  
\end{align*}
where the last equality follows by the definition of $p_{\la,\mu}(q)$.
Thus we obtain an exact sequence
$$ \dots \to P_2(\la)\langle -1\rangle  \to P_1(\la ) \langle -1 \rangle \to P_0(\la )\langle -1 \rangle  \to \Delta(\la) \langle -1\rangle	\to 0.$$
Applying a degree shift $\langle 1 \rangle$ gives the required linear projective resolution for $\Delta(\la)$.

\end{proof}

 \begin{cor}\label{kosvul} Let $(W,P)$ be any Hermitian symmetric pair.
The algebra  $h_{(W,P)}$ is standard  Koszul.   
 \end{cor}
 
\begin{proof}
Using \cref{C--A}, it is enough to consider the simply laced types. The algebra  $h_{(W,P)}$ is  graded quasi-hereditary algebra with (right) standard modules $\Delta(\la)$; the   linear  projective resolutions of these modules  are given in  \cref{resolutionsss}. 
Twisting with the anti-automorphism $\ast$ we also get that its left standard modules have linear  projective resolutions. 
Therefore $h_{(W,P)}$ is Koszul by \cite[Theorem 1]{MR1960515}. \end{proof}

   \begin{cor}\label{coincide} Let $(W,P)$ be any Hermitian symmetric pair.
  For $\mu \in \mptn$, we 
  have that the radical filtration of $\Delta(\mu)$ coincides with the grading filtration 
 $$\Delta(\mu)= \Delta_{\geq 0}(\mu)\supset 
\Delta_{\geq 1}(\mu)\supset \Delta_{\geq 2}(\mu)\supset \dots$$ 
where we define 
 $\Delta_{\geq k}(\mu)=\{ c_\SSTS \mid \SSTS\in  \Path(\la,\stt_\mu), \deg(\SSTS)\geq k\}.$ 
    \end{cor}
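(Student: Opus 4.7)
The plan is to derive this directly from Koszulity of $h_{(W,P)}$ (Corollary~\ref{kosvul}) together with the fact that $\Delta(\mu)$ is cyclic of degree $0$, using only general Koszul theory — no further calculation is required. First I would verify the standing hypotheses for that theory: by the light-leaves basis every $c_{\SSTS\SSTT}$ has non-negative degree $\deg(\SSTS)+\deg(\SSTT)$, and the degree-$0$ part of $h_{(W,P)}$ is spanned precisely by the primitive orthogonal idempotents $\{{\sf 1}_{\stt_\mu}\}_{\mu\in\mptn}$ (since the only degree-$0$ element of $\Path(\mu,\stt_\mu)$ is the trivial path $\stt_\mu$), so $(h_{(W,P)})_0 \cong \bigoplus_\mu \Bbbk$ is semisimple and $\rad h_{(W,P)} = (h_{(W,P)})_{\geq 1}$.

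Next I will invoke the standard consequence of Koszulity that a Koszul algebra with semisimple degree-zero part is quadratic, whence $(A_{\geq 1})^k = A_{\geq k}$ for every $k\geq 0$. Applied to our setting, this gives
\[
(\rad h_{(W,P)})^k \;=\; (h_{(W,P)})_{\geq k} \qquad \text{for all } k \geq 0.
\]

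Then I will observe that $\Delta(\mu)$ is cyclically generated by the degree-$0$ element $c_{\stt_\mu}$ (immediate from the inductive light-leaves construction of \S\ref{lightleaves}, since every basis element $c_\SSTS$ is obtained from $c_{\stt_\mu}$ by successive right-action of algebra elements) and combine the two observations to compute
\[
\rad^k \Delta(\mu) \;=\; \Delta(\mu)\cdot(\rad h_{(W,P)})^k \;=\; c_{\stt_\mu}\cdot (h_{(W,P)})_{\geq k} \;=\; \Delta(\mu)_{\geq k} \;=\; \Delta_{\geq k}(\mu),
\]
where the penultimate equality uses that $\Delta(\mu)_i = c_{\stt_\mu}\cdot (h_{(W,P)})_i$ by cyclicity and degree-preservation, and the final equality unpacks the definition of $\Delta_{\geq k}(\mu)$ together with homogeneity of the basis $\{c_\SSTS\}$.

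The only substantive verification will be confirming semisimplicity of $(h_{(W,P)})_0$, equivalently the uniqueness of the trivial path as the sole degree-$0$ element of $\Path(\mu,\stt_\mu)$; this is forced by the observation that every non-identity elementary step in the construction of $c_\SSTS$ either has strictly positive degree or is available only when a $\ctau$-strand is present, so the trivial path is the only way to reach $\Path(\mu,\stt_\mu)$ in degree $0$. Beyond this small verification I do not anticipate any genuine obstacle: once the framework is in place, the result is entirely formal.
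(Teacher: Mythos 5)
Your outline is correct and its engine is essentially the one the paper drives through the citation of BGS Proposition~2.4.1, but opened up: over a Koszul ring $A$ with semisimple degree-zero part one has $\rad A = A_{\geq 1}$ and, by quadraticity of Koszul rings, $(\rad A)^k = A_{\geq k}$; since $\Delta(\mu) = c_{\stt_\mu}\cdot h_{(W,P)}$ is cyclic on a degree-zero generator, $\rad^k\Delta(\mu) = c_{\stt_\mu}\cdot A_{\geq k} = \Delta_{\geq k}(\mu)$. That computation is sound.

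The one place where your write-up goes wrong is the "small verification" at the end. You assert that semisimplicity of $(h_{(W,P)})_0$ reduces to "the uniqueness of the trivial path as the sole degree-$0$ element of $\Path(\mu,\stt_\mu)$" and that this is "forced by the observation that every non-identity elementary step \ldots\ has strictly positive degree". Neither half of this is quite right. First, semisimplicity of the degree-zero part requires the stronger fact that \emph{every} light-leaf $\SSTS \in \Path(\la,\stt_\mu)$ has $\deg(\SSTS)\geq 0$, with equality only for the trivial path with $\la=\mu$; you do assert the non-negativity at the beginning, but the heuristic at the end only addresses the $\la=\mu$ case. Second, and more seriously, the heuristic itself is false: the $R^+_\ctau$ step is a fork, which has degree $-1$, so individual steps in the light-leaves recursion certainly \emph{can} decrease degree (indeed \cref{basis-construct} exhibits a non-trivial path of total degree $0$ over a non-reduced bottom word). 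The non-negativity and strict positivity you need are exactly the content of \cref{niamh}, and the resulting semisimplicity of $(h_{(W,P)})_0$ is exactly the basicness established in \cref{yabasic}. Once you replace your informal step-by-step argument with a citation of \cref{niamh} (or simply of \cref{yabasic}), the proof is complete and agrees in substance with the paper's.
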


  \begin{proof}
We have  that $h_{(W,P)}$ is standard  Koszul  by \cref{kosvul}. 
 That the radical and grading series coincide  follows from \cite[Proposition 2.4.1]{bgs96}.  
  \end{proof}

\begin{Acknowledgements*}
 The first   and third   authors are grateful for funding from EPSRC  grant EP/V00090X/1 and the Royal Commission for the Exhibition of 1851, respectively.  The authors are very thankful for the referee's careful reading of an earlier version of this manuscript. 

\end{Acknowledgements*}

         \bibliographystyle{amsalpha}   
\bibliography{master}

 \end{document}